\newcommand{\HH}{\mathbb{H}}
\newcommand{\C}{\mathbb{C}}
\newcommand{\R}{\mathbb{R}}
\newcommand{\Z}{\mathbb{Z}}
\newcommand{\id}{\mathrm{Id}}
\newcommand{\OP}{\operatorname}
\numberwithin{equation}{section}
\newtheorem{thm}[equation]{Theorem}
\newtheorem{dfn}[equation]{Definition}
\newtheorem{lma}[equation]{Lemma}
\newtheorem{prp}[equation]{Proposition}
\newtheorem{cor}[equation]{Corollary}
\newtheorem{ex}[equation]{Example}
\newtheorem{rmk}[equation]{Remark}
\newtheoremstyle{TheoremNum}
    {\topsep}{\topsep}              
    {\itshape}                      
    {}                              
    {\bfseries}                     
    {.}                             
    { }                             
    {\thmname{#1}\thmnote{ \bfseries #3}}
\theoremstyle{TheoremNum}
\theoremstyle:=definition,remark,plain,TheoremNum\do{%
\expandafter\g@addto@macro\csname th@\theoremstyle\endcsname{%
\addtolength\thm@preskip\parskip 
}%
} 
\author{Georgios Dimitroglou Rizell}
\address{Department of Mathematics\\
Uppsala University\\
Box 480\\
SE-751 06 Uppsala\\
SWEDEN}
\email{georgios.dimitroglou@math.uu.se}
\author{Michael G. Sullivan}
\address{Department of Mathematics\\
University of Massachusetts\\
Amherst\\
MA 01002\\
USA}
\email{sullivan@math.umass.edu}
\title{An Energy-Capacity inequality for Legendrian submanifolds}
\thanks{The first author is supported by grant KAW 2013.0321 from the Knut and Alice Wallenberg Foundation.
The second author is supported by grant 317469 from the Simons Foundation and thanks the Centre de Recherches Mathematiques for hosting him while some of this work was done. Both authors would like to thank the anonymous referee who read the paper very thoroughly and provided many valuable suggestions.}
\begin{document}

\begin{abstract}
We prove that the number of Reeb chords between a Legendrian submanifold and its contact Hamiltonian push-off is at least the sum of the $\Z_2$-Betti numbers of the submanifold, provided that the contact isotopy is sufficiently small when compared to the smallest Reeb chord on the Legendrian. Moreover, the established invariance enables us to use two different contact forms: one for the count of Reeb chords and another for the measure of the smallest length, under the assumption that there is a suitable symplectic cobordism from the latter to the former. The size of the contact isotopy is measured in terms of the oscillation of the contact Hamiltonian, together with the maximal factor by which the contact form is shrunk during the isotopy. 
The main tool used is a Mayer--Vietoris sequence for Lagrangian Floer homology, obtained by ``neck-stretching'' and ``splashing.''
\end{abstract}

\maketitle

\section{Introduction}
\label{sec:Introduction}

A by-now famous Energy-Capacity inequality for closed Lagrangian submanifolds of tame symplectic manifolds was obtained in \cite{DisplacementLagrangian} by Polterovich. Roughly speaking, this inequality provides a lower bound for the displacement energy of a closed Lagrangian submanifold $L$ (this is an expression involving the Hofer norm \cite{HoferNorm}) in terms of the minimal area of a pseudoholomorphic disc with boundary on $L \subset (X,\omega)$ of a tame symplectic manifold. In \cite{Chekanov} an even stronger statement was established by Chekanov: the number of intersections $L \cap \phi(L)$ is bounded from below by $\dim H_*(L;\Z_2)$ whenever $\phi$ is a Hamiltonian diffeomorphism whose Hofer norm is less than this minimal area of a pseudoholomorphic disc, under the additional assumption that the intersection is transverse; see Theorem \ref{thm:chekanov} below for a precise formulation. Our goal is to provide a contact geometric analogue of the latter result. There have been previous generalizations of Chekanov's result in this direction, albeit not in the full generality that is considered here; see Section \ref{sec:previous} below for an overview.

A {\bf contact manifold} $(Y,\xi)$ with a contact form is a $(2n+1)$-dimensional manifold with a maximally non-integrable field of tangent hyperplanes $\xi \subset TY$, and a {\bf Legendrian submanifold} is an $n$-dimensional submanifold $\Lambda \subset (Y,\xi)$ which is tangent to $\xi$. We will assume that $\xi$ is co-orientable and choose a contact form $\alpha$, i.e.~a one-form satisfying $\xi=\ker \alpha$, which gives rise to the Reeb vector field $R_\alpha$ (whose dynamics depends heavily on $\alpha$). A choice of contact form induces a bijective correspondence between contact Hamiltonians $H_t \colon Y \to \R$ and contact isotopies $\phi^t_{\alpha,H_t} \colon (Y,\xi)\to(Y,\xi)$ starting at the identity (note that a contact isotopy need not preserve $\alpha$). We refer to Section \ref{sec:basics} for more details.

Two Legendrian submanifolds do not generically intersect. However, generically there are integral curves of $R_\alpha$ connecting them: so-called {\bf Reeb chords}. In fact, if a Legendrian isotopy $\Lambda_t$ is \emph{sufficiently} $C^0$-small for $t\in[0,1]$, it follows by the classical result due to Laudenbach--Sikorav \cite{Persistance} and Chekanov \cite{QuasiFunctions} that there are at least a number $\dim H_*(\Lambda;\Z_2)$ of Reeb chords between $\Lambda_0$ and $\Lambda_1$ for a fixed contact form (in the case when the isotopy is $C^1$-small the bound can be obtained by elementary results from differential topology).

A $C^0$-small Legendrian push-off of $\Lambda \subset (Y,\alpha)$ is contained inside a standard contact neighborhood as shown in \cite[Theorem 6.2.2]{Geiges}; this is a neighborhood that can be identified with a neighborhood of the zero-section of $(J^1\Lambda,dz+\theta_\Lambda)$ while preserving the contact forms, and identifying $\Lambda$ with the zero section. Here $\theta_\Lambda$ denotes the so-called Liouville form on $T^*\Lambda.$ The aforementioned Reeb chords between $\Lambda$ and a $C^0$-close push-off are in bijective correspondence with the intersection points of the images of the same under the canonical projection $J^1\Lambda \to T^*\Lambda$ (here we have used the above standard neighborhood), under which the Legendrian submanifolds become Lagrangian immersions. The contact-geometric counterpart of Chekanov's result would therefore answer the following question:
\begin{center}
\boxed{\parbox[t]{10.2cm}{ Given a Legendrian submanifold $\Lambda \subset (Y,\alpha)$, how ``large'' (in an appropriate sense) of a contact Hamiltonian $H_t$ is needed in order for the number of $\alpha$-Reeb chords that go either from $\Lambda$ to $\phi^1_{\alpha,H_t}(\Lambda),$ or vice versa, to be \emph{less} than $\dim H_*(\Lambda,\Z_2)$?}}
\end{center}
Note that the non-relative counterpart of this question concerns the number of so-called translated points of a contactomorphism. This question has received somewhat more attention in comparison with the relative case studied here. We refer to the work of Albers--Frauenfelder \cite{AlbersLeafwise}, Albers--Fuchs--Merry \cite{Orderability}, \cite{Positive}, Albers--Hein \cite{AlbersHein}, Sandon \cite{Sandon}, and Shelukhin in \cite{HoferContactomorphism} for related results concerning the existence of translated points.

\subsection{Results}
\label{sec:Results}
Given a pair Legendrian submanifold $\Lambda_0,\Lambda_1 \subset (Y,\alpha)$ by
$$\mathcal{Q}_\alpha(\Lambda_0,\Lambda_1;a,b), \:\: -\infty \le a \le b \le +\infty,$$
we denote the union of all Reeb chords for the contact form $\alpha$ either
\begin{itemize}
\item starting on $\Lambda_0$, ending on $\Lambda_1$, and being of length $0<\ell \in [a,b],$ or
\item starting on $\Lambda_1$, ending on $\Lambda_0$, and being of length $0<\ell \in [-b,-a].$
\end{itemize}
We also write $\mathcal{Q}_\alpha(\Lambda_0,\Lambda_1):=\mathcal{Q}_\alpha(\Lambda_0,\Lambda_1;-\infty,+\infty)$. Our goal is to obtain a lower bound on these subsets in the case when $\Lambda_0=\Lambda$ and $\Lambda_1=\phi^1_{\alpha,H_t}(\Lambda_0)$ whenever $H_t$ is sufficient small in an appropriate sense.
\begin{rmk}
\label{rmk:qa}
Interchanging the order of the two Legendrians we obtain
$$\mathcal{Q}_\alpha(\Lambda_0,\Lambda_1;a,b)=\mathcal{Q}_\alpha(\Lambda_1,\Lambda_0;-b,-a)$$
(in particular, the Reeb chords that are considered are allowed to start on either of the Legendrians).
\end{rmk}

Fix a contact Hamiltonian $H_t \colon Y \to \R.$ Since $\phi^t_{\alpha, H_t}$ preserves the contact distribution $\xi,$ we have
$$(\phi^t_{\alpha, H_t})^*\alpha=e^{-\tau^t_{\alpha,H_t}}\alpha,$$
where $e^{-\tau^t_{\alpha,H_t}}$ is called the {\bf conformal factor} of the contactomorphism.
To the contact isotopy $\phi^t_{\alpha,H_t}$ we can then associate the numbers
\begin{equation}
\label{eq:norms}
\boxed{
\begin{array}{rcl}
A&:=& \max_{t \in [0,1] \atop y \in Y} \tau^t_{\alpha,H_t}(y) \ge 0,\\
B&:=& -\min_{t \in [0,1] \atop y \in Y} \tau^t_{\alpha,H_t}(y) \ge 0,\\
M_+&:=& -e^A \int_0^1 \min_{y \in Y} H_t dt,\\
M_-&:=& -e^A \int_0^1 \max_{y \in Y} H_t dt,\\
\|H\|_{\OP{osc}} &:=& \int_0^1 (\max_{y \in Y} H_t-\min_{y \in Y}H_t) dt \ge 0.
\end{array}
}
\end{equation}
(The signs and the notation $M_\pm$ may look odd at first sight, but this notation will become useful in Section \ref{sec:Proof} when proving our main theorem.)

Instead of the minimal area of pseudoholomorphic discs, here we consider the following related geometric quantity:
\[ \boxed{\sigma(\alpha, \Lambda) := \parbox[t]{9cm}{The minimal $\alpha$-length of Reeb chords and periodic Reeb orbits $\gamma$ satisfying $[\gamma]=0 \in \pi_1(Y,\Lambda).$}}
\]
Note that $\sigma(\alpha, \Lambda)=+\infty$ holds when the set of contractible chords and orbits is empty.

In the following we assume that the Legendrian submanifold $\Lambda \subset (Y,\alpha)$ and the contact manifolds are closed, and that $\alpha$ is generically chosen, making both the periodic Reeb orbits and Reeb chords on $\Lambda$ non-degenerate. We defer the notion of a Lagrangian concordance to Section \ref{sec:basics}, but we note that $\Lambda \subset (Y,\alpha)$ is Lagrangian concordant to itself.
\begin{thm}
\label{thm:main}
Let $ L \subset (X,d\eta)$ be a Lagrangian concordance from $\Lambda_- \subset (Y_-,\alpha_-)$ to $\Lambda \subset (Y,\alpha),$ i.e.~$(Y_-,\alpha_-)$ and $(Y,\alpha)$ are the concave and convex contact ends of the symplectic cobordism $(X,d\eta),$ and consider a contact Hamiltonian $H_t \colon Y \to \R$ which satisfies
\begin{equation}
\label{eq:nobreaking}
e^A\|H\|_{\OP{osc}} < \sigma(\alpha_-, \Lambda_-).
\end{equation}
Then
$$ |\mathcal{Q}_\alpha (\Lambda,\phi^1_{\alpha,H_t}(\Lambda);-M_+,-M_-)| \ge \sum_{i=0}^{\dim \Lambda} \dim H_i(\Lambda;\Z_2),$$
assuming that the latter Reeb chords are transverse. 
\end{thm}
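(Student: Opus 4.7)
The strategy is to pass to the symplectization $(\R_t\times Y, d(e^t\alpha))$, where the Legendrian $\Lambda$ lifts to the cylindrical Lagrangian $\R\times\Lambda$ and Reeb chords become intersection points. I would first lift the contact isotopy $\phi^t_{\alpha,H_t}$ to a Hamiltonian isotopy $\Phi^t$ on the symplectization. A direct action/length computation, using that the conformal factor is bounded in absolute value by $e^A$ and that the oscillation of the Hamiltonian is $\|H\|_{\OP{osc}}$, shows that the intersections of $\R\times\Lambda$ with $\Phi^1(\R\times\Lambda)$ are in bijection with the Reeb chords counted by $\mathcal{Q}_\alpha(\Lambda,\phi^1_{\alpha,H_t}(\Lambda);-M_+,-M_-)$, and that their symplectic actions fill out an interval of length at most $e^A\|H\|_{\OP{osc}}$. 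The quantities $M_\pm$ are precisely the action-window endpoints forced by the extremal values of $H_t$ and the conformal stretching $e^A$.

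Next, I would exploit the concordance $L$ from $\Lambda_-$ to $\Lambda$ to enlarge the picture into an exact Lagrangian of the form $\mathcal{L} \;:=\; \bigl((-\infty,0]\times\Lambda_-\bigr)\cup L\cup\bigl([T,\infty)\times\Lambda\bigr)$, properly embedded in the (glued) symplectization, and its perturbation $\mathcal{L}':=\Phi^1(\mathcal{L})$ obtained from $\Phi^1$ cut off to act only high in the positive end, so that $\mathcal{L}\cap\mathcal{L}'$ consists exactly of the Reeb chords from the first step. Working with a cylindrical-at-infinity almost complex structure, I would build a Lagrangian Floer cochain complex generated by these intersection points, filtered by action. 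The role of the concordance is to provide a geometrically rigid negative end at $\Lambda_-$ where the Reeb dynamics is controlled through $\sigma(\alpha_-,\Lambda_-)$, even though the chord count itself happens at the top end.

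To extract the topological lower bound, I would apply the Mayer--Vietoris strategy advertised in the abstract. Neck-stretch along a slice $\{t=t_0\}$ that separates $L$ from the top cylinder $[T,\infty)\times\Lambda$, and simultaneously apply the \emph{splashing} procedure near the positive end to concentrate the Hamiltonian perturbation into a thin shell. In the limit, Floer strips split into a ``concordance piece'' and a ``cylindrical piece.'' The cylindrical piece, living in the top half, is a Morse--Bott type Floer complex of $\Lambda$ against a small push-off inside $J^1\Lambda$ and computes $H_*(\Lambda;\Z_2)$; the concordance piece contributes to an acyclic complex associated to the capped-off cobordism $L$ (its generators, which would sit in the lower cylindrical end, can be cancelled by a wrapping/continuation argument using $\Lambda_-$). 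Assembling both pieces into a Mayer--Vietoris long exact sequence forces $|\mathcal{L}\cap\mathcal{L}'|$, and therefore $|\mathcal{Q}_\alpha(\Lambda,\phi^1_{\alpha,H_t}(\Lambda);-M_+,-M_-)|$, to be at least $\sum_i\dim H_i(\Lambda;\Z_2)$.

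The principal obstacle is compactness. I must rule out (i) breaking of Floer strips at the negative SFT end by a contractible Reeb chord of $\Lambda_-$ or contractible Reeb orbit, (ii) bubbling of pseudoholomorphic discs or spheres with boundary on $\mathcal{L}$, and (iii) pathological neck-stretched buildings. A standard monotonicity/energy estimate shows that any such unwanted object would carry a positive period/length bounded above by the total action window, i.e.\ by $e^A\|H\|_{\OP{osc}}$, and would have to be contractible in $(Y_-,\Lambda_-)$ after pushing down through the concordance. This is exactly what the hypothesis \eqref{eq:nobreaking}, namely $e^A\|H\|_{\OP{osc}}<\sigma(\alpha_-,\Lambda_-)$, forbids. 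Consequently the filtered Floer homology is well defined, the Mayer--Vietoris sequence is valid, and the theorem follows.
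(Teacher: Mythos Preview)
Your high-level architecture (pass to the symplectization, build a filtered Floer complex for a pair of exact Lagrangian concordances, run a Mayer--Vietoris argument via neck-stretching and splashing, and use the hypothesis \eqref{eq:nobreaking} to exclude SFT bubbling at the negative end) is exactly the paper's strategy. However, several of your key steps are not correct as stated, and one of them is a genuine gap.

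The central error is the claim that the intersections of $\R\times\Lambda$ with $\Phi^1(\R\times\Lambda)$ are in bijection with the Reeb chords in $\mathcal{Q}_\alpha(\Lambda,\phi^1_{\alpha,H_t}(\Lambda);-M_+,-M_-)$. The lifted isotopy $\Phi^t=\phi^t_{e^rH_t}$ sends $\R\times\Lambda$ to the cylinder $\R\times\phi^1_{\alpha,H_t}(\Lambda)$ (the $r$-translation by the conformal factor merely reparametrizes the $\R$-fiber), and two Legendrian cylinders in a symplectization are generically \emph{disjoint}; Reeb chords between $\Lambda$ and $\phi^1_{\alpha,H_t}(\Lambda)$ do not automatically become Lagrangian intersection points. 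Consequently your $\mathcal{L}'$, built by cutting off $\Phi^1$, either coincides with $\mathcal{L}$ outside a compact set (infinite non-transverse intersection) or has no intersection with $\mathcal{L}$ at all. The paper resolves this in two separate moves. First, the second Lagrangian is \emph{not} $\Phi^1(\mathcal{L})$ but a specific Hamiltonian push-off $L_{\epsilon,N,\sigma}$ of $L$ (Section~\ref{sec:PushOff}) built from a Morse--Bott function with critical manifolds at $\{r=\pm N\}$ together with a ``bulge'' of height $\sigma$ in the concave end; the resulting Floer complex $CF^{\mathfrak{p}_0}(L,L_{\epsilon,N,\sigma};0)=C_*^-\oplus C_*^+$ is the acyclic cone on a quasi-isomorphism, with each summand computing $H_*(\Lambda;\Z_2)$. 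Second, the Reeb chords enter only after turning on a compactly supported Hamiltonian $\widetilde{G}_t$ that incorporates both the contact isotopy (via Usher's trick, Section~\ref{sec:usher}, which is what produces the factor $e^A$ rather than $e^{A+B}$) and the \emph{splashing}; the splash wraps $\phi^1_{G_t}(L)$ by the Reeb flow in a thin annulus $[A,A+2\delta]\times Y$, and it is precisely these new intersection points, forming the complex $I_*=L_*\cap R_*$ of the Mayer--Vietoris sequence \eqref{eq:MVS}, that are in bijection with the sought Reeb chords (Lemma~\ref{lma:reebchords}). The final inequality comes from showing that $[C_*^+\hookrightarrow R_*]$ is injective while $[C_*^+\hookrightarrow C_*]$ vanishes in homology, forcing $\dim H(I_*)\ge\dim H(C_*^+)=\dim H_*(\Lambda;\Z_2)$. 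So the Reeb chords are counted by the \emph{intersection term} in Mayer--Vietoris, not by the whole Floer complex, and neither the push-off nor the mechanism converting Reeb chords to generators is the one you describe.
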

We emphasize that the obstruction is measured for the contact form $\alpha_-$ on the concave end of the symplectic cobordism $(X,d\eta)$, which need not be the same as the contact form $\alpha$ inducing the Reeb chords which we count.

\begin{rmk}
 Without the implementation in Section \ref{sec:usher} of Usher's trick in the setting of contact Hamiltonians, as we learned from the work \cite{HoferContactomorphism} by Shelukhin, we would have to assume
$$e^{A+B}\|H\|_{\OP{osc}} < \sigma(\alpha_-, \Lambda_-).$$
 instead of condition (\ref{eq:nobreaking}).
We believe condition (\ref{eq:nobreaking}) can be improved to
$$\|H\|_{\OP{osc}} < \sigma(\alpha_-, \Lambda_-),$$
i.e.~a quantity that only involves the $C^0$-properties of the contact Hamiltonian. Unfortunately, we were unable to show this with our techniques.
\end{rmk}

One nontrivial consequence of Theorem \ref{thm:main} is that the result can be used as an obstruction to the existence of Lagrangian concordances (cylindrical cobordisms) inside symplectic cobordisms. We refer to Corollary \ref{cor:JoshLisa} below for the case of exact Lagrangian concordances embedded in the symplectization. In addition, note that Lagrangian concordances also arise naturally when we interpolate between two contact forms, as described by Example \ref{ex:conc}. In the case of a ``displaceable'' Legendrian embedding, we hence immediately conclude that:
\begin{cor}
\label{cor:main}
Assume that
$$ |\mathcal{Q}_\alpha (\Lambda,\phi^1_{\alpha,H_t}(\Lambda);-M_+,-M_-)| < \sum_{i=0}^{\dim \Lambda} \dim H_i(\Lambda;\Z_2),$$
is satisfied for the contact Hamiltonian $H_t,$ where all chords are assumed to be transverse. Then for any smooth function $f \colon Y \to (0,1],$ the contact form $\alpha_-:=e^f\alpha$ satisfies
$$ e^A\|H\|_{\OP{osc}} > \sigma(\alpha_-, \Lambda_-).$$
In particular, there exists either a contractible chord or orbit for the contact form $\alpha_-$ satisfying a fixed bound of its length.
\end{cor}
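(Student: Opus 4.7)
The statement is essentially the contrapositive of Theorem \ref{thm:main}, specialized to the concordance that arises from interpolating two contact forms on the same manifold. Fix any smooth $f \colon Y \to (0,1]$ and set $Y_- := Y$, $\alpha_- := e^f\alpha$, $\Lambda_- := \Lambda$. The approach is first to produce a Lagrangian concordance $L$ from $\Lambda_- \subset (Y_-,\alpha_-)$ to $\Lambda \subset (Y,\alpha)$, and then to plug this data into Theorem \ref{thm:main}.

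For the concordance, I would invoke Example \ref{ex:conc}, which the preceding paragraph tells us provides precisely such a concordance via interpolation of contact forms on a trivial cylinder; the normalization $f \in (0,1]$ (in particular $f > 0$) is what makes the resulting Liouville structure on $[0,1]\times Y$ have the correct positive/negative ends and makes $L := [0,1]\times\Lambda$ Lagrangian in it. With this quadruple $(Y_-,\alpha_-,\Lambda_-,L)$ in hand, Theorem \ref{thm:main} applies verbatim: if its sufficient condition $e^A\|H\|_{\OP{osc}} < \sigma(\alpha_-,\Lambda_-)$ held, we would conclude
\[
|\mathcal{Q}_\alpha(\Lambda,\phi^1_{\alpha,H_t}(\Lambda);-M_+,-M_-)| \;\ge\; \sum_{i=0}^{\dim \Lambda} \dim H_i(\Lambda;\Z_2),
\]
contradicting the standing hypothesis of the corollary. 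Hence the sufficient condition must fail, yielding $e^A\|H\|_{\OP{osc}} \ge \sigma(\alpha_-,\Lambda_-)$.

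To upgrade the inequality to the strict one claimed, I would argue by perturbation. Since $Y$ is compact and $f>0$, there is $\delta>0$ with $f\ge\delta$, so for small $\epsilon\in(0,\delta)$ the function $f-\epsilon$ is still an admissible choice, and the same argument applied to $f-\epsilon$ gives $e^A\|H\|_{\OP{osc}} \ge \sigma(e^{f-\epsilon}\alpha,\Lambda)$; letting $\epsilon\to 0^+$ and using that lengths of (nondegenerate) contractible chords/orbits depend continuously on the conformal rescaling of the contact form then yields the strict inequality $e^A\|H\|_{\OP{osc}} > \sigma(\alpha_-,\Lambda_-)$. The final ``in particular'' clause is then purely definitional: the finiteness of the left-hand side forces $\sigma(\alpha_-,\Lambda_-)<\infty$, so a contractible $\alpha_-$-Reeb chord of $\Lambda_-$ or a contractible periodic $\alpha_-$-Reeb orbit must exist, of length at most $e^A\|H\|_{\OP{osc}}$.

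The only substantive input is Theorem \ref{thm:main} itself, together with Example \ref{ex:conc}; the main conceptual point — and the only place where one has to be careful — is the strict-vs-weak inequality upgrade, which depends on an openness/semicontinuity argument for the invariant $\sigma$ under rescaling of the contact form.
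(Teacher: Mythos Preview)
Your overall strategy—build the trivial-cylinder concordance from Example~\ref{ex:conc} and read off the contrapositive of Theorem~\ref{thm:main}—is exactly what the paper has in mind when it calls the corollary immediate; there is no separate proof in the paper. Two points deserve correction, however.

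First, the sign of the exponent. For the cylinder $\R\times\Lambda$ inside $\{f_-\le r\le f_+\}\subset(\R\times Y,d(e^r\alpha))$ to be a concordance \emph{from} $(Y,\alpha_-)$ \emph{to} $(Y,\alpha)$ one needs $\alpha_-=e^{f_-}\alpha$ with $f_-\le 0$; compare Example~\ref{ex:cob}. The statement ``$f:Y\to(0,1]$'' in the corollary is almost certainly meant as a condition on the conformal factor $e^f$ rather than on the exponent (the same slip recurs in the proof of Corollary~\ref{cor:nondisp}). Your sentence ``$f>0$ is what makes the Liouville structure have the correct ends'' is therefore backwards: it is $f\le 0$ (equivalently $e^f\in(0,1]$) that puts $(Y,\alpha_-)$ at the concave end.

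Second, your upgrade from $\ge$ to $>$ does not work as written. Replacing $f$ by $f-\epsilon$ yields $\alpha_-^{(\epsilon)}=e^{-\epsilon}\alpha_-$, and since Reeb lengths scale linearly with the form one has $\sigma(\alpha_-^{(\epsilon)},\Lambda)=e^{-\epsilon}\sigma(\alpha_-,\Lambda)$; the contrapositive then gives $e^A\|H\|_{\OP{osc}}\ge e^{-\epsilon}\sigma(\alpha_-,\Lambda)$, and letting $\epsilon\to 0^+$ recovers only the non-strict inequality you started from. To gain strictness by this route you would need to move $f$ \emph{towards} zero, obtaining $e^A\|H\|_{\OP{osc}}\ge e^{\epsilon}\sigma(\alpha_-,\Lambda)>\sigma(\alpha_-,\Lambda)$, which works whenever $\sup f<0$ but not at the boundary case $\alpha_-=\alpha$. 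The contrapositive by itself yields only $\ge$, which is already enough for the ``in particular'' clause and for the application in Corollary~\ref{cor:nondisp}; the strict sign in the displayed conclusion should be read with this in mind.
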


With a refined version of the non-bubbling theorem proven in Section \ref{sec:CompactnessProof} it should be possible to replace in the hypothesis of Theorem \ref{thm:main}, $\sigma(\alpha,\Lambda)$ by the minimal $d\alpha_-$-energy of either a pseudoholomorphic plane in $\R \times Y_-,$ or a one-punctured pseudoholomorphic disc with boundary on $\R \times \Lambda_-.$ Such a plane or disc is asymptotic to either a periodic Reeb orbit or a Reeb chord on $\Lambda_-.$ Together with the fact that the $d\alpha_-$-energy of these pseudoholomorphic curves is equal to the length of the asymptotic orbit by Stokes' theorem, it now follows that the latter quantity is less than or equal to $\sigma(\alpha,\Lambda).$

Theorem \ref{thm:main} also extends to certain non-compact contact manifolds, such as one-jet spaces, if a standard contact form is used outside of a compact subset; we refer to Section \ref{sec:noncomp} for more details.

\begin{cor}
\label{cor:nondisp}
Let $\Lambda \subset (Y,\xi)$ be a Legendrian submanifold and let $\alpha_0$ be a (not necessarily generic) contact form on $(Y,\xi)$ which is {\bf relatively hypertight}, i.e.~for which $\sigma(\alpha_0,\Lambda)=+\infty.$ Suppose that $\Lambda'$ is Legendrian isotopic to $\Lambda$. For {\em any} choice of contact form $\alpha,$ we then have the bound
\[ |\mathcal{Q}_\alpha (\Lambda,\phi^1_{\alpha,H_t}(\Lambda);-M_+,-M_-)| \ge \sum_{i=0}^{\dim \Lambda} \dim H_i(\Lambda;\Z_2),\]
given that the latter chords are transverse.
\end{cor}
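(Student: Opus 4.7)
My plan is to reduce the corollary directly to Theorem~\ref{thm:main} by exploiting the fact that the hypertightness hypothesis makes the non-breaking inequality \eqref{eq:nobreaking} automatic. Specifically, I would take $\Lambda_-:=\Lambda$ and $\alpha_-:=\alpha_0$ in Theorem~\ref{thm:main}; since $\sigma(\alpha_0,\Lambda)=+\infty$ by assumption, the condition $e^A\|H\|_{\OP{osc}}<\sigma(\alpha_-,\Lambda_-)$ holds vacuously, regardless of the contact Hamiltonian $H_t$ or of the constant $A$. What remains is therefore to exhibit a Lagrangian concordance from $(\Lambda,\alpha_0)$ to the appropriate Legendrian sitting inside $(Y,\alpha)$, embedded in a symplectic cobordism of the type required by the main theorem.

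Such a concordance can be assembled in two pieces. First, Example~\ref{ex:conc} provides, by interpolating between the two contact forms $\alpha_0$ and $\alpha$ on the same underlying contact manifold $(Y,\xi)$, a symplectic cobordism with $\alpha_0$-concave end and $\alpha$-convex end, containing the cylinder $\Lambda\times\R$ as a Lagrangian concordance from $\Lambda\subset(Y,\alpha_0)$ to $\Lambda\subset(Y,\alpha)$. Second, the Legendrian isotopy from $\Lambda$ to $\Lambda'$ inside $(Y,\alpha)$ produces, via the standard trace construction (Chantraine, Ekholm--Honda--K\'alm\'an), a Lagrangian concordance from $\Lambda$ to $\Lambda'$ inside the symplectization of $(Y,\alpha)$. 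Concatenating these two pieces along the common $(Y,\alpha)$-level yields a single Lagrangian concordance from $(\Lambda,\alpha_0)$ to $(\Lambda',\alpha)$ inside a well-defined symplectic cobordism of the form required by Theorem~\ref{thm:main}.

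Feeding this concordance into Theorem~\ref{thm:main} and using that \eqref{eq:nobreaking} is vacuous gives the desired bound on the Reeb chord count between $\Lambda'$ and its push-off; specializing to the trivial Legendrian isotopy $\Lambda'=\Lambda$ recovers the corollary as stated. The main point to verify is that the form-interpolation cobordism of Example~\ref{ex:conc}, once concatenated with the Legendrian-isotopy trace, genuinely satisfies the technical hypotheses invoked in the proof of the main theorem -- in particular, that its ends are cylindrical in the precise sense needed for the neck-stretching and splashing arguments. I do not expect this to present any real obstacle, as both constructions are entirely standard in contact topology and are already implicit in the setup of Theorem~\ref{thm:main}.
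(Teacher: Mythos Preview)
Your approach is essentially the same as the paper's, but you have overlooked two technical points that the paper's proof handles explicitly and that are not merely cosmetic.

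First, the cobordism from Example~\ref{ex:conc} (built on Example~\ref{ex:cob}) only goes from $(Y,e^{f_-}\alpha)$ to $(Y,e^{f_+}\alpha)$ when $f_-<f_+$ pointwise. Writing $\alpha=e^f\alpha_0$, you need $f>0$ everywhere to place $\alpha_0$ on the concave end and $\alpha$ on the convex end; since $\alpha$ is arbitrary, this need not hold. The paper fixes this by replacing $\alpha_0$ with $\alpha_-:=e^{-m}\alpha_0$ for $m>-\min_Y f$, so that $\alpha_-=e^{-(f+m)}\alpha$ with $-(f+m)<0$ everywhere. This rescaling preserves hypertightness, so nothing is lost.

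Second, Theorem~\ref{thm:main} is stated under the standing assumption (just above the theorem) that the contact forms are generic, so that Reeb chords and orbits are non-degenerate; yet the corollary explicitly allows $\alpha_0$ to be non-generic. You cannot simply plug $\alpha_0$ in as $\alpha_-$. The paper perturbs $\alpha_-$ slightly to achieve genericity; after the perturbation $\sigma(\alpha_-,\Lambda)$ may drop from $+\infty$ to a finite value, but it remains larger than the fixed quantity $e^A\|H\|_{\OP{osc}}$, so \eqref{eq:nobreaking} survives. This step is essential and is precisely why the hypertightness hypothesis is robust enough to give a conclusion for \emph{every} $H_t$.

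Both repairs are easy once noticed, and with them your argument becomes the paper's argument. Your additional remarks about concatenating with a Legendrian-isotopy trace to pass to $\Lambda'$ are reasonable, though note that the paper's own proof does not actually carry this out.
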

\begin{proof}

The contact form $\alpha$ can be written as $\alpha = e^f\alpha_0$ for some real-valued function $f \colon Y \to \R.$ Choose any constant $m > -\min_Y f.$ It follows that $\alpha_-:=e^{-m}\alpha_0$ also is a relatively hypertight contact form, while $\alpha_- = e^{-(f+m)}\alpha$ with $-(f+m)<-\min f+\min f=0.$ After a sufficiently small perturbation of the contact form $\alpha_-$ it may be assumed to be generic, while the inequality $ e^A\|H\|_{\OP{osc}} < \sigma(\alpha_-, \Lambda)$ is still satisfied for the contact Hamiltonian generating the isotopy from $\Lambda$ to $\phi^1_{\alpha,H_t}(\Lambda).$ Since it is possible to assume that $\alpha_-=e^g \alpha$ holds for some smooth function $g \colon Y \to (0,1)$ also after the perturbation, the result now follows directly from Corollary \ref{cor:main}.
\end{proof}

\begin{ex}
The following are well-known examples of Legendrian submanifolds satisfying $\sigma(\alpha_0,\Lambda) = +\infty$ to which the above corollary can be applied.
\begin{enumerate}
\item The zero section of the one-jet space $(J^1M,dz-\lambda_M)$ of a closed manifold $M$ endowed with its canonical contact form $\alpha_{\OP{std}}=dz-\lambda_M;$ i.e.~$\lambda_M$ is the Liouville form on $T^*M$ and $z$ is the canonical coordinate on the $\R$-factor of $J^1M=T^*M\times\R;$
\item A fiber of the unit cotangent bundle $S^*M \subset T^*M$ with contact form $\alpha_0=\lambda_M|_{T(S^*M)}$ induced by a Riemannian metric on $M$ having non-positive sectional curvature (recall that such a Riemannian manifold has no closed contractible geodesics, even without assuming periodicity); and
\item The conormal lift of a sub-torus $(S^1)^k \times \{1\}^{n-k} \subset (S^1)^n$ inside the unit cotangent bundle $S^*(S^1)^n,$ with contact form $\alpha_0$ induced by the canonical flat metric on $(S^1)^n=\R^n /\Z^n.$
\end{enumerate}
\end{ex}
In another direction, define the {\em{$\alpha$-displacement energy}} of a Legendrian $\Lambda$ in $(Y, \mbox{ker} \alpha)$ to be
 \[
 \OP{disp}(\alpha, \Lambda) := \inf_{H_t} e^A \|H\|_{\OP{osc}}
 \]
 where the infimum is taken over all contact Hamiltonians such that there are no $\alpha$-Reeb chords between $\phi^1_{\alpha, H_t}(\Lambda)$ and $\Lambda$ or vice versa. (Set this infimum to $+\infty$ if no such Hamiltonian exists.)
Suppose $\Lambda_-, \Lambda_+ \subset Y$ are two Legendrians which are Lagrangian concordant in the symplectization $(\R \times Y,d(e^r\alpha)).$ 
Without loss of generality, assume that the concordance is cylindrical outside of $[S,0] \times Y,$ where $S <0.$
 \begin{cor}
 \label{cor:JoshLisa}
Under the above assumptions, we have
$$ |S| \ge \ln\left(\frac{\sigma(\alpha, \Lambda_-)}{\OP{disp}(\alpha, \Lambda_+)}\right).$$
In particular if the length of the shortest $\alpha$-chord of $\Lambda_-$ is greater than the $\alpha$-displacement energy of $\Lambda_+,$ we achieve a non-trivial lower bound on the length of the Lagrangian concordance.
 \end{cor}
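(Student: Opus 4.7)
The plan is to rescale the contact form at the negative end of the symplectization so that the given concordance $L$ becomes a Lagrangian concordance from $\Lambda_- \subset (Y, e^S\alpha)$ to $\Lambda_+ \subset (Y, \alpha)$, and then to apply Theorem~\ref{thm:main} to a hypothetical displacing Hamiltonian on $\Lambda_+$ to derive a contradiction.

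Concretely, first I would set $\alpha_- := e^S\alpha$. Since $R_{\alpha_-} = e^{-S}R_\alpha$, chord and orbit lengths rescale linearly, so $\sigma(\alpha_-, \Lambda_-) = e^S\,\sigma(\alpha, \Lambda_-)$. The change of variable $r' := r - S$ then identifies $d(e^r\alpha)$ with $d(e^{r'}\alpha_-)$ and carries the cylindrical piece $\{r \le S\}\times \Lambda_-$ of $L$ onto $\{r' \le 0\}\times\Lambda_-$; glued with the unchanged positive end over $\Lambda_+$, this exhibits $L$ as a Lagrangian concordance from $\Lambda_-\subset(Y,\alpha_-)$ to $\Lambda_+\subset(Y,\alpha)$ in the sense required by Theorem~\ref{thm:main}.

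Second, I would argue by contradiction. Assume that $|S| < \ln\!\bigl(\sigma(\alpha,\Lambda_-)/\OP{disp}(\alpha,\Lambda_+)\bigr)$, equivalently $\OP{disp}(\alpha,\Lambda_+) < e^S\,\sigma(\alpha,\Lambda_-) = \sigma(\alpha_-,\Lambda_-)$. By the definition of the $\alpha$-displacement energy, I can then choose a contact Hamiltonian $H_t$ on $(Y,\alpha)$ satisfying $e^A\|H\|_{\OP{osc}} < \sigma(\alpha_-,\Lambda_-)$ for which $\mathcal{Q}_\alpha(\Lambda_+,\phi^1_{\alpha,H_t}(\Lambda_+)) = \emptyset$. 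This is exactly hypothesis~\eqref{eq:nobreaking} for the concordance $L$, so Theorem~\ref{thm:main} forces at least $\sum_i \dim H_i(\Lambda_+;\Z_2) \ge 1$ chords in $\mathcal{Q}_\alpha(\Lambda_+,\phi^1_{\alpha,H_t}(\Lambda_+);-M_+,-M_-)$, directly contradicting the displacing property of $H_t$.

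The only real technicality is to achieve the transversality and non-degeneracy hypotheses required by Theorem~\ref{thm:main} while maintaining the strict inequality $e^A\|H\|_{\OP{osc}} < \sigma(\alpha_-,\Lambda_-)$ and the emptiness of $\mathcal{Q}_\alpha$ after displacement; since these are open conditions, a suitable small generic perturbation of $\alpha_-$ (keeping $\alpha_- = e^{-g}\alpha$ with $g$ in the appropriate range) and of $H_t$ suffices, exactly as in the proof of Corollary~\ref{cor:nondisp}. The corner cases $\sigma(\alpha,\Lambda_-) = +\infty$ or $\OP{disp}(\alpha,\Lambda_+) = +\infty$ are handled by the same argument, giving respectively a non-displaceability conclusion or a vacuous inequality.
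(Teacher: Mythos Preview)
Your proof is correct and follows essentially the same route as the paper: set $\alpha_- = e^S\alpha$, use the scaling relation $\sigma(\alpha_-,\Lambda_-) = e^S\sigma(\alpha,\Lambda_-)$, and derive the inequality from Theorem~\ref{thm:main} applied to a displacing Hamiltonian. The paper phrases it as a direct contrapositive rather than by contradiction, and is considerably terser (it omits the change-of-variable justification and the genericity discussion that you supply), but the argument is the same.
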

 
\begin{proof}
Suppose $H$ displaces $\Lambda^+$ is the above sense.
Theorem \ref{thm:main} implies
$$\sigma(\alpha_-, \Lambda_-) \le \OP{disp}(\alpha, \Lambda) \le e^A \|H\|_{\OP{osc}}$$
where $\alpha_- = e^S \alpha$ and hence $\sigma(\alpha, \Lambda_-)=e^{-S}\sigma(\alpha_-, \Lambda_-).$
\end{proof}

\subsection{Previous results in the symplectic setting}
\label{ssec:PreviousSymplectic}

In the following we assume that $(X,\omega)$ is a tame symplectic manifold.

\subsubsection{Results related to Corollary \ref{cor:nondisp}}
Lower bounds for the number of intersections between a Lagrangian submanifold $L$ and its image $\phi^1_{H_t}(L)$ under a Hamiltonian isotopy has been a major topic in symplectic topology. In certain cases it has been shown that
\[ |L \pitchfork \phi^1_{H_t}(L)| \ge \dim H_*(L;\Z_2) \] 
given that the intersection is transverse. Here we present the two most classical such results.
In \cite{Persistance} Laudenbach--Sikorav used generating family techniques to prove the statement for $(X,\omega)=(T^*M,d\theta_M)$ and $L=0_M$. 
Floer homology was introduced in \cite{Floer:MorseTheoryLagrangian} by Floer, based upon Gromov's technique of pseudoholomorphic curves \cite{Gromov}. The most basic version of Floer homology can handle the case $L\subset (X,\omega)$, under the assumption that there is no non-constant pseudoholomorphic representative of any element in $\pi_2(X,L)$. The work due to Floer proves the lower bound in this setting.
Finally, we note that a far-reaching generalization of Floer homology has been constructed, which can be used to determine such non-trivial lower bounds in many cases \cite{FloerAnomalyI}, \cite{FloerAnomalyII}.

\subsubsection{Results related to Theorem \ref{thm:main}}
In \cite{Chekanov} Chekanov provided the following refinement of the Energy-Capacity inequality due to Polterovich \cite{DisplacementLagrangian}. Recall the definition
\begin{eqnarray*}
\|\phi\| &:=& \inf_{\left\{H_t;\: \phi=\phi^1_{H_t}\right\}} \| H \|_{\OP{osc}}\\
\|H\|_{\OP{osc}} &:=& \int_0^1\left(\max_X H_t-\min_X H_t\right)dt,
\end{eqnarray*}
of the Hofer norm of a Hamiltonian diffeomorphism, as well as the definition of the holomorphic disc capacity
\[0 \le \sigma_\omega(L):= \sup_{J \in \mathcal{J}(X,\omega)} \inf_{u \in \mathcal{M}(L;J)}\int_u \omega \le +\infty,\]
where $\mathcal{M}(L;J)$ denotes the moduli space consisting of non-constant $J$-holomorphic representatives of elements in $\pi_2(X,L)$ and $\mathcal{J}(X,\omega)$ is the contractible set of tame almost complex structures on $(X,\omega)$.
\begin{thm}[\cite{Chekanov}]
\label{thm:chekanov}
Suppose that $\phi \colon (X,\omega) \to (X,\omega)$ is a Hamiltonian diffeomorphism of a tame symplectic manifold satisfying the inequality $\|\phi \| < \sigma_\omega(L)$ for a closed Lagrangian submanifold $L \subset (X,\omega)$. It follows that
\[ |L \cap \phi(L)| \ge \sum_{i=0}^{\dim L} \dim H_i (L;\Z_2),\]
given that the intersection $L \pitchfork \phi(L)$ is transverse.
\end{thm}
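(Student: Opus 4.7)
I would prove Theorem~\ref{thm:chekanov} via Lagrangian Floer homology, following the general paradigm set up by Floer for the case of vanishing $\pi_2(X, L)$. Writing $L' := \phi(L)$, the plan is to consider the $\Z_2$-vector space $CF(L, L')$ freely generated by the transverse intersection points $L \cap L'$, equipped with the differential that counts (modulo $2$) rigid, finite-energy $J$-holomorphic strips $u \colon \R \times [0,1] \to X$ with $u(\cdot, 0) \in L$, $u(\cdot, 1) \in L'$, asymptotic to intersection points at $s \to \pm \infty$. Should this complex have well-defined homology isomorphic to $H_*(L; \Z_2)$, the inequality follows at once from the elementary bound $\dim HF(L, L') \le \dim CF(L, L') = |L \cap L'|$.

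The only place where the hypothesis $\|\phi\| < \sigma_\omega(L)$ enters is in a compactness statement that rules out disc bubbling, both to show $\partial$ is defined with $\partial^2 = 0$ and to set up the continuation maps below. Choose $H_t$ generating $\phi$ with $\|H\|_{\OP{osc}}$ arbitrarily close to $\|\phi\|$. After the standard change of coordinates $u(s,t) \mapsto (\phi_H^t)^{-1}(u(s,t))$, Floer strips on $(L, L')$ become solutions of a perturbed Cauchy--Riemann equation with both boundary components on $L$, whose geometric energy is a priori bounded by $\|H\|_{\OP{osc}}$. If a sequence Gromov-degenerates, the limit must include at least one nonconstant pseudoholomorphic disc bubble with boundary on $L$, whose symplectic area is then bounded above by $\|H\|_{\OP{osc}} < \sigma_\omega(L)$. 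Since $\sigma_\omega(L)$ is the supremum over $J \in \mJ(X,\omega)$ of $\inf_{u \in \mM(L;J)} \int u^*\omega$, one can select a generic tame $J$ for which this infimum strictly exceeds $\|H\|_{\OP{osc}}$, contradicting the existence of the bubble. Standard transversality then furnishes a well-defined boundary operator satisfying $\partial^2 = 0$.

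To identify $HF(L, L')$ with $H_*(L; \Z_2)$, I would interpolate $H_t$ through $s$-dependent families (for instance $\rho(s) H_t$ with a monotone cut-off $\rho$) and use the resulting continuation maps between $HF(L, L)$ and $HF(L, L')$. The associated Floer cylinders again have energy bounded in terms of $\|H\|_{\OP{osc}}$, so the same no-bubbling argument applies, and the usual chain-homotopy/triangle identity shows that these maps are mutually inverse in homology. Finally, $HF(L, L)$ is identified with $H_*(L; \Z_2)$ via a small Morse perturbation of the Hamiltonian (or via a PSS-type construction), once more using bubble-exclusion to control compactness. The main obstacle is entirely analytic: carrying out the compactness argument with sufficient care that the single inequality $\|H\|_{\OP{osc}} < \sigma_\omega(L)$ genuinely suffices in every moduli space in sight (strips, continuation strips, and the homotopies between continuation data), while handling the subtlety that $\sigma_\omega(L)$ is a supremum over $J$ and so the almost complex structure must be chosen compatibly with $H$ from the outset.
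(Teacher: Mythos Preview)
The paper does not prove Theorem~\ref{thm:chekanov}; it is quoted from \cite{Chekanov} as background for the contact-geometric analogue established here. Your outline follows the correct Floer-theoretic strategy, but there is a real gap in the energy estimate. You assert that, after the change of variables, Floer strips defining the \emph{differential} have geometric energy bounded a priori by $\|H\|_{\OP{osc}}$. This is false: such a strip has energy equal to the action difference $\mathfrak{a}(p_+)-\mathfrak{a}(p_-)$ of its asymptotic chords (cf.~Lemma~\ref{lma:varphienergy}), and nothing forces the action spectrum of $L\cap\phi(L)$ to lie in an interval of width $\|H\|_{\OP{osc}}$. Hence you cannot conclude $\partial^2=0$ on the full complex, nor rule out bubbling for arbitrary strips in the differential, from the hypothesis $\|H\|_{\OP{osc}}<\sigma_\omega(L)$ alone.

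The fix---Chekanov's, and precisely the mechanism this paper implements for Theorem~\ref{thm:main}---is to work with \emph{action-filtered} complexes $CF^{[m_-,m_+]}$ with window width $m_+-m_-<\sigma_\omega(L)$. On such a window every contributing strip has energy below $\sigma_\omega(L)$, so bubbling is excluded and $\partial^2=0$. The continuation maps between $CF(L,L;\epsilon f)$ (the Morse complex) and $CF^{[m_-,m_+]}(L,L;H_t)$ \emph{do} have energy controlled by $\|H\|_{\OP{osc}}$---that part of your sketch is correct---and one chooses the windows so that the round-trip $\Phi_{H,\epsilon f}\circ\Phi_{\epsilon f,H}$ is homotopic to the identity. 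This yields an injection $H_*(L;\Z_2)\hookrightarrow HF^{[m_-,m_+]}(L,L;H_t)$ and hence the bound on $|L\cap\phi(L)|$. Compare Propositions~\ref{prp:welldefcomplex} and~\ref{prp:invariance}, which carry out exactly this filtered argument in the cobordism setting.
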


\subsection{Previous results in the contact setting}
\label{sec:previous}

\subsubsection{Results related to Corollary \ref{cor:nondisp}}
Chekanov's refinement \cite{QuasiFunctions} of the aforementioned result \cite{Persistance} by Laudenbach--Sikorav establishes the persistence of Reeb chords under general contact isotopies of the zero-section of $J^1M$ -- a set of transformations that obviously is much larger than its subset consisting of the lifts of Hamiltonian isotopies of $T^*M$. 
This can be used to deduce Corollary \ref{cor:nondisp} in the special case when $\Lambda=0_M \subset J^1M$ and when the contact forms are all taken to be standard, i.e.~when
$$\alpha=\alpha_0=\alpha_{\OP{std}}.$$
Recall that in this case the Reeb chords between $\Lambda$ and its push-off are in bijective correspondence with intersection points of their images under the canonical projection $ J^1M \to T^*M.$ Our result can therefore be seen as a generalization of this result to general contact forms. (Recall that we still have to make the requirement that the contact forms coincide with $\alpha_{\OP{std}}$ outside of a compact subset.)

In the more general case when $\alpha=\alpha_0,$ and $\sigma(\alpha_0,\Lambda)=+\infty,$ but under the additional assumption that
\[ \phi^\R_{R_\alpha}(\Lambda) := \bigcup_{t\in\R} \phi^t_{R_\alpha}(\Lambda) \subset Y \]
is a closed submanifold, the conclusion of Corollary \ref{cor:nondisp} also follows from work of Eliashberg--Hofer--Salamon \cite[Theorem 2.5.4]{LagCont}.

There have also been results inside certain prequantization spaces. Consider the standard Legendrian $\R P^n \subset \R P^{2n+1}$, i.e.~the image of
\[ \mathfrak{Re}\C^{n+1} \cap S^{2n+1} \subset \left(S^{2n+1},\mathrm{ker} \sum_{i=1}^{2n+1}(x_i dy_i-y_idx_i)\right)\]
under the canonical projection, where the contact structure on $\R P^{2n+1}$ is induced by the standard contact structure on $S^{n+1}$. A result by Givental \cite{Givental} shows that there must exist a Reeb chord between $\R P^n$ and $\phi^1_{\alpha,H_t}(\R P^n)$ for any choice of contact form $\alpha$ and contact isotopy; also, see related results \cite{Borman} by Borman--Zapolsky. Note that this result cannot be deduced from Corollary \ref{cor:nondisp}, due to the presence of contractible periodic orbits and chords. We expect that the results can be recovered by our methods as well, after a more refined Floer theoretic invariance has been established.

\subsubsection{Results related to Theorem \ref{thm:main}}
There have also been previous results along the lines of Theorem \ref{thm:main}, i.e.~taking quantitative properties of the contact Hamiltonian into account.

Notably, in the case when $\alpha=\alpha_-$ is the $S^1$-invariant contact form on a prequantization space
\[S^1 \to (P,\alpha) \to (M,\omega),\]
and when $\Lambda=\Lambda_-$ is the lift of an \emph{embedded} Lagrangian submanifold in $(M,\omega)$, such results were obtained in \cite{Her} by Her generalizing previous work by Ono \cite{Ono}. For a result in contactizations of Liouville domains with the standard contact form we also refer to the more recent result \cite{Akaho2} by Akaho.

For a Legendrian submanifold of a general contact manifold (again under the assumption that $\alpha=\alpha_-$ and $\Lambda=\Lambda_-$), Theorem \ref{thm:main} can also be seen to follow from a result by Akaho \cite{Akaho}, again under the additional assumption that
\[ \phi^\R_{R_\alpha}(\Lambda) \subset Y\]
is a \emph{closed} submanifold satisfying some additional topological constraints. Note that the latter behavior is non-generic and imposes severe restrictions on the contact form.

Finally, we mention the result \cite[Remark 1.14]{Entov:Tetragons} due to Entov and Polterovich which is relevant here. They use completely different techniques to show that, under suitable assumptions, a Legendrian and its image under the Reeb flow (for a fixed contact form) admit a Reeb chord from one component to the other for all choices of contact forms.

\subsubsection{Results related to Corollary \ref{cor:JoshLisa}}

Sabloff and Traynor prove a similar result when the Legendrian contact homology DGAs have augmentations \cite{SabloffTraynor15}, which in turned inspired this corollary.
Their hypotheses are stronger as many Legendrian contact homology DGAs do not have augmentations.
But, on the other hand, they have an improved bound where their numerator is not just a chord of minimal length, but runs over a collection of (possibly longer) chords which represent certain canonical classes in the so-called linearized Legendrian contact homology (this is a chain complex associated to the Legendrian which is generated by its Reeb chords). They also consider general Lagrangian cobordisms in the symplectization, as opposed to just concordances.

\subsection{Overview of paper}
\label{sec:Overview}
In Section \ref{sec:basics} we review background definitions of Lagrangian cobordisms. In Section \ref{sec:action} we introduce a Language that will later allow us to consider different Hofer-type energies for a single pseudoholomorphic curve in a fixed symplectic cobordism; roughly speaking, instead of the symplectic area induced by the symplectic form $d\eta$ we will also consider the ``area'' induced by the (not necessarily symplectic) two-form $d(\varphi \eta)$ for different functions $\varphi$ subject to certain conditions. These Hofer-type energies are important for studying the compactness properties of the moduli spaces, but later we will also utilize this language in order to formulate a version of neck-stretching.

In Section \ref{sec:FloerHomology}, we introduce a Floer theory for Lagrangian cobordisms. The Floer strips with Hamiltonian perturbation terms are of the type usually considered in Lagrangian Floer homology. (However, some of the formulations might not be ``mainstream'' in that we absorb the parameters of the continuation maps into our $\overline{\partial}$-equation.) The main new ingredient here is that we use the previously defined two-form $d(\varphi \eta)$ to define new Hofer-type energies, and then we consider the action properties of the Floer complexes for these different choices. These Hofer-type energies allow us to formulate refined conditions for when compactness holds, that is, when Floer strips have no pieces which escape into to the concave end of the symplectic cobordism. (Ordinary symplectic energy does not capture the pieces of a curve that disappear into the concave end since the symplectic area of these pieces vanishes.) In addition, we formulate condition that ensure a weak form of ``filtered invariance'' of our Floer complexes with respect to our Hofer-type energies, which later will be crucial; see Section \ref{sec:invariance}.

 The aforementioned compactness property is proven in Section \ref{sec:CompactnessProof}, which is somewhat technical Gromov non-bubbling result for strips satisfying a Cauchy-Riemann equation with a Hamiltonian perturbation term. While the result is not surprising, to our knowledge it does not follow from the compactness results which exist in the literature. The strategy is to show that noncompactness implies that a pseudoholomorphic bubble asymptotic to Reeb chords is forming in the concave end; the typical example is a pseudoholomorphic plane or half-disc asymptotic to a contractible orbit or chord. The asymptotic convergence to Reeb chords, together with Stokes' theorem, then enables us to extract some quantum $\hbar>0$ of $d\alpha$-area (i.e.~symplectic area in the contact planes) concentrated inside the concave end, for any sequence of curves in which parts disappear down into the concave end. Here it is crucial that, by Stokes' theorem, the constant $\hbar$ is expressed in terms of the length of the smallest contractible Reeb chord and orbit. 

Sections \ref{sec:usher} and \ref{sec:SplashNeck} discuss several methods to study pseudoholomorphic curves using Reeb chord actions: the version of Usher trick for contact Hamiltonians, as implemented by Shelukhin in \cite{HoferContactomorphism}, together with neck-stretching and ``splashing.'' The splashing construction in this context is analogous to the wrapping considered in \cite{SymplecticEilenbergSteenrod} by Cieliebak--Oancea in order to prove their Mayer--Vietoris long exact sequence for symplectic homology. Our version of neck-stretching is somewhat less technical, as it only consists of a consideration of different Hofer-type energies $d(\varphi\eta)$ for a given pseudoholomorphic curve. When combined with an action argument, this then enables us to exclude the curve from passing through a certain hypersurface. The advantage is that we never need to deform the complex structure by taking an SFT-type neck stretching limit in order to reach our conclusion. We anticipate other applications, as it provides a relatively easy way to decompose a Floer complexes into subcomplexes.

Up until this point, our constructions and definitions apply to arbitrary exact Lagrangian cobordisms between Legendrians. In Section \ref{sec:PushOff} we provide a technical push-off 
 $L_{\epsilon, N,s}$ 
of our Lagrangian cobordism 
 $L$ 
assuming it is a concordance. 
 
The main feature of this push-off is that, for $s=\sigma>0$,  the intersection points $L_{\epsilon,N,s} \cap L$ that are close to the level sets $\{r=-N\}$ and $\{r=N\}$ have a difference of action of magnitude roughly equal to $\sigma.$  This is later used in Proposition \ref{prp:refinedinvariance} to prove that the Floer complex for these two Lagrangians still has a mapping cone structure after turning on a Hamiltonian perturbation of oscillation just slightly less than $\sigma.$

 We prove the main theorem in Section \ref{sec:Proof}, putting all the previous sections together to compute various Floer differentials and chain maps for the Lagrangian concordance and its push-off, as well as for other related pairs of Lagrangians. We outline this proof below.

%
First we show in Section \ref{sec:FloerHomologyPush} that the push-off of the cobordism constructed in Section \ref{sec:PushOff} has a trivial Floer complex. We also show that the complex corresponds to the following Morse cohomology complex (i.e.~with a differential counting \emph{positive} gradient flow lines): Start with the Morse function on the trivial cobordism $\R \times \Lambda$ given as the canonical projection to the $\R$-factor. Add two canceling critical manifolds -- one at the top (some positive $\R$-coordinate) corresponding to the convex end of the concordance, and one at the bottom (some negative $\R$-coordinate) corresponding to the concave end of the concordance. Then perform a perturbation from the Morse-Bott to the Morse setting. It is important that the complex at the top and at the bottom are nontrivial; this is indeed the case, since they both compute the homology of $\Lambda$ (possibly with an index shift). In conclusion, the acyclicity of the total complex implies that the differential must be nontrivial, and moreover that there exists rigid gradient flow lines going from the bottom complex to the top complex.

We next ``turn on" a Hamiltonian perturbation $G_t$ in the middle of the concordance. Here $G_t$ is a compactly supported Hamiltonian on $\R \times Y$ constructed, using Section \ref{sec:usher}, out of the (non-compactly supported) cylindrical lift $e^rH_t$ of our contact Hamiltonian of interest. With some care, we ensure that the Lagrangian is cylindrical over the Legendrian $\phi^1_{\alpha,H_t}(\Lambda)$ in a small slice
$$\phi^1_{G_t}(L) \cap ([A+\delta/2,A+3\delta/2]\times Y)=[A+\delta/2,A+3\delta/2] \times \phi^1_{H_t}(\Lambda).$$ 
We splash and neck-stretch as in this cylindrical slice to create a ``barrier'' at $\{A+2\delta/2\} \times Y$ through which no Floer strip can pass. See Section \ref{sec:TurnSplash} for more details.

The total complex remains acyclic, as we show in Proposition \ref{prp:refinedinvariance} and Corollary \ref{cor:invariance} using a partial ``filtered invariance'' of our Floer complex. Here the $e^{A}\|H\|_{\OP{osc}} < \sigma(\alpha_-, \Lambda_-)$ assumption in Theorem \ref{thm:main} plays a significant role. The homological algebra of Section \ref{sec:TurnSplash}, based upon the Mayer-Vietoris type decomposition from Section \ref{sec:SplashNeck}, then shows that applying ``splashing'' to the slice $[A+\delta/2,A+3\delta/2] \times \phi^1_{H_t}(\Lambda)$ must create new generators. These generators are realized by intersections of the splashed Lagrangian with the original cylindrical concordance. Very roughly speaking, the complex on the top and on the bottom contain classes that must be killed in homology (either by \emph{being} the boundary of something or by \emph{having} a nontrivial boundary in the larger complex). Since they cannot kill each other after the splashing and neck-stretching (no strip can pass through the middle of the cobordism), they must now interact with the newly created generators inside $[A+\delta/2,A+3\delta/2]\times Y.$
 The latter intersection points correspond to Reeb chords between $\Lambda$ and $\phi^1_{H_t}(\Lambda)$ starting on either Legendrian, i.e.~analogously to the generators of Rabinowitz Floer homology. Their existence finally gives us the sought number of chords.

\subsection{Relation to existing techniques}
\label{sec:Comparison}
It has come to our attention that the authors in \cite{Positive}, when proving the existence of translated points, use techniques that are similar to some of ours: they use Shelukhin's contact version of Usher's trick when computing a certain oscillatory norm; they introduce several actions on the loop space and study continuation maps that relate chords; and they show that similar a priori bounds on energy can prevent certain bubbling in the sense of symplectic field theory. On the other hand, they work with the action functional used in the definition of Rabinowitz Floer homology, instead of the more classical setup of Floer homology used here.

It should be the case that Rabinowitz Floer homology for Lagrangian submanifolds as defined in \cite{Merry}, \cite{LagrangianRabinowitz} by Merry -- this is a Floer complex with differential counting gradient trajectories of the Rabinowitz action functional -- is a suitable framework also for studying the questions here. In the case when the obstruction contact form $\alpha_-$ and the contact form $\alpha$ used for counting the orbits are taken to coincide, our results also appear very naturally from this perspective, taking the standard invariance properties of this Floer theory into account; see Remark \ref{rmk:rabinowitz}. However, we would like to stress that the full result in the case when $\alpha_-$ and $\alpha$ are related by a symplectic cobordism would require a new form of invariance for Rabinowitz Floer homology, e.g.~one which also allows deformations of the contact form while working within some suitable action range. The invariance result here is proven by carefully controlling our continuation maps via ``splashing'' and ``neck-stretching,'' in order to produce our Mayer--Vietoris sequence \eqref{eq:MVS}. Finally, compared to \cite{Positive} our analysis of SFT bubbling also has more cases to consider due to the Lagrangian boundary condition, which makes the situation more involved.

In addition, we make the technical remark concerning the relations between the ``splashing'' and ``neck-stretching'' that we perform here and the related techniques from \cite{SymplecticEilenbergSteenrod} that aim at the same results. (For instance, they also obtain a Mayer--Vietoris sequence in a similar setting.) The approach taken here, however, is more basic since we never rely on the full SFT compactness theorem for excluding the existence of Floer strips, but rather attain this by mere means of action computations.

\section{Basic definitions}
\label{sec:basics}

\subsection{Symplectic geometry}
A {\bf symplectic manifold} $(X,\omega)$ is a smooth $2n$-dimensional manifold $X$ together with a closed non-degenerate two-form $\omega$. A symplectic manifold is called {\bf exact} given that the symplectic form exact, i.e.~$\omega=d\eta$ for some one-form $\eta.$ An $n$-dimensional submanifold $L \subset (X,\omega)$ is called {\bf Lagrangian} if $\omega|_{TL}$ vanishes and, given that $\omega=d\eta$ is exact with a choice of primitive $\eta,$ it is called {\bf exact Lagrangian} if $\eta|_{TL}$ is an exact one-form on $L$.

A (time-dependent) Hamiltonian $H \colon X \times [0,1] \to \R$, usually written as $H_t \colon X \to \R$ where $t \in [0,1]$, gives rise to the so-called Hamiltonian vector field $X_{H_t}$ via Hamilton's equations
\[ \omega(\cdot,X_{H_t})=dH_t(\cdot).\]
The corresponding Hamiltonian flow $\phi^t_{H_t} \colon (X,\omega) \to (X,\omega)$ with infinitesimal generator $X_{H_t}$ preserves the symplectic form.

For an exact symplectic manifold $(X,d\eta)$ with a choice of primitive $\eta$ of the symplectic form, recall the definition of the {\bf Liouville vector field} $\zeta$, which is determined uniquely by $\iota_\zeta d\eta=\eta$.

By a {\bf Liouville manifold} $(P,d\theta)$ we mean an open exact symplectic manifold satisfying the following properties. The Liouville vector field $\zeta$ is transverse and outward-pointing to the boundary of a smooth compact domain $\overline{P} \subset P,$ such that $\zeta$ moreover defines a complete (forward-time) flow in the subset $P \setminus \mathrm{int} \overline{P}.$ The compact domain $(\overline{P},d\theta)$ is called a {\bf Liouville domain}.

\subsection{Contact geometry}
Recall that a {\bf contact manifold} is a smooth $(2n+1)$-dimensional manifold $(Y,\xi)$ with a maximally non-degenerate hyperplane distribution $\xi \subset TY$ called the {\bf contact distribution}. A {\bf Legendrian submanifold} is a smooth $n$-dimensional submanifold $\Lambda \subset (Y,\xi)$ for which $T\Lambda \subset \xi$.

For us all contact manifolds will be assumed to have {\bf coorientable} contact distributions, which is equivalent to the existence of a {\bf contact form} $\alpha \in \Omega^1(Y)$ satisfying $\xi=\ker \alpha$. A choice of contact form determines the {\bf Reeb vector field} $R_\alpha$ on $Y$ via the equations
\[ \iota_{R_\alpha} d\alpha=0 \:\:\: \text{and} \:\:\: \iota_{R_\alpha}(\alpha)=1.\]
This vector field then gives rise to the {\bf Reeb flow} $\phi^t_{R_\alpha} \colon (Y,\alpha)\to (Y,\alpha)$, which can be seen to preserve $\alpha$.

We assume Legendrian submanifolds are closed unless stated otherwise. We also assume that the contact manifold $(Y,\xi)$ is closed, or that it is the {\bf contactization} of a Liouville manifold $(P,d\theta)$, i.e.
\begin{gather*}
(P \times \R, \ker \alpha_{\OP{std}}),\:\: \alpha_{\OP{std}}:=dz+\theta,
\end{gather*}
where $z$ denotes the coordinate of the $\R$-factor. Observe that the canonical contact form $\alpha_{\OP{std}}$ on the contactization induces the Reeb vector field $R_{\alpha_{\OP{std}}}=\partial_z$. When considering a more general contact form $\alpha$ on a contactization, we will always assume that $\alpha=\alpha_{\OP{std}}$ holds outside of a compact subset.

Periodic solutions to the Reeb vector field are called {\bf periodic Reeb orbits}. 
Each periodic Reeb orbit $\gamma$ has a {\bf length} given by \[ \ell(\gamma):=\int_\gamma \alpha >0.\]

A non-trivial integral curve of $R_\alpha$ having end-points on an embedded Legendrian submanifold $\Lambda$ is called a {\bf Reeb chord on $\Lambda$}, and its length is defined in the same way as that of a periodic Reeb orbit. 
We denote $\mathcal{Q}_\alpha(\Lambda)$ to be the set of Reeb chords on $\Lambda$ and Reeb periodic orbits, and $\mathcal{Q}^0_\alpha(\Lambda) \subset \mathcal{Q}_\alpha(\Lambda)$ to be the subset consisting of those Reeb chords and periodic orbits which define trivial elements in $\pi_1(Y,\Lambda)$.

\begin{ex}
The jet space $J^1M=T^*M\times \R$ has a canonical contact form $\alpha_{\OP{std}}:=dz+\theta_M$, where $\theta_M=p\,dq$ is the so-called Liouville form on $T^*M$ and $z$ is the standard coordinate on the $\R$-factor. This is also an example of a contactization of a Liouville manifold. The zero-section $0_M \subset J^1M$ is a Legendrian submanifold. Observe that $R_{\alpha_{\OP{std}}}=\partial_z$ and, hence, that 
$\mathcal{Q}_{\alpha_{\OP{std}}}(0_M)=\emptyset$.
\end{ex}
\begin{rmk} While two Legendrian submanifolds generically are \emph{disjoint}, there are typically (a discrete space of) Reeb chords with endpoints on them. In the case of $(J^1M,\alpha_{\OP{std}})$, two Reeb chords between $\Lambda_0, \Lambda_1 \subset J^1M$ correspond bijectively to intersection points of their images under the canonical projection $\pi\colon J^1M \to T^*M$. Observe that $\pi(\Lambda_i) \subset (T^*M,d\theta_M)$, $i=0,1$, are exact Lagrangian immersions, i.e.~immersions for which the pull-back of the one-form $\theta_M$ is exact.
\end{rmk}
The above remark relates the phenomenon of intersection points in symplectic geometry with that of Reeb chords in contact geometry. The following passage from a contact manifold to a cylindrical symplectic manifold (and from a Legendrian submanifold to a cylindrical Lagrangian submanifold) will also provide such a correspondence. The exact symplectic manifold
\[ (\R \times Y,d(e^r \alpha)) \]
associated to a contact manifold is called the {\bf symplectization of $(Y,\alpha)$}, where $r$ denotes the standard coordinate on the $\R$-factor. Observe that the cylinder $\R \times \Lambda \subset (\R \times Y,d(e^r \alpha))$ is (exact) Lagrangian if and only if $\Lambda \subset (Y,\alpha)$ is Legendrian.

\subsection{Contact Hamiltonians}
\label{ssec:ContactHamiltonians}

A {\bf contact Hamiltonian} is a smooth function
\[H \colon Y \times [0,1] \to \R,\]
which usually will be considered as a family of functions $H_t \colon Y \to \R$, $t \in [0,1]$. The Hamiltonian $e^r H_t$ on the symplectization can be seen to have a Hamiltonian flow of the form
\[\phi^t_{e^r H_t}(r,x)=(r +\tau^t_{\alpha, H_t}(x),\phi^t_{\alpha, H_t}(x)), \:\: (r,x) \in \R \times Y.\]
The translation $\tau^t_{\alpha, H_t}(x)$ is determined by the so-called conformal factor via the formula
$$(\phi^t_{\alpha,H_t})^*\alpha=e^{-\tau^t_{\alpha,H_t}}\alpha.$$
The contact Hamiltonian can be recovered from the formula
$$ H_t(\phi^t_{\alpha,H_t}(x)) = \alpha\left(\frac{d}{dt}\phi^t_{\alpha,H_t}(x)\right).$$
Observe that $\tau^t_{\alpha,H_t} \colon Y \to \R$ is indefinite for each $t \in [0,1]$, and that it vanishes identically if and only if the contactomorphism preserves the contact form; such contactomorphisms are usually called {\em strict contactomorphisms}. We have
\[ \phi^{g(t)}_{R_\alpha}=\phi^t_{\alpha,\dot{g}(t)},\]
where $\dot{g}(t) \colon Y \to \R$ is seen as family of constant functions, and where the left hand side is the Reeb flow.

A standard result implies that each one-parameter family of contactomorphisms starting at $\id_Y$ is induced by a contact Hamiltonian (see e.g.~\cite{Geiges}). It is clear that the contact Hamiltonian depends on the choice of contact form.

 We say that a (contact) Hamiltonian $G_t$ is {\bf indefinite} if, for each $t \in [0,1],$ its image is \emph{not} contained inside either of the two subsets $(-\infty,0),(0,+\infty) \subset \R.$

We will need the following basic fact.
\begin{lma} \label{lma:factor} A contact isotopy $\phi^t_{\alpha,H_t}$ can be uniquely factorized as
\[ \phi^t_{\alpha,H_t} =\phi^t_{\alpha,c_t}\circ \phi^t_{\alpha,G_t},\]
for the contact Hamiltonians $c_t,G_t \colon Y \to \R$ given by
\begin{eqnarray*}
& & c_t := (\max_Y H_t + \min_Y H_t)/2, \\
& & G_t := H_t \circ \phi^t_{\alpha,c_t} - c_t.
\end{eqnarray*}
In particular, $c_t$ is a family of constant functions and $\phi^t_{\alpha,c_t}=\phi^{\int_0^t c_s ds}_{R_\alpha}$, while $G_t$ is indefinite for each $t \in [0,1]$ and satisfies $\|G_t\|_{\OP{osc}}=\|H_t\|_{\OP{osc}}.$
\end{lma}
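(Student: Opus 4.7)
The plan is to verify the claimed factorization by a direct computation of the contact Hamiltonian associated to the composition $\psi_t := \phi^t_{\alpha,c_t}\circ \phi^t_{\alpha,G_t}$, and then to read off the auxiliary properties from the explicit definitions of $c_t$ and $G_t$. The main ingredient is the observation that since $c_t$ is a \emph{spatially} constant family, its contact flow is simply a reparametrization of the Reeb flow, and in particular is a strict contactomorphism with vanishing conformal factor $\tau^t_{\alpha,c_t}\equiv 0$. This will make the general composition formula for contact Hamiltonians collapse into the desired identity.

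Concretely, I would first recall that for any contact Hamiltonian $K$, the contact vector field $X_K$ satisfies $\alpha(X_K)=K$ and $\iota_{X_K}d\alpha=dK(R_\alpha)\alpha-dK$; when $K=c_t$ is spatially constant this reduces to $X_{c_t}=c_t R_\alpha$, hence $\phi^t_{\alpha,c_t}=\phi^{\int_0^t c_s\,ds}_{R_\alpha}$ and $(\phi^t_{\alpha,c_t})^*\alpha=\alpha$. Then I would differentiate $\psi_t(x)$ in $t$, apply $\alpha$, and use the preceding observation:
\begin{align*}
\alpha\!\left(\tfrac{d}{dt}\psi_t(x)\right) &= \alpha\!\left(X_{c_t}(\psi_t(x))\right)+\alpha\!\left((D\phi^t_{\alpha,c_t})\tfrac{d}{dt}\phi^t_{\alpha,G_t}(x)\right)\\
&= c_t+G_t(\phi^t_{\alpha,G_t}(x)),
\end{align*}
where the last equality uses $(\phi^t_{\alpha,c_t})^*\alpha=\alpha$ in the second summand. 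Substituting the definition $G_t=H_t\circ\phi^t_{\alpha,c_t}-c_t$, the right-hand side telescopes to $H_t(\psi_t(x))$. Since $\psi_0=\id_Y$, this identifies $\psi_t$ as the contact isotopy generated by $H_t$, proving the factorization.

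The remaining assertions follow immediately. Because $\phi^t_{\alpha,c_t}$ is a diffeomorphism, the spatial $\max$ and $\min$ of $H_t\circ\phi^t_{\alpha,c_t}$ coincide with those of $H_t$, so
\[
\max_Y G_t=\tfrac{1}{2}(\max_Y H_t-\min_Y H_t),\qquad \min_Y G_t=-\tfrac{1}{2}(\max_Y H_t-\min_Y H_t),
\]
which shows that $G_t$ is indefinite and that $\|G_t\|_{\OP{osc}}=\|H_t\|_{\OP{osc}}$. For uniqueness, any factorization $\phi^t_{\alpha,H_t}=\phi^t_{\alpha,c'_t}\circ\phi^t_{\alpha,G'_t}$ with $c'_t$ spatially constant forces $G'_t=H_t\circ\phi^t_{\alpha,c'_t}-c'_t$ by the same composition computation, and requiring $G'_t$ to be balanced in the sense $\max_Y G'_t+\min_Y G'_t=0$ (the explicit form of indefiniteness produced by the construction) pins down $c'_t=(\max_Y H_t+\min_Y H_t)/2$.

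There is no genuine obstacle in this proof; the only subtle point is correctly applying the chain rule to the composition $\psi_t$ and making sure to use the fact that $\phi^t_{\alpha,c_t}$ preserves $\alpha$ on the nose (not merely up to conformal rescaling), which is what makes the second summand in the displayed computation equal to $G_t$ evaluated at the intermediate point rather than a rescaled version. Once this is in place, the rest is bookkeeping.
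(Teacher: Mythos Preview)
Your proof is correct and follows essentially the same route as the paper: both verify the factorization by computing $\alpha\bigl(\tfrac{d}{dt}(\phi^t_{\alpha,c_t}\circ\phi^t_{\alpha,G_t})\bigr)$, using that the Reeb flow is a strict contactomorphism to simplify the second summand, and then substituting the definition of $G_t$. Your write-up is in fact slightly more detailed than the paper's, since you spell out the explicit $\max$/$\min$ of $G_t$ and address the uniqueness clause, which the paper's proof leaves implicit.
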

\begin{proof}
The claim that $G_t$ is indefinite, as well as the equivalence between the oscillatory norms, is immediate by construction.

The equality $\phi^t_{\alpha,H_t} =\phi^t_{\alpha,c_t}\circ \phi^t_{\alpha,G_t}$ of the two flows can be seen by the standard computation
\begin{eqnarray*}
\lefteqn{\alpha\left(\frac{d}{dt}(\phi^t_{\alpha,c_t}\circ \phi^t_{\alpha,G_t})\right)= } \\
& = & \alpha(c_t R_\alpha(\phi^t_{\alpha,c_t}\circ \phi^t_{\alpha,G_t}))+\alpha\left(D\phi^t_{\alpha,c_t} \left(\frac{d}{dt}\phi^t_{\alpha,G_t}\right) \right) \\
& = & c_t+(\phi^t_{\alpha,c_t})^*\alpha\left(\frac{d}{dt}(\phi^t_{\alpha,G_t})\right) \\
& = & c_t+G_t \circ \phi^t_{\alpha,G_t} \\
& = & H_t \circ (\phi^t_{\alpha,c_t}\circ \phi^t_{\alpha,G_t}),
\end{eqnarray*}
where the fourth equality follows since $(\phi^t_{\alpha,c_t})^*\alpha=\alpha,$ and where the last equality holds by the construction of $G_t=H_t \circ \phi^t_{\alpha,c_t} - c_t.$
\end{proof}

\subsection{Symplectic cobordisms}
\label{sec:SymplecticCobordisms}
Consider a compact exact symplectic manifold $(\overline{X},d\eta)$ with contact boundary $\partial \overline{X}=Y_- \sqcup Y_+$, where $\eta$ restricts to the contact form $\alpha_\pm$ on $Y_\pm$, and where the Liouville vector field determined by $\eta$ is transverse to the boundary, inwards-pointing along $Y_-$, and outwards-pointing along $Y_+$. We will call this a {\bf compact exact symplectic cobordism from $(Y_-,\alpha_-)$ to $(Y_+,\alpha_+)$}. A compact exact symplectic cobordism can be completed by adjoining half symplectizations of the form
$$((-\infty,0] \times Y_-,d(e^r\alpha_-)) \:\:\: \text{and} \:\:\: ([0,+\infty) \times Y_+,d(e^r\alpha_+)),$$
given that we use appropriate coordinates near $\partial \overline{X}$ (see e.g.~the standard symplectic neighborhood theorem in \cite{SympTop}).
\begin{dfn}
The exact symplectic manifold $(X,d\eta)$ together with the choice of embedding $\overline{X} \subset X$, is called a {\bf (complete) exact symplectic cobordism}. The non-compact subsets $(-\infty,0] \times Y_-$ and $[0,+\infty) \times Y_+ \subset X$ are called its {\bf concave} and {\bf convex} cylindrical ends, respectively.
\end{dfn}
Note that the identifications of these cylindrical ends are part of the data of a complete exact symplectic cobordism.
\begin{ex}
\label{ex:cob}
For any two smooth functions $f_\pm \colon Y \to \R$ satisfying $f_-(y) < f_+(y),$ the subset
$$\{ (r,y); \: f_-(y) \le r \le f_+(y) \} \subset (\R \times Y,d(e^r\alpha))$$
of the symplectization is a compact exact symplectic cobordism from $(Y,e^{f_-}\alpha)$ to $(Y,e^{f_+}\alpha).$ Note that its completion again is symplectomorphic to the symplectization $(\R \times Y,d(e^r\alpha)),$ but that the canonical cylindrical structure provided by this symplectization differs from the ones induced by the data of our symplectic cobordism.
\end{ex}
We will also allow the non-compact case $(\overline{X},d\eta)=([a,b] \times P \times \R,d(e^r\alpha_{\OP{std}}))$ when the contact manifold is a contactization of a Liouville manifold.

Let $(X,d\eta)$ be the completion of $\overline{X} \subset (X,d\eta)$ as above. Observe that $(X,d\eta)$ equivalently can be obtained as the completion of the domain
$$\overline{X}_{T_-,T_+}:= [T_-,0) \times Y_- \:\: \sqcup \:\: \overline{X} \:\: \sqcup \:\: (0,T_+] \times Y_+ \subset (X,d\eta)$$
with smooth contact boundary, given any choices of $T_- \le 0 \le T_+$. The latter domain is a compact exact symplectic cobordism from $(Y_-,e^{T_-}\alpha_-)$ to $(Y_+,e^{T_+}\alpha_+)$.

\subsection{Lagrangian cobordisms and concordances}
\label{sec:LagrangianCobordismsAndConcordances}
Here we develop the notion of both a compact and as well as a complete (typically non-compact) Lagrangian cobordism. First, by a {\bf (compact) Lagrangian cobordism} $\overline{L} \subset (\overline{X},d\eta)$ from the Legendrian submanifold $\Lambda^- \subset (Y_-,\alpha_-)$ to $\Lambda^+ \subset (Y_+,\alpha_+)$ we mean a Lagrangian embedding $\overline{L} \hookrightarrow (\overline{X},d\eta)$ for which the following holds.
\begin{itemize}
\item The boundary satisfies $\partial \overline{L} \subset Y_- \sqcup Y_+ = \partial\overline{X}$, where moreover
\begin{itemize}
\item $\overline{L} \cap Y_\pm = \Lambda^\pm,$ and
\item $\overline{L}$ is invariant under the Liouville flow of $\eta$ near the boundary,
\end{itemize}
are satisfied.
\item The one-form $\eta|_{T\overline{L}}$ has a primitive which is globally constant when restricted to either of $\Lambda^-,\Lambda^+ \subset \partial\overline{L}$. (This condition is empty whenever both of $\Lambda^\pm$ are connected.)
\end{itemize}
In the above situation we will also say that $\Lambda^- \subset (Y,\alpha_-)$ is {\bf Lagrangian cobordant} to $\Lambda^+ \subset (Y_+,\alpha_+)$. A Lagrangian cobordism $\overline{L} \subset (\overline{X},d\eta)$ can be completed to a properly embedded Lagrangian $L \subset (X,d\eta)$ inside the completion of the symplectic cobordism, by adjoining the non-compact cylindrical ends
\begin{eqnarray*}
&& (-\infty,0] \times \Lambda_- \subset ((-\infty,0] \times Y_-,d(e^r\alpha_-)),\\
&& [0,+\infty) \times \Lambda_+ \subset ([0,+\infty) \times Y_+,d(e^r\alpha_+)).
\end{eqnarray*}
If $\overline{L}$ is an exact Lagrangian, then so is the resulting submanifold $L.$ 
\begin{dfn}
\label{dfn:LagCob}
An exact Lagrangian submanifold $L \subset (X,d\eta)$ is called a {\bf (complete) exact Lagrangian cobordism} if there exists $T_- \le 0 \le T_+$ such that $L$ is obtained by completing a compact exact Lagrangian cobordism $\overline{L} \subset (\overline{X}_{T_-,T_+},d\eta)$ from $\Lambda_- \subset (Y_-,e^{T_-}\alpha_-)$ to $\Lambda_+ \subset (Y_+,e^{T_+}\alpha_+)$. The Legendrian submanifolds $\Lambda_\pm \subset (Y_\pm,\alpha_\pm)$ will be called the {\bf $\pm$-ends} of $L$.
\end{dfn}

 Throughout this article, denote $[0,1]$ by $I.$ In the case when $\overline{L}$ is diffeomorphic to $I \times \Lambda,$ we call both $\overline{L}$ and $L$ a {\bf Lagrangian concordance}, and we say that $\Lambda^- \subset (Y,\alpha_-)$ is {\bf Lagrangian concordant} to $\Lambda^+ \subset (Y,\alpha_+)$.

\begin{ex}
\label{ex:conc}
Inside the symplectic cobordism $\{ f_-(y) \le r \le f_+(y) \} \subset (\R \times Y,d(e^r\alpha))$ from Example \ref{ex:cob}, any Lagrangian cylinder $\R \times \Lambda$ intersected with this domain is a Lagrangian concordance.
\end{ex}

With the above definition, there is a Lagrangian cobordism from $\Lambda \subset (Y,\alpha)$ to $\Lambda \subset (Y,e^s \alpha)$ if and only if $s>0.$ In fact, this follows from the basic fact that no exact symplectic cobordism from $(Y,\alpha)$ to $(Y,\alpha)$ exists. (E.g.~such a symplectic cobordism can be used to construct a closed exact symplectic manifold, which cannot exist by Stokes' theorem). However, by convention, we will also prescribe that \emph{any Legendrian submanifold $\Lambda \subset (Y,\alpha)$ is Lagrangian concordant to itself}. It was shown in \cite{LagConc} that a Legendrian isotopy $\Lambda_t \hookrightarrow (Y,\alpha)$, $i \in [0,1]$, (i.e.~an isotopy through Legendrian submanifolds) induces a Lagrangian concordance from $(\Lambda_0,\alpha)$ to $(\Lambda_1,e^C\alpha)$ for some constant $C\ge 0$.

\subsection{Primitives and action of Hamiltonian chords}
\label{sec:action}

Assume that we are given an \emph{ordered} pair of exact Lagrangian cobordisms $L_0,L_1 \subset (X,\eta)$. 

In order to define a Hofer-type symplectic energy and a Hofer-type Floer energy for the pseudoholomorphic strips in Section \ref{sec:Moduli} we will need to specify a one-form on $X$ which is exact when pulled back to $L_i$, $i=0,1.$ First, note that $\eta$ pulls back to an exact one-form on $L_i$ by assumption. It will later be important to exploit the fact that the one-form $\varphi\eta$ pulls back to an exact one-form also for a large class of continuous piecewise smooth functions $\varphi \colon X \to \R_{> 0}$. 

The above energies are important when defining the Floer chain complex for a pair of exact Lagrangian cobordisms. In Section \ref{sec:FloerHomology} we introduce the Floer complex in this setting, associated to a pair of Lagrangian submanifolds together with a compactly supported time-dependent Hamiltonian $G_t \colon X \to \R.$

We also need to study the so-called Floer ``continuation maps,'' between Floer complexes, which provide a morphism from the Floer theory of one set-up to the Floer theory of another. For this reason, we need to generalize our set-up from $G_t,$ a time-dependent Hamiltonian on $(X,d\eta)$ as above, to a one-parameter family
$$G_{s,t} \colon X \to \R, \:\: (s,t) \in \R \times [0,1],$$
of compactly supported time-dependent Hamiltonians parametrized by $s \in \R.$

We are now ready to define the class of functions which will be used for defining the Hofer-type energies associated to the triple $(L_0,L_1,G_{s,t})$.

\begin{dfn}
\label{dfn:varphi}
Let $\mathcal{C}(L_0,L_1,G_{s,t})$ denote the class of continuous piecewise smooth functions $\varphi \colon X \to \R_{> 0}$ satisfying the following properties.
\begin{enumerate}
\item $\varphi|_{\overline{X}}$ is constant;
\item On the cylindrical ends the function $\varphi|_{X \setminus \overline{X}}$ depends only on $r$, and it satisfies
$$\varphi'(r)+ \varphi(r) \ge 0$$ wherever it is differentiable (this ensures that $d(\varphi \eta)$ is non-negative on any $J$-complex two-planes whenever $J$ is cylindrical);
\item $\varphi'(r) \equiv 0$ in some (possibly empty) neighborhood of the (possibly empty) subset
$$\left\{ \begin{array}{l|l} \{r\} \times Y_\pm & (L_0 \cup L_1) \cap (\{r\} \times Y_\pm) \subset (Y_\pm ,\alpha_\pm)\\
& \text{is \underline{not} an embedded Legendrian link} \end{array}\right\} \subset X$$
of the cylindrical ends;
\item On the concave end we either have $\varphi(r)=e^{-r-r_\varphi}$ for some $r_\varphi \ge 0$ or $\varphi(r) \equiv 1$ for all $r \ll 0$ sufficiently small, in which case we set $r_\varphi:=+\infty$; and
\item In the subset
$$\{ x \in X \: | \: G_{s,t}(x) \neq 0 \:\text{for some}\: (s,t) \in \R \times [0,1]\}$$
we have $\varphi(r) \equiv 1$.
\end{enumerate}
 We call such a function $\varphi \in \mathcal{C}(L_0,L_1,G_{s,t})$ a {\bf{Hofer function}}.
\end{dfn}
 Part (3) allows us to have $\varphi' \neq 0$ in any subset $I \times Y_\pm$ inside which both $L_0$ and $L_1$ are traces of the Reeb flow applied to a Legendrian submanifold. This will later be used to ``stretch the neck'' of the symplectic form in such a region. (See Section \ref{sec:SplashNeck}.) Part (5) above is needed in order to prove the energy estimates given by Lemmas \ref{lma:varphienergy} and \ref{lma:action}. Note that, in particular, the constant function $1 \in \mathcal{C}(L_0,L_1,G_{s,t})$ is always contained in the above subset.

The following basic computation will be used multiple times, so we formulate it as a lemma.

\begin{lma}
\label{lma:ActionComp}
 Suppose $L \subset [r_0,r_1] \times Y$ is a Lagrangian submanifold intersecting each $(\{r\} \times Y,\alpha)$ $r \in [r_0,r_1],$ transversely in a Legendrian submanifold.
\begin{enumerate}
\item The submanifold $L$ is of the form $L=\phi^1_{\sigma(r)}([r_0,r_1] \times \Lambda)$ for some fixed Legendrian $\Lambda \subset (Y,\alpha)$ and for some smooth autonomous Hamiltonian $\sigma \colon [r_0,r_1] \to \R$ with Hamiltonian vector field $X_\sigma(r)=e^{-r}\sigma'(r)R_\alpha.$ 
\item The one-form $\varphi(r)e^r\alpha$ pulled back to $L$ has a primitive
$$ f(r)=\int_{r_0}^r \varphi(r)e^r\frac{d}{dr}(e^{-r}\sigma'(r))dr=\int_{r_0}^r \varphi(r)(\sigma''(r)-\sigma'(r))dr $$
on $L \subset [r_0,r_1] \times Y$ depending only on the $r$-coordinate. In particular, if $\varphi(r) \equiv 1,$ then one can take $f(r)=\sigma'(r)-\sigma(r)$ as a primitive.
\end{enumerate}
\end{lma}
\begin{proof}
(1): The Legendrian condition of the slices implies that $e^r\alpha$ pulled back to $L$ is of the form $g(r)dr$ where $g(r_0)=0$ without loss of generality. It can be seen that $\Lambda:=L \cap (\{r_0\} \times Y ),$ and the Hamiltonian $\sigma(r)$ satisfying the ODE $\sigma'(r)-\sigma(r)=g(r),$ $\sigma(0)=0,$ are as needed. To that end, we use e.g.~the Weinstein Lagrangian neighborhood theorem together with the fact that a graphical Lagrangian inside the cotangent bundle is determined uniquely by its potential function.

(2): This calculation is straightforward.
\end{proof}

\begin{lma}
\label{lma:crucial}
The continuous and piecewise smooth one-form $\varphi\eta$ for $\varphi \in \mathcal{C}(L_0,L_1,G_{s,t})$ is exact when pulled back to $L_i,$ $i=0,1.$ The primitive on $L_i$ is moreover locally constant when restricted to any slice $\{r = r_0\}$ contained inside the subset
$$V:=\left\{ \begin{array}{l|l} \{r\} \times Y_\pm & L_i \cap (\{r\} \times Y_\pm) \subset (Y_\pm ,\alpha_\pm)\\
& \text{is a Legendrian submanifold} \end{array}\right\} \subset X$$
of the cylindrical ends. (In regions where $L_i$ is cylindrical the pullback of $\varphi\eta$ obviously vanishes.)
\end{lma}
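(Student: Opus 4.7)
My plan is to construct a continuous, piecewise smooth primitive $F \colon L_i \to \R$ of $\varphi \eta|_{L_i}$ by combining the exactness of $\eta|_{\overline{L}_i}$ on the underlying compact Lagrangian cobordism with a vanishing result on the strictly cylindrical ends. The key preliminary observation is that on any strictly cylindrical portion $\R \times \Lambda_i^\pm$ of $L_i$, the pullback $\eta|_{L_i}$ vanishes identically. Indeed, since $\eta = e^r\alpha_\pm$ and $TL_i = \R\partial_r \oplus T\Lambda_i^\pm$ there, we have $\alpha_\pm(\partial_r) = 0$ (because a contact form on $Y_\pm$ carries no $dr$-component) and $\alpha_\pm|_{T\Lambda_i^\pm} = 0$ (the Legendrian condition), whence $\eta|_{TL_i} = 0$, and consequently $\varphi \eta|_{L_i} \equiv 0$ on such regions regardless of the value of $\varphi$.

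By the definition of a complete exact Lagrangian cobordism, I may choose $T_\pm$ such that $L_i$ is strictly cylindrical outside $\overline{X}_{T_-,T_+}$, with $\overline{L}_i := L_i \cap \overline{X}_{T_-,T_+}$ a compact exact Lagrangian cobordism. Then $\eta|_{\overline{L}_i} = df_i$ for some smooth primitive $f_i$ that is globally constant on each end $\Lambda_i^\pm$, taking values $C_i^\pm$. On the region where $\varphi \equiv c$ is constant — which contains $\overline{X}$ by property~(1) of Definition~\ref{dfn:varphi} — one has $\varphi \eta|_{L_i} = d(c f_i)$ directly. On any transitional piece of $\overline{L}_i$ lying inside $[T_-,0] \times Y_-$ or $[0,T_+] \times Y_+$ where $L_i$ is not yet strictly cylindrical, the slice of $L_i$ fails to be Legendrian and so $L_0 \cup L_1$ fails to be an embedded Legendrian link; property~(3) of Definition~\ref{dfn:varphi} then forces $\varphi'(r) \equiv 0$ in a neighborhood, so $\varphi$ is locally constant in $r$ there, and multiplying $f_i$ by this local constant yields a primitive of $\varphi \eta|_{L_i}$ on each transitional component.

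Gluing these primitives and extending by the constants $cC_i^\pm$ on the strictly cylindrical ends (where $\varphi \eta|_{L_i} \equiv 0$) yields a single-valued, continuous, piecewise smooth $F$ on $L_i$. The local constancy on $V$ then follows automatically: the subset $V$ consists of $r$-slices on the cylindrical ends for which $L_i$ intersects in a Legendrian, which by the structure of a complete exact Lagrangian cobordism coincides with the strictly cylindrical portion of $L_i$; there $\varphi \eta|_{L_i} = 0$, so $F$ is locally constant. The main obstacle I anticipate is the careful bookkeeping of the continuity of $F$ across the interfaces of the piecewise decomposition — in particular at $\{T_\pm\} \times \Lambda_i^\pm$ and at $\{0\} \times \Lambda_i^\pm$ — which reduces to checking that the globally constant values of $f_i$ on $\Lambda_i^\pm$ propagate consistently through the transitional components (where $\varphi$ is constant by property~(3)) and match the constants chosen on the strictly cylindrical pieces of $L_i$.
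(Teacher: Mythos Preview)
Your proof is correct and follows essentially the same approach as the paper: decompose $L_i$ into the region where the pullback of $\varphi\eta$ vanishes (the cylindrical/Legendrian-slice portion) and the region where $\varphi$ is locally constant (forced by property~(3) of Definition~\ref{dfn:varphi}), construct primitives on each piece, and glue. Your treatment is slightly more explicit than the paper's about the choice of $T_\pm$, the continuity of the glued primitive, and the identification of $V$ with the strictly cylindrical portion of $L_i$, which the paper leaves implicit.
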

\begin{proof}
We establish the existence of primitives on each piece $L_i \cap V$ and $L_i \setminus V$ separately, and then proceed to show that these primitives can be combined to form a globally defined primitive.

 The pull-back to $L_i \cap V$ of $ \varphi\eta$ is exact by Part (2) of Lemma \ref{lma:ActionComp} with a primitive of the form $g(r)dr.$ 

The assumptions imply that $\varphi'(r)\equiv 0$ holds inside the level-sets of $r$ contained in $X \setminus V$. 
A primitive of the pull-back of $\varphi\eta$ to $L_i \cap (X \setminus V)$ can thus be taken to be a locally constant rescaling of the primitive of the pull-back of $\eta$ there (the latter primitives exist since $L_i$ are assumed to be exact).

 It now straightforward to combine the primitives defined in the different regions above to a globally defined primitive on $L_i.$
\end{proof}
The above lemma implies that a primitive of $\varphi\eta$ pulled back to $L_i$ is locally constant outside of a compact subset. The following lemma shows that it is possible to assume that this constant vanishes on both of the ends for one of the cobordisms in the pair.
\begin{lma}
\label{lma:equalaction}
Assume that the primitive of $\varphi\eta$ pulled back to $L_i$ which vanishes on the negative end and takes the value $C_i$ on the positive end, for $i=0,1.$ After a Hamiltonian isotopy of the union $\overline{L}_0 \cup \overline{L}_1 \subset (\overline{X},d\eta)$ of Lagrangian cobordisms supported inside the interior $(\overline{X} \setminus \partial \overline{X},d\eta)$, the primitives on the positive end can be assumed to be given by $C_i+C$, $i=0,1,$ for an arbitrary constant $C \in \R,$ while the primitives still vanish on the negative ends.
\end{lma}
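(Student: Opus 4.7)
The plan is to exhibit a Hamiltonian $H \colon \overline{X} \to \R$ that is locally constant on collar neighborhoods of each boundary component, but with distinct constants chosen so that the primitive of $\varphi\eta$ picks up the prescribed shift $C$ at the positive end. Let $\varphi_0>0$ denote the constant value of $\varphi$ on $\overline{X}$ guaranteed by condition (1) of Definition \ref{dfn:varphi}. I will choose $H$ so that $H \equiv 0$ on an open collar neighborhood $U_-$ of $Y_-$, $H \equiv -C/\varphi_0$ on an open collar neighborhood $U_+$ of $Y_+$, and smoothly interpolated across the middle of $\overline{X}$. Since $H$ is locally constant on $U_+ \cup U_-$, the Hamiltonian vector field $X_H$ vanishes there, so the Hamiltonian flow $\phi_H^t$ restricts to the identity on $U_+ \cup U_-$. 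In particular $\overline{L}_i^1 := \phi_H^1(\overline{L}_i)$ coincides with $\overline{L}_i$ near $\partial\overline{X}$, so it is still a compact exact Lagrangian cobordism with the same Legendrian ends $\Lambda_i^\pm$, and the isotopy moves points only inside the interior $\overline{X} \setminus \partial\overline{X}$.

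To compute the new primitive $g_i^1$ of $\varphi\eta|_{\overline{L}_i^1}$ I will appeal to the Cartan-formula identity
\[ (\phi_H^t)^*\eta - \eta = d\beta_t, \qquad \beta_t(x) := \int_0^t \bigl(\eta(X_H) - H\bigr)\circ \phi_H^s(x)\,ds. \]
Because $\varphi$ is constant on $\overline{X}$, it is $\phi_H^t$-invariant there and $d\varphi = 0$, so $(\phi_H^t)^*(\varphi\eta) = \varphi\eta + d(\varphi_0 \beta_t)$. Pulling back to $\overline{L}_i$ and integrating gives $g_i^1 \circ \phi_H^1 = g_i + \varphi_0 \beta_1 + c_i$ on $\overline{L}_i$ for an additive constant $c_i$ on each connected component. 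On $U_-$, where $X_H = 0$ and $H = 0$, one has $\beta_1 \equiv 0$; on $U_+$, where $X_H = 0$ and $H = -C/\varphi_0$, one has $\beta_1 \equiv C/\varphi_0$. Setting $c_i = 0$ on every component of $\overline{L}_i$ then forces $g_i^1|_{\Lambda_i^-} = 0$, and consequently $g_i^1|_{\Lambda_i^+} = C_i + \varphi_0 (C/\varphi_0) = C_i + C$, which is exactly what is claimed.

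The main interpretive subtlety is that $H$ itself is \emph{not} compactly supported in the open interior, since the value $-C/\varphi_0 \ne 0$ persists up to $Y_+$. This is essential: if we insisted that $H$ vanish near the whole boundary, then $\beta_1$ would also vanish on $\partial\overline{L}_i$ and no shift would be possible. The key point is that the \emph{isotopy} $\phi_H^t$ is nonetheless the identity on a neighborhood of $\partial\overline{X}$, because $X_H$ vanishes wherever $H$ is locally constant, and it is in this standard sense that the isotopy is supported inside the interior. This observation, combined with the commutation of the constant $\varphi$ with the pullback on $\overline{X}$, is precisely what makes the Cartan-type calculation deliver the desired simultaneous shift $C$ for both primitives $g_0^1$ and $g_1^1$.
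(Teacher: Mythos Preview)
Your proof is correct and follows essentially the same approach as the paper. The paper's proof simply names the Hamiltonian as $\sigma(r)$, a function of the collar coordinate near $Y_+$ that is constant near the boundary and vanishes away from it, whereas you spell out the Cartan-formula computation in detail; both arguments rest on the same observation that a Hamiltonian which is locally constant near $\partial\overline{X}$ generates an isotopy fixing a neighborhood of the boundary while shifting the primitive by the boundary value of $-H$.
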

\begin{proof}
The claim follows by considering a suitable Hamiltonian diffeomorphism
$\phi^1_\sigma \colon (\overline{X},d\eta) \to (\overline{X},d\eta),$ applied to $\overline{L}_0 \cup \overline{L}_1$, where the autonomous Hamiltonian $\sigma$ has support in some collar neighborhood $((-\epsilon,0] \times Y_+,d(e^r\alpha_+))$ of the boundary $\partial \overline{X} \cong \{0\} \times Y_+.$ Moreover, we may require that
\begin{itemize}
\item $\sigma=\sigma(r)$ only depends on the symplectisation coordinate,
\item $\sigma'(r)=0$ holds in some neighborhood of the boundary.
\end{itemize}
Note that the computation in Part (2) of Lemma \ref{lma:ActionComp} shows that $C=\sigma'(0)-\sigma(0)=-\sigma(0).$

\end{proof}
Fix $s = s_0.$
A Hamiltonian chord $p$ of $\phi^t_{G_{s_0,t}}$ from $L_0$ to $L_1$ is a path
\begin{gather*}
([0,1],\{0\},\{1\}) \to (X,L_0,L_1),\\
t \mapsto \phi^t_{G_{s_0,t}}(x),
\end{gather*}
with starting point given by $x \in L_0$ and endpoint given by $\phi^1_{G_{s_0,t}}(x) \in L_1$. Observe that intersections $L_0 \cap L_1$ are Hamiltonian chords from $L_0$ to $L_1$ for a constant Hamiltonian flow.

Take primitives $f_i \colon L_i \to \R$ and $f^\varphi_i \colon L_i \to \R$ of $\eta$ and $\varphi\eta$ pulled back to $L_i$, respectively, uniquely determined by the requirement that they vanish for all $r \ll 0$ sufficiently small on the concave end.

To a Hamiltonian chord $p$ of $G_{s_0,t}$ starting on $x \in L_0$ and ending on $\phi^1_{G_{s_0,t}}(x) \in L_1$, we associate its so-called {\bf action} defined by
\begin{eqnarray}
\label{eq:a_varphi}
\mathfrak{a}_\varphi(p)& :=& f^\varphi_0(x)-f^\varphi_1(\phi^1_{G_{s_0,t}}(x))+\int_0^1(\eta(\dot{\phi}^t_{G_{s_0,t}}(x))-G_{s_0,t}(\phi^t_{G_{s_0,t}}(x)))dt.
\end{eqnarray}
\begin{rmk}
\label{rmk:action}
The following two claims are straightforward.
\begin{enumerate}
\item In the case when $G_{s_0,t} \equiv 0$, and hence $p(t) \equiv x \in L_0 \cap L_1$ is an intersection point, these actions specialize to the potential differences
\begin{eqnarray*}
\mathfrak{a}_\varphi(p)&:=&f^\varphi_0(x)-f^\varphi_1(x)
\end{eqnarray*}
at the intersection. When $\varphi \equiv 1$ we also write
$$\mathfrak{a}(p):=\mathfrak{a}_1(p)$$
for the induced action.
\item If $p \in L_0 \cap L_1$ is an intersection point contained outside of the support of $G_{s_0,t}$ for all $t \in [0,1],$ then it can be considered either as a $0$-Hamiltonian or a $G_{s_0,t}$-Hamiltonian chord. We claim that, for any fixed choice of
$$\varphi \in \mathcal{C}(L_0,L_1,G_{s,t}) \subset \mathcal{C}(L_0,L_1,0),$$
the action of this intersection point obtained in the two different cases coincide.
\end{enumerate}
\end{rmk}

\section{The Floer homology for a pair of Lagrangian cobordisms}
\label{sec:FloerHomology}

In this section we introduce the Floer chain complex whose differential is defined with a Hamiltonian perturbation term, and the so-called continuation chain map between such Floer chain complexes needed for proving invariance. Floer homology was originally introduced by Floer in \cite{Floer:MorseTheoryLagrangian} for a pair of compact Lagrangian submanifolds, and has since then seen a lot of development. We refer to \cite{OhSympTop1} and \cite{OhSympTop2} for a thorough and modern treatment, including the incorporation of the Hamiltonian term in Section 14. In this section we describe a version of Floer homology in the present context, i.e.~for a pair of non-compact exact Lagrangian cobordisms inside a symplectic cobordism with a concave end. We also refer to \cite{Cthulhu} for a previous construction of Floer homology in a similar setting.

In the following we let $L_0,L_1 \subset (X,d\eta)$ be complete exact Lagrangian cobordisms of a complete exact symplectic cobordism. We denote by $\Lambda_i^\pm \subset (Y_\pm,\alpha_\pm)$, $i=0,1$, the Legendrian submanifolds being the $\pm$-ends of $L_i$. We also fix a choice of a one-parameter family $G_{s,t} \colon X \to \R$ of compactly supported and time-dependent Hamiltonians. The most general case we consider, when defining continuation maps between Floer complexes, is
\begin{equation}
\label{eq:Gst}
G_{s,t} = \rho(s) G_t, \mbox{where} \,\,\rho(s): \R \rightarrow [0,1] \,\, \mbox{and} \,\, \mbox{supp}(\rho') \,\, \mbox{is compact.}
\end{equation}
We can thus write $G_{+,t}$ and $G_{-,t}$ to denote the Hamiltonian $G_{s_0,t}$ with $s_0 \gg 0$ and $s_0 \ll 0$, respectively. 

\subsection{Admissible almost complex structures}
\label{sec:AdmissibleACS}

 An $(s,t)$-dependent almost complex structure $J=J_{s,t}$, $(s,t) \in \R \times [0,1]$ on a symplectic manifold $(X,\omega)$, i.e.~a smooth one-parameter family of time-dependent almost complex structures parametrised by $s \in \R$, is said to be {\bf tamed by $\omega$} if $\omega(v,Jv)>0$ holds whenever $v \neq 0$. 
We assume that for all families $J_{s,t}$ there exists some $K \ge 0$ such that 
\begin{equation}
\label{eq:Jst}
J_{s,t} = J_{\pm, t} \quad \mbox{if} \,\, \pm s \ge K,
\end{equation}
 for the tame $t$-dependent almost complex structures $J_{\pm, t}.$ An almost complex structure $J$ on a symplectization
\[ (\R \times Y,d(e^r \alpha))\]
is said to be {\bf cylindrical} if
\begin{itemize}
\item $J\partial_r=R_\alpha$,
\item $J \xi =\xi$, where $\xi := \ker\alpha \subset TY,$ and $J|_\xi$ is tamed by $d\alpha,$ and
\item $J$ is invariant under translations of the coordinate $r$.
\end{itemize}
It automatically follows that a cylindrical almost complex structure is tame.

Consider a choice of function $\varphi \colon X \to \R$, $\varphi \in \mathcal{C}(L_0,L_1,G_{s,t})$, as defined in Section \ref{sec:action}. We call a tame almost complex structure on $(X,d\eta)$ {\bf admissible with respect to $\varphi$} if it coincides with a cylindrical almost complex structure in each component of
$$\{ r ; \: \varphi'(r) \neq 0 \} \subset X \setminus \overline{X}$$
in the cylindrical ends. The space of admissible almost complex structures will be denoted by
$$\mathcal{J}(X,d\eta,\varphi).$$
Observe that this is a non-empty and contractible space by \cite{Gromov}.

In the next section we will see that an almost complex structure which is admissible with respect to $\varphi$ satisfies the property that $d(\varphi\eta)$ is non-negative on the corresponding complex tangent planes. Loosely speaking, this means that the piecewise smooth two-form $d(\varphi\eta)$ behaves like a symplectic form when using it to define the energy of a pseudoholomorphic curve.

\subsection{Moduli spaces of pseudoholomorphic strips and their energy estimates}
\label{sec:Moduli}

We define here the pseudoholomorphic curves which we will need in the next subsection for our maps (differentials and chain maps). We also define and relate a number of different energies of these curves expressed in terms of the actions introduced in Section \ref{sec:action}. All definitions and results here are standard, with only one minor variation: the energies are induced by the two-form $d(\varphi\eta)$ which is not necessarily a symplectic form everywhere.

Let $p_\pm(t)$ be $G_{\pm,t}$-Hamiltonian chords from $L_0$ to $L_1$. (Recall in the case when $G_{\pm,t} \equiv 0$ these are intersection points $L_0 \cap L_1$.) Define the moduli-space of pseudoholomorphic strips
\[\mathcal{M}_{p_+,p_-}(L_0,L_1;G_{s,t})\]
to be the set of smooth maps $u \colon \R \times [0,1] \to X$ satisfying
\begin{equation}
\label{eq:strip}
\begin{cases}
u(s,0) \in L_0, \quad u(s,1) \in L_1, \quad \lim_{s \to \pm \infty}u(s,t)=p_\pm(t), \\
\partial_s u(s,t)+J_{s,t}(\partial_t u(s,t)-X_{G_{s,t}}(u(s,t)))=0,
\end{cases}
\end{equation}
 where the above limits are 
 uniform in $t.$ 
 This last condition we state as: $u$ satisfies the Cauchy-Riemann equation with a Hamiltonian perturbation term. However, for short these solutions will often be referred to simply as {\bf pseudoholomorphic strips}. The chord $p_-$ will be called the {\bf input} while $p_+$ will be called the {\bf output}.

The {\bf Floer energy} of a strip is the quantity
\[ E_{d(\varphi\eta),J_{s,t}}(u):=\int_{-\infty}^\infty \int_0^1 d(\varphi\eta)(\partial_s u(s,t),J_{s,t} \partial_s u(s,t))\, dt \,ds\ge 0.\]

This quantity is a priori non-negative on any strip, since $d(\varphi\eta)$ is non-negative on any $J_{s,t}$-complex tangency. Here it is crucial that $J_{s,t}$ is cylindrical wherever $\varphi'(r) \neq 0,$ and that $\varphi(r)+\varphi'(r) \ge 0$; see Definition \ref{dfn:varphi} and Section \ref{sec:AdmissibleACS}.

We define the {\bf $d(\varphi\eta)$-energy} to be given by
\[ E_{d(\varphi\eta)}(u):=\int_u d(\varphi\eta).\]
Stokes' theorem together with the exactness of $L_i,$ $i=0,1,$ implies that this quantity only depends on the asymptotics of the strip; more precisely, we have
\begin{equation}
\label{eq:sympen}
E_{d(\varphi\eta)}(u)= \mathfrak{a}_\varphi(p_+)-\mathfrak{a}_\varphi(p_-) + \int_0^1G_{+,t}(p_+(t))dt-\int_0^1G_{-,t}(p_-(t))dt.
\end{equation}
 Note that $\varphi$ only is piecewise smooth, but that Stokes' theorem still applies since $\varphi$ is continuous.

The obvious generalizations of the above formulas also enable us to consider the different energies in the case when the map $u$ only is defined on a subset of the strip $\R \times [0,1].$ 

In the case when $G_{s,t} \equiv G_t$ only depends on the $t$-coordinate we have the following precise expressions for the Floer energy which is standard; see e.g.~\cite[Section 12.3]{OhSympTop2}.
\begin{lma}
\label{lma:varphienergy}
Consider $\varphi \in \mathcal{C}(L_0,L_1,G_t)$ and $J_{s,t} \in \mathcal{J}(X,d\eta,\varphi)$, and a strip $u \in \mathcal{M}_{p_+,p_-}(L_0,L_1;G_t)$. The energy can be expressed as
\[E_{d(\varphi\eta),J_{s,t}}(u)=\mathfrak{a}_\varphi(p_+)-\mathfrak{a}_\varphi(p_-) \ge 0,\]
with equality if and only if $u$ is contained in a single Hamiltonian chord.
\end{lma}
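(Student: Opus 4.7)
The goal is to relate the Floer energy $E_{d(\varphi\eta),J_{s,t}}(u)$ to the action difference $\mathfrak{a}_\varphi(p_+)-\mathfrak{a}_\varphi(p_-)$, and the plan is to pass through the topological energy $E_{d(\varphi\eta)}(u)$ using the already-established equation \eqref{eq:sympen}. The key computational input is the Floer equation $J_{s,t}\partial_s u=\partial_t u-X_{G_{s,t}}(u)$, which rewrites the integrand of the Floer energy as
\[ d(\varphi\eta)(\partial_s u,J_{s,t}\partial_s u)=d(\varphi\eta)(\partial_s u,\partial_t u)-d(\varphi\eta)(\partial_s u,X_{G_{s,t}}(u)). \]
The first summand is the integrand of $u^\ast d(\varphi\eta)$, so its integral gives $E_{d(\varphi\eta)}(u)$.

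For the second summand I would invoke condition (5) in Definition \ref{dfn:varphi}: on the set where $G_{s,t}\neq 0$ one has $\varphi\equiv 1$, so $d(\varphi\eta)=d\eta$ there, and Hamilton's equation yields $d(\varphi\eta)(\partial_s u,X_{G_{s,t}})=dG_{s,t}(\partial_s u)$. Applying the chain rule
\[ dG_{s,t}(\partial_s u)=\partial_s\bigl(G_{s,t}(u(s,t))\bigr)-(\partial_s G_{s,t})(u(s,t)), \]
and integrating in $s$ and $t$, the total $s$-derivative term yields a boundary contribution $\int_0^1\bigl[G_{+,t}(p_+(t))-G_{-,t}(p_-(t))\bigr]dt$, while the explicit $s$-derivative term $\int\!\int(\partial_s G_{s,t})(u)\,ds\,dt$ vanishes precisely under the hypothesis $G_{s,t}\equiv G_t$. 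Combining these pieces gives
\[ E_{d(\varphi\eta),J_{s,t}}(u)=E_{d(\varphi\eta)}(u)-\int_0^1\bigl[G_t(p_+(t))-G_t(p_-(t))\bigr]dt, \]
and substituting \eqref{eq:sympen} (in which $G_+=G_-=G_t$) the two Hamiltonian bracketed terms cancel, leaving exactly $\mathfrak{a}_\varphi(p_+)-\mathfrak{a}_\varphi(p_-)$.

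For the inequality $E_{d(\varphi\eta),J_{s,t}}(u)\geq 0$ I would argue pointwise: on the compact part $\overline{X}$ and on the cylindrical pieces where $J_{s,t}$ is cylindrical, condition (2) in Definition \ref{dfn:varphi} guarantees $d(\varphi\eta)(v,J_{s,t}v)\geq 0$ for all tangent vectors $v$, with strict positivity on the contact distribution $\xi$ and non-negativity in the $\langle\partial_r,R_\alpha\rangle$-directions (where it vanishes when $\varphi'+\varphi=0$). Since the integrand is everywhere non-negative, the integral is non-negative.

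The equality case is the part requiring care. If $E_{d(\varphi\eta),J_{s,t}}(u)=0$ then $\partial_s u$ must lie in the kernel of the non-negative quadratic form $d(\varphi\eta)(\cdot,J_{s,t}\cdot)$ almost everywhere. On any open set where $\varphi\equiv 1$ this form is the standard $d\eta$-taming form and forces $\partial_s u\equiv 0$; on the exponential regions $\varphi(r)=e^{-r-r_\varphi}$ the vanishing of the $\xi$-component of $\partial_s u$ follows immediately, and together with the Floer equation and elliptic unique continuation one concludes $\partial_s u\equiv 0$ globally, so that $t\mapsto u(s,t)$ is an $s$-independent Hamiltonian chord of $G_t$. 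The main obstacle is precisely this last step, since the two-form $d(\varphi\eta)$ is allowed to degenerate in the exponential cylindrical regions; the hypothesis $G_{s,t}=G_t$ (hence $J$-holomorphic in those regions) is what lets standard unique continuation for the perturbed Cauchy--Riemann operator propagate the vanishing from regions of non-degeneracy.
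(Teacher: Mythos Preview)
Your proof is correct and follows essentially the same route as the paper: you use the Floer equation to split the integrand, invoke condition~(5) of Definition~\ref{dfn:varphi} to replace $d(\varphi\eta)(\cdot,X_{G_t})$ by $dG_t$, integrate the resulting total $s$-derivative, and cancel against~\eqref{eq:sympen}. The non-negativity argument is likewise the same in substance.

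The only notable difference is in the equality case. The paper observes directly that, by conditions~(1),~(3) and~(5) of Definition~\ref{dfn:varphi}, the function $\varphi$ is locally constant in a neighborhood of every Hamiltonian chord (chords lie either in $\overline{X}$, in the support of $G_t$, or at intersection points of $L_0\cap L_1$, and in each case one of these conditions forces $\varphi'=0$). Hence $d(\varphi\eta)=C\,d\eta$ is an honest symplectic form near the asymptotics, so the integrand is \emph{strictly} positive there whenever $\partial_s u\neq 0$; a non-trivial strip therefore picks up positive energy near its ends without any further argument. Your approach via a region-by-region kernel analysis plus unique continuation is valid but more laborious, and the ``main obstacle'' you flag (degeneracy of $d(\varphi\eta)$ in the exponential regions) is sidestepped entirely by the paper's observation. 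You could streamline your write-up by making this same observation instead.
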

\begin{proof}
Since $u$ satisfies the Cauchy-Riemann equation with Hamiltonian perturbation term, we compute
\begin{eqnarray*}
\lefteqn{E_{d(\varphi\eta),J_{s,t}}(u) =}\\
& =& \int_u d(\varphi\eta) + \int_{-\infty}^{+\infty} \int_0^1 d(\varphi\eta)(\partial_s u(s,t),-X_{G_t}(u(s,t)))dt\,ds \\
& =& \int_u d(\varphi\eta) - \int_0^1 \int_{-\infty}^{+\infty} \partial_sG_t(u(s,t)) ds\,dt,
\end{eqnarray*}
where we have used the property $\varphi \in \mathcal{C}(L_0,L_1,G_t)$ in order to infer that
$$d(\varphi\eta)(\cdot,-X_{G_t}(u(s,t)))=d\eta(\cdot,-X_{G_t}(u(s,t)))=-d_{u(s,t)}G_t(\cdot)$$
is satisfied.

The expressions of the energies in terms of the actions now follow by elementary applications of Stokes' theorem together with Equality \eqref{eq:sympen}.

The assumptions on $\varphi$ and $J_{s,t}$ imply that the energies are non-negative; here we have used the assumption that $d(\varphi\eta)$ is non-negative on any $J_{s,t}$-complex line. Since, moreover, $d(\varphi\eta)=Cd\eta$ is a \emph{symplectic} form near the Hamiltonian chords by the assumption $\varphi \in \mathcal{C}(L_0,L_1,G_t),$ it follows that a non-trivial pseudoholomorphic strip in fact must have \emph{positive} energy.
\end{proof}

The following lemma gives an action estimate in the case when the family $G_{s,t}$ of Hamiltonians is allowed to depend on $s \in \R$ in a very controlled way. These estimates are straightforward adaptations of the estimates in \cite[Section 14.4]{OhSympTop2} to the current setting. Take $G_{s,t}=\rho(s)G_t$ for some smooth $\rho \colon \R \to [0,1],$ and let $p_+,p_-$ denote a Hamiltonian chord of $G_{+,t}$ and $G_{-,t},$ respectively. Moreover, we assume that $\rho'(s)$ has compact support and satisfies the property that each of the integrals $\int_{(\rho')^{-1}([0,+\infty))} \rho'(s) ds$ and $\int_{(\rho')^{-1}((-\infty,0])} \rho'(s) ds$ are equal to either $0$ or $\pm 1$. Again, we take $\varphi \in \mathcal{C}(L_0,L_1,G_{s,t})$ and $J_{s,t} \in \mathcal{J}(X,d\eta,\varphi)$.

Consider a strip $u \in \mathcal{M}_{p_+,p_-}(L_0,L_1;G_{s,t})$ together with a (possibly empty) open subset $U \subset \R \times [0,1]$ satisfying the conditions that:
\begin{enumerate}[label=(C.\arabic{*}), ref=(C.\arabic{*})]
\item The subset $U$ is disjoint from $[T,+\infty) \times [0,1]$ for some number $T \gg 0;$
\item \label{C.2} There is a number $\mathfrak{a}^-_\varphi \in \R$ determined as follows: we either require $U$ to be precompact, in which case we set
$$\mathfrak{a}^-_\varphi:=\mathfrak{a}_\varphi(p_-),$$
or that it contains a subset of the form $(-\infty,T] \times [0,1]$ for some $T \ll 0,$ in which case we set
$$\mathfrak{a}^-_\varphi:=\mathfrak{a}_\varphi(p_-)+\int_0^1 G_{-,t}(p_-(t)) dt;$$
\item $G_{s,t} \circ u$ vanishes in some neighborhood of $\overline{U} \setminus U \subset \R \times [0,1],$ where we note that this subset is compact by the previous assumptions.
\end{enumerate}

\begin{lma}
\label{lma:action}
Let the strip $u \in \mathcal{M}_{p_+,p_-}(L_0,L_1;G_{s,t})$ be a solution of the perturbed Cauchy-Riemann equation (\ref{eq:strip}) and consider a (possibly empty) subset $U \subset \R \times [0,1],$ where both $G_{s,t} = \rho(s) G_t$ and $U$ are assumed to satisfy the properties above.
\begin{enumerate}
\item[(0)] If $\rho(s) \equiv 1$ or $\rho(s) \equiv 0$ then
$$ 0 \le E_{d(\varphi\eta),J_{s,t}}(u|_{\R \times [0,1] \setminus U})\le \mathfrak{a}_\varphi(p_+)-\mathfrak{a}^-_\varphi-E_{d(\varphi\eta)}(u|_U);$$
\item[(1)] If $\rho(s)$ is non-constant and $\rho(s)=0$ (resp. $\rho(s)=1$) whenever $|s| \gg 0$ is sufficiently large, i.e.~$G_{-, t} \equiv G_{+,t} \equiv 0$ (resp. $G_{-, t} \equiv G_{+,t}\equiv G_t$) then
$$ 0 \le E_{d(\varphi\eta),J_{s,t}}(u|_{\R \times [0,1] \setminus U})\le \mathfrak{a}_\varphi(p_+)-\mathfrak{a}^-_\varphi-E_{d(\varphi\eta)}(u|_U)+ \|G_t\|_{\OP{osc}};$$
\item[(2)] If $\rho(s) = 0$ for $s \ll 0$, $\rho(s)= 1$ for $s \gg 0$, and $\rho'(s) \ge 0$, then
$$0 \le E_{d(\varphi\eta),J_{s,t}}(u|_{\R \times [0,1] \setminus U}) \le \mathfrak{a}_\varphi(p_+)-\mathfrak{a}_\varphi(p_-)-E_{d(\varphi\eta)}(u|_U)+\int_0^1 \max_X G_t dt;$$
\item [(3)]
If $\rho(s) = 1$ for $s \ll 0$, $\rho(s)= 0$ for $s \gg 0$, and $\rho'(s) \le 0$, then
$$0 \le E_{d(\varphi\eta),J_{s,t}}(u|_{\R \times [0,1] \setminus U}) \le \mathfrak{a}_\varphi(p_+)-\mathfrak{a}^-_\varphi-E_{d(\varphi\eta)}(u|_U)-\int_0^1 \min_X G_t dt;$$
\end{enumerate}
\end{lma}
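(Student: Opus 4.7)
My plan is to closely follow the action--Floer energy bookkeeping of \cite[Section 14.4]{OhSympTop2}, with two additional ingredients: careful tracking of the cutoff function $\varphi$, and splitting the domain of integration into $U$ and $V:=\R\times[0,1]\setminus U$.

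Starting from the perturbed Cauchy--Riemann equation \eqref{eq:strip}, I would derive the pointwise identity
$$d(\varphi\eta)(\partial_s u, J_{s,t}\partial_s u) = u^*d(\varphi\eta)(\partial_s,\partial_t) - d(\varphi\eta)(\partial_s u, X_{G_{s,t}}(u)).$$
Condition (5) of Definition \ref{dfn:varphi} ensures $\varphi\equiv 1$ on the support of $G_{s,t}$, so by Hamilton's equation the last term equals $-dG_{s,t}(\partial_s u)$. Writing $\partial_s G_{s,t}=\rho'(s)G_t$ and using the chain rule $dG_{s,t}(\partial_s u)=\partial_s(G_{s,t}\circ u)-\rho'(s)G_t(u)$, then integrating over $V$ yields
$$E_{d(\varphi\eta),J_{s,t}}(u|_V)=\int_V u^*d(\varphi\eta) - \int_V\partial_s(G_{s,t}\circ u)\,ds\,dt + \int_V \rho'(s)G_t(u)\,ds\,dt.$$

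Next I apply Stokes' theorem to the one-form $G_{s,t}(u)\,dt$ on $V$. The boundary $\partial V$ contributes zero along the Lagrangian edges $\R\times\{0,1\}$ (since $dt=0$ there); zero along the finite portion of $\partial U$ (by the hypothesis that $G_{s,t}\circ u\equiv 0$ in a neighborhood of $\overline{U}\setminus U$); the term $\int_0^1 G_{+,t}(p_+(t))\,dt$ at $s=+\infty$; and, precisely when $U$ is precompact so that $V$ reaches the negative asymptotic, the term $-\int_0^1 G_{-,t}(p_-(t))\,dt$ at $s=-\infty$. Combining this with the action identity \eqref{eq:sympen} applied to $E_{d(\varphi\eta)}(u)=E_{d(\varphi\eta)}(u|_V)+E_{d(\varphi\eta)}(u|_U)$, a short bookkeeping calculation produces the master formula
$$E_{d(\varphi\eta),J_{s,t}}(u|_V)=\mathfrak{a}_\varphi(p_+)-\mathfrak{a}^-_\varphi - E_{d(\varphi\eta)}(u|_U) + \int_V \rho'(s)G_t(u(s,t))\,ds\,dt,$$
where the two definitions of $\mathfrak{a}^-_\varphi$ correspond exactly to the two allowed configurations of $U$: the extra term $\int_0^1 G_{-,t}(p_-(t))\,dt$ in the non-precompact case arises because the Stokes computation on $V$ no longer picks up the $s=-\infty$ asymptotic.

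All four claimed inequalities then reduce to bounding $\int_V\rho'G_t(u)\,ds\,dt$ by the stated constant. Case (0) is immediate since $\rho'\equiv 0$. In cases (2) and (3) the sign of $\rho'$ is constant, and pointwise estimates $\rho'G_t(u)\le\rho'\max_X G_t$ (for $\rho'\ge 0$) and $\rho'G_t(u)\le\rho'\min_X G_t$ (for $\rho'\le 0$), combined with $\int_\R\rho'\,ds=\pm 1$, give the desired upper bounds $\int_0^1\max_X G_t\,dt$ and $-\int_0^1\min_X G_t\,dt$ over all of $\R\times[0,1]$; the hypothesis that $G_{s,t}\circ u\equiv 0$ near $\partial U$ is used precisely to guarantee that the discarded piece $\int_U\rho'G_t(u)\,ds\,dt$ has the correct sign, so that the bound survives the restriction from $\R\times[0,1]$ to $V$ (either $\rho$ vanishes near $\partial U$ so $\rho'$ also does, or $G_t(u)$ does, and in either case the $U$-contribution along the support of $\rho'$ is harmless). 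Case (1) follows by splitting $\rho'=\rho'_++\rho'_-$ and applying cases (2)--(3) to each part, the hypothesis $\int_\R\rho'_\pm\,ds\in\{0,\pm 1\}$ supplying the correct total. The lower bound $E_{d(\varphi\eta),J_{s,t}}(u|_V)\ge 0$ is immediate from the tameness of $J_{s,t}$ together with condition (2) of Definition \ref{dfn:varphi}, which ensure that the integrand is pointwise non-negative.

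The delicate point I expect to be the main obstacle is the comparison of $\int_V\rho'G_t(u)\,ds\,dt$ with its counterpart over $\R\times[0,1]$: the naive pointwise bound controls the latter but not automatically the former, and the vanishing assumption on $G_{s,t}\circ u$ near $\partial U$ must be used to show that the removed piece over $U$ does not push the estimate the wrong way. Everything else is a routine adaptation of the standard Oh--Floer action computation to the present non-compact, exact, and $\varphi$-weighted setting.
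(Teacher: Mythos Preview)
Your approach is correct and essentially coincides with the paper's: both derive the identity
\[
E_{d(\varphi\eta),J_{s,t}}(u|_V)=\mathfrak{a}_\varphi(p_+)-\mathfrak{a}^-_\varphi-E_{d(\varphi\eta)}(u|_U)+\int_V \rho'(s)G_t(u)\,ds\,dt
\]
via Stokes/integration by parts and then bound the last term. The only bookkeeping difference is that the paper introduces a smooth bump function $\chi\colon\R\times[0,1]\to[0,1]$ with $\chi\equiv 1$ on $V$ and $\chi\equiv 0$ on $\OP{supp}(G_t\circ u)\cap U$, so that the integral over $V$ becomes an integral of $\chi\cdot(\text{integrand})$ over all of $\R\times[0,1]$; the estimates then use $\min_X G_t\le \chi G_t(u)\le \max_X G_t$. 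This sidesteps any regularity issue with $\partial U$ when applying Stokes.

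One remark: the ``delicate point'' you flag is in fact not an obstacle, and the parenthetical argument you give for it is not quite right (the vanishing of $\rho G_t(u)$ near $\partial U$ says nothing about $\rho' G_t(u)$ in the interior of $U$). The bound on $\int_V\rho' G_t(u)$ holds \emph{directly}, with no need to compare to $\int_{\R\times[0,1]}$: since $G_t$ is compactly supported we have $\max_X G_t\ge 0$, so for $\rho'\ge 0$ and each fixed $t$,
\[
\int_{V_t}\rho'(s)G_t(u(s,t))\,ds\le \max_X G_t\cdot\int_{V_t}\rho'(s)\,ds\le \max_X G_t\cdot\int_\R\rho'(s)\,ds=\max_X G_t,
\]
and integrating in $t$ gives $\int_0^1\max_X G_t\,dt$; the cases $\rho'\le 0$ and mixed sign follow in the same way. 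So the restriction to $V$ never ``pushes the estimate the wrong way.''
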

\begin{rmk} The reason for why we need the energy estimates for the complicated domains in Lemma \ref{lma:action} is that we have not established the full symplectic field theory (SFT for short) type compactness result for Floer strips with a Hamiltonian perturbation term as considered here. Ideally, it should be possible to obtain a compactness result which, given that a sequence of strips eventually leaves every compact subset of the target space, extracts a limit ``building''. These buildings are supposed to consist of several levels of curves, defined with or without Hamiltonian perturbation terms, having punctures (in the interior as well as on the boundary) asymptotic to Reeb chords and orbits. In contrast, here we only establish Corollary \ref{cor:RefinedChordExistence} which, roughly speaking, exhibits the behavior of such a strip just prior to the moment when a breaking of SFT type occurs.
\end{rmk}
\begin{proof}
Similar to the proof of Lemma \ref{lma:varphienergy} we compute
\begin{eqnarray*}
\lefteqn{E_{d(\varphi\eta),J_{s,t}}(u|_{\R \times [0,1] \setminus U}) =}\\
& =& E_{d(\varphi\eta)}(u|_{\R \times [0,1]}) - E_{d(\varphi\eta)}(u|_U) + \\
& + & \int_{u|_{\R \times [0,1] \setminus U}} \rho(s)d(\varphi\eta)(\partial_s u(s,t),-X_{G_t}(u(s,t))) dt\,ds.
\end{eqnarray*}
Using the assumption that $G_t \circ u$ vanishes in a neighborhood of the boundary of $U$, the latter term can be computed to be equal to 
\begin{eqnarray*}
\lefteqn{\int_{-\infty}^{+\infty}\int_0^1 \chi(s,t)\rho(s)d(\varphi\eta)(\partial_s u(s,t),-X_{G_t}(u(s,t))) dt\,ds = }\\
& = & - \int_0^1 \int_{-\infty}^{+\infty} \chi(s,t)\rho(s) \partial_s G_t(u(s,t)) ds\,dt\\
& =& - \int_0^1\left[\rho(s) \chi(s,t)G_t(u(s,t))\right]_{s=-\infty}^{+\infty} dt + \\
&+& \int_0^1 \int_{-\infty}^{+\infty}\chi(s,t)\rho'(s)G_t(u(s,t)) ds\,dt + \\
& + & \int_0^1 \int_{-\infty}^{+\infty}\rho(s)G_t(u(s,t))\partial_s\chi(s,t) ds\,dt
\end{eqnarray*}
for some smooth bump function $\chi \colon \R \times [0,1] \to [0,1]$ 
that is equal to one when restricted to $\R \times [0,1] \setminus U,$ and which vanishes on $\OP{supp}(G_t \circ u) \cap U.$ Here we use the fact that there exists disjoint open neighborhoods of $\R \times [0,1] \setminus U$ and $\OP{supp}G_t(u(s,t)) \cap U;$ this is a consequence of the assumptions made on $U.$

In particular, since $\partial_s\chi(s,t)$ vanishes in the subset $\OP{supp}(G_t \circ u) \cap U,$ we conclude the vanishing
$$\int_0^1 \int_{-\infty}^{+\infty}\rho(s)G_t(u(s,t))\partial_s\chi(s,t) ds\,dt=0$$
of the last term in the above expression. Also, note that $\chi(s,t)\equiv 1$ for $s \gg 0$ by construction.

Finally, we use the fact that
$$\max_{\R \times [0,1]} \chi(s,t) G_t(u(s,t)) \le \max_{\R \times [0,1]} G_t(u(s,t)),$$
$$\min_{\R \times [0,1]} \chi(s,t) G_t(u(s,t)) \ge \min_{\R \times [0,1]} G_t(u(s,t)).$$
In combination with the expression of $E_{d(\varphi\eta)}(u|_{\R \times [0,1]})$ given in Equality \eqref{eq:sympen}, all estimates can now be seen to follow.
\end{proof}

\subsection{The boundary map and chain maps}
\label{sec:LagIntHF}
We are now ready to define the Floer complexes along with their boundary maps, as well as continuation maps between them. Assume that we are given exact Lagrangian cobordisms $L_0, L_1 \subset (X,d\eta)$ that are disjoint outside of a compact subset. Furthermore, we consider a compactly supported Hamiltonian diffeomorphism $\phi^1_{G_t} \colon (X,\omega) \to (X,\omega)$ for which the intersection $\phi^1_{G_t}(L_0) \pitchfork L_1$ is transverse.

For any fixed component $\mathfrak{p} \in \pi_0(\Pi(X;L_0,L_1))$ of the space of paths from $L_0$ and $L_1$ in $X$, we define the graded and finite-dimensional vector space
\[ CF_*^{\mathfrak{p}}(L_0,L_1;G_{t}) := \Z_2 \left\langle \begin{array}{l|l} p(t) & p(t)=\phi^t_{G_t}(x),\: t \in [0,1],\\
& p(0) \in L_0, p(1) \in L_1, [p(t)] \in \mathfrak{p}, \end{array}\right\rangle\]
spanned by chords of $\phi^t_{G_t}$ from $L_0$ to $L_1$ in class $\mathfrak{p}$. Up to a global shift, this grading is well-defined modulo the greatest common divisor of the Maslov numbers of the two cobordisms; we refer to \cite{Floer:RelativeMorseIndex} for more details. For our purposes the grading will not play any role.

Given two numbers $m_- \le m_+$ we also define
\[ CF_*^{\mathfrak{p}}(L_0,L_1;G_{t};\varphi)^{m_+}_{m_-} := \Z_2 \left\langle \begin{array}{l|l} & p(t)=\phi^t_{G_t}(x), \: t \in [0,1],\\
p(t) & p(0) \in L_0, p(1) \in L_1, [p(t)] \in \mathfrak{p}, \\
& m_- \le \mathfrak{a}_\varphi(p(t)) 
 < 
m_+, \end{array}\right\rangle\]
which is a vector subspace of $CF_*^{\mathfrak{p}}(L_0,L_1;G_{t})$. It is important to observe that this subspace depends on the choice of function $\varphi \colon X \to \R$ used when defining the action. When the function $\varphi$ is clear from the context, we will sometimes use the notation
$$CF_*^{\mathfrak{p}}(L_0,L_1;G_{t})^{m_+}_{m_-}=CF_*^{\mathfrak{p}}(L_0,L_1;G_{t};\varphi)^{m_+}_{m_-},$$
omitting this choice.

Under certain additional assumptions deferred to Section \ref{sec:invariance} below, we can define the following linear maps.

{\bf{The differential}}: This is a linear map
$$d: CF_*^{\mathfrak{p}}(L_0,L_1;G_{t}) \rightarrow CF_{*-1}^{\mathfrak{p}}(L_0,L_1;G_{t}) $$
that on a generator $q$ is defined via the $\Z_2$-count
\[ d(q) :=\sum_{\dim(\mathcal{M}_{p,q}(L_0,L_1;J,G_t)/\R)=0} \#_2\left(\mathcal{M}_{p,q}(L_0,L_1;G_t)/\R\right) p.\]
Here we have used a fixed almost complex structure $J=J_t$ only depending on the $t$-coordinate, and $\mathcal{M}_{p,q}(L_0,L_1;G_t)/\R$ denotes the solutions up to the action of translation of the $s$-coordinate (this action obviously preserves the solutions). Using Lemma \ref{lma:varphienergy} we see that $\langle d(q),p \rangle \neq 0$ being nonzero implies that $\mathfrak{a}_\varphi(p)>\mathfrak{a}_\varphi(q)$. In other words, our differential \emph{increases} the action. This means that the differential descends to a differential
$$ d^{m_+} \colon CF_*^{\mathfrak{p}}(L_0,L_1;G_{t})^{m_+}_{m_-} \to CF_{*-1}^{\mathfrak{p}}(L_0,L_1;G_{t})^{m_+}_{m_-}$$
in the following manner: consider the differential $d^{m_+}$ induced on the quotient space
$$CF_*^{\mathfrak{p}}(L_0,L_1;G_{t})^{m_+}_{-\infty}=CF_*^{\mathfrak{p}}(L_0,L_1;G_{t})/CF_*^{\mathfrak{p}}(L_0,L_1;G_{t})^{+\infty}_{m_+}$$
and then take its restriction to the subspace
$$CF_*^{\mathfrak{p}}(L_0,L_1;G_{t})^{m_+}_{m_-} \subset CF_*^{\mathfrak{p}}(L_0,L_1;G_{t})^{m_+}_{-\infty}=CF_*^{\mathfrak{p}}(L_0,L_1;G_{t})/CF_*^{\mathfrak{p}}(L_0,L_1;G_{t})^{+\infty}_{m_+}.$$

{\bf{The continuation maps}}: Given a one-parameter family of Hamiltonians $G_{s,t}$ of the form prescribed by Lemma \ref{lma:action}, the induced continuation map
$$\Phi_{G_{-,t}, G_{+,t}}: CF_*(L_0,L_1;G_{-,t}) \rightarrow CF_*(L_0,L_1;G_{+,t})
$$
is defined on a generator $p_-$ of $CF_*(L_0,L_1;G_{-,t})$ via the $\Z_2$-count
\[ \Phi_{G_{-,t}, G_{+,t}}(p_-) :=\sum_{\dim \mathcal{M}_{p_+,p_-}(L_0,L_1;G_{s,t})=0} \#_2\left(\mathcal{M}_{p_+,p_-}(L_0,L_1;G_{s,t})\right) p_+.\]
 Lemma \ref{lma:action} above determines its behavior with respect to the action filtration. Observe that a continuation map does not necessarily increase the action.

The following is the transversality result that we need in order to define the above counts, which is standard. The compactness properties will be dealt with in Section \ref{sec:invariance}.
\begin{prp}
\label{prp:Transversality}
For generic $J_{s,t} \in \mathcal{J}(X,d\eta, \varphi)$ satisfying (\ref{eq:Jst}) the moduli spaces
$$ \bigcup_{\kappa \in \R} \mathcal{M}_{p_+,p_-}(L_0,L_1;\rho_\kappa(s)G_t),$$
where $\rho_\kappa(s)$ depends smoothly on $\kappa \in \R$ and where $\rho_\kappa'(s) \equiv 0$ outside of a compact subset (allowed to depend on $\kappa$), are all transversely cut out. Moreover, the same is true for the moduli spaces $$ \mathcal{M}_{p_+,p_-}(L_0,L_1;G_t)$$ for generic choices of $J_{t} \in \mathcal{J}(X,d\eta, \varphi)$ only depending on $t \in[0,1].$

Furthermore, transversality can be achieved by a perturbation of $J_t$ supported in some arbitrarily small precompact neighborhood of the Hamiltonian chords.
\end{prp}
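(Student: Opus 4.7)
The argument follows the standard Floer-theoretic transversality scheme, adapted to the setting of strips with Hamiltonian perturbation terms inside a symplectic cobordism with cylindrical ends. I would first realize the universal moduli space
$$\mathcal{U} := \bigsqcup_{(J_{s,t},\,\kappa)}\mathcal{M}_{p_+,p_-}(L_0,L_1;\rho_\kappa(s)G_t;J_{s,t})$$
as the zero locus of a smooth Fredholm section of a Banach bundle over the product of a Banach manifold of $W^{1,p}_\delta$-maps $u$ (with exponential decay weights $\delta>0$ at $s \to \pm \infty$ matching the asymptotic operators at the non-degenerate chords $p_\pm$) with the parameter space $\R \times \mathcal{J}^\ell$, where $\mathcal{J}^\ell \subset \mathcal{J}(X,d\eta,\varphi)$ denotes a separable Banach manifold of $C^\ell$-close admissible almost complex structures obeying the tail condition of the proposition. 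By the Sard-Smale theorem applied to the projection $\mathcal{U} \to \mathcal{J}^\ell$, together with a standard Taubes-style Baire category argument to pass from $C^\ell$-generic to $C^\infty$-generic structures, the proposition reduces to showing that the total linearization of this Fredholm section is surjective at every solution.

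The surjectivity argument proceeds by contradiction: suppose $\nu$ is a non-zero element in the cokernel of the linearized Cauchy-Riemann-with-Hamiltonian operator at a solution $u$. Unique continuation for the first-order elliptic equation satisfied by $\nu$ implies that $\nu$ vanishes only on a closed nowhere-dense set, so it suffices to produce a point $z_0 \in \R \times (0,1)$ at which $u$ is an immersion, $u^{-1}(u(z_0)) = \{z_0\}$, and $\nu(z_0) \ne 0$; an admissible variation $Y \in T_{J}\mathcal{J}^\ell$ pairing non-trivially with $\nu$ via $\int \langle \nu, Y \circ du \circ j\rangle$ may then be constructed as a bump localized near $u(z_0)$. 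The existence of such a somewhere-injective immersion point is standard for any non-trivial strip $u$, following the classical argument of Floer-Hofer-Salamon adapted to the Hamiltonian-perturbed case.

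The \emph{moreover} clause -- that the perturbation may be localized in an arbitrarily small precompact neighborhood $\mathcal{N}$ of the chord images $p_\pm([0,1]) \subset X$ -- is the genuinely delicate point. By the asymptotic expansion $u(s,t) = \exp_{p_\pm(t)}(\xi(s,t))$ with $\|\xi\|$ decaying exponentially in $|s|$ at a rate dictated by the non-degenerate asymptotic operators, the strip is an embedded immersion of the half-infinite sub-strip $[S,+\infty)\times[0,1]$ (resp.\ $(-\infty,-S]\times[0,1]$) into $\mathcal{N}$ for $S \gg 0$; provided the strip is non-constant (the trivial strip with $p_+=p_-$ is automatically regular since the moduli space is zero-dimensional), a somewhere-injective immersion point $z_0$ with $u(z_0) \in \mathcal{N}$ may be extracted from the asymptotic ends. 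The $s$-independent case follows identically by specializing to $\rho_\kappa \equiv 1$ and restricting attention to $t$-dependent structures. The principal obstacle is this final step, where one must combine the asymptotic expansion with non-degeneracy of the asymptotic operators to rule out the pathology that all injective immersion points of $u$ escape every small neighborhood of the chord images; once this is in hand, the Sard-Smale/Baire machinery produces the generic $J$ with transversality and the proposition follows.
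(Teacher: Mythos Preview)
Your argument is correct and follows the same standard Floer-theoretic scheme that the paper invokes, the paper simply being terser and deferring to \cite{OhSympTop2} and \cite{AudinDamian} for the core statement. Both you and the paper isolate the same essential observation for the ``furthermore'' clause: any non-trivial strip must, by its asymptotics, enter an arbitrarily small neighborhood $\mathcal{N}$ of the Hamiltonian chords, and perturbations of $J$ supported there remain admissible since $\varphi$ is constant near the chords.

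There is one small difference worth noting. You treat the $(s,t)$-dependent and the $t$-only cases by the same somewhere-injective argument. The paper instead remarks that the $(s,t)$-dependent case is \emph{easier}: when $J$ is allowed to vary with the domain coordinate $s$ (the tail constant $K$ in \eqref{eq:Jst} may be enlarged after perturbation), one can localize the variation $Y$ in the \emph{domain} near any point $(s_0,t_0)$ where the cokernel element $\nu$ is non-zero and $u(s_0,t_0)\in\mathcal{N}$, so no injectivity condition on $u$ is needed at all. Your route works, it is just more laborious than necessary for that half of the statement. One minor quibble: the constant strip at $p_+=p_-$ is regular because non-degeneracy of the chord makes the linearized operator an isomorphism, not merely ``since the moduli space is zero-dimensional'' --- expected dimension zero does not by itself imply regularity.
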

\begin{proof}
See e.g.~\cite[Proposition 15.5]{OhSympTop2} for the $t$-dependence case, but also \cite[Section 8.6]{AudinDamian} for the analogous result in the closed case (which is similar).

The only note to make is that our almost complex structure must remain admissible after the perturbation (see Section \ref{sec:AdmissibleACS}). To achieve this, note that in Definition \ref{dfn:varphi}
we can consider arbitrary time-dependent perturbations of the tame almost complex structure in some neighborhood of the Hamiltonian chords; clearly the strip must enter this neighborhood by the assumption made on its asymptotics.

Finally, observe that the case when $J$ depends on both $(s,t)$ as opposed to just $t$ is considerably easier, since the ``some-arc-injective'' (weaker than ``somewhere-injective'' which is used when $J$ domain-independent) condition is not needed.

\end{proof}

\subsection{A condition for non-bubbling}
\label{sec:compactness}

A main technical point of this paper establishes conditions for when $\left(CF_*^{\mathfrak{p}}(L_0,L_1;J,G_t)^{m_+}_{m_-},\partial\right)$ is a complex and when the continuation map $\Phi_{G_{-,t}, G_{+,t}}$ is well-defined. To this end, Theorem \ref{thm:compactness} below is the only new ingredient needed in our Floer theory set-up, which gives a condition for when strips are confined to some given compact subset.

In this section we will consider a fixed Hofer function $\varphi \in \mathcal{C}(L_0,L_1,G_t)$ for action computations, together with an associated constant $0\le r_\varphi \le +\infty.$ 
Let the set $\mathcal{Q}_{\alpha_-}^{\mathfrak{p}}$ consist of all contractible periodic $(\alpha_-)$-Reeb orbits and all $(\alpha_-)$-Reeb chords having 
\begin{itemize}
\item both ends on $\Lambda^-_0,$ while defining the trivial element in $\pi_1(Y_-,\Lambda^-_0),$
\item both ends on $\Lambda^-_1,$ while defining the trivial element in $\pi_1(Y_-,\Lambda^-_1),$ or
\item starting point on $\Lambda^-_0$ and endpoint on $\Lambda^-_1,$ living in the component $\mathfrak{p} \in \pi_0(\Pi(X,L_0,L_1)).$
\end{itemize}
We define the quantity
\begin{equation}
\label{eq:hbar_def}
\boxed{
$$\hbar(\varphi,\mathfrak{p},\Lambda^-_0,\Lambda^-_1,\alpha_-):= e^{-r_\varphi}\min_{c \in \mathcal{Q}_{\alpha_-}^{\mathfrak{p}}} \int_c \alpha_- > 0,$$
}
\end{equation}
which for short will be referred to simply as $\hbar.$

\begin{rmk}
The above definition of $\hbar$ does not take the length of any Reeb chord starting at $\Lambda^-_1$ and ending at $\Lambda^-_0$ into account. This is important, since in the application that we have in mind, there will be very small such Reeb chords; see the push-off of the Lagrangian cobordism considered in the proof of Theorem \ref{thm:main} in Section \ref{sec:Proof}.
\end{rmk}
In the subsequent Section \ref{sec:CompactnessProof} we prove the following:
\begin{thm}[Non-bubbling for strips]
\label{thm:compactness}
Fix an admissible function $\varphi,$ a compact family of admissible almost complex structures $J \in \mathcal{J}(X,d\eta, \varphi)$ as well as exact Lagrangian submanifolds $L_i,$ and a family $G_{s,t} = \rho(s) G_t$ of Hamiltonians. 
Both the families $J$ and $L_i$ are required to be fixed outside of some precompact subset as in Proposition \ref{prp:Transversality}. 
Then there is a compact subset $K \subset X$ containing all $J$-holomorphic strips $u \in \mathcal{M}_{p_+,p_-}(L_0,L_1;G_{s,t})$ of either of the following types, under the assumption that $p_\pm \in \mathfrak{p}.$
\begin{enumerate}
\item[(0)] In the case when $\rho \equiv 1$ or $\rho \equiv 0$ we require
$$\max\{\mathfrak{a}_\varphi(p_+) - \mathfrak{a}_\varphi(p_-) - \hbar,\mathfrak{a}_\varphi(p_+)-\hbar\} <0;$$
\item[(1)] In the case when $\rho$ is non-constant and $\rho(s)=0$ (resp. $\rho(s)=1$) whenever $|s| \gg 0$ is sufficiently large, i.e.~$G_{-, t} \equiv G_{+,t} \equiv 0$ (resp. $G_{-, t} \equiv G_{+,t}\equiv G_t$), we require 
$$\max\{\mathfrak{a}_\varphi(p_+)-\mathfrak{a}_\varphi(p_-)-\hbar,\mathfrak{a}_\varphi(p_+)-\hbar\} < -\|G_t\|_{\OP{osc}};$$
\item[(2)] In the case when $\rho(s) = 0$ for $s \ll 0$, $\rho(s)= 1$ for $s \gg 0,$ and $\rho'(s) \ge 0,$ we require 
$$\max\{\mathfrak{a}_\varphi(p_+)-\mathfrak{a}_\varphi(p_-)-\hbar,\mathfrak{a}_\varphi(p_+)-\hbar\} < -\int_0^1 \max_X G_t dt;$$
and
\item[(3)] In the case when $\rho(s) = 1$ for $s \ll 0$, $\rho(s)= 0$ for $s \gg 0,$ and $\rho'(s) \le 0,$ we require 
$$\max\{\mathfrak{a}_\varphi(p_+)-\mathfrak{a}_\varphi(p_-)-\hbar,\mathfrak{a}_\varphi(p_+)-\hbar\} < \int_0^1 \min_X G_t dt.$$
\end{enumerate}
\end{thm}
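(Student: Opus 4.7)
The plan is to argue by contradiction using an SFT-style neck-stretching analysis in the concave end. Assume no such compact $K$ exists; then there is a sequence of strips $u_n \in \mathcal{M}_{p_+, p_-}(L_0, L_1; G_{s,t})$ whose images exit every compact subset of $X$. The uniform $d(\varphi\eta)$-Floer energy bound supplied by Lemma \ref{lma:action} allows extraction of a limit holomorphic building via neck-stretching along hypersurfaces $\{r_k\} \times Y_-$ with $r_k \to -\infty$; this is legitimate because $G_{s,t}$ has compact support and the admissible $J_{s,t}$ is cylindrical in the relevant region. The limit comprises a (possibly Floer-broken) main component in the cobordism together with at least one nontrivial bubble component living in $(\R \times Y_-, d(e^r\alpha_-))$. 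These bubbles are pseudoholomorphic planes asymptotic to contractible periodic Reeb orbits, half-planes with boundary on some $\R \times \Lambda_i^-$ asymptotic to Reeb chords on $\Lambda_i^-$, or strips with boundary on both $\R \times \Lambda_0^-$ and $\R \times \Lambda_1^-$ asymptotic to mixed Reeb chords.

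The topological constraint $p_\pm \in \mathfrak{p}$ forces, via a gluing and homotopy argument, the Reeb asymptotics of these bubbles to lie in $\mathcal{Q}_{\alpha_-}^{\mathfrak{p}}$. Combined with Stokes' theorem applied to the one-form $\varphi\eta = e^{-r_\varphi}\alpha_-$ in the deep concave end, this yields the lower bound $\hbar$ on the $d(\varphi\eta)$-energy of each nontrivial bubble.

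Two configurations then arise, corresponding precisely to the two terms inside the $\max$. In Configuration A, a bubble is attached to the main component at an interior or boundary puncture, while the main component still retains $p_-$ as its input. Choosing a precompact neighborhood $U$ of the bubble region disjoint from $\OP{supp}(G_{s,t})$ and applying Lemma \ref{lma:action} with $E_{d(\varphi\eta)}(u_n|_U) \ge \hbar$ for $n$ large enough yields $0 \le \mathfrak{a}_\varphi(p_+) - \mathfrak{a}_\varphi(p_-) - \hbar + C$, where $C$ equals $0$, $\|G_t\|_{\OP{osc}}$, $\int_0^1 \max_X G_t\, dt$, or $-\int_0^1 \min_X G_t\, dt$ in cases $(0)$--$(3)$ respectively. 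In Configuration B, the limit building includes a Floer breaking at an intermediate asymptotic $c \in \mathcal{Q}_{\alpha_-}^{\mathfrak{p}}$ deep in the concave end, so the upper Floer strip runs from $c$ to $p_+$. Normalizing primitives to vanish at $r \to -\infty$, one computes $\mathfrak{a}_\varphi(c) = e^{-r_\varphi}\ell(c) \ge \hbar$; the Floer energy estimate for the upper strip then gives $0 \le \mathfrak{a}_\varphi(p_+) - \mathfrak{a}_\varphi(c) + C \le \mathfrak{a}_\varphi(p_+) - \hbar + C$. Either outcome contradicts the hypothesis.

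The hard part is the SFT-style compactness itself, since the standard compactness theorem does not directly cover Floer strips perturbed by a Hamiltonian term. One must extend the usual neck-stretching analysis to the Hamiltonian-perturbed setting (using that the perturbation is compactly supported and hence absent in the neck) and rule out microscopic bubbles attached to the Lagrangian boundary by a monotonicity-type estimate. Carrying this out rigorously is the role of Section \ref{sec:CompactnessProof}.
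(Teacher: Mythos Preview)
Your overall architecture is correct: the two terms inside the $\max$ do correspond to two distinct degeneration scenarios, and the action inequalities you extract match those the paper uses to reach a contradiction via Lemma~\ref{lma:action}. But your route diverges from the paper's in a way worth noting.

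You invoke a full SFT limit building and then read off the bubble energies from its levels. The paper deliberately avoids this. It does \emph{not} establish that a limit building exists for Hamiltonian-perturbed strips; it proves only the weaker statement (Propositions~\ref{prp:MainCpt} and~\ref{prp:ChordExistence}, following Cieliebak--Mohnke and Albers--Fuchs--Merry) that the restrictions of the $u_k$ to the deep concave end contain substrips or subcylinders $C_n$ of diverging conformal modulus which, after an $\R$-shift, converge in $C^\infty_{\mathrm{loc}}$ to a trivial cylinder over some element of $\mathcal{Q}_{\alpha_-}^{\mathfrak{p}}$. No further structure of any ``building'' is analyzed. The paper then sets $U_n$ to be the relevant complementary component of $C_n$ in the domain (precompact if the limit is a closed orbit or a pure chord on a single $\Lambda_i^-$; containing the end $\{s \le -S\}$ if the limit is a mixed chord), and plugs this $U_n$ directly into Lemma~\ref{lma:action}. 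These two cases are exactly your Configurations~A and~B, but reached without any building machinery.

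Two points of caution in your version. First, your phrase ``Floer breaking at an intermediate asymptotic $c$'' in Configuration~B is misleading: there is no Floer generator deep in the concave end, so $c$ is a Reeb chord appearing at an SFT-type puncture, and the ``upper strip'' is a punctured curve rather than a Floer strip. Your action identity $\mathfrak{a}_\varphi(c)=e^{-r_\varphi}\ell(c)$ is correct once interpreted this way, but the paper sidesteps the need to define actions of such punctured pieces by working with $E_{d(\varphi\eta)}(u_n|_{U_n})$ directly (Corollary~\ref{cor:RefinedChordExistence}). Second, in the mixed-chord case the limiting chord must run from $\Lambda_0^-$ to $\Lambda_1^-$ and not the reverse (cf.\ the Remark following the definition of $\hbar$); in the paper this is secured by the elementary topological observation in Part~(4) of Proposition~\ref{prp:MainCpt}, whereas in a full-building argument you would have to extract it from the level structure.
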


\subsection{Well-definedness and invariance}
\label{sec:invariance}
We proceed to apply Theorem \ref{thm:compactness} in order to define our Floer complexes, and to show the needed invariance properties.

Using the above non-bubbling theorem we obtain a condition for when the differential of the Floer complex is well-defined. Namely, once the strips have been shown to be confined to a given compact subset, the remaining argument is standard.
\begin{prp}[Conditions for a well-defined complex]
\label{prp:welldefcomplex}
Under the assumption that
$$\max\{m_+ - m_--\hbar,m_+-\hbar\}<0$$
is satisfied with $\hbar$ and $\varphi$ as specified above, the map
$$d^{ m_+}\colon CF_*^{\mathfrak{p}}(L_0,L_1;G_{t};\varphi)_{m_-}^{m_+} \rightarrow CF_{*-1}^{\mathfrak{p}}(L_0,L_1;G_{t};\varphi)_{m_-}^{m_+} $$
is well-defined and satisfies $(d^{ m_+})^2=0$ for any generic $J_t \in \mathcal{J}(L_0,L_1,\varphi)$.
\end{prp}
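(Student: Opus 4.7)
The plan is to leverage the non-bubbling Theorem \ref{thm:compactness} to confine all relevant strips to a fixed compact subset, and then invoke the standard Gromov--Floer compactness and gluing package.

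First I would establish well-definedness of $d$. Given a generator $q$ with $\mathfrak{a}_\varphi(q) \in [m_-, m_+]$, Lemma \ref{lma:varphienergy} ensures that any $p$ with $\langle dq, p \rangle \neq 0$ automatically satisfies $\mathfrak{a}_\varphi(p) \ge \mathfrak{a}_\varphi(q) \ge m_-$, so the lower action bound is free; the upper bound $\mathfrak{a}_\varphi(p) \le m_+$ is imposed by the quotient structure of $CF^{m_+}_{m_-}$, viewed as $CF^{\ge m_-}/CF^{> m_+}$ under the (decreasing) action filtration preserved by $d$. For such a pair $(p, q)$ the inequalities $\mathfrak{a}_\varphi(p) - \mathfrak{a}_\varphi(q) \le m_+ - m_- < \hbar$ and $\mathfrak{a}_\varphi(p) \le m_+ < \hbar$ hold by the hypothesis, so case (0) of Theorem \ref{thm:compactness} (with $\rho \equiv 1$) applies. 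Hence every $u \in \mathcal{M}_{p,q}(L_0, L_1; G_t)$ is contained in a fixed compact $K \subset X$. Within $K$, exactness of $L_0, L_1$ precludes disc bubbling and exactness of $d\eta$ precludes sphere bubbling; only finitely many Hamiltonian chords lie in $K$, and for a generic $J_t \in \mathcal{J}(X, d\eta, \varphi)$ provided by Proposition \ref{prp:Transversality}, the zero-dimensional spaces $\mathcal{M}_{p, q}/\R$ are transversely cut out and compact, hence finite.

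To establish $d^2 = 0$, I would analyze the Gromov compactification of the one-dimensional moduli spaces $\mathcal{M}_{p,q}/\R$ for $\mathfrak{a}_\varphi(p), \mathfrak{a}_\varphi(q) \in [m_-, m_+]$. The crucial observation is that any intermediate chord $r$ appearing in a broken configuration $q \to r \to p$ automatically satisfies $m_- \le \mathfrak{a}_\varphi(q) \le \mathfrak{a}_\varphi(r) \le \mathfrak{a}_\varphi(p) \le m_+$; thus each constituent strip again falls under Theorem \ref{thm:compactness}(0) and is confined to a compact set. The standard Floer gluing theorem then identifies broken strips with the boundary of the compactified one-manifold $\overline{\mathcal{M}_{p,q}/\R}$, and its mod-$2$ boundary count vanishes.

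The principal obstacle is Theorem \ref{thm:compactness} itself (addressed in Section \ref{sec:CompactnessProof}); once that is in hand, the remainder of the argument follows well-established templates as in \cite{OhSympTop2, AudinDamian}. A minor but genuine technical point is that the transversality perturbations of Proposition \ref{prp:Transversality} must remain inside the admissibility class $\mathcal{J}(X, d\eta, \varphi)$ in order to retain the hypotheses of Theorem \ref{thm:compactness}; this is precisely why the proposition explicitly states that perturbations can be localized near the Hamiltonian chords, where admissibility places no restriction.
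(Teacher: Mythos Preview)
Your proposal is correct and follows essentially the same approach as the paper: invoke Case~(0) of Theorem~\ref{thm:compactness} to confine all strips (including broken ones arising in $d^2$) to a compact set, and then appeal to the standard Floer compactness and gluing package as in \cite[Sections~19.2,~19.5]{OhSympTop2}. Your write-up is in fact more detailed than the paper's, which dispatches the argument in two sentences; your explicit remark that intermediate chords in a broken configuration inherit the action bounds, and your observation about keeping the transversality perturbation within the admissible class, are both points the paper leaves implicit.
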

\begin{proof}
All moduli spaces of strips involved in the definition of $d^{ m_+},$ as well as the glued strips involved in the definition of $(d^{ m_+})^2,$ satisfy the energy bounds in Case (0) of the non-bubbling Theorem \ref{thm:compactness}. In view of this result, the relation $(d^{ m_+})^2=0$ now follows as in \cite[Sections 19.2, 19.5]{OhSympTop2} by a gluing argument, taking the one-dimensional moduli spaces into account.
\end{proof}

We only establish a limited form of invariance for our Floer complexes with action filtration that will be sufficient for our needs. We work under assumptions for which the above non-bubbling result is satisfied for the relevant strips, and the rest of the argument is then again classical.
\begin{prp}[Filtered invariance]
\label{prp:invariance}
Let $\hbar$ and $\varphi$ be as specified above. 
 Assume that we are given numbers satisfying
\begin{equation}
\label{eq:inv.0}
m_- +\max\left\{\int_0^1 \max_X G_t dt,0\right\} \le m_-' \le m_+' \le m_+,
\end{equation}
 and that there is an equality

\[ CF_*^{\mathfrak{p}}(L_0,L_1;0;\varphi)=CF_*^{\mathfrak{p}}(L_0,L_1;0;\varphi)_{m_-'}^{m_+'},\]
 i.e.~all generators have action contained in the range 
 $[m_-',m_+').$ 
 Under the additional assumptions that
\begin{eqnarray}
\label{eq:inv.1} && \max\{m_+ -m_--\hbar,m_+-\hbar\} < 0, \\
\label{eq:inv.2} && \max\{m_+'-m_-'-\hbar,m_+'-\hbar\} < -\|G_t\|_{\OP{osc}},\\
\label{eq:inv.3} && \max\{m_+-m_-'-\hbar,m_+-\hbar\}< -\int_0^1 \max_X G_t dt,\\
\label{eq:inv.4} && \max\{m_+'-m_--\hbar,m_+'-\hbar\}< \int_0^1 \min_X G_t dt,
\end{eqnarray}
are satisfied, there are continuation maps
\begin{eqnarray*}
\Phi_{0,G_t}\colon CF_*^{\mathfrak{p}}(L_0,L_1;0;\varphi) \rightarrow CF_*^{\mathfrak{p}}(L_0,L_1;G_t;\varphi)_{m_-}^{m_+},\\
\Phi_{G_t,0}\colon CF_*^{\mathfrak{p}}(L_0,L_1;G_t;\varphi)_{m_-}^{m_+} \rightarrow CF_*^{\mathfrak{p}}(L_0,L_1;0;\varphi),
\end{eqnarray*}
which are chain maps whose composition admits a chain homotopy
\[ \Phi_{G_t,0}\circ \Phi_{0,G_t} \sim \id_{CF_*^{\mathfrak{p}}(L_0,L_1;0)}\]
making $\Phi_{G_t,0}$ a left-sided homotopy inverse of $\Phi_{0,G_t}.$
\end{prp}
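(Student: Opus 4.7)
The plan is to set up standard filtered Floer continuation maps, where each of the four inequalities (\ref{eq:inv.1})--(\ref{eq:inv.4}) is calibrated precisely to invoke one of the four cases of the non-bubbling Theorem \ref{thm:compactness}: case $(0)$ for the two differentials, cases $(2)$ and $(3)$ for the two continuation maps, and case $(1)$ for the chain homotopy. The remaining hypothesis (\ref{eq:inv.0}) serves the separate role of ensuring that $\Phi_{0,G_t}$ respects the action window $[m_-,m_+]$.

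First I would define $\Phi_{0,G_t}$ by $\Z_2$-counting rigid elements of $\mathcal{M}_{p_+,p_-}(L_0,L_1;\rho_+(s)G_t)$ for a non-decreasing cutoff $\rho_+\colon \R\to[0,1]$ with $\rho_+(s)=0$ for $s\ll 0$ and $\rho_+(s)=1$ for $s\gg 0$, restricting the sum to output chords with $\mathfrak{a}_\varphi(p_+) \in [m_-,m_+]$. By Lemma \ref{lma:action}(2), any contributing strip satisfies
\[ \mathfrak{a}_\varphi(p_+) \ge \mathfrak{a}_\varphi(p_-) - \int_0^1 \max_X G_t\, dt \ge m_-' - \int_0^1 \max_X G_t\, dt \ge m_-, \]
where the last inequality is exactly (\ref{eq:inv.0}); thus the lower filtration bound on the image is automatic. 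Finiteness of the count follows from case (2) of Theorem \ref{thm:compactness} applied with $\mathfrak{a}_\varphi(p_+) \le m_+$ and $\mathfrak{a}_\varphi(p_-) \ge m_-'$, which reduces precisely to (\ref{eq:inv.3}). Symmetrically, $\Phi_{G_t,0}$ uses a non-increasing cutoff $\rho_-$; its image lies automatically in the unrestricted complex $CF^{\mathfrak{p}}_*(L_0,L_1;0;\varphi)$ by the hypothesis, and non-bubbling is case $(3)$ of Theorem \ref{thm:compactness} via (\ref{eq:inv.4}).

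The chain-map property of both maps is the usual analysis of the boundary of one-dimensional moduli spaces: each breaking is a Floer strip (contributing to one of the differentials) glued to a rigid continuation strip. Action considerations force the intermediate chord into the appropriate filtered window in every case; for instance in the $\Phi_{0,G_t}$ direction Lemma \ref{lma:action}(2) gives a lower bound $\ge m_-$ on any intermediate $G_t$-chord, while the action-increasing property of $d_{G_t}$ on chords with $\mathfrak{a}_\varphi(p_+)\le m_+$ forces it to stay under $m_+$. Non-bubbling of the Floer factors is case $(0)$ of Theorem \ref{thm:compactness}, invoked via (\ref{eq:inv.1}) for strips in the $G_t$-complex and via the trivial consequence $\max\{m_+'-m_-'-\hbar,m_+'-\hbar\}<0$ of (\ref{eq:inv.2}) (using $\|G_t\|_{\OP{osc}}\ge 0$) for strips in the $0$-complex; non-bubbling of the continuation factors reuses (\ref{eq:inv.3}) or (\ref{eq:inv.4}).

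For the chain homotopy $\Phi_{G_t,0}\circ \Phi_{0,G_t} \sim \id$ I would introduce a one-parameter family $\{\rho_\kappa\}_{\kappa \in [0,\infty)}$ of cutoffs with $\rho_\kappa(s)=0$ for $|s|\gg 0$, interpolating between the trivial $\rho_0\equiv 0$ and, for $\kappa\to\infty$, a wide bump whose left and right halves reparametrise to $\rho_+$ and $\rho_-$. The $\Z_2$-count of rigid elements in $\bigsqcup_\kappa \mathcal{M}_{p_+,p_-}(L_0,L_1;\rho_\kappa G_t)$, with both $p_\pm$ intersections in $CF^{\mathfrak{p}}_*(L_0,L_1;0;\varphi)$, defines $H$. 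Because both asymptotes have action in $[m_-',m_+']$, case $(1)$ of Theorem \ref{thm:compactness} applies uniformly in $\kappa$ via hypothesis (\ref{eq:inv.2}). The boundary of the compactified one-dimensional family consists of (a) Floer breakings at either end, contributing $dH+Hd$; (b) the $\kappa\to\infty$ stratum of broken continuation pairs, contributing $\Phi_{G_t,0}\circ \Phi_{0,G_t}$; and (c) the $\kappa=0$ stratum of degenerate constant solutions, which by the standard identification with intersection points contributes $\id$. The primary technical obstacle is arranging simultaneous transversality across the four moduli problems and the parameter family, handled by Proposition \ref{prp:Transversality} via sufficiently small perturbations of the admissible $J_{s,t}$ in small neighborhoods of the Hamiltonian chords; no analytic input beyond Theorem \ref{thm:compactness} is required.
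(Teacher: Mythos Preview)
Your proposal is correct and follows essentially the same approach as the paper's proof: both define the continuation maps and chain homotopy via families $\rho(s)G_t$, invoke the four cases of Theorem~\ref{thm:compactness} matched to inequalities \eqref{eq:inv.1}--\eqref{eq:inv.4} exactly as you describe, use Lemma~\ref{lma:action} together with \eqref{eq:inv.0} to control the action windows, and defer the remaining compactness and gluing to standard references. If anything, you are more explicit than the paper in spelling out which hypothesis feeds which case of the non-bubbling theorem.
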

\begin{proof}
The well-definedness of the complexes is implied by Proposition \ref{prp:welldefcomplex} above, which shows that the non-bubbling result Theorem \ref{thm:compactness} applies to the Floer strips defining the boundary operators as well as its squares. We need to show that the moduli spaces of strips involved in the chain maps and chain homotopies, as well as the different glued strips appearing in the algebraic relations, all satisfy the corresponding hypotheses of Theorem \ref{thm:compactness}. 
In other words, we need to consider the moduli spaces arising in the definitions of the operations
\begin{itemize}
\item $\Phi_{0,G_t},$ $\Phi_{0,G_t} \circ \partial,$ $\partial \circ \Phi_{0,G_t},$
\item $\Phi_{G_t,0},$ $\Phi_{G_t,0} \circ \partial,$ $\partial \circ \Phi_{G_t,0},$
\item $K_{0,G_t,0},$ $\Phi_{G_t,0} \circ \Phi_{0,G_t}$, $K_{0,G_t,0} \circ \partial$, $\partial \circ K_{0,G_t,0}$.
\end{itemize}
Here $K_{0,G_t,0}$ is the chain homotopy defined by counting rigid Floer strips with a Hamiltonian perturbation term $\rho_\kappa(s)G_t$ for an appropriate one-parameter family of compactly supported functions $\rho_\kappa(s);$ see e.g.~\cite[Chapter 19.4]{OhSympTop2} for more details.

These operations are all solutions of Cauchy-Riemann equations with a perturbation term coming from a family of Hamiltonians of the form $G_{s,t}:=\rho(s)G_t.$ Here, $0\le\rho(s)\le 1$ satisfies the assumptions of Lemma \ref{lma:action}. To show these solution strips satisfy the conditions of Theorem \ref{thm:compactness}, we use the assumption that Inequalities \eqref{eq:inv.0} and \eqref{eq:inv.1} are satisfied (for the condition in Case (0)), together with \eqref{eq:inv.2} (for Case (1)), \eqref{eq:inv.3} (for Case (2)), and \eqref{eq:inv.4} (for Case (3)).


Since we thus have established that the strips needed to define the above maps are contained in some fixed compact subset, the rest of the argument is standard. We refer to \cite[Sections 19.3, 19.5]{OhSympTop2} for the compactness and gluing argument needed for obtaining the sought algebraic relations satisfied by these maps. Verifying these relations consists of analyzing the boundary of one-dimensional moduli spaces of strips. Here we note that only generators in the specified action ranges are involved in the broken strips arising as boundary points of the relevant one-dimensional moduli spaces. Here it is crucial to use the facts that
\begin{itemize}
\item by Lemma \ref{lma:varphienergy} the differential cannot decrease action, while,
\item by Case (2) of Lemma \ref{lma:action} (with $U=\emptyset$) the chain map $\Phi_{0,G_t}$ can decrease action by at most $\max\left\{\int_0^1 \max_X G_t dt,0\right\},$
\end{itemize}
in combination with Inequalities \eqref{eq:inv.0} together with the assumption that all the generators of the complex $ CF_*^{\mathfrak{p}}(L_0,L_1;0;\varphi)=CF_*^{\mathfrak{p}}(L_0,L_1;0;\varphi)_{m_-'}^{m_+'}$ have action in the range $[m_-',m_+' )$ by assumption.
\end{proof}

As a direct application of Lemma \ref{lma:action}, the chain map in the above proposition can be shown to satisfy the following behavior with respect to the action filtration.

\begin{prp}[Filtration properties]
\label{prp:filtrationproperties}
Assume that the hypotheses of Proposition \ref{prp:invariance} are satisfied for \emph{some} choice of $\varphi \in \mathcal{C}(L_0,L_1,G_t)$. Then, for an arbitrary, and possibly different, choice of $\widetilde{\varphi} \in \mathcal{C}(L_0,L_1,G_t),$ the following can be said:
\begin{enumerate}
\item If $\langle \Phi_{0,G_t}(p_-),p_+ \rangle \neq 0$ holds for generators 
$$ p_- \in CF_*^{\mathfrak{p}}(L_0,L_1;0;\varphi)=CF_*^{\mathfrak{p}}(L_0,L_1;0;\varphi)_{m_-'}^{m_+'} \:\:\: \text{and} \:\:\: p_+ \in CF_*^{\mathfrak{p}}(L_0,L_1;G_t;\varphi)_{m_-}^{m_+},$$
then
$$ \mathfrak{a}_{\widetilde{\varphi}}(p_+) > \mathfrak{a}_{\widetilde{\varphi}}(p_-) - \int_0^1 \max_X G_t dt$$
is satisfied.
\item If $\langle \Phi_{G_t,0}(q_-),q_+\rangle \neq 0$ holds for generators
$$ q_- \in CF_*^{\mathfrak{p}}(L_0,L_1;G_t;\varphi)_{m_-}^{m_+} \:\:\: \text{and} \:\:\: q_+ \in CF_*^{\mathfrak{p}}(L_0,L_1;0;\varphi)=CF_*^{\mathfrak{p}}(L_0,L_1;0;\varphi)_{m_-'}^{m_+'} $$
then
$$ \mathfrak{a}_{\widetilde{\varphi}}(q_+) > \mathfrak{a}_{\widetilde{\varphi}}(q_-) +\int_0^1 \min_X G_t dt$$
is satisfied.
\end{enumerate}
\end{prp}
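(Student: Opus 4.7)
The strategy is to directly re-run the proof of Lemma \ref{lma:action} but replacing $\varphi$ by the (possibly different) $\widetilde{\varphi}$. The key conceptual point is that, while the chain maps $\Phi_{0,G_t}$ and $\Phi_{G_t,0}$ are constructed using an almost complex structure $J_{s,t} \in \mathcal{J}(X,d\eta,\varphi)$ tailored to the specific $\varphi$ appearing in the hypotheses of Proposition~\ref{prp:invariance}, the subsequent \emph{action estimates} depend only on topological/Stokes computations plus the non-negativity of Floer energy, and the latter is in fact valid for any admissible one-form coming from $\mathcal{C}(L_0,L_1,G_t)$.

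First I would unpack the hypothesis: if $\langle \Phi_{0,G_t}(p_-),p_+\rangle \neq 0$, then there exists a $J_{s,t}$-holomorphic strip $u \in \mathcal{M}_{p_+,p_-}(L_0,L_1;G_{s,t})$ with $G_{s,t} = \rho(s)G_t$ and $\rho$ monotonically increasing from $0$ to $1$ (case (2) of Lemma~\ref{lma:action}); analogously for $\Phi_{G_t,0}$ with $\rho$ decreasing from $1$ to $0$ (case (3)).

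Next, the main observation: the proof of Lemma~\ref{lma:action} uses $\varphi$ only through two of its defining properties in $\mathcal{C}(L_0,L_1,G_t)$: (a) condition (2) of Definition~\ref{dfn:varphi}, which guarantees $d(\varphi\eta) \ge 0$ on $d\eta$-symplectic two-planes and hence $E_{d(\varphi\eta),J_{s,t}}(u)\ge 0$ (since $J_{s,t}$ is tamed by $d\eta$, independently of which $\varphi$ it was designed for); and (b) condition (5), $\varphi \equiv 1$ on $\mathrm{supp}(G_t)$, used in the identity $d(\varphi\eta)(\,\cdot\,,-X_{G_t})=-\rho(s)\,dG_t(\,\cdot\,)$. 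Both properties hold for \emph{any} $\widetilde{\varphi} \in \mathcal{C}(L_0,L_1,G_t)$, and neither requires $J_{s,t}$ to be admissible with respect to $\widetilde{\varphi}$. Therefore the same chain of equalities in the proof of Lemma~\ref{lma:action}, together with formula \eqref{eq:sympen}, yields when applied to $u$ with $\widetilde{\varphi}$ and $U = \emptyset$:
\[
0 \le E_{d(\widetilde{\varphi}\eta),J_{s,t}}(u) \le \mathfrak{a}_{\widetilde{\varphi}}(p_+)-\mathfrak{a}_{\widetilde{\varphi}}(p_-)+\int_0^1 \max_X G_t\, dt
\]
in case (2), and the mirror estimate with $-\int_0^1\min_X G_t\, dt$ in case (3). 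Rearranging produces the two inequalities of the proposition.

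The only remaining point is the strict inequality. I would obtain it by noting that a non-constant strip $u$ realizing the non-vanishing matrix coefficient must pass through a region where $d(\widetilde{\varphi}\eta)$ is non-degenerate and $J_{s,t}$-tamed --- for instance, near a Hamiltonian chord endpoint, or anywhere in $\mathrm{supp}(G_t)$, since there $\widetilde{\varphi}\equiv 1$ forces $d(\widetilde{\varphi}\eta)=d\eta$. On such a region the integrand $d(\widetilde{\varphi}\eta)(\partial_s u, J_{s,t}\partial_s u)$ is strictly positive wherever $\partial_s u \neq 0$; since $u$ is non-constant this yields $E_{d(\widetilde{\varphi}\eta),J_{s,t}}(u) > 0$, upgrading ``$\ge$'' to ``$>$''. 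I expect this last step to be the only mild subtlety, since one must check that every strip contributing to the chain maps genuinely enters such a non-degenerate region --- but this is automatic from the asymptotic conditions of the strip, which force it to approach at least one chord lying either in $\mathrm{supp}(G_t)$ or in the ``middle" part $\overline{X}$ where $\widetilde{\varphi}$ is a positive constant.
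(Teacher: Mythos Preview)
Your proposal is correct and follows the same approach as the paper, which simply states that the proposition is ``a direct application of Lemma~\ref{lma:action}'' without giving further details. Your write-up is in fact more careful than the paper's, as you explicitly isolate the two properties of Definition~\ref{dfn:varphi} (conditions (2) and (5)) that make the energy estimate go through for an arbitrary $\widetilde{\varphi}\in\mathcal{C}(L_0,L_1,G_t)$ independently of the admissibility of $J_{s,t}$ with respect to $\widetilde{\varphi}$, and you also justify the strict inequality.
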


Consider a vector subspace
$$ C_*^\lambda \subset CF_*^{\mathfrak{p}}(L_0,L^\lambda_1;G_t;\varphi) $$
spanned by generators which all are intersection points contained in the complement of the support of the Hamiltonian isotopy $L^\lambda_1,$ $\lambda \in [0,1].$ Note that these intersection points are transverse for all $\lambda$ and, hence, $C_*^\lambda$ are all canonically isomorphic as vector spaces. Consider a compactly supported family $J_t^\lambda,$ $\lambda \in [0,1],$ of admissible $t$-dependent almost complex structures. Assume that any two generators $p,q \in C_*^\lambda$ satisfy
$$\max\{\mathfrak{a}_\varphi(p) - \mathfrak{a}_\varphi(q)-\hbar,\mathfrak{a}_\varphi(p)-\hbar\}<0.$$
Then, by Case (0) of Theorem \ref{thm:compactness}, any Floer strip $u \in \mathcal{M}_{p,q}(L_0,L_1^\lambda;G_t)$ is contained inside a fixed compact subset $K \subset X.$
\begin{prp}(Invariance via bifurcation analysis)
\label{prp:bifurcation}
 
In the above setting, under the additional assumption that there exists no Floer strip $u \in \mathcal{M}_{p,x}(L_0,L_1^\lambda;G_t)$ whose input is a generator $p \in C_*^\lambda$ and output some generator $x \in CF_*^{\mathfrak{p}}(L_0,L^\lambda_1;G_t;\varphi) \setminus C_*^\lambda$ for any $t \in [0,1]$ then:
\begin{enumerate}
\item Each of $C_*^i,$ $i=0,1,$ is a well-defined Floer subcomplex when $J^i_t$ is a generic admissible almost complex structure; and
\item The subcomplexes $C_*^0$ and $C_*^1$ are chain homotopy equivalent.
\end{enumerate}
\end{prp}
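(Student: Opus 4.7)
For part (1) the argument is essentially immediate from the hypotheses. By Case (0) of Theorem \ref{thm:compactness}, the action bound assumption on pairs $p,q \in C_*^i$ confines every relevant zero- and one-dimensional moduli space $\mathcal{M}_{p,q}(L_0, L_1^i; G_t)$ to a fixed compact subset $K \subset X$. Transversality at a generic $J_t^i \in \mathcal{J}(X,d\eta,\varphi)$ is supplied by Proposition \ref{prp:Transversality}. Since by hypothesis no Floer strip has one endpoint in $C_*^i$ and the other in $CF_*^{\mathfrak{p}}(L_0, L_1^i; G_t; \varphi) \setminus C_*^i$, the Floer differential preserves $C_*^i$, and the standard analysis of the ends of one-dimensional moduli spaces restricted to $C_*^i$ yields $d^2 = 0$ as in Proposition \ref{prp:welldefcomplex}.

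For part (2), I would use a bifurcation analysis along $\lambda \in [0,1]$. The crucial feature of the setup is that the generators of $C_*^\lambda$ lie in the complement of the support of the Hamiltonian isotopy, so the underlying graded $\Z_2$-vector space is canonically identified for all $\lambda \in [0,1]$; only the differential $d^\lambda$ depends on $\lambda$. For a generic one-parameter family $J_t^\lambda$, the parametric moduli space
\[
\mathcal{M}^{\OP{par}}_{p,q} := \bigcup_{\lambda \in [0,1]} \mathcal{M}_{p,q}(L_0, L_1^\lambda; J_t^\lambda, G_t) \times \{\lambda\}, \quad p,q \in C_*^\lambda,
\]
is cut out transversely, and Case (0) of Theorem \ref{thm:compactness} applied uniformly in $\lambda$ guarantees that every component of expected dimension $\le 1$ is contained inside a fixed compact $K \subset X$. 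A standard argument (as in \cite[Sections 19.3, 19.5]{OhSympTop2}) now shows there are only finitely many parameter values $\lambda_1 < \cdots < \lambda_N$ at which $d^\lambda$ changes. Between these values $d^\lambda$ is constant as a matrix in the fixed basis, while at each $\lambda_i$ a single index $-1$ strip appears as an isolated point of $\mathcal{M}^{\OP{par}}$, producing a handle-slide that realizes an elementary chain isomorphism $(C_*^{\lambda_i - \varepsilon}, d^{\lambda_i-\varepsilon}) \simeq (C_*^{\lambda_i + \varepsilon}, d^{\lambda_i+\varepsilon})$. Composing these $N$ isomorphisms yields the desired equivalence $C_*^0 \simeq C_*^1$. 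Birth-death bifurcations are automatically excluded because the generators of $C_*^\lambda$ are fixed points of $X$ and cannot degenerate.

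The main obstacle, and the reason the hypothesis is essential, is that the usual bifurcation argument assumes the \emph{full} ambient Floer complex $CF_*^{\mathfrak{p}}(L_0, L_1^\lambda; G_t; \varphi)$ is well-defined throughout the family, which one cannot guarantee from our data. Here the assumption that no Floer strip has one endpoint in $C_*^\lambda$ and the other in its complement, for \emph{every} $\lambda \in [0,1]$, is exactly what is needed: it ensures that the parametric moduli space of strips with both endpoints in $C_*^\lambda$ has no boundary arising from strips leaking in or out, so the bifurcation analysis takes place entirely within $C_*^\lambda$. Verifying carefully that this ``closedness'' of the parametric moduli space under $\lambda$-deformation is preserved at the handle-slide parameter values $\lambda_i$ — i.e. that the incoming and outgoing handle-slide strips really only involve generators of $C_*^\lambda$ — is the technical heart of the argument.
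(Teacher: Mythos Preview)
Your proposal is correct and follows essentially the same approach as the paper. The paper's own proof is much terser: for (1) it simply cites Proposition~\ref{prp:welldefcomplex}, and for (2) it invokes Floer's bifurcation sketch made rigorous in \cite{Sullivan} and \cite{YJLee}, observing exactly as you do that births/deaths are absent, that the relevant $(-1)$-strips are confined to a fixed compact set by Case~(0) of Theorem~\ref{thm:compactness}, and that the no-crossing hypothesis makes the handle-slide maps well-defined on $C_*^\lambda$; your write-up simply unpacks these references in greater detail.
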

\begin{proof}
(1): This is a direct consequence of Proposition \ref{prp:welldefcomplex}.

(2): Floer's original sketch of invariance of Lagrangian Floer theory under Hamiltonian isotopies used bifurcation analysis: as the geometric data changes, the Floer complex changes by stabilizations (births and deaths of pairs of Lagrangian intersections) and handle-slides (the presence of isolated index $-1$ pseudoholomorphic strips in one-parameter families of such) \cite{Floer:MorseTheoryLagrangian}. This sketch was made rigorous in \cite{Sullivan} for Lagrangian Floer theory, as well as in \cite{YJLee} for Hamiltonian Floer theory.

By assumption, there are neither births nor deaths occurring in our family. Moreover, all relevant $(-1)$-strips involved in the definition of the handle-slide maps are all contained in a fixed compact subset. Together with the assumption on the non-existence of the prescribed strips, the compactness and gluing arguments from \cite{Sullivan} again show that the induced algebraic handle-slide maps are well-defined chain isomorphisms of the complexes $C_*^\lambda.$
\end{proof}

The typical situation when the above proposition can be applied is when an energy computation (possibly using an action $\mathfrak{a}_{\widetilde{\varphi}}$ defined with a different $\widetilde{\varphi}$) prevents the existence of the unwanted Floer strips.

\subsection{Naturality}
\label{sec:Naturality}

It will be useful to switch perspectives between Floer complexes defined in terms of intersection points of Lagrangian submanifolds (e.g.~for the neck stretching construction in Section \ref{sec:neckstretching}) and in terms of Hamiltonian chords (e.g.~for defining continuation maps). The following naturality property provides a translation between these two definitions.

 Note that if $L_0^\lambda$ is an exact Lagrangian isotopy, the components $\pi_0(\Pi(X;L_0^\lambda,L_1))$ are canonically identified as $\lambda$ varies. For instance, one can consider the naturally constructed lifts of the paths to the contactization $(X \times \R, dz+\eta),$ such that the endpoints of the paths lie on the Legendrian lifts of $L_0^\lambda$ and $L_1.$ Choosing the $z$--coordinate of the former to be sufficiently large compared to the one of the latter, these Legendrian lifts stay disjoint during the entire lifted isotopy.
\begin{lma}
Let $L_0,L_1 \subset (X,d\eta)$ be exact Lagrangian cobordisms, fix a function $\varphi$ as in Section \ref{sec:action}, and let $G_t \colon X \to \R$ be a Hamiltonian. Fix a time-dependent tame almost complex structure $J_t$ on $(X,d\eta)$. Using action conventions in the same section, there is a canonical action-preserving chain complex isomorphism
\[ CF_*(\phi^1_{G_t}(L_0),L_1;0) \simeq CF_*(L_0,L_1;G_t),\]
where the former complex is defined using $J_t$ and the latter complex is defined using
\[ \widetilde{J}_t := D(\phi^1_{G_t} \circ (\phi^t_{G_t})^{-1})^{-1} \circ J_t \circ D(\phi^1_{G_t}\circ (\phi^t_{G_t})^{-1}),\]
and where an intersection point $p \in \phi^1_{G_t}(L_0) \cap L_1$ is identified with the Hamiltonian chord $t \mapsto \phi^t_{G_t}((\phi^1_{G_t})^{-1}(p))$ from $(\phi^1_{G_t})^{-1}(p) \in L_0$ to $p \in L_1.$

 When $\phi^t_{G_t}$ has compact support, this isomorphism moreover preserves homotopy classes of paths under the previously mentioned canonical identification $\pi_0(\Pi(X;\phi^1_{G_t}(L_0),L_1)) \cong \pi_0(\Pi(X;L_0,L_1))$.

\end{lma}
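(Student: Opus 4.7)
The plan is to write down the geometric bijection on generators, verify action preservation via a standard primitive-transport computation, and then show that the two Cauchy--Riemann equations transform into one another under the corresponding change of variables on the strip.

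First I would set $\Phi_t := \phi^1_{G_t}\circ(\phi^t_{G_t})^{-1},$ so that $\Phi_0=\phi^1_{G_t}$ and $\Phi_1=\id.$ The bijection on generators is then the one stated in the lemma: an intersection point $p \in \phi^1_{G_t}(L_0)\cap L_1$ corresponds to the Hamiltonian chord $p(t):=\phi^t_{G_t}((\phi^1_{G_t})^{-1}(p)),$ which starts at $x:=(\phi^1_{G_t})^{-1}(p)\in L_0$ and terminates at $p\in L_1.$

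Next I would verify action preservation. Since $G_t$ is compactly supported and, by Definition \ref{dfn:varphi}(5), $\varphi\equiv 1$ on the support of $G_t,$ the standard formula of Lagrangian Floer theory transports the primitive $f^\varphi_0$ of $(\varphi\eta)|_{L_0}$ to the primitive
\[ f^\varphi_{\phi^1_{G_t}(L_0)}(\phi^1_{G_t}(x)) := f^\varphi_0(x)+\int_0^1\bigl(\eta(\dot\phi^t_{G_t}(x))-G_t(\phi^t_{G_t}(x))\bigr)\,dt\]
of $(\varphi\eta)|_{\phi^1_{G_t}(L_0)}.$ Because $\phi^1_{G_t}$ is the identity outside a compact subset, this primitive also vanishes at the negative end, so it is the one used to compute the action of the intersection point $p=\phi^1_{G_t}(x)$ in $CF_*(\phi^1_{G_t}(L_0),L_1;0).$ Subtracting $f^\varphi_1(p)$ from both sides recovers the chord action $\mathfrak{a}_\varphi$ from Section \ref{sec:action} verbatim.

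Third, I would establish the bijection on Floer trajectories via the pointwise ansatz $u(s,t)=\Phi_t(v(s,t)),$ under which the boundary and asymptotic conditions translate in the evident way. The main computation differentiates $u=\Phi_t(v)$ in $s$ and $t,$ using the identity $\partial_t\Phi_t|_y = -D\Phi_t|_y \cdot X_{G_t}(y)$ (obtained by differentiating $\phi^t_{G_t}\circ(\phi^t_{G_t})^{-1}=\id$ in $t$), and gives
\[ \partial_s u + J_t\,\partial_t u \;=\; D\Phi_t\bigl(\partial_s v + \widetilde{J}_t(\partial_t v - X_{G_t}(v))\bigr),\]
with $\widetilde{J}_t=D\Phi_t^{-1}\circ J_t\circ D\Phi_t$ as in the statement. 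Thus $u$ is $J_t$-holomorphic with boundary in $(\phi^1_{G_t}(L_0),L_1)$ if and only if $v$ solves the $\widetilde{J}_t$-perturbed Cauchy--Riemann equation with boundary in $(L_0,L_1)$ and Hamiltonian perturbation $G_t.$ This correspondence is canonical and commutes with the $\R$-translation in the $s$-variable, so the two differentials correspond generator-by-generator, producing the required action-preserving chain isomorphism. The only real technical point is the primitive transport, and it simplifies considerably thanks to $\varphi\equiv 1$ on $\mathrm{supp}(G_t).$
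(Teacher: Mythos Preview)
Your proposal is correct and follows essentially the same approach as the paper: both use the conjugation $u=\Phi_t(v)$ with $\Phi_t=\phi^1_{G_t}\circ(\phi^t_{G_t})^{-1}$ to match the two Cauchy--Riemann equations, and both obtain the action identity by transporting the primitive via Cartan's formula (your displayed primitive-transport expression is exactly the integrated version of the paper's $d(\phi^t_{G_t})^*(\iota_{X_{G_t}}\eta-G_t)$). If anything, your write-up is more explicit than the paper's terse sketch, and your observation that $\varphi\equiv1$ on $\OP{supp}(G_t)$ justifies why the computation with $\eta$ suffices for $\mathfrak{a}_\varphi$ is a point the paper leaves implicit.
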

\begin{proof}
For the differential is suffices to check that the pseudoholomorphic strips $u(s,t)$ in the definition of the differential of $CF_*(\phi^1_{G_t}(L_0),L_1;0)$, i.e.~satisfying the Cauchy--Riemann equation
$$du(\partial_s)+J_tdu(\partial_t)=0,$$
correspond bijectively to the solutions $\widetilde{u}(s,t):=(\phi^1_{G_t} \circ (\phi^t_{G_t})^{-1})^{-1}\circ u(s,t)$ of 
$$d\widetilde{u}(\partial_s)+\widetilde{J}_t(d\widetilde{u}(\partial_t)-X_{G_t}(\widetilde{u}(s,t)))=0,$$
i.e.~the Floer strips in the definition of the differential of $CF_*(L_0,L_1;G_t).$

The fact that the two definitions of action coincide follows from Cartan's formula. Namely, the differential of $(\phi^t_{G_t})^*\eta$ with respect to the variable $t$ is given by
$$ (\phi^t_{G_t})^*(d\iota_{X_{G_t}}\eta+\iota_{X_{G_t}} d\eta)=d(\phi^t_{G_t})^*(\iota_{X_{G_t}}\eta-G_t).$$
Hence, the primitive of $\eta$ pulled back to $\phi^1_{G_t}(L_0)$ can be obtained from the primitive of $\eta$ pulled back to $L_0$ after adding an integral of $(\phi^t_{G_t})^*(\iota_{X_{G_t}}\eta-G_t)$. This shows that the different versions of the action defined for the intersection points and the corresponding Hamiltonian chords actually coincide; roughly speaking, the latter integral provides the additional terms in the definition of the action for the Hamiltonian chords.
\end{proof}

\subsection{A note about the non-compact case}
\label{sec:noncomp}
There are also interesting cases where the contact manifold is non-compact. We restrict attention to the following situation. Suppose that $(P,d\theta)$ is a Liouville manifold, i.e.~a complete exact symplectic null-cobordism with a convex but no concave end. We will consider contact manifolds of the form $Y=P \times \R$ or $Y=P \times S^1.$ The canonical canonical contact form on the latter is given by $\alpha_{\OP{std}}:=dz+\theta$, where $z$ is either a coordinate on the $\R$-factor or the angular coordinate on the $S^1$-factor. Note that the canonical contact form on $(J^1M,dz+\theta_M)$ on the one-jet space of a smooth manifold $M$ is of the former type, where $\theta_M$ is the tautological one-form on the cotangent bundle $T^*M.$

In this setting we only consider contact forms $\alpha$ that coincide with $\alpha_{\OP{std}}$ outside of a compact subset. Moreover, the symplectic cobordisms that we allow will all be required to be of the form
\[ (\overline{X},d\eta)=(\{f \le r \le g\} \subset \R \times Y, d(e^r\alpha)).\]
 Here $r$ is the symplectization coordinate on the $\R$-factor. The functions $f,g\colon Y=P \times \R \to \R$ satisfy $f(y) < g(y)$ for each $y \in Y,$ and are both constant outside of a compact subset. Note that, after a suitable identification, the corresponding completed symplectic cobordism again is of the form $(\R \times Y,d\widetilde{\eta}),$ where
\begin{itemize}
\item $\widetilde{\eta}=e^re^f\alpha$ for $r \ll 0,$ while
\item $\widetilde{\eta}=e^re^g\alpha$ for $r \gg 0.$
\end{itemize}

Due to the (additional) non-compactness in this case, some extra care must be taken when one studies pseudoholomorphic curves in the symplectization in order for the compactness results to apply. We refer to e.g.~\cite[Lemma 4.1]{OnHomologicalRigidity} for a careful treatment of such an argument. The basic idea is to use a fixed cylindrical almost complex structure outside of a compact subset of $(X=\R \times Y,d\widetilde{\eta}),$ which moreover makes the canonical projection $X \to P$ into a pseudoholomorphic map; i.e.~the cylindrical almost complex structure is the lift of an almost complex structure on $(P,d\theta)$. Here we must use the fact that $\alpha = \alpha_{\OP{std}}$ is satisfied outside of a compact subset. The main point in this case is the following. As is established by the aforementioned lemma, all pseudoholomorphic curves with a given bound on the energy are contained inside some a priori given subset of the form $\R \times K \subset \R \times Y =X,$ where $K \subset Y$ is compact.

\section{Proof of Theorem \ref{thm:compactness}}
\label{sec:CompactnessProof}

In this section we prove Theorem \ref{thm:compactness}.
Compactness results in symplectizations have been proved in a number of different set-ups, see \cite{CompAbbas, CompSFT, CieliebakMohnke, Fish} for example.
However, to our knowledge, existing symplectizations results do not explicitly prove compactness for the set of solutions to Equation (\ref{eq:strip}). Rather than extending the full SFT compactness results to our set-up, we prove Proposition \ref{prp:ChordExistence}; this is a simpler result which recovers a certain Reeb chord or orbit, and implies Theorem \ref{thm:compactness}. 

Our argument to prove Proposition \ref{prp:ChordExistence} is essentially a relative (Lagrangian boundary condition) version of \cite[Sections 5 and 6]{Orderability}, which is turn almost entirely relies on \cite[Section 5]{CieliebakMohnke}.
Since \cite{Orderability} is closer to our set-up than \cite{CieliebakMohnke}, we make precise references to \cite{Orderability}.
The interested reader can then use \cite{Orderability} to see which specific result is relevant from the original source \cite{CieliebakMohnke}.

We set-up some notation to be used throughout this section. In the proof of Theorem \ref{thm:compactness} below, we will be interested in studying a sequence $\tilde{u}_k$ of pseudoholomorphic strips with Hamiltonian perturbation term induced by $G_{s,t},$ with fixed positive and negative punctures asymptotic to Hamiltonian chords $p_+$ and $p_-$ from $L_0$ to $L_1,$ respectively. The Hamiltonian $G_{s,t}$ is assumed to have support contained outside of the concave 
 and convex ends, 
and hence both asymptotics satisfy $p_\pm \subset \overline{X}.$ Both chords are, moreover, assumed to live in the component $\mathfrak{p} \in \pi_0(\Pi(X,L_0,L_1)).$

We will restrict these strips to the negative symplectic end $u_k := \tilde{u}_k|_{\tilde{u}_k^{-1}({\{ r \le {\kappa}\} \times Y})},$ where $\kappa<-N-1< -N-\epsilon,$ independent of $k,$ is a regular value of the projections in the $\R$-coordinate for $\tilde{u}_k.$
The terms $N \gg 1 \gg \epsilon>0$ will be defined in the construction of our Lagrangians in Section \ref{sec:PushOff}.
In particular, we will assume for these restrictions $u_k$ that the Hamiltonian term in (\ref{eq:strip}) vanishes
and that the Lagrangian boundary conditions $L_0,L_1$ are cylindrical 
$\{ r \le {\kappa}\} \times \Lambda_0^-, \{ r \le {\kappa}\} \times \Lambda_1^-.$
Let
\begin{equation}
 \label{eq:Z_k}
Z_k := \tilde{u}_k^{-1}({\{ r \le {\kappa}\} \times Y}) \subset \R \times [0,1]
\end{equation}
denote the domain of $u_k,$ which is a Riemann
surface due to the regularity assumption on $\kappa.$ 
Write $u_k = (a_k, f_k)$ where $a_k:Z_k \rightarrow \R$ is the projection of $u_k$ to the $r$-coordinate.

Throughout this section, $J$ (or $J_k$) denotes an almost complex 
 structure 
considered in Section \ref{sec:AdmissibleACS}, restricted to this negative end.
Since for any $u_k$ that we consider the image is compact ( although not a priori uniformly in $k$), the Hofer energy
\begin{equation}
\label{eq:HoferEnergyE}
E(u_k) := \sup_{\{\nu \in C^\infty(\R, [0,1]) \,|\, \nu' \ge 0\}}
\int u_k^*d(\nu \eta) < E = E( p_-, p_+) < \infty.
\end{equation}
can be seen to be bounded by a function of the asymptotics $ p_\pm$ only; see e.g.~\cite[Proposition 3.6]{Cthulhu}.

We begin with relative versions of the Monotonicity Lemma, the Maximum Principle, and the Conformal Modulus,
adopting as much as possible the notation of the absolute versions given in \cite{Orderability}.
Let $g_J$ be the metric on $\R \times Y$ defined by $J$ and the symplectic form.
Let $B_{g_J}(\mbox{center}; \mbox{radius})$ denote a ball defined with that metric.
\begin{lma}[Monotonicity]
\label{lma:monotonicity}
There exists constants $C, 1 > 10 \epsilon >0$ such that for all $0 < \delta < \epsilon,$ for all $J$-holomorphic maps
$$ v: (Z, \partial Z) \rightarrow \left(\{ r \le {\kappa}\} \times Y,(\{\kappa\} \times Y) \cup (\{ r \le {\kappa}\} \times \Lambda_0^-) \cup (\{ r \le {\kappa}\} \times \Lambda_1^-) \right),$$ 
 for all components $Z_0 \subset Z$ with $v$ is nonconstant on $ Z_0$, and for all $y \in v( Z_0)$ with $B_{g_J}(y;\delta) \subset \{ r < \kappa\} \times Y$ we have
$$
\OP{area}_{g_J}(v(Z_0) \cap {B_{g_J}(y;\delta)}) \ge e^{r(y)} C \delta^2.
$$
Moreover, the constants vary continuously with $J$ in the operator-norm topology.
\end{lma}

\begin{proof}
This follows from \cite[Proposition 4.7.2]{AudinLafontaine} after noting that
 the noncompact Lagrangians are cylindrical, and so the constant $C = C(L_0 \cup L_1)$
 in the monotonicity lemma is nonzero.
See also \cite[Proposition 2.69]{CompAbbas}.
\end{proof}

Denote the closed upper half-plane by
\[\HH := \{z \in \C; \:\: \mathfrak{Im}(z)\ge 0 \}\]
and its boundary by $\partial \HH.$

\begin{lma}[Maximum Principle]
\label{lma:MaximumPrinciple}
Let $U \subset \HH$ be a connected neighborhood of $0 \in \HH.$ 
If
\[u \colon (U,U \cap \partial\HH) \to (\R \times Y,\R\times(\Lambda_0^- \cup \Lambda_1^-))\]
is pseudoholomorphic and non-constant
then $r \circ u$ has no local maximum on $U,$ including $U \cap \partial\HH$.
\end{lma}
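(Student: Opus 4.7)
The plan is to reduce this to the standard maximum principle for subharmonic functions by establishing (i) that $a:=r\circ u$ is subharmonic on $U$ and (ii) that its outward normal derivative vanishes on $U\cap\partial\HH$ as a consequence of the Legendrian hypothesis.

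First, using the cylindrical form of $J$ (i.e.\ $J\partial_r=R_\alpha$, $J\xi=\xi$, and $J|_\xi$ tamed by $d\alpha$) and writing $u=(a,f)$ in standard conformal coordinates $(s,t)$ on $\HH$, I would derive from the Cauchy--Riemann equation $du\circ j=J\circ du$ the identities
\[ da=f^*\alpha\circ j, \qquad \Delta a\,ds\wedge dt=f^*d\alpha\ge 0,\]
where the inequality is because $d\alpha$ tames $J|_\xi$ and hence is non-negative on $j$-complex lines in $\xi$. Thus $a$ is subharmonic. Along $\partial\HH\cap U$ the image lies in $\R\times\Lambda_i^-$ for some locally constant $i\in\{0,1\}$, so $\partial_s f\in T\Lambda_i^-\subset\xi$, giving $\alpha(\partial_s f)=0$; combined with the Cauchy--Riemann identity this yields $\partial_t a\equiv 0$ on $\partial\HH\cap U$, a Neumann condition.

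Next, I would rule out local maxima of $a$. Suppose $a$ has a local max at some $z_0\in U$. If $z_0$ is interior, the strong maximum principle forces $a$ to be locally constant near $z_0$. If $z_0\in\partial\HH$, Hopf's boundary point lemma applied to the subharmonic function $a$ on a small half-disk $D\ni z_0$ implies that either $a$ is constant on $D$, or the outward normal derivative $-\partial_t a(z_0)$ is strictly positive; since the latter vanishes, $a$ must be constant on $D$. (Equivalently, one may extend $a$ by the reflection $\tilde a(s,t):=a(s,|t|)$, which is subharmonic across $t=0$ owing to the Neumann condition, and apply the interior maximum principle to $\tilde a$.)

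Finally, I would upgrade this to a contradiction with the non-constancy of $u$. On an open set $V$ where $da=0$, the Cauchy--Riemann identity gives $f^*\alpha=0$, so $df$ takes values in $\xi$; similarly $f^*d\alpha=0$, and since $\pi_\xi\circ df$ is $J|_\xi$-holomorphic with $J|_\xi$ tamed by $d\alpha$, this forces $\pi_\xi\circ df=0$ and hence $df=0$ on $V$. Thus $u$ is constant on $V$. The standard unique continuation principle for pseudoholomorphic maps of Aronszajn type, accommodating the Lagrangian boundary by the Schwarz-type reflection already alluded to above, then propagates this to all of the connected set $U$, contradicting the assumption that $u$ is non-constant. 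The main technical point is the boundary case in the second step: the Legendrian hypothesis is used precisely to supply the vanishing Neumann datum that turns Hopf's lemma into a local constancy statement (and symmetrically turns the reflected function into a genuine subharmonic extension). Everything else is classical subharmonic analysis combined with unique continuation.
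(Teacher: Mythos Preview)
Your argument is correct and is precisely the standard subharmonic-function proof that the paper refers to (the paper simply cites \cite[Lemma 5.5]{Khovanov} rather than spelling out details). One minor remark: the appeal to Aronszajn-type unique continuation in your last step is unnecessary, since the strong maximum principle already gives that $a$ is constant on the entire connected set $U$ (not just locally), and your own computation then shows $du\equiv 0$ on all of $U$ directly.
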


\begin{proof}
This follows from standard properties of subharmonic functions; see e.g.~\cite[Lemma 5.5]{Khovanov}.
\end{proof}

\begin{lma}[Conformal Modulus]
\label{lma:ConformalModulus}
Let $X$ denote either the interval $[0,1]$ or the unit circle $S^1 = \R/\Z.$
Let $v = (a_v,f_v): X \times [0,L] \rightarrow (-\infty, \kappa] \times Y$ be holomorphic, with Lagrangian boundary conditions $\R \times \Lambda_j^-$ on $\partial X.$
Assume $a_v(t,0) \le R < S \le a_v(t,L)$ for all $t \in X.$
Then $L,$ called the conformal modulus of $X \times [0,L],$ is bounded below by $(S-R)/2E(v).$
\end{lma}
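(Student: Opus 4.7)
The approach is to combine the cylindrical $J$-holomorphicity of $v$ with the definition of the Hofer energy and a one-variable Cauchy--Schwarz argument in the $[0,L]$-direction. Writing $v=(a_v,f_v)$ and using that $J$ is cylindrical, i.e.~$J\partial_r=R_\alpha$, $J\xi=\xi$, and $J|_\xi$ is tamed by $d\alpha|_\xi$, the Cauchy--Riemann equation $\partial_s v+J\partial_t v=0$ gives $\partial_s a_v=\alpha(\partial_t f_v)$ and $\partial_t a_v=-\alpha(\partial_s f_v)$. A direct expansion then yields the pointwise identity
$$v^{*}(dr\wedge\alpha)= \bigl((\partial_s a_v)^2+(\partial_t a_v)^2\bigr)\,ds\wedge dt=|\nabla a_v|^2\,ds\,dt.$$

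Next, I would choose a nondecreasing smooth function $\nu\colon\R\to[0,1]$ which equals $0$ on $(-\infty,R]$, equals $1$ on $[S,+\infty)$, and satisfies $\nu'\equiv (S-R)^{-1}$ on $(R,S)$ (smoothed near the corners). Using the standard pointwise nonnegativity $v^{*}d\alpha\ge 0$, which holds because $d\alpha$ vanishes on the $\langle\partial_r,R_\alpha\rangle$-plane and $J|_\xi$ is $d\alpha$-tame, the definition of Hofer energy gives
$$E(v)\;\ge\;\int v^{*}d(\nu\alpha)\;=\;\int \nu'(a_v)\,|\nabla a_v|^2\,ds\,dt+\int \nu(a_v)\,v^{*}d\alpha\;\ge\;\frac{1}{S-R}\int_{\{R\le a_v\le S\}}|\nabla a_v|^2\,ds\,dt.$$

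Finally, for each $t\in X$ define $s_R(t):=\sup\{s\in[0,L]:a_v(s,t)\le R\}$ and $s_S(t):=\inf\{s>s_R(t):a_v(s,t)\ge S\}$; by continuity $a_v(s_R,t)=R$, $a_v(s_S,t)=S$, and $a_v\in[R,S]$ on $[s_R,s_S]$. The fundamental theorem of calculus gives $\int_{s_R(t)}^{s_S(t)}\partial_s a_v\,ds=S-R$, and Cauchy--Schwarz then yields
$$\int_{s_R(t)}^{s_S(t)}(\partial_s a_v)^2\,ds\;\ge\;\frac{(S-R)^2}{s_S(t)-s_R(t)}\;\ge\;\frac{(S-R)^2}{L}.$$
Integrating over $t\in X$ (where $|X|=1$), and using that $(\partial_s a_v)^2\le|\nabla a_v|^2$ together with the energy estimate above, I obtain
$$\frac{(S-R)^2}{L}\;\le\;\int_{\{R\le a_v\le S\}}(\partial_s a_v)^2\,ds\,dt\;\le\;(S-R)\,E(v),$$
hence $L\ge (S-R)/E(v)$, which is already stronger than the claimed bound $L\ge (S-R)/(2E(v))$.

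There is no serious obstacle; the only point that requires care is matching the convention for the Hofer energy used by the paper (the form $\nu\eta$ versus $\nu\alpha$ on the negative end), but since a bounded multiplicative discrepancy only costs a constant and the claimed bound carries a factor of $2$ of slack, the argument goes through in either convention.
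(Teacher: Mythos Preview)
Your argument is correct and is exactly the standard proof; the paper itself gives no details beyond citing \cite[Lemma~6.9]{Orderability} for the $S^1$ case and remarking that the strip case is verbatim, and that cited proof is precisely this Cauchy--Schwarz estimate. Your closing remark that the $\nu\eta$ versus $\nu\alpha$ discrepancy is absorbed by the factor of $2$ is a bit optimistic if taken literally (on a deep negative end the weight $e^r$ is not bounded below by $1/2$), but since the cited source uses the $\nu\alpha$ convention this is the intended meaning and your sharper bound $L\ge(S-R)/E(v)$ already suffices.
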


\begin{proof}
The case when $X =S^1$ is proved as \cite[Lemma 6.9]{Orderability}. The proof of the $X =[0,1]$ case can be copied verbatim.
\end{proof}

Let $Z$ be a compact 
 (possibly disconnected) 
Riemann surface with boundary, viewed as a subset of $\R \times [0,1]$ or $\R \times (\R/2\Z).$
Let $w = (a_w,f_w): Z \rightarrow (-\infty, \kappa] \times Y$ be a smooth (not necessarily $J$-holomorphic) map, non-constant on all of its connected components, and where in the case $Z \subset \R \times [0,1]$ we assume Lagrangian conditions
$$w(Z \cap ((-\infty, \kappa] \times \{j\})) \subset \R \times \Lambda_j^-$$
for $j = 0,1.$
Assume $a_w|_{\partial Z} = \kappa,$ or in the case of a Lagrangian boundary condition, $ a_w|_{\partial Z \setminus \R \times \{0,1\}} = \kappa.$ 

 We wish to study topological features of ``tranches" of $w(Z)$, that is, of $w(Z) \cap \{ r_0 \le r \le r_1\} \times Y,$ for various choices of $(r_0,r_1).$ These arise later in this section when replacing $w$ with our sequence $u_k.$ However, $(r_0,r_1)$ is not within our control as we vary $k,$ so a priori, the topology of these tranches can grow out of control. This potential problem motivates allowing to include to the tranches we encounter pieces of $w(Z)$ that lie outside of $ \{ r_0 \le r \le r_1\} \times Y,$ to simplify the tranches' topology (such as number of connected components). We formalize this inclusion process in the below discussion.

Fix $b \in (\kappa - 2 \delta, \kappa - \delta)$ where $b, b \pm \delta$ are regular values of $a_w.$
For $R < S < \kappa-4 \delta$ (which we think of as ``not fixed" compared to $b$) where $R,S, R \pm \delta, S \pm \delta$ are regular values of $a_w$ and $S -R \ge 2 \delta,$
 we will define surfaces $Z^S_R(w), Z^R_S(w)$ as done in \cite[Section 6]{Orderability}.
 Let $\mathcal{C}_R$ be the set of connected components of $(a_w)^{-1}([R, R+ \delta])$ and of $(a_w)^{-1}([R-\delta, R]).$
 Note that components of the first type can intersect components of the second at $(a_w)^{-1}(R).$
 Begin constructing $\mathcal{C}^+_R \subset \mathcal{C}_R$ (resp. $\mathcal{C}^-_R \subset \mathcal{C}_R$) 
 by including all connected components in $ \mathcal{C}_R$ that meet 
 $(a_w)^{-1}(R+\delta)$ (resp. $(a_w)^{-1}(R-\delta)$) 
 and those in $(a_w)^{-1}([R, R+ \delta])$ (resp. $(a_w)^{-1}([R- \delta, R])$) that do not meet $(a_w)^{-1}(R).$
 Since $Z$ can a priori be disconnected, there may be components of this second type. 
Next extend first $\mathcal{C}^+_R$ (resp. second $\mathcal{C}^-_R$) to include those connected components in $\mathcal{C}_R$ which are connected to (i.e.~share a boundary component with) some component in the previously defined $\mathcal{C}^+_R$ (resp. $\mathcal{C}^-_R$), but which is not equal to a connected component indexed by $\mathcal{C}^-_R$ (resp. the extended $\mathcal{C}^+_R$).
Repeat this (finite) process as long as $\mathcal{C}^\pm_R$ increases, after which $\mathcal{C}_R = \mathcal{C}^+_R \cup \mathcal{C}^-_R.$
 The process is finite because $R$ is a regular value of $a_w.$
To abuse notation, we will also refer to $\mathcal{C}^\pm_R$ as the union, taken over the index set $\mathcal{C}^\pm_R$, of connected components. This union is a subset of the domain $Z$ of the map $w$ defined above.

Set 
\begin{eqnarray*}
 Z^S_R(w) &= & (a_w)^{-1}([R+\delta, S-\delta]) \cup \mathcal{C}^+_R \cup \mathcal{C}^-_S,\\
 Z^b(w) & = & (a_w)^{-1}((-\infty, b-\delta]) \cup \mathcal{C}^-_b,\\
 Z_S^R(w) &= &Z^b(w) \setminus Z^S_R(w).
\end{eqnarray*}

Define a subset $P_0 \subset Z$ to be a $\delta$-\emph{essential local minimum} (resp. \emph{maximum}) on level $R_0$ of $w$ if $P_0$ is a connected component of $a^{-1}_w((-\infty, R_0 + \delta])$ (resp. $a^{-1}_w([R_0 - \delta, \infty))$) and $R_0 = \min_{P_0} a_w$ (resp. $R_0 = \max_{P_0} a_w$).
Note $P_0$ need not be a point and could intersect the boundary of $Z.$ 
Define the function $\chi_w: (-\infty, \kappa] \rightarrow \Z$ on the set of regular values of $a_w$ by setting $\chi_w(r) = \chi(Z^b_r(w)).$
 Since $b > r$, we use the definition of $ Z^S_R(w)$ above setting $S = b$ and $R = r.$
Recall $b$ is fixed once $u$ is.
A value $r \in (-\infty, \kappa]$ is called a \emph{jump} if there is a non-zero difference in limits (taken over regular values)
$$
\lim_{S\rightarrow r_+} \sup \chi_w(S) - \lim_{R\rightarrow r_-} \sup \chi_w(R).
$$

Now suppose that $u = (a,f):Z \rightarrow (-\infty, \kappa] \times Y$ is the restriction, to the negative symplectic end $(-\infty, \kappa] \times Y,$ of an arbitrary $J$-holomorphic map $\tilde{u} = (\tilde{a}, \tilde{f})$ in the statement of Theorem \ref{thm:compactness}.
We construct the following ``doubles.'' 
Let $\bar{Z} \subset \R \times [-1,0]$ be the complex conjugate of $Z.$
Let $Z^d = {(\bar{Z} \cup Z)}/{\{ x-i \simeq x+i \}} \subset \R \times (\R/2\Z).$
For any regular values $R \ne S$ of $a,$ define $(Z^S_R(u))^d, (Z^b)^d \subset Z^d$ in the same way.
Construct a smooth (but not $J$-holomorphic) map $u^d= (a^d,f^d): Z^d \rightarrow (-\infty, \kappa] \times Y,$ 
such that for all $r \in \R,$ $(a^d)^{-1}(r) = (a^{-1}(r))^d.$
This implies $(Z^S_R(u))^d = Z^S_R(u^d)$ for all regular values $R \ne S.$
To see more explicitly how $u^d$ can be constructed, fix a constant $\varepsilon >0.$
For $z = x+iy \in Z^d$ with $\varepsilon < |y| < 1 - \varepsilon,$ set $u^d(x+iy) = u(x+i|y|).$
For $z = x+iy \in (a^{-1}(r))^d $ with $|y| < \varepsilon/2$ or $|y| > 1 - \varepsilon/2,$ set $f^d$ to be locally constant
(taking values in our two Legendrians $\Lambda_0^- \cup \Lambda_1^-$).
 For $\varepsilon/2 \le |y| \le \varepsilon$ and $1-\varepsilon \le |y| \le 1 - \varepsilon/2,$ smoothly interpolate.

\begin{prp}\cite[Proposition 6.7]{Orderability}
\label{prp:6.7}
Recall $\delta$ from Lemma \ref{lma:monotonicity} and the uniform bound $E$ on the Hofer energy of any curve $u$ ($=u_k$). Let $g$ be a (uniform) bound on the genus of such curves. 
\begin{enumerate}
\item
There exists an $N_0 = N_0(g, E, \delta)$ such that the number of $\delta$-essential minima and the number of jumps of $\chi_u$ is bounded from above by $N_0.$
\item
If $R,S, R \pm \delta, S \pm \delta$ are regular values of $a,$ if $ \chi ((Z^S_R(u))^d)=0,$ and if there are no
$\delta$-essential minima in $Z^S_R(u),$ then $Z^S_R(u)$ is the union of at most $N_0$ cylinders and/or strips, each running between levels $R$ and $S.$
\end{enumerate}
\end{prp}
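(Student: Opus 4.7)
The approach will follow the strategy of \cite[Proposition 6.7]{Orderability}, adapted to our relative setting by invoking the relative Monotonicity Lemma \ref{lma:monotonicity} (which allows Lagrangian boundary) and the relative Conformal Modulus Lemma \ref{lma:ConformalModulus}. The plan has three parts: bound the number of essential minima via area/monotonicity, bound the number of jumps of $\chi_u$ via a topological argument using the Euler characteristic of the doubled surface, and then, under the stated hypotheses, use these bounds together with the Maximum Principle to classify the topology of $Z_R^S(u)$.

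First, I would bound the number of $\delta$-essential minima. For each $\delta$-essential minimum $P_0$ on level $R_0$, pick a point $y \in P_0$ with $a(y) = R_0$; then $B_{g_J}(y,\delta/2) \subset \{r < \kappa\} \times Y$ (up to shrinking $\delta$), and by Lemma \ref{lma:monotonicity} the image of $u$ near $P_0$ contributes at least $C(\delta/2)^2$ of symplectic area to $u$. Since distinct essential minima have pairwise disjoint levels (differing by more than $\delta$, after a standard reduction), we get pairwise disjoint contributions to the energy, and the bound $E(u) \le E$ caps the number of such minima by $E/(C\delta^2/4)$. Part (b)'s bound on cylinder/strip counts follows from the same area estimate applied to any cylinder/strip of height at least $2\delta$ running through $Z_R^S(u)$: each one contains a Monotonicity-ball of area at least $C\delta^2$, giving $N_0 \le E/(C\delta^2)$.

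Next I would bound the number of jumps of $\chi_u$. Each jump occurs only at a critical value of $a_u$ or when $\mathcal{C}_r^\pm$ reorganizes; all such events are detected by critical points of $a^d$ on $Z^d$. Since $u$ is $J$-holomorphic and $a$ is plurisubharmonic for a cylindrical $J,$ each interior critical point of $a$ corresponds to a branch point or a place where $u$ fails to be a graph; these contribute to the Euler characteristic of $Z^d$ via a Riemann--Hurwitz count. Using that $Z$ has genus at most $g$ and boundary components bounded in terms of $g$ and the number of punctures (fixed by $p_\pm$), the Euler characteristic of $Z^d$ is bounded in terms of $g$; hence the total number of critical-value jumps is bounded by a function $N_0(g,E,\delta)$. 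The Hofer energy enters because arbitrarily thin critical ``necks'' contribute a definite amount of energy by a Conformal Modulus argument (Lemma \ref{lma:ConformalModulus}), so only finitely many such necks can occur.

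Finally, for part (b), suppose $\chi((Z_R^S(u))^d)=0$ and there are no $\delta$-essential minima in $Z_R^S(u)$. The absence of essential minima together with the Maximum Principle (Lemma \ref{lma:MaximumPrinciple}) forces every component of $Z_R^S(u)$ to meet both level $R$ and level $S$: an interior local minimum would produce an essential minimum after shrinking $\delta$, and the Lagrangian boundary condition is cylindrical so $r \circ u$ has no boundary maximum either. Thus each component of the doubled surface is an open Riemann surface with at least two boundary components (one at level $R$, one at level $S$); combined with the vanishing Euler characteristic this forces each component to be an annulus, i.e.~a cylinder or strip. The number of such components is then bounded by the area/monotonicity argument from the first step. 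The main obstacle will be in the second step: carefully controlling how topological jumps in $\chi_u$ arise from bubbling-type events and non-trivial level sets in a way that matches the Euler characteristic count, since in our relative setting the boundary contributions from $\R \times \Lambda_i^-$ in the doubled surface need to be handled uniformly in $k$. I would anticipate following the critical-value-by-critical-value case analysis of \cite[Section 6]{Orderability} with only cosmetic modifications once the doubling $u \mapsto u^d$ is fixed as in the discussion preceding the proposition.
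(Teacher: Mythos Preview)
Your first step (bounding $\delta$-essential minima via monotonicity) and your third step (using the Maximum Principle plus $\chi=0$ to force annular topology) are in line with the paper's argument. The substantial divergence, and the gap, is your second step.

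You propose to bound the number of jumps of $\chi_u$ by counting critical points of $a$ via a Riemann--Hurwitz type argument, with the Conformal Modulus Lemma controlling thin necks. Neither ingredient is appropriate here. The function $a = r \circ u$ is a real-valued subharmonic function, not a holomorphic map of Riemann surfaces, so Riemann--Hurwitz does not apply; and the Conformal Modulus Lemma \ref{lma:ConformalModulus} plays no role in this proposition (it is used only later, in Proposition \ref{prp:MainCpt}). More seriously, jumps of $\chi_u$ are not in any simple bijection with critical points of $a$: the definition of $Z^b_r(u)$ via the $\mathcal{C}^\pm_r$ procedure means a jump reflects a change in the \emph{global} combinatorics of sublevel sets, not a local Morse event.

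The paper's route is different and more robust. One first bounds the number of connected components of $Z^S_R(u)$ and $Z_S^R(u)$ by monotonicity (each component carries at least $C\delta^2$ of area). Then, working with the \emph{smooth} doubled map $u^d$---which is explicitly not $J$-holomorphic---one bounds $|\chi(Z^S_R(u^d))|$ and $|\chi(Z_S^R(u^d))|$ using only that the genus of $Z^d$ is controlled and that the number of components (hence boundary components) is already bounded. The jump bound then follows from pure smooth topology: since $\chi_{u^d}$ takes values in a fixed interval $[-M_3,M_3]$ and changes monotonically except at essential minima (already bounded), the number of jumps is bounded. You should replace your critical-point count by this component-count-plus-Euler-characteristic argument, applied to $u^d$ rather than $u$.
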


\begin{proof}

The absolute version is proved as \cite[Lemmas 6.2-6.6]{Orderability}, which establish upper bounds, say $M_j>0,$ on the number of components, $\delta$-essential minima and absolute value of Euler characteristics of various restrictions of the maps.
 We will see below that the $M_j = M_j(\delta, E, g)$ and can be thought of ``excessively large" since they each contain terms with $\delta^{-2}.$
For the lemmas which require holomorphicity of the map, we indicate how the proof of the absolute case extends to the relative case.
For the lemmas which only require smoothness of the map, we apply their results to $u^d.$ We review each below.

Without any modification, the proof of \cite[Lemma 6.2]{Orderability} implies the number of components of $Z^S_R(u)$ and $Z_S^R(u)$ is at most $M_1 = \frac{8E}{C \delta^2}$ where $E$ is the energy defined in equation (\ref{eq:HoferEnergyE}) and $C$ is the constant from Lemma \ref{lma:monotonicity}.
The relative version of the Monotonicity Lemma \ref{lma:monotonicity} supplements the absolute version.
Adding Lemma \ref{lma:MaximumPrinciple} to preclude boundary maxima, the proof of \cite[Lemma 6.3]{Orderability}
shows the number of $\delta$-essential local minima of $u$ is at most $M_2= \frac{2E}{C \delta^2}.$
Again, the Monotonicity Lemma-based proof carries over verbatim.

Since the proof of \cite[Lemma 6.4]{Orderability} uses only smooth topology, we use it to prove 
\begin{eqnarray*}
\chi(Z^b(u^d)) & \ge & 2-3g - \frac{12 E}{C \delta^2}\\
 \chi(Z_S^R(u^d)), \chi(Z^S_R(u^d)) & \in & \left[ \chi(Z^b(u^d)) - \frac{8 E}{C \delta^2}, \frac{8 E}{C \delta^2}\right].
 \end{eqnarray*}

Similarly, the proof of \cite[Lemma 6.6]{Orderability} uses only smooth topology and so we apply it to prove that the number of jumps of $\chi_{u^d},$ and hence $\chi_{u}$ as well, is bounded above by some $M_4 = 3g + \frac{5 E}{C \delta^2} .$
This proof uses $Z_S^R(u^d)$ which otherwise appears inessential in our presentation.

Setting $N_0 = \max\{M_1,M_2,M_4\}$ gets us the first claim.
It remains to modify \cite[Lemma 6.5]{Orderability} to prove the second claim.
There is a correspondence between $\delta$-essential minima/maxima of $u$ and of $u^d$
(one-to-one on the boundary and one-to-two in the interior).
So if a component of $U$ of $Z^S_R(u^d)$ has $\chi(U) >0,$ then $U$ is a disk and $a^d|_U(\delta U) = S$
since by Lemma \ref{lma:MaximumPrinciple}, $u^d$ has no maxima. 
This implies $u|_{U \cap Z^S_R(u))}$ has a $\delta$-essential minimum.
So if $ \chi (Z^S_R(u^d))=0$ and if there are no $\delta$-essential minima of $u$ in $Z^S_R(u),$
then all components of $Z^S_R(u^d)$ have Euler characteristic 0 and therefore $u^d|_{Z^S_R(u^d)}$ 
is a union of cylinders connecting the $R$ and $S$ levels.
 Hence $N_0 \ge M_1$ implies the second claim.

\end{proof}

\begin{prp}\cite[Proposition 5.4/6.10]{Orderability}
\label{prp:MainCpt}
Let $$u_k = (a_k,f_k): Z_k \rightarrow (-\infty, \kappa] \times Y$$
be the restrictions of $\tilde{u}_k$ introduced at the beginning of this section.
Assume $\inf_k \inf_{Z_k} a_k = -\infty.$ 
Then there exists a 
 subsequence 
of the natural numbers, which we index by $n$, and a subsequence either all of strips or all of cylinders, $C_n \subset Z_{k_n}$ such that an $\R$-shift 
$v_n= (b_n, g_n)$ of the restrictions of $u_{k_n}$ to $C_n$ has the following properties:
\begin{enumerate}
\item
$C_n$ is biholomorphic to $[-l_n, l_n] \times [0,1] \subset \C$ or $[-l_n, l_n] \times S^1,$ for all $n,$ and $l_n \rightarrow \infty$ as $n \rightarrow \infty.$ Here and elsewhere, $S^1 =\R /\Z.$
\item
$\int_{C_n} v_n^* d\alpha \rightarrow 0.$
\item
There is a sequence $\sigma_n \rightarrow \infty$ such that $\pm b_n(\pm l_n, t) \ge \sigma_n$ for each $t \in [0,1].$
\item
If the $C_n$ are strips then either $v_n|_{[-l_n, l_n] \times \{0,1\}} \subset L_j$ for one of $j \in \{0,1\},$ or $v_n|_{[-l_n, l_n] \times \{j\}} \subset L_j$ for each of $j \in \{0,1\}.$ In the latter case we may moreover require that
$$r(v_n(-l_n,0)) < r(v_n(l_n,0))<0$$
is satisfied (observe that the continuous path $ \tilde{u}_n(\cdot,0): \R \rightarrow L_0$ is asymptotic at both of its ends to the starting points of the chords $p_-,p_+$ contained outside of the region $\{ r \le 0\}$).
\end{enumerate}
\end{prp}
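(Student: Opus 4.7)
The plan is to adapt the strategy of \cite[Proposition 5.4/6.10]{Orderability} to the present relative (Lagrangian boundary condition) setting, building on Proposition \ref{prp:6.7}. The three key ingredients will be: the uniform bound $N_0$ of that proposition on the number of $\delta$-essential minima and jumps of $\chi_{u_k}$; the Conformal Modulus Lemma \ref{lma:ConformalModulus}; and the uniform bound $E=E(x,y)$ on the Hofer energies of the $u_k$. The hypothesis $\inf_k \inf_{Z_k} a_k = -\infty$ provides the vertical room needed for the sub-cylinders or sub-strips to have conformal modulus tending to infinity.

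First I would fix $\delta$ as in Lemma \ref{lma:monotonicity} and $N_0=N_0(g,E,\delta)$ as in Proposition \ref{prp:6.7}. After passing to a subsequence, the $r$-range of $u_k$ has diameter tending to infinity. Since the jumps of $\chi_{u_k}$ together with the values at which $\delta$-essential minima occur amount to at most $2N_0$ distinguished values of $r$, removing them partitions the $r$-range of $u_k$ into at most $2N_0+1$ open subintervals, and by pigeonhole at least one subinterval $[R_k^{\max},S_k^{\max}]$ has length tending to $+\infty$. On any subinterval $[R,S]\subset [R_k^{\max},S_k^{\max}]$ bounded by regular values, the second claim of Proposition \ref{prp:6.7} applies and shows that $Z^S_R(u_k)$ is a disjoint union of at most $N_0$ cylinders and/or strips running between the two levels.

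Next I would subdivide $[R_k^{\max},S_k^{\max}]$ at regular values of $a_k$ into $M_k$ equal sub-pieces, choosing $M_k \to \infty$ slowly enough that each sub-piece still has length tending to infinity. Each sub-piece then produces at most $N_0$ sub-cylinders/strips, for a total of at most $N_0 M_k$ per $k$. Since their $d\alpha$-energies are non-negative and sum to at most a constant multiple of the uniform Hofer-energy bound $E$, some sub-component $C_k$ has $d\alpha$-energy at most $\OP{const}\cdot E/(N_0 M_k)\to 0$, which is item (2). By Lemma \ref{lma:ConformalModulus}, each $C_k$ is biholomorphic to a rectangle $[-l_k,l_k]\times [0,1]$ or an annulus $[-l_k,l_k]\times S^1$ with $l_k \ge (S_k-R_k)/(2E) \to \infty$, giving item (1). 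Translating in the $\R$-direction so that the $r$-range of $C_k$ becomes symmetric about zero produces item (3); the orientation convention in (3)/(4) can then be enforced by choosing the parametrization of $C_k$ so that $r$ increases with $s$, together with a further $\R$-translation that pushes $C_k$ into $\{r<0\}$ (possible because $C_k\subset (-\infty,\kappa]\times Y$ with $\kappa<0$).

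Finally, the topological type of $C_k$ (cylinder; strip with both long edges on $L_0$; strip with both long edges on $L_1$; strip straddling both) falls into finitely many cases, so one last pigeonhole yields a sub-subsequence of a fixed type and delivers item (4). The main obstacle is not this combinatorial/pigeonhole step but rather establishing Proposition \ref{prp:6.7} itself in the relative setting: controlling the Euler characteristics of $Z_R^S(u^d)$ and the jumps of $\chi_{u^d}$ requires the relative versions of the Monotonicity Lemma \ref{lma:monotonicity} and the Maximum Principle Lemma \ref{lma:MaximumPrinciple}, together with a careful matching of $\delta$-essential minima of $u$ and of $u^d$ under the doubling construction. Once Proposition \ref{prp:6.7} is in hand, the pigeonhole and Conformal Modulus arguments above proceed essentially verbatim as in the absolute case of \cite{Orderability}.
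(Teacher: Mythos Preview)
Your proposal is correct and follows essentially the same route as the paper. Both arguments pigeonhole among the at most $N_0$ essential levels (jumps of $\chi_{u_k}$ and $\delta$-essential minima) to locate an $r$-interval of diverging length free of essential levels, invoke Proposition~\ref{prp:6.7} to decompose the preimage into at most $N_0$ cylinders/strips running between the two levels, and then combine a second pigeonhole (for small $d\alpha$-energy) with Lemma~\ref{lma:ConformalModulus} to obtain $l_n\to\infty$. The only cosmetic difference is the order of operations: the paper first finds a single interval $[\rho_n,s_n]$ of length $\ge n$ with $\int_{a_n^{-1}([\rho_n,s_n])}u_n^*d\alpha\le E/n$ and then picks any component, whereas you first subdivide the long interval into $M_k$ pieces and pigeonhole among the $\le N_0M_k$ resulting components. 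Your final pigeonhole on the (finitely many) topological types of $C_k$ is exactly what the paper encodes in its sentence ``by continuity of $\tilde u_n|_{\R\times\{0,1\}}$, at least one component must be\ldots''; since every component already has one of the four allowed types, both formulations succeed. One small remark: your ``further $\R$-translation that pushes $C_k$ into $\{r<0\}$'' is in tension with item~(3), which forces $b_n(l_n,\cdot)\ge\sigma_n>0$; the intended content of the last clause of~(4) is only the ordering $r(v_n(-l_n,0))<r(v_n(l_n,0))$, which you obtain simply by orienting the biholomorphism so that $r$ increases with $s$.
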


\begin{proof}

We review the proof of \cite[Proposition 5.4/6.10]{Orderability}, modifying it to account for possible cylindrical Lagrangian boundary conditions and for Part (4).
A level $\rho \in ( -\infty, \kappa]$ is defined as essential for $u_k$ if any of the following hold: 
$\rho = \kappa;$ $\rho = \min u_k;$ $\rho-\delta$ is a $\delta$-essential minimum; $\chi_{u_k}$ has a jump at $\rho.$
Proposition \ref{prp:6.7} implies the number of essential levels of $u_k$ is bounded independent of $k.$
Since $\lim_k \min a_k = -\infty,$ we can, after possibly passing to a subsequence $n$ of the natural numbers, find an interval $[\rho_n, s_n] \subset (-\infty, \kappa-\delta)$ of length at least $n$ such that the following hold:
$[\rho_n, s_n]$ does not contain any essential levels; 
$\rho_n,s_n, \rho_n \pm \delta, s_n \pm \delta$ are regular values of $a_n;$ and
$\int_{a_n^{-1}([\rho_n, s_n])} u^*_{n} d\alpha \le E/n.$
Proposition \ref{prp:6.7} implies that $ u_n^{-1}([\rho_n, s_n] \times Y)$ is a union of cylinders and strips, each running between
$\rho_n \times Y$ and $s_n \times Y.$

 Recall that $\tilde{u}_n(0,s)$ takes values in the complement of the concave end (actually, inside $\overline{X}$) outside of a compact subset of the domain; this follows by the assumptions concerning the asymptotics of the strips. Using this fact we deduce that $u_n,$ restricted to the connected components of $u^{-1}_n([\rho_n, s_n] \times Y),$ cannot map all the components' boundaries which lie in $Z \cap (\{j\} \times \R)$ to $L_{j'},$ where $\{j,j'\} =\{0,1\}.$ The reason is that this would contradict continuity of $\tilde{u}_n(0,s)$; indeed, using the aforementioned properties, one sees that $r(u_n(0, S_1)), r(u_n(0, S_1))> \rho_n$ holds for some numbers
$$S_0 \le \inf{r^{-1}(-\infty,\rho_n)} \le \sup{r^{-1}(-\infty,\rho_n)} \le S_1.$$
In conclusion, at least one connected component of $u^{-1}_n([\rho_n, s_n] \times Y)$ must be a cylinder, a strip with both boundary conditions on the same Lagrangian $L_j,$
 or with boundary $Z \cap (\{j\} \times \R)$ mapping to $L_j$ for $j = 0,1.$
 Let $C_n$ denote one such component. 
 The proof now follows from Lemma \ref{lma:ConformalModulus}.
\end{proof}

\begin{prp}\cite[Theorem 5.3]{Orderability}
\label{prp:ChordExistence}
Let $u_k = (a_k,f_k): Z_k \rightarrow (-\infty, \kappa] \times Y$ be the restrictions of $\tilde{u}_k$ introduced at the beginning of this section.
Assume that $\inf_k \inf_{Z_k} a_k = -\infty.$
Then there exists a subsequence $k_n$ and a subsequence of all strips or of all cylinders $C_n \subset Z_{k_n},$ biholomorphically equivalent to the standard strip or cylinder, $[-l_n,l_n] \times [0,1] $ or $[-l_n,l_n] \times S^1,$ such that $l_n \rightarrow \infty$ and such that $u_{k_n}|_{C_n}$ converges (up to an $\R$-shift in either $C^\infty_{\tiny{\OP{loc}}}(\R \times [0,1]; \R \times Y)$ or $C^\infty_{\tiny{\OP{loc}}}(\R \times S^1; \R \times Y)$) to a trivial strip or cylinder over a Reeb chord or closed orbit in the class $\mathcal{Q}_{\alpha_-}^{\mathfrak{p}}$ as defined in Section \ref{sec:compactness}.
\end{prp}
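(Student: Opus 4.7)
The plan is to build on Proposition \ref{prp:MainCpt}, which hands us thin cylinders or strips $C_n \subset Z_{k_n}$ of the stated conformal shape, with $\int_{C_n} v_n^* d\alpha \to 0$ for the $\R$-shifted restrictions $v_n = (b_n,g_n)$ and with the $\R$-coordinates $\pm b_n(\pm l_n,\cdot)$ escaping to $+\infty$. Three tasks remain: (a) extract a $C^\infty_{\OP{loc}}$-convergent subsequence of the $v_n$ after fixing an appropriate $\R$-shift; (b) identify the limit as a trivial strip or cylinder over a Reeb chord or orbit $\gamma$; and (c) verify $\gamma \in \mathcal{Q}^{\mathfrak{p}}_{\alpha_-}$.

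For (a), the plan is to normalize the $\R$-shift by fixing a reference point (e.g.~the conformal center $(0,1/2)$) at $b_n = 0$ and to run a standard bubbling analysis on the resulting sequence. Any gradient blow-up, when rescaled, would produce either a non-constant pseudoholomorphic plane in $(\R \times Y_-, d(e^r\alpha_-))$ or a non-constant half-plane with boundary on $\R \times \Lambda_j^-$. Both are excluded: the symplectic form $d(e^r\alpha_-)$ is exact, so a plane of positive area has infinite Hofer energy (contradicting the uniform bound $E(v_n) \le E$); and $\R \times \Lambda_j^-$ is exact Lagrangian with vanishing primitive (since $\alpha_-|_{\Lambda_j^-} \equiv 0$ by the Legendrian condition), so a half-plane with this boundary has zero area and must be constant. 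Elliptic regularity combined with Arzel\`a--Ascoli then yields a $C^\infty_{\OP{loc}}$ limit $v_\infty = (b_\infty, g_\infty)$.

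For (b), lower semicontinuity gives $\int v_\infty^* d\alpha_- = 0$. Since $J$ is cylindrical, this forces $\pi_\xi \circ dg_\infty \equiv 0$, so the Cauchy--Riemann equations collapse to $\partial_s b_\infty = \alpha_-(\partial_t g_\infty)$ and $\partial_t b_\infty = -\alpha_-(\partial_s g_\infty)$; in particular $b_\infty$ is harmonic. The Legendrian condition $T\Lambda_j^- \subset \ker \alpha_-$ forces Neumann boundary conditions $\partial_t b_\infty|_{t=0,1} = 0$ in the strip case, with periodicity replacing them in the cylinder case. Expanding $b_\infty$ in the corresponding Fourier basis in $t$, the escape of the ends in the $\R$-coordinate forces all higher modes to vanish and rules out the purely constant mode, leaving $b_\infty(s,t) = as + c$ with $a>0$. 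Consequently $v_\infty(s,t) = (as+c, \phi^{at}_{R_{\alpha_-}}(y_0))$ is a trivial strip or cylinder over a Reeb chord or orbit $\gamma$ of positive length $a$, with endpoints dictated by the boundary data recorded in Proposition \ref{prp:MainCpt}(4).

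For (c), the class of $\gamma$ should be inherited from the negative asymptotic $p_-$ of $\tilde{u}_{k_n}$, which lives in the component $\mathfrak{p}$. The portion of $\tilde{u}_{k_n}$ lying on the concave side of $C_n$ is an explicit path-homotopy (rel.~the Legendrians $\Lambda_j^-$) between $p_-$ and $\gamma$, so $\gamma$ lies in the same component $\mathfrak{p}$. The three Lagrangian boundary possibilities recorded in Proposition \ref{prp:MainCpt}(4), together with the cylinder case, match precisely the four types of elements in the definition of $\mathcal{Q}^{\mathfrak{p}}_{\alpha_-}$. I expect the main technical obstacle to arise in step (a): excluding gradient blow-up requires careful treatment of the cylindrical Lagrangian boundary, but in this exact setting it reduces to a clean application of Stokes' theorem applied to the vanishing of $e^r\alpha_-$ on $\R \times \Lambda_j^-$.
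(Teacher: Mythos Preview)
Your overall strategy matches the paper's, but step~(a) contains a genuine gap. Your exclusion of plane and half-plane bubbles is incorrect: non-constant $J$-holomorphic planes $u \colon \C \to \R \times Y_-$ with finite Hofer energy \emph{do} exist in symplectizations---this is precisely Hofer's result \cite{Hofer93}, where such planes are asymptotic to periodic Reeb orbits. Exactness of $d(e^r\alpha_-)$ rules out \emph{sphere} bubbles by Stokes' theorem, but not planes, since the domain is non-compact and the puncture at infinity contributes a nonzero boundary term (the period of the asymptotic orbit). The same applies to half-planes with boundary on $\R \times \Lambda_j^-$, which can be asymptotic to Reeb chords despite the vanishing of $e^r\alpha_-$ along the Lagrangian boundary. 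The correct argument, which the paper imports from \cite[Lemma~28]{Hofer93} and its relative analogue \cite[Proposition~2.55]{CompAbbas}, uses the datum from Proposition~\ref{prp:MainCpt}(2) that you record but never invoke in step~(a): $\int_{C_n} v_n^* d\alpha_- \to 0$. Any bubble inherits zero $d\alpha_-$-energy in addition to finite Hofer energy, and it is \emph{that} combination which forces constancy.

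A smaller issue in step~(b): the ``escape of the ends'' condition $\pm b_n(\pm l_n,t) \ge \sigma_n$ lives at $s = \pm l_n \to \pm\infty$, outside every compact set, so $C^\infty_{\OP{loc}}$ convergence alone does not transfer it to $v_\infty$ and hence cannot by itself kill the exponential Fourier modes of $b_\infty$. The paper instead invokes the classification of finite-Hofer-energy curves with zero $d\alpha_-$-energy (\cite[Theorem~31]{Hofer93} and \cite[Theorems~2.54,~2.57]{CompAbbas}) to obtain the trivial strip or cylinder directly. Your harmonic-function route can be made to work, but the growth control on $b_\infty$ has to come from the finite Hofer energy of $v_\infty$, not from the pre-limit escape of the ends.
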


\begin{proof}
Recall that because $u_k$ maps to $(-\infty, \kappa] \times Y,$ there is no perturbation in the $J$-holomorphic equations and
the Lagrangian boundary conditions are cylindrical. This is the case for which Abbas' book provides a complete set of details for the 
lemmas used to prove compactness \cite{CompAbbas}.

Aside from a Reeb orbit energy bound in the last sentence of the statement of \cite[Theorem 5.3]{Orderability}, the proof of \cite[Theorem 5.3]{Orderability}, which uses \cite[Proposition 5.4/6.10]{Orderability} allows us to deduce this proposition from Proposition \ref{prp:MainCpt} with the following modifications: 
replace cylinders with cylinders and/or strips;
supplement the absolute Monotonicity Lemma with Lemma \ref{lma:monotonicity};
derive a bound on the gradient not just at interior points but at boundary points as well, which \cite[Proposition 2.56]{CompAbbas} does; 
supplement \cite{Orderability}'s references to \cite{Hofer93} with relative analogues from \cite{CompAbbas}.
For this third modification, \cite[Theorem 5.3]{Orderability} uses (1) \cite[Lemma 28]{Hofer93} and (2) the proof of \cite[Theorem 31]{Hofer93}.
Here (1) states that if $u: \C \rightarrow \R \times Y$ is $J$-holomorphic, $E(u) < \infty$ and $\int_\C u^*d\alpha = 0,$ then $u$ is constant. 
Roughly, (2) states that if $u: \C \rightarrow \R \times Y$ is $J$-holomorphic, $E(u) < \infty,$ and $u$ is non-constant, then there exists a sequence of $s_k \rightarrow \infty$ such that the component of $u(e^{2 \pi (s+it)})$ in $Y$ converges to a $T$-periodic Reeb orbit. 
The relative versions (domains are $\HH$ instead of $\C$) of (1) and (2) are proved as \cite[Proposition 2.55]{CompAbbas} and \cite[Theorem 2.54/2.57]{CompAbbas}, respectively.

The fact that the limiting Reeb chord or closed orbit is an element of $\mathcal{Q}_{\alpha_-}^{\mathfrak{p}}$ follows from topology.
 If the limit is a closed orbit or is a chord with endpoints on the same Legendrian, then the limit of the images of the $u_k$ can serve as the homotopy to prove the chord/orbit is a trivial element in relative/absolute $\pi_1.$
If it is a chord with endpoints on two different Legendrians, Part (4) of Proposition \ref{prp:MainCpt} is also required, to conclude that the chord is homotopic to the intersection points $p_\pm \in \mathfrak{p}$ (viewed as constant paths).
\end{proof}
 
\begin{cor}
\label{cor:RefinedChordExistence}
Fix $\varphi \in \mathcal{C}(L_0,L_1, G_{s,t})$ be a Hofer function such that $r_\varphi < \infty.$ 
 Recall $\hbar$ which is defined in equation (\ref{eq:hbar_def}).
Let $u_k = (a_k,f_k): Z_k \rightarrow (-\infty, \kappa] \times Y$ be the restrictions of $\tilde{u}_k$ introduced at the beginning of this section.
Assume that $\inf_k \inf_{Z_k} a_k = -\infty.$
Then there exist a subsequence $u_{k_n}$ and a sequence of connected open subsets $U_n \subset \R \times [0,1]$ such that one of the following two cases holds.
\begin{enumerate}
\item $U_n \subset Z_{k_n}$ is precompact and satisfies
\begin{enumerate}
\item $\liminf_{n \to \infty} E_{d(\varphi\eta)}(u_{k_n}|_{U_n}) \ge \hbar,$ and
\item $\limsup_{n \to \infty} {a_{k_n}(U_n)} = -\infty.$
\end{enumerate}
\item $U_n$ contains the cylindrical end $\{ s \le - S_n \}$ for some $ S_n \gg 0,$ while it is disjoint from $\{ s \ge S_n \},$ and satisfies
\begin{enumerate}
\item $\liminf_{n \to \infty} E_{d(\varphi\eta)}(\widetilde{u}_{k_n}|_{U_n}) \ge \hbar-\mathfrak{a}_\varphi(p_-)-\int_0^1 G_{-,t}(p_-(t))dt,$
\item $\limsup_{n \to \infty} a_{k_n}(\partial U_n \setminus (\R \times \{0,1\})) = -\infty.$
\end{enumerate}
\end{enumerate}
\end{cor}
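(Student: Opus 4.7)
The plan is to apply Proposition \ref{prp:ChordExistence} to extract a long neck piece of $\widetilde{u}_{k_n}$ that converges, after an $\R$-shift, to a trivial strip or cylinder over a Reeb chord or periodic orbit $\gamma \in \mathcal{Q}_{\alpha_-}^{\mathfrak{p}}$ of length $\ell = \int_\gamma \alpha_- \ge e^{r_\varphi}\hbar$, and then to carve out a subdomain $U_n \subset \R \times [0,1]$ on which a direct Stokes' computation recovers the required energy bound. The key simplification is that in the deep concave region $\{r \le -R_\varphi\}$, where by hypothesis $\varphi(r) = e^{-r-r_\varphi}$, one has $\varphi\eta = e^{-r_\varphi}\alpha_-$ and hence $d(\varphi\eta) = e^{-r_\varphi}\,d\alpha_-$. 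Since the horizontal Lagrangian boundary lies in $\R\times\Lambda_i^-$ with $\Lambda_i^-$ Legendrian, $\alpha_-$ pulls back trivially there, so the $d(\varphi\eta)$-energy of $\widetilde{u}_{k_n}$ over a subdomain in this deep region reduces to $e^{-r_\varphi}$ times an $\alpha_-$-boundary integral over a single interior level curve.

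Concretely, fix $R_\varphi \ge 0$ with $\varphi(r) = e^{-r-r_\varphi}$ for all $r \le -R_\varphi$. Apply Proposition \ref{prp:ChordExistence} to pass to a subsequence $u_{k_n}$ and long strips/cylinders $C_n \subset Z_{k_n}$ converging, after the $\R$-shift of Proposition \ref{prp:MainCpt}, in $C^\infty_{loc}$ to a trivial strip/cylinder over $\gamma$; since $s_n \le \kappa$ and $s_n-\rho_n \to \infty$, necessarily $\rho_n \to -\infty$. Using this convergence, choose regular values $t_n$ of $a_{k_n}$ with $t_n \to -\infty$ and $t_n < -R_\varphi$, positioned in the middle of the neck so that the intersection $\beta_n := a_{k_n}^{-1}(t_n) \cap C_n$ is a single smooth curve with $\int_{\beta_n}\alpha_- \to \ell$. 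Define $U_n$ to be the connected component of $\{(s,t) \in \R \times [0,1] : r(\widetilde{u}_{k_n}(s,t)) < t_n\}$ containing an interior point of $C_n$ with $a_{k_n} < t_n$. By construction, $\partial U_n \setminus (\R\times\{0,1\}) \subset \{r\circ\widetilde{u}_{k_n} = t_n\}$, yielding condition (b) since $t_n \to -\infty$. After a further subsequence we may assume that uniformly in $n$ either (1) $U_n$ is precompact in $\R\times[0,1]$, or (2) $U_n$ contains $\{s \le -S_n\}$ for some $S_n \to +\infty$ (automatically disjoint from $\{s \ge S_n'\}$ for appropriate $S_n'$, since the fixed positive asymptote lies at $r(p_+) > t_n$ for large $n$).

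In Case (1), $U_n \subset Z_{k_n}$ lies inside $\{r \le -R_\varphi\}$ where the key identity applies, and $G_{s,t}$ vanishes on $U_n$ by Definition \ref{dfn:varphi}(5); hence the Floer energy equals $\int u_{k_n}^* d(\varphi\eta)$ and Stokes' theorem gives
\begin{equation*}
E_{d(\varphi\eta)}(u_{k_n}|_{U_n}) = e^{-r_\varphi}\int_{\partial U_n}u_{k_n}^*\alpha_-.
\end{equation*}
The horizontal boundary contributes zero by the Legendrian condition, while the interior boundary at $r = t_n$ contains $\beta_n$ (contributing asymptotically $\ell$) together with possibly other exit curves. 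Each exit is oriented so that its outward normal points in the $+\partial_r$ direction; by the cylindrical identity $J\partial_r = R_{\alpha_-}$ together with the Cauchy--Riemann equation, this forces $\alpha_-(\text{tangent}) > 0$ along the curve, so every additional exit contributes non-negatively. The $\liminf$ as $n \to \infty$ is therefore at least $e^{-r_\varphi}\ell \ge \hbar$, yielding (a). In Case (2), the same Stokes' computation picks up an extra asymptotic boundary term at $s \to -\infty$; using Definition \ref{dfn:varphi}(5), the definition of $\mathfrak{a}_\varphi$ from Section \ref{sec:action}, and the Hamiltonian correction between $\int\widetilde{u}_{k_n}^* d(\varphi\eta)$ and the Floer energy as in the proof of Lemma \ref{lma:varphienergy}, this extra contribution equals $-\mathfrak{a}_\varphi(p_-) - \int_0^1 G_{-,t}(p_-(t))\,dt$, producing the required bound.

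The main technical hurdle lies in choosing $t_n$ so that $\beta_n$ is a single simple curve $C^\infty$-close to a reparametrization of $\gamma$ with $\int_{\beta_n}\alpha_- \to \ell$, and in handling any additional exit curves from $U_n$ at level $t_n$ via the positivity-of-$\alpha_-$ observation above; the case distinction between (1) and (2) is then forced by whether or not the chosen component of $\{r\circ\widetilde{u}_{k_n} < t_n\}$ extends out to the negative cylindrical end of the strip domain.
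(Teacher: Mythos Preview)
Your argument is correct, though it follows a genuinely different path from the paper's proof and actually proves something slightly sharper. The paper defines $U_n$ as a connected component of the \emph{complement} of the neck $C_n$ (in $Z_{k_n}$ or in $\R\times[0,1]$), so that $\partial U_n \setminus (\R\times\{0,1\})$ is a single arc or circle $\gamma_n$ on the edge of $C_n$; it then constructs an explicit small-area bordism $A_n$ from $u_{k_n}(\gamma_n)$ to the Reeb chord/orbit and applies Stokes' theorem. Your $U_n$, by contrast, is a component of the sublevel set $\{a_{k_n}<t_n\}$, whose free boundary may contain several curves; you compensate by the positivity observation $u^*\alpha_-(v)=da(n)>0$ along each exit curve, which is a clean replacement for the bordism argument. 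The paper's case split (pure chord/orbit vs.\ mixed chord) determines whether $U_n$ is precompact or contains the negative end, whereas in your construction $U_n$ is \emph{always} precompact for large $n$: since $p_-$ is a fixed chord with bounded $r$-coordinate and $t_n\to-\infty$, eventually $r(p_-(t))>t_n$ for all $t$, so the $s\to-\infty$ end never lies in $\{r<t_n\}$. Thus your Case~(2) discussion is in fact vacuous, and you always land in Case~(1) of the corollary with the stronger bound $\hbar$---which is perfectly fine, since the statement only requires one of the two alternatives to hold.

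One small imprecision: you assert that $\beta_n=a_{k_n}^{-1}(t_n)\cap C_n$ is a single curve, but the $C^\infty_{\mathrm{loc}}$ convergence only controls the middle of $C_n$, not the ends near $s=\pm l_n$, so there could in principle be further components of this level set inside $C_n$. However, your positivity argument handles these extra components in exactly the same way as the ``other exit curves'' outside $C_n$, so the conclusion is unaffected.
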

\begin{rmk}

By (1.b) and (2.b) the curves $u_n|_{U_n}$ all converge to $-\infty.$ Recall that this is a basic feature of the breaking of pseudoholomorphic curves in the SFT-setting.
\end{rmk}
\begin{proof}
Let $k_n, C_n$ be as in Proposition \ref{prp:ChordExistence}.

{\em Case 1:} Suppose $ f_{k_n}|_{C_n}$ converges to either a closed Reeb orbit or to a Reeb chord starting and ending on the same Legendrian component.
We wish to show that Case (1) applies.
Let $U_n$ be the component of $Z_{k_n} \setminus C_n$ which does not intersect 
$\partial Z_{k_n} \setminus (\R \times \{0,1\}).$ 

By the Maximum Principle Lemma \ref{lma:MaximumPrinciple}, $ a_{k_n}|_{U_n}$ obtains its maximum on $ \nu_n := \partial U_n \setminus (\R \times \{0,1\}).$ 
 The path $\nu_n$ is smooth as a subset of the boundary of the Riemann surface $Z_n$ defined in equation (\ref{eq:Z_k}).
Part (b) of Case (1) then readily follows the convergence established by Proposition \ref{prp:ChordExistence}; namely, the latter convergence is only possible after shifting the symplectisation coordinate $\R$ to $+\infty$ in view of $\pm l_n \rightarrow \pm \infty$ (there exists no finite energy punctured pseudoholomorphic curves in a half symplectisation $(-\infty,r_0] \times Y$).

Let $ \gamma_n$ be the Reeb chord (resp. orbit) from Proposition \ref{prp:ChordExistence}.
For arbitrary $\varepsilon >0$ there exists $N$ such that for all $n \ge N,$
 there exists a smooth rectangle (resp. annulus) $A_n \subset \R \times Y$ which is a bordism from $u_{k_n}( \nu_n)$ to $\{a_{k_n}(0,0)\} \times \gamma_n$ in $(\R \times Y, L_0 \cup L_1)$ for which 
$$ \left| \int_{A_n} d \alpha_-\right| = \left| \int_{\gamma_n} \alpha_- - \int_{f_{k_n}(\nu_n)} \alpha_-\right| < \varepsilon.
$$
 The inequality comes from the strong convergence of $f_{k_n}(\nu_n)$ to $\gamma_n$ in Proposition \ref{prp:ChordExistence}.
 Stokes' theorem then implies 
 $$
 \int_{U_n} u_{k_n}^*d (\varphi \eta) = e^{-r_\varphi} \int_{ \nu_n} u_{k_n}^*\alpha_- \ge e^{-r_\varphi} \left(\int_{ \gamma_n} \alpha_- - \varepsilon\right) \ge \hbar - e^{-r_\varphi} \varepsilon,
 $$
 proving part (a) of Case (1).
 For the last inequality, we use that $ \gamma_n \in \mathcal{Q}_{\alpha_-}^{\mathfrak{p}}$ by Proposition \ref{prp:ChordExistence}.

{\em Case 2:} When $ f_{k_n}|_{C_n}$ converges to a Reeb chord running between $\Lambda_0^-$ and $\Lambda_1^-,$ we set $U_n$ to be the component of $\R \times [0,1] \setminus C_n$ which includes the cylindrical end 
$\{ s \le -S_n \},$ and hence excludes $\{ s \ge S_n\}.$

We again apply the maximum principle (to $ a_{k_n}|_{C_n}$) to conclude part (b) of Case (2). Part (a) of Case (2) follows from the same argument in Case (1): appending a smooth rectangle $A_n$ with $\varepsilon$-small $d \alpha_-$-area to $\tilde{u}_{k_n}(U_n);$ Stokes' theorem; together with $e^{-r_\varphi} \int_{ \gamma_n} \alpha_- \ge \hbar.$ The ($\int_0^1 G_{-,t}dt $)-term arises from the use of Stokes' theorem as in Equality \eqref{eq:sympen}.

\end{proof}

\begin{proof}[Proof of Theorem \ref{thm:compactness}]
We prove Case (2) of the theorem, as cases (0), (1) and (3) are similar.
As discussed at the beginning of this section, we assume the theorem does not hold. Then there exists a sequence of $\tilde{u}_k$ 
(with restrictions $\tilde{u}_k|_{Z_k} = u_k= ({a}_k, {f}_k)$) of solutions to the $G_{s,t}$-perturbed $J_k$-holomorphic equations, such that (after passing to a subsequence and taking $k$ sufficiently large) $\lim_k \min {a}_k = -\infty$ and the $J_k$ are sufficiently close to each other in the operator-norm topology so that the constants in Lemma \ref{lma:monotonicity} can be treated as independent of $k \gg 0.$

 Recall the action $\mathfrak{a}_{\varphi}$ defined in equation (\ref{eq:a_varphi}).
The hypothesis of Case (2) of Theorem \ref{thm:compactness} implies the inequality
\begin{eqnarray*}
0 & > &
 \max\{ \mathfrak{a}_{\varphi}(p_+) - \mathfrak{a}_{\varphi}(p_-) - \hbar, \mathfrak{a}_{\varphi}(p_+) - \hbar\} + \int_0^1 \max G_t dt.
\end{eqnarray*}
We finally reach a contradiction by the below inequalities, derived from the above hypothesis, Cases (1.a) and (2.a) of Corollary \ref{cor:RefinedChordExistence} for $k \gg 0$ sufficiently large, together with Case (2) of Lemma \ref{lma:action} (in that precise order):
\begin{eqnarray*}
0 & > &\mathfrak{a}_{\varphi}(p_+) - \mathfrak{a}_{\varphi}(p_-) - \hbar + \int_0^1 \max G_t dt \\
& \ge & \mathfrak{a}_{\varphi}(p_+)-\mathfrak{a}_\varphi^- - E_{d(\varphi\eta)}(\tilde{u}_k|_{U_k}) + \int_0^1 \max G_t dt \\
& \ge & E_{d(\varphi \eta), J_k} (\tilde{u}_k|_{[0,1] \times \R \setminus U_k}) > 0.
\end{eqnarray*}
\end{proof}

\section{Usher's trick in the symplectization}
\label{sec:usher}
In this section we fix a contact manifold $(Y,\alpha)$ with a contact form $\alpha$. When speaking about the symplectization we will thus always refer to the identification $(\R \times Y,d(e^r\alpha))$. In order to make the distinction between a Hamiltonian on $\R \times Y$ and a contact Hamiltonian on $Y,$
 in this section we use 
the superscript ``$\alpha$,'' i.e.~$H^\alpha_t \colon Y \to \R,$ in the case of a contact Hamiltonian.

The contact Hamiltonian $ H_t^\alpha \colon Y \to \R$ has a lift $e^r H_t^\alpha \colon \R \times Y \to \R$ to the symplectization which, of course, is unbounded unless $H_t^\alpha\equiv 0.$ In order to produce a Hamiltonian that is possible to measure, it will be necessary to cut it off using a smooth bump function. Obviously, this must be done in a way which preserves the Hamiltonian diffeomorphism in some significant subset. In order to perform the cut-off in an efficient manner we will need to use the so-called Usher's trick, which made its first appearance in the proof of \cite[Theorem 1.3]{ObservationsHofer} due to Usher.

That this trick is very useful also for contact Hamiltonians was observed by Shelukhin in \cite{HoferContactomorphism}; see Lemma \ref{lma:usher}. Here we follow his construction when proving the following result.
\begin{prp}
\label{prp:usher}
Assume that we are given an indefinite contact Hamiltonian $ H_t^\alpha$ and any number $\delta>0.$ Let { {$A,B\ge 0$}} be as in (\ref{eq:norms}). Then there exists a compactly supported Hamiltonian $G_t \colon \R \times Y \to \R$ for which

\[ \phi^1_{G_t}|_{(\phi^1_{G_t})^{-1}([A+\delta/2,A+3\delta/2] \times Y)} =\phi^1_{e^r H_t^\alpha},\]
and whose Hofer norm satisfies the bound
\[ e^{A+2\delta} \min_Y H_{1-t}^\alpha < \min_{\R \times Y} G_t \le 0 \le \max_{\R \times Y} G_t < e^{A+2\delta} \max_Y H_{1-t}^\alpha.\]
Moreover, we can take $G_t$ to be of the form $\kappa_t \cdot e^r K_t^\alpha$ for an indefinite contact Hamiltonian $K_t^\alpha \colon Y \to \R$ and a smooth one-parameter family of bump-functions $\kappa_t \colon \R \times Y \to [0,1]$ satisfying:

\begin{enumerate}
\item  The functions $\kappa_t$ are supported inside $[0,A+B+1] \times Y$; 
\item On the subsets
$$\phi^t_{G_t}((\phi^1_{G_t})^{-1}([A+\delta/2,A+3\delta/2] \times Y)), \: t \in [0,1],$$
we have $\kappa_t \equiv 1.$
\end{enumerate}
\end{prp}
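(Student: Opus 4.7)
The plan is to adapt Usher's trick \cite{ObservationsHofer} to the symplectization, following Shelukhin \cite{HoferContactomorphism}. I would first lift the contact Hamiltonian $H_t$ to the Hamiltonian $F_t := e^r H_t$ on the symplectization, whose flow $\phi^t_{F_t}(r, x) = (r + \tau^t_{\alpha, H_t}(x), \phi^t_{\alpha, H_t}(x))$ is the standard lift of the contact isotopy. A direct cutoff $\kappa_t F_t$ with $\kappa_t \equiv 1$ on the forward trajectories of $F_t$ issuing from $(\phi^1_{F_t})^{-1}([A, A+2\delta] \times Y)$ would yield a Hofer bound with exponent roughly $2A + B + 2\delta$ rather than the sharp $A + 2\delta$, since these trajectories can reach $r$-coordinates well above $A + 2\delta$.

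To get the sharp bound, I would replace $F_t$ by an equivalent generator $e^r K_t$, where $K_t$ is a modified version of $H_{1-t}$ (twisted by the appropriate conformal factor data associated to the inverse of $\phi^{\bullet}_{\alpha,H_t}$) chosen so that the flow of $e^r K_t$ applied to the set $[A, A+2\delta] \times Y$ at time $1$ reproduces the inverse of $\phi^1_{F_t}$ on this set. The key property is that the trajectories of $\phi^t_{e^r K_t}$ starting in $[A, A+2\delta] \times Y$ remain inside the strip $\{r \leq A + 2\delta + \varepsilon\}$ for any prescribed $\varepsilon > 0$; this containment uses $A = \max_{t, y} \tau^t_{\alpha, H_t}$ together with an explicit computation of the conformal factors of $K_t$, and the construction simultaneously ensures that $K_t$ is indefinite with $\max_Y K_t = \max_Y H_{1-t}$ and $\min_Y K_t = \min_Y H_{1-t}$. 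Using compactness of $Y$, a partition-of-unity argument then yields a smooth family $\kappa_t \colon \R \times Y \to [0,1]$ satisfying the required properties (1)--(3) together with $\kappa_t \equiv 1$ on an open neighborhood of the reverse trajectories and $\mathrm{supp}(\kappa_t) \subset \{-2\delta \le r \le A + 2\delta + \varepsilon\}$.

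Setting $G_t := \kappa_t \cdot e^r K_t$, the Leibniz rule $X_{\kappa_t \cdot e^r K_t} = \kappa_t X_{e^r K_t} + (e^r K_t) X_{\kappa_t}$ together with $d\kappa_t = 0$ on the open set $\{\kappa_t \equiv 1\}$ implies $X_{G_t} = X_{e^r K_t}$ there, so $\phi^1_{G_t}$ coincides with $\phi^1_{e^r K_t}$ on $[A,A+2\delta]\times Y$, and hence $\phi^1_{G_t} = \phi^1_{e^r H_t}$ on its preimage under $\phi^1_{G_t}$. The Hofer norm estimate then follows from the pointwise bound $|G_t(r,y)| \le e^{A+2\delta+\varepsilon}|K_t(y)|$ on $\mathrm{supp}(\kappa_t)$, the extremal identities for $K_t$, and choosing $\varepsilon$ small enough to preserve strict inequality. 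The main obstacle is verifying the $r$-containment in the previous step, which requires a careful analysis of the conformal factor of $K_t$ using the transformation rule $\tau^s_{\alpha,K}(x) = \tau^{1-s}_{\alpha,H}((\phi^1_{\alpha,H})^{-1}(x)) - \tau^1_{\alpha,H}((\phi^1_{\alpha,H})^{-1}(x))$ (or its analogue for the particular reparametrization used); once this containment is established, the remainder of the argument is routine.
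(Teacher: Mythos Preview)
Your overall plan—replace $e^rH_t$ by $e^rK_t$ via Usher's trick and then cut off—is the paper's approach. The gap is in how you extract the Hofer bound. You assert that the construction gives $\max_Y K_t=\max_Y H_{1-t}$ and $\min_Y K_t=\min_Y H_{1-t}$, and then bound $|G_t|\le e^{A+2\delta+\varepsilon}|K_t|$ using $\operatorname{supp}(\kappa_t)\subset\{r\le A+2\delta+\varepsilon\}$. But in the \emph{contact} setting those extremal identities are false: Usher's relation (Part~(2) of Lemma~\ref{lma:usher}) is $(e^r\widetilde{F}_t)\circ\phi^t_{e^r\widetilde{F}_t}=e^rF_{1-t}$, which at the contact level reads $\widetilde{F}_t\bigl(\phi^t_{\alpha,\widetilde{F}_t}(y)\bigr)=e^{-\tau^t_{\alpha,\widetilde{F}_t}(y)}F_{1-t}(y)$. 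The conformal factor does not disappear, so $\max_Y K_t$ is only controlled up to a factor of order $e^B$, and your argument then yields exponent $A+B+2\delta$—precisely the suboptimal bound you set out to avoid. The paper does not separate the estimate into ``bound the $r$-support of $\kappa_t$ from above'' and ``bound $K_t$ on $Y$''; it uses Part~(2) directly: the cutoff $\kappa_t$ is supported in an arbitrarily small neighborhood of the image $\phi^t_{-e^rK_{1-t}}(Y_A)$ with $Y_A=(-\infty,A+2\delta]\times Y$, and on that image the values of $-e^rK_{1-t}$ coincide with those of $-e^rH_t$ on $Y_A$, which are bounded by $e^{A+2\delta}\max_Y|H_t|$ without any appeal to $\max_Y K_t$.

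There is also a direction inconsistency. You write that $\phi^1_{e^rK_t}$ reproduces the \emph{inverse} of $\phi^1_{F_t}$ on $[A,A+2\delta]\times Y$, yet conclude $\phi^1_{G_t}=\phi^1_{e^rH_t}$ on the preimage; these are not compatible. In the paper's construction $\phi^1_{e^rK_t}=\phi^1_{e^rH_t}$ holds globally (Part~(1) of Lemma~\ref{lma:usher}), and Property~(3) concerns the sets $\phi^t_{G_t}\bigl((\phi^1_{G_t})^{-1}([A,A+2\delta]\times Y)\bigr)$, i.e.\ trajectories \emph{ending} in the slab. These are accessed via the identity $\phi^{1-t}_{G_t}\circ(\phi^1_{G_t})^{-1}=\phi^t_{-G_{1-t}}$, and the containment actually used is from \emph{below} ($r\ge 0$, coming from $\min_{t,y}\tau^t_{\alpha,\widetilde{F}_t}=-A$ in Part~(3) of Lemma~\ref{lma:usher}), not the upper bound $r\le A+2\delta+\varepsilon$ you propose.
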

 Before proving this proposition we first recall Usher's trick in this setting.

\begin{lma}[Usher]
\label{lma:usher}
Let $ F_t^\alpha \colon Y \to \R$ be a contact Hamiltonian. Then there exists a contact Hamiltonian $\widetilde{F}_t^\alpha \colon Y \to \R$ satisfying the following properties:
\begin{enumerate}
\item $\phi^1_{\alpha, \widetilde{F}_t^\alpha}=\phi^1_{\alpha, F_t^\alpha}$,
\item $(e^r \widetilde{F}_t^\alpha) \circ \phi^t_{e^r \widetilde{F}_t^\alpha} = e^r F^\alpha_{1-t}$,
\item $\min_{t \in [0,1] \atop y \in Y} \tau^t_{\alpha,\widetilde{F}_t^\alpha}(y) = -\max_{t \in [0,1] \atop y \in Y} \tau^t_{\alpha,- F_{1-t}^\alpha}(y)$,
\item $\max_{t \in [0,1] \atop y \in Y} \tau^t_{\alpha,\widetilde{F}_t^\alpha}(y) = -\min_{t \in [0,1] \atop y \in Y} \tau^t_{\alpha,- F_{1-t}^\alpha}(y)$.
\end{enumerate}
In particular, by the second property, $\widetilde{F}_t^\alpha$ is indefinite if and only if $ F_t^\alpha$ is.
\end{lma}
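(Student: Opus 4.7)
The plan is to define $\widetilde{F}_t$ as the contact Hamiltonian generating the contact isotopy
\[ \psi^t := \phi^1_{\alpha,F_t} \circ (\phi^{1-t}_{\alpha,F_t})^{-1}, \]
which is a composition of contactomorphisms satisfying $\psi^0 = \mathrm{id}$ and $\psi^1 = \phi^1_{\alpha,F_t}$. Property (1) is then immediate, and I would package the remaining three properties as computations involving this $\psi^t$.

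The first observation to record is that, by the standard identity (\ref{eq:one}) that is already invoked just before the statement of the lemma, one has $(\psi^t)^{-1} = \phi^{1-t}_{\alpha,F_t} \circ (\phi^1_{\alpha,F_t})^{-1} = \phi^t_{\alpha,-F_{1-t}}$. This means that the conformal factor $\widetilde{\tau}^t := \tau^t_{\alpha,\widetilde{F}_t}$ of $\psi^t$ and the conformal factor $\tau^t_{\alpha,-F_{1-t}}$ of its inverse are related by
\[ \widetilde{\tau}^t = -\tau^t_{\alpha,-F_{1-t}} \circ (\psi^t)^{-1} = -\tau^t_{\alpha,-F_{1-t}} \circ \phi^t_{\alpha,-F_{1-t}}. \]
Since $\phi^t_{\alpha,-F_{1-t}}$ is a diffeomorphism of $Y$ at each fixed $t \in [0,1]$, taking extrema in $y$ is unaffected by this post-composition, and properties (3) and (4) follow at once.

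For property (2), I would compute $\frac{d}{dt}\psi^t(y)$ directly. Writing $\Phi^s := \phi^s_{\alpha,F_t}$ for the flow of $F$ so that $\psi^t = \Phi^1 \circ (\Phi^{1-t})^{-1}$, the identity $\Phi^1 = \psi^t \circ \Phi^{1-t}$ together with an implicit differentiation of $\Phi^{1-t}(z(t)) = y$ yields
\[ \tfrac{d}{dt}\psi^t(y) = D\psi^t|_y \cdot X_{F_{1-t}}(y), \]
where $X_{F_{1-t}}$ is the contact vector field of $F_{1-t}$. Applying $\alpha$ to both sides and using $(\psi^t)^{\ast}\alpha = e^{-\widetilde{\tau}^t}\alpha$ with $\alpha(X_{F_{1-t}}) = F_{1-t}$ gives
\[ \widetilde{F}_t(\psi^t(y)) = e^{-\widetilde{\tau}^t(y)} F_{1-t}(y), \]
which rearranges to $e^{\widetilde{\tau}^t(y)}\widetilde{F}_t(\psi^t(y)) = F_{1-t}(y)$. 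Translating to the symplectization using the formula $\phi^t_{e^r H_t}(r,y) = (r + \tau^t_{\alpha,H_t}(y),\phi^t_{\alpha,H_t}(y))$ recovers exactly the stated equality $(e^r\widetilde{F}_t) \circ \phi^t_{e^r \widetilde{F}_t} = e^r F_{1-t}$. The last sentence of the lemma concerning indefiniteness then follows from (2), since the right-hand side $F_{1-t}$ takes a given sign at $y$ if and only if $\widetilde{F}_t$ takes the same sign at $\psi^t(y)$, because the factor $e^{\widetilde{\tau}^t}$ is positive.

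The main obstacle is purely bookkeeping: one must be careful about the distinction between evaluating conformal factors at pre-image versus image points, and about the parametrization of $\psi^t$ as a non-autonomous flow (so that semigroup identities like $\phi^1 = \phi^t \circ \phi^{1-t}$ fail in general, although the specific identity $\Phi^1 = \psi^t \circ \Phi^{1-t}$ that I use does hold tautologically by the definition of $\psi^t$). Once the computation of $\tfrac{d}{dt}\psi^t$ is carried out cleanly, the other three properties fall out with no further analytic input.
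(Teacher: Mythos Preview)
Your proposal is correct and takes essentially the same approach as the paper: both define $\widetilde{F}_t$ as the generator of $\psi^t=(\phi^t_{\alpha,-F_{1-t}})^{-1}=\phi^1_{\alpha,F}\circ(\phi^{1-t}_{\alpha,F})^{-1}$, derive (1) from $\psi^1=\phi^1_{\alpha,F}$, and obtain (3)--(4) from the relation between the conformal factors of a contactomorphism and its inverse. For (2) the paper simply invokes the standard identity $G_t\circ\phi^t_{G_t}=-H_t$ for the Hamiltonian generating an inverse flow (applied on the symplectization with $H_t=-e^rF_{1-t}$), whereas you re-derive this identity by differentiating $\psi^t$ directly; the content is the same.
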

\begin{proof}
For any two Hamiltonians $G_t$ and $H_t$ on a symplectic manifold, it is a general and easily checked fact that
$$ \phi^t_{G_t} \circ \phi^t_{H_t}=\phi^t_{G_t+H_t\circ (\phi^t_{G_t})^{-1}}$$
holds. In particular, the Hamiltonian isotopy $(\phi^t_{H_t})^{-1}$ is generated by the Hamiltonian $G_t$ satisfying
\[ G_t \circ \phi^t_{G_t} = - H_t.\]
 When applying this to the lift $H_t=e^rH_t^\alpha$ of a contact Hamiltonian, note that the produced $G_t=e^rG_t^\alpha$ again is the lift of a contact Hamiltonian.

Recall the standard fact that the contact Hamiltonian $ H_t^\alpha=-F_{1-t}^\alpha$ generates
\[ \phi^t_{\alpha,- F_{1-t}^\alpha} = \phi^{1-t}_{\alpha, F_t^\alpha} \circ (\phi^1_{\alpha, F_t^\alpha})^{-1},\]
 as can be checked by explicitly evaluating the contact form $\alpha$ on the infinitesimal generator. Applying the previously mentioned formula for the inverse of the Hamiltonian isotopy to $ e^rH_t^\alpha=-e^rF_{1-t}^\alpha,$ the produced contact Hamiltonian $\widetilde{F}_t^\alpha:=G_t^\alpha$ generating $(\phi^t_{\alpha,- F_{1-t}^\alpha})^{-1}$ is the one we seek. Indeed, using the aforementioned identities, we immediately compute
\[ \phi^1_{\alpha, \widetilde{F}_t^\alpha}=(\phi^1_{\alpha,- F_{1-t}^\alpha})^{-1}=\phi^1_{\alpha, F_t^\alpha} \circ (\phi^{1-1}_{\alpha, F_t^\alpha})^{-1}=\phi^1_{\alpha, F_t^\alpha}.\]
 Thus we have established Parts (1) and (2).

Parts (3) and (4) are a straightforward consequence of the fact that $\phi^*\alpha=e^\tau\alpha$ if and only if $(\phi^{-1})^*\alpha=e^{-\tau}\alpha.$
\end{proof}

\begin{proof}[Proof of Proposition \ref{prp:usher}]
 As in the proof of the above lemma, recall the standard identity
\begin{equation}
\label{eq:one}
\phi^{1-t}_{\alpha, F_t^\alpha}\circ (\phi^1_{\alpha, F_t^\alpha})^{-1}=\phi^t_{\alpha,- F_{1-t}^\alpha}
\end{equation}
of contact isotopies for any contact Hamiltonian $ F_t^\alpha$.

We now consider the lifted contact Hamiltonian $-e^r H_{1-t}^\alpha$. Lemma \ref{lma:usher} (Usher's trick) applied to the contact Hamiltonian $ F_t^\alpha:=-H_{1-t}^\alpha$ produces an, again, indefinite contact Hamiltonian $ \widetilde{F}_t^\alpha.$ Denoting this contact Hamiltonian by $- K_{1-t}^\alpha:= \widetilde{F}_t^\alpha,$ we obtain
\begin{equation}
\label{eq:two}
\phi^1_{-e^r H_{1-t}^\alpha}=\phi^1_{-e^r K_{1-t}^\alpha},
\end{equation}
(this is Part (1) of that lemma) while for each $t \in [0,1]$ the equalities
\begin{eqnarray*}
\max_{\phi^t_{-e^r K_{1-t}^\alpha}([A,A+2\delta]\times Y)} (-e^r K_{1-t}^\alpha ) &= & \max_{[A,A+2\delta]\times Y)}(-e^r H_t^\alpha),\\
\min_{\phi^t_{-e^r K_{1-t}^\alpha}([A,A+2\delta]\times Y)} (-e^r K_{1-t}^\alpha )&= & \min_{[A,A+2\delta]\times Y)}(-e^r H_t^\alpha),
\end{eqnarray*}
hold (this is Part (2) of that lemma).

We now claim that the corresponding contact Hamiltonian $ K_t^\alpha$ will do the job, and where the cut-off function $ \kappa_{1-t} \colon \R \times Y \to \R_{\ge 0}$ is constructed to have support inside some arbitrarily small neighborhood of $\phi^t_{-e^r K_{1-t}^\alpha}([A+\delta/2,A+3\delta/2]\times Y) \cap \{ r \ge 0 \}$ for each $t \in [0,1],$ in which $ \kappa_{1-t} \equiv 1$ moreover is satisfied.

The statement that $\phi^1_{G_t}$ satisfies
$$\phi^1_{G_t}|_{(\phi^1_{G_t})^{-1}([A+\delta/2,A+3\delta/2] \times Y)} =\phi^1_{e^r H_t^\alpha} $$
in the subset $(\phi^1_{G_t})^{-1}([A+\delta/2,A+3\delta/2] \times Y),$ as required, is a consequence of Property (3) of this proposition, together with
\[ \phi^1_{e^r K_t^\alpha}=(\phi^1_{-e^r K_{1-t}^\alpha})^{-1}=(\phi^1_{-e^r H_{1-t}^\alpha})^{-1}=\phi^1_{e^r H_t^\alpha}\]
as follows from Formulas \eqref{eq:one} and \eqref{eq:two}. It thus remains to prove that Property (3) can be made to hold simultaneously with Properties (1) and (2). 

First, note that the subsets
$$\phi^t_{-e^r K_{1-t}^\alpha}([A+\delta/2,A+3\delta/2] \times Y), \:\: t\in[0,1],$$
all are contained inside $[0,A+B+1) \times Y$, since Parts (3) and (4) of Lemma \ref{lma:usher} shows that the $r$-coordinate of this subset is decreased (resp. increased) by at most the amount $A>0$ (resp. $B>0$) during the isotopy. (Recall that the lemma was applied to $ F_t^\alpha=-H_{1-t}^\alpha$ in order to construct $ \widetilde{F}_t^\alpha=-K_{1-t}^\alpha$.)

Second, consider the equality
$$ \phi^{1-t}_{ G_t} \circ (\phi^1_{ G_t})^{-1}([A+\delta/2,A+3\delta/2] \times Y) = \phi^t_{- G_{1-t}}([A+\delta/2,A+3\delta/2]\times Y) $$
which holds by the Hamiltonian analogue of Formula \eqref{eq:one}. The right-hand side can be seen here to coincide with $\phi^t_{-e^r K_{1-t}^\alpha}([A+\delta/2,A+3\delta/2] \times Y)$ by the construction of the cut-off function $\kappa_t$ together with $G_t=\kappa_t\cdot e^r K_t^\alpha.$ In particular, $\phi^{1-t}_{G_t} \circ (\phi^1_{G_t})^{-1}([A+\delta/2,A+3\delta/2] \times Y)$ is contained in the subset where $ \kappa_{1-t}$ is equal to one.
\end{proof}

The pay-off of this trick is that the Hofer norm for $G_t$ in Proposition \ref{prp:usher} is controlled with a factor approximately equal to $e^{A}$ as opposed to $e^{A+B}.$ Such a larger factor would result using the naive approach setting $G_t = \kappa_t \cdot e^r H_t^\alpha$ instead of $G_t = \kappa_t \cdot e^r K_t^\alpha$ with a contact Hamiltonian $ K_t^\alpha$ as constructed by Lemma \ref{lma:usher}.

\section{Splashing and neck-stretching}
\label{sec:SplashNeck}
Here we use a splitting along a hypersurface of contact type towards two goals. 
In Section \ref{sec:neckstretching}, we review neck-stretching.
In Section \ref{sec:splashing}, we wrap a Lagrangian in a certain way that, combined with neck-stretching, ultimately leads to a Mayer--Vietoris long exact sequence in Floer homology.
For similar decompositions of complexes defined by counts of pseudoholomorphic curves in different settings, see \cite{BorderedI} and \cite{BorderedII}. The approach taken here is also closely related to the Mayer--Vietoris decomposition in symplectic homology and Wrapped Floer homology as constructed in \cite{SymplecticEilenbergSteenrod} by Cieliebak--Oancea.

In this section let $L_0,L_1 \subset (X,d\eta)$ denote two exact Lagrangian submanifolds in an exact symplectic manifold. Consider a dividing hypersurface $(Y,\alpha) \subset (X,d\eta)$ of contact-type, where $\alpha=\eta|_{TY}$. Assume that
\[\Lambda_i := L_i \cap Y \subset (Y,\alpha), \:\: i=0,1,\]
are connected Legendrian submanifolds, where $\Lambda_i \subset L_i$ moreover is dividing and where $\Lambda_0 \cap \Lambda_1=\emptyset.$ 
We will also take a neighborhood of $Y \subset (X,d\eta)$ and its exact symplectomorphism to the subset
\[ ([T-3\varepsilon,T+3\varepsilon] \times Y,d(e^r\alpha)) \]
of the symplectization as part of the data, for some $\varepsilon>0$ and arbitrary $T \in \R$ (such a neighborhood always exists). The image of $L_i$ under this identification is given by the cylindrical Lagrangian submanifold
\[ [T-3\varepsilon,T+3\varepsilon] \times \Lambda_i \subset [T-3\varepsilon,T+3\varepsilon] \times Y, \:\: i=0,1. \]

In addition, we fix here a piecewise smooth Hofer function $\varphi$ (see Definition \ref{dfn:varphi}) which moreover is assumed to satisfy
$$\varphi|_{[T-3\varepsilon,T+3\varepsilon] \times Y} \equiv 1.$$
Recall that for $i = 0,1$ we have chosen primitives $f^\varphi_0,f^\varphi_1$ of the pullbacks of $\varphi \eta$ to $L_0,L_1.$

\subsection{Neck-stretching}
\label{sec:neckstretching}
Neck-stretching can be used to prevent certain pseudoholomorphic curves or strips from crossing a hypersurface. In the setting of Floer homology with a Hamiltonian perturbation term, it appears in e.g.~\cite[Section 2.3]{SymplecticEilenbergSteenrod}, while in the setting of symplectic field theory (SFT for short) we refer to \cite{IntroSFT}, \cite{CompSFT}. However, note that the approach taken here is simpler compared to the one in \cite{SymplecticEilenbergSteenrod}, since our method only uses positivity of energy for strips, and does not rely on the SFT compactness theorem.

Denote by $X_L \sqcup X_R=X \setminus ([T,T+\varepsilon]\times Y)$ the components ``to the left'' and ``to the right'' of the cylindrical region, respectively (here we used the property that the hypersurface is dividing). One way to formulate the neck-stretching is as follows. First, excise the cylindrical pair
$$([T,T+\varepsilon] \times Y,[T,T+\varepsilon] \times (\Lambda_0 \cup \Lambda_1)) \subset (X, L_0 \cup L_1), $$
and replace it with the ``longer pair''
$$([T,T+\varepsilon+\lambda] \times Y,[T,T+\varepsilon+\lambda] \times (\Lambda_0 \cup \Lambda_1)).$$
Rescale the symplectic form to be $e^\lambda d\eta$ on the subset $X_R.$ In this manner we obtain a new pair of exact Lagrangian submanifolds of a new symplectic manifold.

For us we will only be interested in the case when $[T,T+\varepsilon] \times Y$ is a cylindrical subset of a cylindrical end of $X$. By assumption we have $\varphi(r) \equiv 1 \in \R$ for all $r \in [T,T+\varepsilon].$ In this case there is an alternative formulation of the above construction, where instead of replacing the manifold we replace the function $\varphi \colon X \to \R_{>0}$ with the function
$$\varphi_\lambda \colon X \to \R_{>0},$$
determined by
\begin{equation}
\label{eq:VarphiNeckStretch}
\varphi_\lambda := \begin{cases}
\varphi(x), & x \in X_L,\\
e^{\rho_\lambda(r)}\varphi(x)=e^{\rho_\lambda(r)}, & x=(r,y) \in [T,T+\varepsilon]\times Y,\\
e^\lambda \varphi(x), & x \in X_R.
\end{cases}
\end{equation}
where $\rho_\lambda \colon \R \to [0,\lambda]$ is a smooth interpolation function for which $\rho_\lambda' \ge 0$ has compact support, while it satisfies $\rho_\lambda(T)=0$ and $\rho_\lambda(T+\varepsilon)=\lambda.$ Note that $\varphi_\lambda$ is a Hofer function (in particular, see Part (2) of Definition \ref{dfn:varphi}). Moreover, there is an inclusion
$$\mathcal{J}(L_0,L_1,\varphi_\lambda) \subset \mathcal{J}(L_0,L_1,\varphi)$$
of subsets of admissible almost complex structures.
 Recall Section \ref{sec:AdmissibleACS}. 

Let $C_i = f^{\varphi}_i(T, \cdot)$ denote the value of the primitive $ f^{\varphi}_i$ at $L_i \cap Y \cap \{r = T\},$ which is constant by Lemma \ref{lma:ActionComp}, together with the fact that $L_i \cap Y$ is connected.
 Recall $f^{\varphi}_i$ is a primitive of the one-form $\varphi \eta$ pulled back to $L_i.$ See Section \ref{sec:LagrangianCobordismsAndConcordances}.
\begin{lma}
\label{lma:StretchedAction}
Consider intersection points $p_L,p_R \in L_0 \cap L_1$ contained inside $X_L$ and $X_R$, respectively. After neck-stretching we have
$$\mathfrak{a}_{\varphi_{\lambda}}(p_R)=e^\lambda(\mathfrak{a}_\varphi(p_R)-(C_1-C_0))+(C_1-C_0)$$
while the action of $p_L$ is unaffected, i.e.~$\mathfrak{a}_{\varphi_\lambda}(p_L)=\mathfrak{a}_\varphi(p_L).$
\end{lma}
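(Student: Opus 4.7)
The plan is a direct bookkeeping argument: I trace how the primitives $f_i^{\varphi_\lambda}$ (normalized to vanish on the concave end, like $f_i^\varphi$) compare to $f_i^\varphi$ across the three regions $X_L$, the neck $[T,T+\varepsilon]\times Y$, and $X_R$ distinguished by the piecewise definition \eqref{eq:VarphiNeckStretch}.

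First I observe that on $X_L$ we have $\varphi_\lambda\equiv \varphi$, so the one-forms $\varphi\eta$ and $\varphi_\lambda\eta$ agree there. Since both primitives are pinned down by the common condition of vanishing for $r\ll 0$, they must coincide on all of $L_i\cap X_L$. In particular $f_i^{\varphi_\lambda}(p_L)=f_i^\varphi(p_L)$, which immediately gives $\mathfrak{a}_{\varphi_\lambda}(p_L)=\mathfrak{a}_\varphi(p_L)$.

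Next I compute the value of $f_i^{\varphi_\lambda}$ on $L_i\cap([T-3\varepsilon,T+3\varepsilon]\times\Lambda_i)$. Since $\eta=e^r\alpha$ and $\alpha|_{T\Lambda_i}=0$ by the Legendrian condition, the pullback of $\varphi_\lambda\eta$ (and of $\varphi\eta$) to this cylinder vanishes. Consequently $f_i^{\varphi_\lambda}$ and $f_i^\varphi$ are both locally constant on the neck portion of $L_i$; by continuity with the $X_L$ analysis, and using the connectedness of $\Lambda_i$, both are identically equal to $C_i$ there. Then on $X_R$ we have $\varphi_\lambda=e^\lambda\varphi$, and for any path $\gamma\subset L_i$ from the cylindrical boundary $(T+\varepsilon,\cdot)$ to $p_R$ I obtain
\begin{equation*}
f_i^{\varphi_\lambda}(p_R)=C_i+\int_\gamma\varphi_\lambda\eta=C_i+e^\lambda\int_\gamma\varphi\eta=C_i+e^\lambda\bigl(f_i^\varphi(p_R)-C_i\bigr).
\end{equation*}
Subtracting the $i=1$ equation from the $i=0$ equation, and using $\mathfrak{a}_\varphi(p_R)=f_0^\varphi(p_R)-f_1^\varphi(p_R)$, a short algebraic rearrangement of $e^\lambda\mathfrak{a}_\varphi(p_R)$ together with the $(C_0-C_1)$-correction term produces the claimed identity.

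I do not anticipate any serious obstacle: the argument is essentially Stokes' theorem combined with the normalization of the primitives and the Legendrian condition on the neck. The only delicate point to get right is that $C_i$ is a single constant rather than one per component, which is what makes the connectedness hypothesis on $\Lambda_i$ essential; without it, the identity would have to be stated componentwise in $\Lambda_i$, but the argument would otherwise be unchanged.
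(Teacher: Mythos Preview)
Your argument is correct and is precisely the ``direct computation'' the paper alludes to: the paper's proof simply records the formula $f_i^{\varphi_\lambda}=e^\lambda(f_i^\varphi-C_i)+C_i$ on $L_i\cap X_R$ (and $f_i^{\varphi_\lambda}=f_i^\varphi$ elsewhere) without spelling out the integration you carry through. One small remark: your subtraction indeed yields the correction term $(C_0-C_1)$, matching the paper's own proof and its subsequent applications; the appearance of $(C_1-C_0)$ in the displayed statement is a sign typo in the paper.
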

The effect of replacing $\varphi$ with $\varphi_\lambda$ will be said to be a {\em{neck-stretching with parameter $\lambda$}}.

\begin{proof}
The primitives of $\varphi_\lambda\eta$ pulled-back to $L_i$ are given by
\begin{equation*}
f^{\varphi_\lambda}_i = \begin{cases}
 f^\varphi_i(x), & x \in L_i \cap X_L, 
\\
e^\lambda (f^\varphi_i(x)-C_i)+C_i, & x \in L_i \cap X_R,
\end{cases}
\end{equation*}
as can be seen by a direct computation.
\end{proof}

\subsection{Splashing}
\label{sec:splashing} 

Consider the autonomous Hamiltonian
$$h^{\OP{Spl}}_{ \varepsilon,Z_-,Z_+}=\int_0^r e^s\beta^{\OP{Spl}}_{\varepsilon,Z_-,Z_+}(s)ds \colon (X,d\eta) \to \R$$
which only depends on the symplectization coordinate $r \in \R$, and where $\beta^{\OP{Spl}}_{\varepsilon,Z_-,Z_+}$ is as shown in Figure \ref{fig:wrap}. The corresponding Hamiltonian vector field is given by $\beta^{\OP{Spl}}_{\varepsilon,Z_-,Z_+}R_\alpha,$ whose support inside $[T-3\varepsilon,T+3\varepsilon] \times Y$.

We define
$$L^{\OP{Spl}}_0 :=\phi^1_{h^{\OP{Spl}}_{\varepsilon,Z_-,Z_+}}(L_0)$$
to be a Hamiltonian deformation of $L_0,$ supported in $[T-3\varepsilon,T+3\varepsilon] \times Y$. As in \cite{FuchsRutherford}, we call this a ``splash.''
We assume $L^{\OP{Spl}}_0 \cap [T-3\varepsilon,T+3\varepsilon] \times Y$ is cylindrical outside of $\{|r - (T \pm 2\varepsilon)| < \varepsilon^2\}$ and $\{|r - T| < \varepsilon^2\}.$ The following result is straightforward.

\begin{lma} For any fixed numbers $Z_- < 0 < Z_+$, we can make the above Hamiltonian 
$ h^{\OP{Spl}}_{\varepsilon,Z_-,Z_+}$ arbitrarily small in the uniform $C^0$-norm by shrinking $\varepsilon>0$. After either increasing
$Z_+$ or decreasing $Z_-,$ we may moreover assume that $h^{\OP{Spl}}_{\varepsilon,Z_-,Z_+} \equiv 0$ holds outside of $[T-3\varepsilon,T+3\varepsilon] \times Y$.
\end{lma}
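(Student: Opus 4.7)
The plan is to specify $\beta^{\OP{Spl}}_{\varepsilon,Z_-,Z_+}$ by hand and then verify both claims of the lemma through direct integral estimates.

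First I would fix a standard smooth cut-off $\chi \colon \R \to [0,1]$ with $\chi \equiv 1$ on $[-1/2, 1/2]$ and $\OP{supp}(\chi) \subset [-1,1]$, and take $\beta^{\OP{Spl}}_{\varepsilon,Z_-,Z_+}$ to be the smooth function on $\R$ supported in $[T-3\varepsilon,\, T+3\varepsilon]$, with constant value $Z_-$ on the plateau $[T-2\varepsilon+\varepsilon^2,\, T-\varepsilon^2]$ and constant value $Z_+$ on the plateau $[T+\varepsilon^2,\, T+2\varepsilon-\varepsilon^2]$, interpolated by rescaled copies of $\chi$ through three transition zones of width at most $2\varepsilon^2$ centered at $T-2\varepsilon$, $T$, and $T+2\varepsilon$, plus two tails of length at most $\varepsilon$ near the endpoints of the support. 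Since the Hamiltonian vector field $X_{h^{\OP{Spl}}} = \beta^{\OP{Spl}}(r)R_\alpha$ preserves the $r$-coordinate and translates each point at level $r$ along the Reeb direction by $\beta^{\OP{Spl}}(r)$ after time one, the time-$1$ image $L_0^{\OP{Spl}}$ of the cylindrical piece $[T-3\varepsilon,T+3\varepsilon]\times \Lambda_0$ is again cylindrical precisely on those regions where $\beta^{\OP{Spl}}$ is constant, giving the required structural assumption.

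The uniform $C^0$-smallness claim, which I read as applying to the symplectization Hamiltonian $h^{\OP{Spl}}$ itself (``the above Hamiltonian'' from the preceding paragraph), then follows from the crude estimate
\[ \|h^{\OP{Spl}}\|_{C^0} \;\le\; \int_{T-3\varepsilon}^{T+3\varepsilon} e^s|\beta^{\OP{Spl}}(s)|\,ds \;\le\; 6\varepsilon \cdot e^{T+3\varepsilon} \cdot \max\{|Z_-|,|Z_+|\}, \]
whose right-hand side tends to $0$ as $\varepsilon \to 0$ for fixed $Z_\pm$.

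Finally, the condition $h^{\OP{Spl}} \equiv 0$ outside $[T-3\varepsilon, T+3\varepsilon] \times Y$ is automatic for $r \le T-3\varepsilon$, and for $r \ge T+3\varepsilon$ reduces to the single linear constraint $\int_\R e^s\beta^{\OP{Spl}}(s)\,ds = 0$. Writing this as $A(\varepsilon)\, Z_- + B(\varepsilon)\, Z_+ = 0$, the coefficients $A$ and $B$ (the $e^s$-weighted plateau lengths plus small contributions from the three transitions) are both strictly positive. Since $Z_-$ and $Z_+$ enter with opposite signs, we can enforce the vanishing by either increasing $Z_+$ (when the integral is negative) or decreasing $Z_-$ (when it is positive), in each case preserving the sign constraint $Z_- < 0 < Z_+$. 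The only mildly technical point is to perform this adjustment first, before shrinking $\varepsilon$; this is harmless, as the $C^0$-bound above depends only on $\max\{|Z_-|,|Z_+|\}$, so the two conclusions can be arranged simultaneously. There is no substantial obstacle: the lemma is essentially an elementary calculus exercise once the shape of $\beta^{\OP{Spl}}$ prescribed by Figure \ref{fig:wrap} is fixed.
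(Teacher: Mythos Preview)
The paper provides no proof of this lemma beyond the remark ``The following result is straight forward,'' so there is nothing to compare against at the level of strategy. Your proposal correctly identifies that the phrase ``the above Hamiltonian $\beta^{\OP{Spl}}_{\varepsilon,Z_-,Z_+}$'' must refer to $h^{\OP{Spl}}$ (this is confirmed by the paper's later use in Section~\ref{sec:TurnSplash}: ``We can assume this Hamiltonian to be arbitrarily small in the uniform $C^0$-norm''), and your integral estimate and balancing argument are exactly the computations the authors have in mind.

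Two small points. First, your description of the profile includes ``two tails of length at most $\varepsilon$ near the endpoints,'' but the paper's convention (stated just before the lemma) is that $L_0^{\OP{Spl}}$ is cylindrical outside $\{|r-(T\pm 2\varepsilon)|<\varepsilon^2\}\cup\{|r-T|<\varepsilon^2\}$, so $\beta^{\OP{Spl}}$ is already identically zero on $[T-3\varepsilon,T-2\varepsilon-\varepsilon^2]$ and $[T+2\varepsilon+\varepsilon^2,T+3\varepsilon]$; there are only the three transition zones. This does not affect your estimate. Second, your remark about performing the adjustment of $Z_\pm$ ``first, before shrinking $\varepsilon$'' is slightly off: the coefficients $A(\varepsilon),B(\varepsilon)$ depend on $\varepsilon$, so the balancing value of $Z_+$ (or $Z_-$) does too. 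The correct observation is that the ratio $A(\varepsilon)/B(\varepsilon)$ stays bounded as $\varepsilon\to 0$ (both are $\sim 2\varepsilon e^T$), so the adjusted $\max\{|Z_-|,|Z_+|\}$ is bounded uniformly in $\varepsilon$ in terms of the original values, and hence your $C^0$-bound still tends to zero. With that clarification the argument is complete.
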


\begin{figure}[htp]
\centering
\labellist
\pinlabel $\beta^{\OP{Spl}}_{\varepsilon,Z_-,Z_+}(r)$ at 170 170
\pinlabel $T-2\varepsilon$ at 74 98
\pinlabel $T+\varepsilon$ at 167 98
\pinlabel $r$ at 273 83
\pinlabel $T-\varepsilon$ at 103 70
\pinlabel $T$ at 126 98
\pinlabel $T+2\varepsilon$ at 203 70
\pinlabel $T+3\varepsilon$ at 234 98
\pinlabel $T-3\varepsilon$ at 38 70
\pinlabel $Z_+$ at -8 156
\pinlabel $Z_-$ at -8 13
\endlabellist
\includegraphics{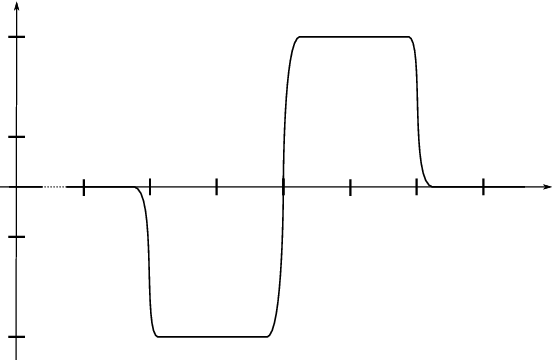}
\caption{The splashing construction is performed by applying the time-$1$ flow of the Hamiltonian vector field 
 $\beta^{\OP{Spl}}_{\varepsilon,Z_-,Z_+}(r) R_\alpha$ 
supported in $\{ r \in [T-3\varepsilon,T+3\varepsilon]\}$ to the Lagrangian $L_0$ which is cylindrical in the same subset.}
\label{fig:wrap}
\end{figure}

\begin{lma}
\label{lma:IntersectionComplex}
The intersection points
$$\{p^-_i\} =L^{\OP{Spl}}_0 \cap L_1 \cap [T-\varepsilon^2,T) \times Y$$
are in bijective correspondence with a subset of the Reeb chords from $\Lambda_1$ to $\Lambda_0,$ while the intersection points 
$$\{p^+_i\} = L^{\OP{Spl}}_0 \cap L_1 \cap (T,T+\varepsilon^2] \times Y$$
are in bijective correspondence with a subset of the Reeb chords from $\Lambda_0$ to $\Lambda_1.$ 
 For appropriate $\beta^{\OP{Spl}}_{\varepsilon,Z_-,Z_+},$ 
the $\varphi$-action of the intersection point $p^\pm_i$ and the length of the corresponding Reeb chord $c_i^\pm$ satisfy
$$ |\pm e^T \ell(c_i^\pm) + (C_0 - C_1) - \mathfrak{a}_\varphi(p_i^\pm) | < K {\varepsilon},$$
where the constant $ K = K(Z_-, Z_+)$ depends continuously on its  two  parameters. (The value of these constants $K$ depend on the particular choice of family $\beta^{\OP{Spl}}_{\varepsilon,Z_-,Z_+}$ of functions; more precisely, they are taken to satisfy $\varepsilon^2|\frac{d}{dr}\beta^{\OP{Spl}}_{\varepsilon,Z_-,Z_+}| \le K(Z_-,Z_+)$ .) 
\end{lma}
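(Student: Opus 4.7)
The plan is to describe $L_0^{\OP{Spl}}$ slice-by-slice in $\{|r-T|<3\varepsilon\}$, identify its intersections with $L_1$ as Reeb chords, and then compute the action of each such intersection directly from Cartan's formula. First I would observe that the Hamiltonian vector field of $h^{\OP{Spl}}$ on the symplectization is $X_{h^{\OP{Spl}}} = \beta^{\OP{Spl}}_{\varepsilon,Z_-,Z_+}(r) R_\alpha$: since $h^{\OP{Spl}}$ depends only on $r$ with derivative $e^r\beta^{\OP{Spl}}(r)$, a short computation using the convention $\omega(\cdot,X_h)=dh(\cdot)$ confirms this. Its time-$1$ flow therefore preserves the $r$-coordinate and acts on each slice $\{r_0\}\times Y$ as $\phi^{\beta^{\OP{Spl}}(r_0)}_{R_\alpha}$. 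Hence $L_0^{\OP{Spl}}\cap(\{r_0\}\times Y) = \phi^{\beta^{\OP{Spl}}(r_0)}_{R_\alpha}(\Lambda_0)$, while $L_1\cap(\{r_0\}\times Y)=\Lambda_1$ by cylindricality, and intersection points in this slice correspond bijectively to Reeb trajectories of signed length $\beta^{\OP{Spl}}(r_0)$ connecting $\Lambda_0$ and $\Lambda_1$. By the shape of $\beta^{\OP{Spl}}$ in Figure \ref{fig:wrap}, this sign is negative on $[T-\varepsilon^2,T)$ (yielding chords from $\Lambda_1$ to $\Lambda_0$) and positive on $(T,T+\varepsilon^2]$ (yielding chords from $\Lambda_0$ to $\Lambda_1$), establishing the bijection.

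For the action I would compare primitives of $\eta$ (which equals $\varphi\eta$ throughout the relevant region, since $\varphi\equiv 1$ there) on $L_0$ and on $L_0^{\OP{Spl}}$ via Cartan's formula. The identity $\mathcal{L}_{X_h}\eta = d(\eta(X_h)-h)$ integrates to $(\phi^1_{h^{\OP{Spl}}})^*\eta = \eta + dF$ with $F = \int_0^1 (\phi^s_{h^{\OP{Spl}}})^*(\eta(X_{h^{\OP{Spl}}})-h^{\OP{Spl}})\,ds$. Since the flow preserves $r$ and the integrand depends only on $r$, this collapses to the explicit expression $F(p) = e^{r_0}\beta^{\OP{Spl}}(r_0) - \int_0^{r_0}e^s\beta^{\OP{Spl}}(s)\,ds$ for any $p$ in the slice $\{r_0\}\times Y$. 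Combined with the cylindrical-region constancies $f^\varphi_0|_{L_0\cap\{|r-T|<3\varepsilon\}} = C_0$ and $f^\varphi_1|_{L_1\cap\{|r-T|<3\varepsilon\}} = C_1$, together with the fact that $\phi^1_{h^{\OP{Spl}}}$ is the identity on the negative cylindrical end (so the normalization of primitives is preserved), this yields
$$\mathfrak{a}_\varphi(p) = (C_0 - C_1) + e^{r_0}\beta^{\OP{Spl}}(r_0) - \int_0^{r_0}e^s\beta^{\OP{Spl}}(s)\,ds.$$

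The last step is the error estimate. Since $|r_0-T|<\varepsilon^2$, we have $e^{r_0} = e^T + O(\varepsilon^2)$, and substituting $\beta^{\OP{Spl}}(r_0) = \pm\ell(c^\pm_i)$ produces the leading term $\pm e^T\ell(c^\pm_i)$ up to an error of order $\varepsilon^2\max(|Z_-|,|Z_+|)$. The remaining integral $\int_0^{r_0}e^s\beta^{\OP{Spl}}(s)\,ds$ is bounded by roughly $6\varepsilon \cdot e^{T+3\varepsilon}\max(|Z_-|,|Z_+|)$, since $\beta^{\OP{Spl}}$ is supported in an interval of length $6\varepsilon$. All constants involved depend continuously on $T$ and $Z_\pm$, delivering the claimed bound. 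I expect the main bookkeeping hurdle to be maintaining consistent sign conventions throughout — in particular, reconciling the paper's Hamiltonian sign convention, the sign of $\beta^{\OP{Spl}}$ on each side of $T$, and the direction of the associated Reeb chord — but no essential new idea beyond the above setup should be required.
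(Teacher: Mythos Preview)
Your proposal is correct and follows essentially the same approach as the paper. The only difference is computational: the paper directly computes the pull-back of $e^r\alpha$ to $L_0^{\OP{Spl}}$ as $e^r\tfrac{d}{dr}\beta^{\OP{Spl}}(r)\,dr$ and integrates (approximating $e^r\approx e^T$ over the interval $|r-T|\le\varepsilon^2$), whereas you obtain the primitive via Cartan's formula, yielding $F(r_0)=e^{r_0}\beta^{\OP{Spl}}(r_0)-h^{\OP{Spl}}(r_0)$; the two formulas are related by integration by parts, and both produce the same $O(\varepsilon)$ error.
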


\begin{proof}
By the construction of
$$L^{\OP{Spl}}_0=\phi^1_{ \beta^{\OP{Spl}}_{\varepsilon,Z_-,Z_+} R_\alpha}(L_0)$$
each intersection point $p \in L^{\OP{Spl}}_0 \cap L_1$ corresponds to a chord of the time-one flow of the rescaled Reeb vector field $\beta^{\OP{Spl}}_{\varepsilon,Z_-,Z_+}R_\alpha$, where the function $\beta^{\OP{Spl}}_{\varepsilon,Z_-,Z_+}$ only depends on $r.$ The correspondence between intersection points and Reeb chords is hence straightforward. (Also, see Figure \ref{fig:chords2} for a similar correspondence.) By the same reasons, the pull-back of $e^r\alpha_+$ to $L^{\OP{Spl}}_0$ takes the form $e^r\frac{d}{dr}\beta^{\OP{Spl}}_{\varepsilon,Z_-,Z_+}(r)dr$; see Part (2) of Lemma \ref{lma:ActionComp}.

 We see from Figure \ref{fig:wrap} that we can choose $\beta^{\OP{Spl}}_{\varepsilon,Z_-,Z_+}$ such that 
$\left|\frac{d}{dr}\beta^{\OP{Spl}}_{\varepsilon,Z_-,Z_+}\right| < (Z_+ - Z_-)/\varepsilon^{2}.$ So, in particular, for
 
 $\{|r-T| \le \varepsilon^2\}$ we can assume
$$\left|e^r\frac{d}{dr}\beta^{\OP{Spl}}_{\varepsilon,Z_-,Z_+}-e^T\frac{d}{dr}\beta^{\OP{Spl}}_{\varepsilon,Z_-,Z_+}\right| \le |e^r-e^T|K/\varepsilon^2 \le K$$
 for a constant $K(Z_+,Z_-).$ The integral 
$$ \int_T^r \left(e^r\frac{d}{dr}\beta^{\OP{Spl}}_{\varepsilon,Z_-,Z_+}-e^T\frac{d}{dr}\beta^{\OP{Spl}}_{\varepsilon,Z_-,Z_+} \right) dr$$
then satisfies the estimate
$$|(f^\varphi_0-C_0) -(e^T\beta^{\OP{Spl}}_{\varepsilon,Z_-,Z_+}(r)-0)| \le K\varepsilon.$$
Since $f^\varphi_1$ is constantly equal to $C_1$ inside this region, the statements concerning the action thus hold. (Recall that $f_0^\varphi(p_i^\pm)-f_1^\varphi(p_i^\pm)=\mathfrak{a}_\varphi(p_i^\pm),$ while $e^T\beta^{\OP{Spl}}_{\varepsilon,Z_-,Z_+}(r(p_i^\pm))=\pm e^T \ell(c_i^\pm)$.)
\end{proof}

 Note that 
there are no intersection points of $ L^{\OP{Spl}}_0 \cap L_1$ in the region where $L^{\OP{Spl}}_0 \cap [T-3\varepsilon,T+3\varepsilon] \times Y$ is cylindrical, which includes the slices $\{r = T \pm \varepsilon\},$
 because $L_1$ is also cylindrical.

 In Section \ref{sec:ContinuationMaps}, we will define $\overline{M}_\pm$ in terms of $M_\pm$ from equation (\ref{eq:norms}).
For now, just consider $\overline{M}_\pm$ as constant. Also, we recall the definition of $\varphi_\lambda$ in equation (\ref{eq:VarphiNeckStretch}).

\begin{prp}
\label{prp:WrapObstruction}

Fix $Z_+>0>Z_-$ satisfying
$$Z_+ + (C_0 - C_1)>\overline{M}_+ > \overline{M}_- > Z_- - (C_0 - C_1).$$
After possibly shrinking $\varepsilon>0,$ increasing $\lambda,$ increasing $Z_+>0$ or decreasing $Z_-<0,$ the following can be made to hold. Consider the set of intersection points $p \in L^{\OP{Spl}}_0 \cap L_1$ such that $\overline{M}_- \le \mathfrak{a}_\varphi(p) \le \overline{M}_+.$ For any almost complex structure in
$$\mathcal{J}(L_0,L_1,\varphi_\lambda) \subset \mathcal{J}(L_0,L_1,\varphi),$$
there exists no pseudoholomorphic strip whose input is an intersection point as above located in $\{ r \le T+ \varepsilon\}$ (resp. $\{ r \ge T - \varepsilon\}$) and whose output is an intersection point as above located in $\{ r \ge T+ \varepsilon\}$ (resp. $\{ r \le T - \varepsilon\}$). 
\end{prp}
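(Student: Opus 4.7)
The plan is to argue by contradiction, treating the first case of a strip $u$ crossing $\{r=T+\varepsilon\}$ upward; the second case is symmetric. Suppose $u$ has input $p_-$ with $r(p_-)\le T+\varepsilon$ and output $p_+$ with $r(p_+)\ge T+\varepsilon$, both with $\mathfrak{a}_\varphi(p_\pm)\in[\overline{M}_-,\overline{M}_+]$, for some $\lambda\ge 0$ and $J\in\mathcal{J}(L_0,L_1,\varphi_\lambda)$.

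First I would use Lemma \ref{lma:IntersectionComplex} to locate the relevant intersections: the splash intersections are concentrated in the three transition regions of $L_0^{\OP{Spl}}$ near $r=T-2\varepsilon$, $r=T$ and $r=T+2\varepsilon$, with $\mathfrak{a}_\varphi\approx\pm e^r\ell(c)+(C_0-C_1)$ up to a $K\varepsilon$-error, where $K$ depends continuously on $(Z_-,Z_+,\varphi(T,\cdot),T)$. Fixing $Z_\pm$ satisfying the hypothesis and then shrinking $\varepsilon$ makes this error negligible. The geometric constraint $r(p_\pm)\gtrless T+\varepsilon$ (using $\varepsilon<1$ so that $\varepsilon^2<\varepsilon$) then forces $p_+\in X_R$ (an old intersection or a splash intersection near $r=T+2\varepsilon$), while $p_-$ lies either in $X_L$ or in the $T$-transition with $r(p_-)\in[T,T+\varepsilon^2]$.

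Next I would apply Lemma \ref{lma:varphienergy}, which gives $\mathfrak{a}_{\varphi_\lambda}(p_+)\ge\mathfrak{a}_{\varphi_\lambda}(p_-)$, and convert via Lemma \ref{lma:StretchedAction}. Choosing the bump function $\rho_\lambda$ in \eqref{eq:VarphiNeckStretch} to vanish on $[T,T+2\varepsilon^2]$ forces $\mathfrak{a}_{\varphi_\lambda}(p_-)=\mathfrak{a}_\varphi(p_-)$ exactly (in either sub-case for $p_-$), while $\mathfrak{a}_{\varphi_\lambda}(p_+)=e^\lambda(\mathfrak{a}_\varphi(p_+)-(C_1-C_0))+(C_1-C_0)$. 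The key contradiction should then follow by a case analysis on the locations of $p_\pm$: for each surviving case, the hypothesis $Z_++(C_0-C_1)>\overline{M}_+>\overline{M}_->Z_--(C_0-C_1)$ together with the approximate action formula from Lemma \ref{lma:IntersectionComplex} should pin down a definite sign of $\mathfrak{a}_\varphi(p_+)-(C_1-C_0)$ which, when combined with the $e^\lambda$-rescaling, yields $\mathfrak{a}_{\varphi_\lambda}(p_+)<\mathfrak{a}_{\varphi_\lambda}(p_-)$ uniformly in $\lambda\ge 0$.

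The main obstacle is the regime of small $\lambda$ (in particular $\lambda=0$), where the stretching trivializes and the resulting action inequality reduces to the original $\mathfrak{a}_\varphi$-monotonicity, which is not immediately contradicted by the window constraint. To handle this, I would invoke the Monotonicity Lemma \ref{lma:monotonicity} applied to $u^{-1}([T,T+\varepsilon]\times Y)$: both Lagrangians are cylindrical Legendrians in this slab and, after a small perturbation of $Z_\pm$ excluded by a measure-zero set of Reeb chord lengths between $\Lambda_0$ and $\Lambda_1$, are disjoint there, so any crossing component has $d(\varphi_\lambda\eta)$-energy bounded below by a positive constant $c$ depending only on the Reeb geometry. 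Combining this lower bound with the quantitative sign analysis above, and shrinking $\varepsilon$ and widening $Z_\pm$ as allowed by the hypothesis, should close the contradiction uniformly in $\lambda$.
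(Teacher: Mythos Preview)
Your large-$\lambda$ strategy is essentially the paper's: stretch the neck so that the $\varphi_\lambda$-action of every intersection point on the ``right'' side becomes much smaller than that of every point on the ``left,'' and then invoke action monotonicity (Lemma~\ref{lma:varphienergy}). The paper carries this out by stretching at the cylindrical slice $\{r=T+\varepsilon\}$ for the first case (and $\{r=T-\varepsilon\}$ for the second), where the splashed potential is $f_0^{\OP{Spl}}(T+\varepsilon)\approx C_0+Z_+$; the hypothesis $\overline{M}_+<Z_++(C_0-C_1)$ is exactly what makes the coefficient $\nu_1$ in Lemma~\ref{lma:StretchedAction} negative for every $p$ with $r(p)\ge T+\varepsilon$ and $\mathfrak{a}_\varphi(p)\le\overline{M}_+$. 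Your choice to stretch near $T$ (with $\rho_\lambda$ vanishing on $[T,T+2\varepsilon^2]$) lands you in the region where $\beta^{\OP{Spl}}\approx Z_-$, which makes the potential bookkeeping and sign analysis considerably messier; stretching at $T\pm\varepsilon$ is the cleaner choice.

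The genuine gap is your reading of the quantifier on $\lambda$. The proposition does \emph{not} assert the conclusion for every $\lambda\ge 0$; rather, $\lambda=\lambda(L_0^{\OP{Spl}},L_1,T,\varphi)>0$ is \emph{chosen} sufficiently large (this is implicit in the ``after possibly shrinking $\varepsilon$, increasing $Z_+$, decreasing $Z_-$'' clause and explicit in the paper's proof), and the conclusion then holds for every $J\in\mathcal{J}(L_0,L_1,\varphi_\lambda)$. With that reading your ``main obstacle'' disappears: there is no small-$\lambda$ regime to worry about. Your attempted patch via the Monotonicity Lemma would not have worked in any case: the lower bound $C\delta^2$ you would extract from Lemma~\ref{lma:monotonicity} requires a ball of radius $\delta$ contained in the slab, so it degenerates to zero as $\varepsilon\to 0$; and even a fixed positive energy bound $c$ would only yield $\mathfrak{a}_\varphi(p_+)-\mathfrak{a}_\varphi(p_-)\ge c$, which is perfectly compatible with both actions lying in $[\overline{M}_-,\overline{M}_+]$ whenever $\overline{M}_+-\overline{M}_->c$.
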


\begin{proof}
Let $\lambda = \lambda(L^{\OP{Spl}}_0,L_1, T, \varphi)>0$ be some positive constant and perform a neck-stretching with parameter $\lambda$ to one of the slices $\{r = T \pm \varepsilon\},$ with the corresponding functions $\varphi_\lambda^\pm \colon X \to \R_{>0}.$
Let $f^\pm_0,$ $f^\pm_1$ denote the primitives of the corresponding pull-backs of $\varphi_\lambda^\pm \eta$ to $L^{\OP{Spl}}_0,$ and $L_1$, respectively, and $f^{\OP{Spl}}_0$ denote a primitive of the pull-back of $\varphi\eta$ to $L^{\OP{Spl}}_0$ uniquely determined by $f^{\OP{Spl}}_0(T) = C_0 = f^\varphi_0(T).$
The induced actions will be denoted by $\mathfrak{a}_{\varphi_\lambda^\pm}(p) = f^\pm_0(p) - f^\pm_1(p).$ 

{\em Claim 1:} For sufficiently large $\lambda,$ any intersection point $p \in L^{\OP{Spl}}_0 \cap L_1 \cap \{r \ge T + \varepsilon\}$ satisfies $\mathfrak{a}_{\varphi_\lambda^+}(p) < Z_-.$

\emph{Proof of Claim 1:} Lemma \ref{lma:StretchedAction} implies that
\begin{eqnarray*}
\mathfrak{a}_{\varphi_\lambda^+}(p)
& = & e^\lambda ((f^{\OP{Spl}}_0(p) - f^\varphi_1(p))- (f^{\OP{Spl}}_0(T+\varepsilon) - f^\varphi_1(T+\varepsilon)))\\
& + & 
 (f^{\OP{Spl}}_0(T+\varepsilon) - f^\varphi_1(T+\varepsilon)) \\
& <& e^\lambda \nu_1 + \nu_2
\end{eqnarray*} 
for some constants $\nu_1, \nu_2$ independent of $\lambda.$
It suffices to verify $\nu_1 <0,$ that is, $f^{\OP{Spl}}_0(p) - f^\varphi_1(p) < f^{\OP{Spl}}_0(T+\varepsilon) -f^\varphi_1(T+\varepsilon).$ Consider first the case when $p$ is an intersection point in 
$\{T+\varepsilon \le r \le T+ 3\varepsilon\}.$
In this case, Figure \ref{fig:wrap} illustrates 
$$ f^{\OP{Spl}}_0(p) - f^{\OP{Spl}}_0(T+\varepsilon) = \int_{T+\varepsilon}^p e^r\frac{d}{dr}\beta^{\OP{Spl}}_{\varepsilon,Z_-,Z_+}(r) dr 
 < 
0.$$ 
 Since $L_1$ is cylindrical in this region, $f^\varphi_1(p) = f^\varphi_1(T+\varepsilon).$
All other intersections points of $L^{\OP{Spl}}_0$ and $L_1$ in $\{r \ge T+ 3\varepsilon\}$ exist as intersections points of $L_0$ and $L_1.$
 For these, we list some relations and approximations that we then combine to get the desired result.
Among the parameters that we adjust, we fix $\lambda$ last, immediately preceded by $\varepsilon.$
 In particular the product of $kZ_+$ (which can be thought close to $3 e^T Z_+$) with $\varepsilon$ can be made small enough to satisfy the last inequality in the second line below, as well as (once $T>0$ is fixed) the last inequality in the fourth line below. 
\begin{eqnarray*}
 f^{\OP{Spl}}_0(p) - f^\varphi_0(p) & = & (f^{\OP{Spl}}_0(T+3\varepsilon) -C_0) - (f^\varphi_0(T+3\varepsilon) -C_0) \\
& = & 
 \int_T^{T+3\varepsilon} e^r\frac{d}{dr}\beta^{\OP{Spl}}_{\varepsilon,Z_-,Z_+}(r) dr < Z_+ (e^{T+3\varepsilon} - e^T) < k Z_+\varepsilon, 
 \end{eqnarray*}
$$ f^{\varphi}_0(p) - f^\varphi_1(p) = \mathfrak{a}_\varphi(p) \le \overline{M}_+ < Z_+ - kZ_+\varepsilon + (C_0 -C_1), $$
$$ f^{\OP{Spl}}_0(T+\varepsilon) - f^{\OP{Spl}}_0(T+3\varepsilon) = \int^{T+\varepsilon}_{T+3\varepsilon} e^r\frac{d}{dr}\beta^{\OP{Spl}}_{\varepsilon,Z_-,Z_+}(r) dr > Z_+ e^{T+\varepsilon} > Z_+ + k Z_+\varepsilon, $$
$$ (C_0 - C_1) - (f^{\OP{Spl}}_0(T+ 3 \varepsilon) - f^\varphi_1(T+3 \varepsilon))= \int^T_{T+3\varepsilon} e^r\frac{d}{dr}\beta^{\OP{Spl}}_{\varepsilon,Z_-,Z_+}(r) dr < Z_+ (e^{T+3\varepsilon} - e^T) < k Z_+\varepsilon, $$
$$ f^\varphi_1(T+\varepsilon) = f^\varphi_1(T+3 \varepsilon). $$
\begin{eqnarray*}
f^{\OP{Spl}}_0(p) - f^\varphi_1(p) & < & f^\varphi_0(p) - f^\varphi_1(p) + k Z_+ \varepsilon \\
& < & Z_+ + (C_0 - C_1) \\
 &= &   Z_+ + (C_0 - C_1) + (kZ_+\varepsilon - kZ_+\varepsilon) -  (f_1^{\varphi}(T+\varepsilon) - f_1^{\varphi}(T+ 3\varepsilon)) \\
& < & f^{\OP{Spl}}_0(T+ \varepsilon) - f^\varphi_1(T+\varepsilon) \\
&-& ( f^{\OP{Spl}}_0(T+ 3 \varepsilon) - f^\varphi_1(T+3 \varepsilon)) + (C_0 - C_1) - k Z_+ \varepsilon \\
& < & f^{\OP{Spl}}_0(T+ \varepsilon) - f^\varphi_1(T+\varepsilon) - (C_0 - C_1) + (C_0 - C_1).
\end{eqnarray*}

{\em Claim 2:} For sufficiently large $\lambda,$ any intersection point $p \in L^{\OP{Spl}}_0 \cap L_1 \cap \{r \ge T - \varepsilon \}$ satisfies $\mathfrak{a}_{\varphi_\lambda^-}(p) > Z_+.$

\emph{Proof of Claim 2:} Lemma \ref{lma:StretchedAction} implies that
\begin{eqnarray*}
\mathfrak{a}_{\varphi_\lambda^-}(p) & = & e^\lambda ((f^{\OP{Spl}}_0(p) - f^\varphi_1(p))- (f^{\OP{Spl}}_0(T-\varepsilon) 
 + f^\varphi_1(T-\varepsilon)))\\
 & + &(f^{\OP{Spl}}_0(T-\varepsilon) - f^\varphi_1(T-\varepsilon)) \\
& >& e^\lambda \nu_1 + \nu_2
\end{eqnarray*}
It suffices to verify $\nu_1 >0,$ that is, $f^{\OP{Spl}}_0(p) - f^\varphi_1(p) > f^{\OP{Spl}}_0(T-\varepsilon) -f^\varphi_1(T-\varepsilon).$ Consider first the case when $p$ is an intersection point in 
$\{T-\varepsilon \le r \le T+ 3\varepsilon\}.$
In this case, Figure \ref{fig:wrap} illustrates $f^{\OP{Spl}}_0(p) > f^{\OP{Spl}}_0(T-\varepsilon),$ while $f^\varphi_1(p) = f^\varphi_1(T+\varepsilon).$
All other intersections points in $\{r \ge T+ 3\varepsilon\}$ exist as intersections points of $L_0 \cap L_1.$
 To complete the verification for these points, repeat the string of inequalities as in the proof of Claim 1 with the following changes: (1) replace $\le \overline{M}_+ < Z_+ - kZ_+\varepsilon + (C_0 -C_1)$ with 
 $\ge \overline{M}_- > Z_- + kZ_-\varepsilon - (C_0 -C_1);$ 
(2) replace $T+\varepsilon$ with $T-\varepsilon$ and $T+3\varepsilon$ with $T-3\varepsilon;$
(3) reflect all integration through $r=T.$

 For any $J \in \mathcal{J}(L_0,L_1, \varphi_\lambda),$ a pseudoholomorphic 
 strip must increase action (from input to output) for any action, including $\mathfrak{a}_{\varphi_\lambda^\pm}$; see Lemma \ref{lma:varphienergy}. Note that neck-stretching does not change the actions for any chords or intersection points to the left of the neck.
Therefore, by Claims 1 and 2, there is no pseudoholomorphic strip with Lagrangian boundary conditions $L^{\OP{Spl}}_0$ and $L_1$ 
 whose input is an intersection point as in the proposition's statement, located in $\{ r \le T+ \varepsilon\}$ (resp. $\{ r \ge T - \varepsilon\}$) and whose output is an intersection point as in the proposition's statement, located in $\{ r \ge T+ \varepsilon\}$ (resp. $\{ r \le T - \varepsilon\}$)

\end{proof}

\section{Pushing off the Lagrangian concordance}
\label{sec:PushOff}

In 
 Sections \ref{sec:PushOff} and \ref{sec:Proof}, 
 we assume that we are given a complete Lagrangian concordance $L \subset (X,d\eta)$ inside a complete symplectic cobordism from $(Y_-,\alpha_-)$ to $(Y_+,\alpha_+)$; c.f.~Definition \ref{dfn:LagCob}. The $(\pm)$-ends of $L$ are denoted by $\Lambda_\pm \subset (Y_\pm,\alpha_\pm)$, where $\Lambda_+=\Lambda$. The assumption that $L$ is a concordance means that 
$L$ is diffeomorphic to $\R \times \Lambda$ and that, hence, $\Lambda_-$ is diffeomorphic to $\Lambda.$

In this sections we fix two numbers
\[ 0 < \epsilon<\sigma \]
where the former is supposed to be sufficiently small and the latter can be arbitrary.

Let $\Lambda_\pm^\epsilon:=\phi^{-\epsilon}_{\alpha_\pm,1}(\Lambda_\pm) \subset (Y_\pm,\alpha_\pm)$ denote the image of the ends of $L$ under the time-$(-\epsilon)$ Reeb flow associated to the contact forms $\alpha_\pm$. The goal here is to construct a smooth family of Lagrangian cobordisms $L_{\epsilon,N,s}$ from $\Lambda_-^\epsilon$ to $\Lambda_+^\epsilon$ depending smoothly on the parameters $\epsilon,N,$ and $s \in [\epsilon, \sigma]$. Each of these Lagrangian cobordisms will be given as the image
\[ L_{\epsilon,N,s}=\phi^1_{H_{\epsilon,N,s}}(L) \]
of $L$ under a Hamiltonian diffeomorphism, where $H_{\epsilon,N,s} \colon X \to \R$ is a smooth family of autonomous Hamiltonians. 

 Our construction will be made to satisfy the following crucial properties:
\begin{itemize}
\item The above Hamiltonian flow is of the form $\frac{d}{dt}\phi^t_{H_{\epsilon,N,s}} \equiv -\epsilon R_{\alpha_\pm}$ outside of the compact subset $\{|r| \ge N+\epsilon \}$ of the respective cylindrical ends of $(X,d\eta)$. In particular, the cobordisms $L_{\epsilon,N,s}$ are all cylinders over $\Lambda_\pm^\epsilon$ outside the same subset.
\item The above Hamiltonian vector field is of the form $\beta_{\epsilon,N,s}(r) R_{\alpha_\pm}$ inside the complement
$$X \setminus (\{ | r-N| \le 2\epsilon\} \cup \overline{X}),$$
where $\beta_{\epsilon,N,s}(r)=\beta_{\epsilon,N,\epsilon}(r)$ moreover holds inside the complement
$$X \setminus (\{ | r-N| \le 2\epsilon\} \cup \{ -N+\epsilon \le r \le -\epsilon \}\cup \overline{X}).$$
\item For $s=\epsilon$ there exists a Weinstein neighborhood of $L \subset (X,d\eta)$, i.e.~a neighborhood symplectically identified with a neighborhood of the zero-section of $(T^*L,d\theta_L)$, in which all $\phi^t_{H_{\epsilon,N,\epsilon}}(L)$, $t \in [0,1]$, are graphs of the exact one-forms $t\,dg_{\epsilon,N}$ for a smooth family of functions
\[ g_{\epsilon,N} \colon L \to \R.\]
This family of functions is moreover required to be a Morse perturbation of a Morse--Bott function $\widetilde{g}_{\epsilon,N} \colon L \to \R$ that satisfies:
\begin{itemize}
\item The inequality $d\widetilde{g}_{\epsilon,N}(\partial_r)>0$ holds when restricted to the subsets $\{|r| \ge N+\epsilon \}$ of the cylindrical ends; and
\item The form $d\widetilde{g}_{\epsilon,N}$ is non-vanishing outside of the two critical submanifolds $\{r=\pm N\} \cap L \subset L$, each of which is non-degenerate in the Bott sense and diffeomorphic to $\Lambda$;
\end{itemize}
The function $g_{\epsilon,N}$ will be taken to be $ C^2$-close to $\widetilde{g}_{\epsilon,N}$, and to differ with the latter function only in a small neighborhood of the above critical submanifolds.
\end{itemize}
 See Figures \ref{fig:epsilon} and \ref{fig:bulge2} for illustrations of the deformations in the first two points. In fact, if the deformation described by the figures are applied to the whole concordance, then the intersection with the original concordance will be of Bott type; the third point is achieved after a generic Morse perturbation of this situation.

In order to construct the sought Hamiltonian isotopy, we choose the following strategy. First, we construct an appropriate Weinstein neighborhood of $L$, second, we construct the above function $g_{\epsilon,N}$ and, third, we construct the Hamiltonian vector field $\beta_{\epsilon,N,s}(r) R_{\alpha_\pm}$ inside the subset $\{ -N+\epsilon \le r \le -\epsilon \}$.

\subsection{A Weinstein neighborhood of the Lagrangian concordance}
\label{sec:wein}
We fix a parametrization $\psi \colon \R \times \Lambda \to L$ which maps $(-\infty,-1) \times \Lambda$ and $(1,+\infty) \times \Lambda$ into the concave and convex cylindrical ends of $X$, respectively. More specifically, using $q$ to denote the standard coordinate on $\R$, outside of $(-1,1) \times \Lambda$ we ask that $\psi$ takes the form
\[\psi(q, x) = \begin{cases}
(q +1,\widetilde{\psi}_-(x)) \in (-\infty,0] \times \Lambda_- \subset (-\infty,0] \times Y_-,& q \le -1,\\
(q-1,\widetilde{\psi}_+(x)) \in [0,+\infty) \times \Lambda_+ \subset [0,+\infty) \times Y_+,& q \ge 1,
\end{cases}\]
where $\widetilde{\psi}_-$ and $\widetilde{\psi}_+=\id_\Lambda$ are parametrizations of $\Lambda_-$ and $\Lambda_+=\Lambda$, respectively.

 First we use the Legendrian normal neighborhood theorem (see \cite[Theorem 6.2.2]{Geiges}) in order to extend the above Legendrian embeddings $\widetilde{\psi}_\pm$ to \emph{contact-form preserving} identifications
\begin{gather*}
\widetilde{\Psi}_\pm \colon (V_\pm,dz+\theta_\Lambda) \hookrightarrow (Y_\pm,\alpha_\pm),\\
\widetilde{\Psi}_\pm|_{0_\Lambda} = \widetilde{\psi}_\pm,
\end{gather*}
defined on neighborhoods
\[V_\pm \subset (J^1\Lambda=T^*\Lambda \times \R,dz+\theta_\Lambda)\]
of the zero-section.

 Then we use Weinstein's Lagrangian neighborhood theorem (see e.g.~\cite[Theorem 3.33]{SympTop}) in order to extend the above Lagrangian embedding $\psi \colon \R \times \Lambda \hookrightarrow (X,d\eta)$ to a (non-exact) symplectic embedding
\begin{gather*}
\Psi \colon (U,d\theta_{\R \times \Lambda}) \hookrightarrow (X,d\eta),\\
\Psi|_{0_{\R \times \Lambda}}=\psi,
\end{gather*}
defined on a neighborhood
\[ U \subset (T^*(\R \times \Lambda)=T^*\R \times T^*\Lambda,d\theta_{\R \times \Lambda}=d\theta_{\R} \oplus d\theta_\Lambda)\]
of the zero section.

Again we use $q$ to denote the standard coordinate on the $\R$-factor, while $\mathbf{q}$ is used to denote local coordinates on $\Lambda$. The coordinates $p$ and $\mathbf{p}$ are then used to denote the corresponding (locally defined) canonical conjugate momenta.

 These locally defined coordinates can be used to express \emph{globally well-defined} symplectomorphisms
\begin{gather*}
F_a \colon (T^*(\R \times \Lambda),d\theta_{\R \times \Lambda}) \xrightarrow{\cong} (\R \times J^1 \Lambda,d(e^r(dz+\theta_\Lambda))),\:\:a \in \R,\\
((q,\mathbf{q}),(p,\mathbf{p})) \mapsto (q+a,(\mathbf{q},e^{-(q+a)}\mathbf{p},-e^{-(q+a)}p)).
\end{gather*}
In the below lemma we use this identification in order to construct a symplectomorphism $\Psi$ of a particularly convenient form.

\begin{lma}
 The symplectomorphism $\Psi$ above can be constructed (for a possibly smaller domain $U$) so that it takes the form
$$ \Psi((q,\mathbf{q}),(p,\mathbf{p})) = \begin{cases}
 \widetilde{\Psi}\circ F_1, & q \le -1,\\
 \widetilde{\Psi}\circ F_{-1}, & q \ge 1,
\end{cases}$$
on the cylindrical ends of $(X,d\eta)$.
\end{lma}

\begin{proof}
 
We need to examine the construction of the symplectomorphism in the proof of the Weinstein Lagrangian neighborhood theorem to show that the sought properties can be achieved. In the proof one starts with an extension of $\psi$ to the cotangent bundle of $\R \times \Lambda$ by a \emph{diffeomorphism} (i.e.~smooth but not necessarily symplectic), which moreover is assumed to be symplectic along the zero-section (i.e.~infinitesimally). 
After an application of Moser's trick, this diffeomorphism is deformed to a symplectomorphism, while fixing the map set-wise along the zero-section.

In the case when the original diffeomorphism already is an exact symplectomorphism in some subset $U \subset T^*(\R \times \Lambda),$ in the sense that it preserves fixed global choices of primitives of the symplectic forms in that subset, it is readily seen that the application of Moser's trick can be assumed to not deform the map inside $U.$ The reason is that the time-differential of an interpolating path of symplectic forms then admits a path of primitives all which \emph{vanish} inside $U.$

In view of the above, the sought symplectomorphism $\Psi$ can be constructed by starting with a diffemorphic embedding of the cotangent bundle which is equal to $F_{\pm1}$ in the cylindrical ends. To that end, note that these symplectomorphisms become exact for a suitable choice of Liouville form on the cotangent bundle $T^*(\R \times \Lambda).$
\end{proof}

Observe that there is a possibly smaller neighborhood $U' \subset U$ of the zero-section $0_{\R \times \Lambda} \subset T^*(\R \times \Lambda)$ which is preserved by the $\R$-action
\[((q,\mathbf{q}),(p,\mathbf{p})) \mapsto ((q+t,\mathbf{q}),(e^tp,e^t\mathbf{p})), \:\: t \in \R,\]
on $T^*(\R \times \Lambda)$. This flow rescales the symplectic form by $e^t$, and corresponds to the Liouville vector field $\partial_r$ on the convex and concave ends of $(X,d\eta)$

\subsection{Explicitly defined push-offs along the ends}
\label{sec:explicit}

First, we consider the function
\[\beta^+_{\epsilon,N} \colon \R \to [-\epsilon,\epsilon]\]
shown in Figure \ref{fig:epsilon} which satisfies
\begin{itemize}
\item $\frac{d}{d r}\beta^+_{\epsilon,N}(r) \le 0$;
\item $\beta^+_{\epsilon,N}(r)=\epsilon$ for $r \le N-\epsilon$; 
\item $\beta^+_{\epsilon,N}(r)=-\epsilon$ for $r \ge N+\epsilon$;
\item $\beta^+_{\epsilon,N}(N)=0$ and $\frac{d}{d r}\beta^+_{\epsilon,N}(N) < 0$.
\end{itemize}
We moreover require this function to satisfy
\[\beta^+_{\epsilon,N}(r):=\epsilon\beta^+((r-N)/\epsilon)\]
for a suitable fixed smooth function
\[\beta^+ \colon \R \to [-1,1]\]
satisfying $\beta^+(-r)=-\beta^+(r)$.

Second, we define the function
\begin{eqnarray*}
& & \beta^-_{\epsilon,N} \colon \R \to [-\epsilon,\epsilon], \\
& & \beta^-_{\epsilon,N}(r):=\beta^+_{\epsilon,N}(-r),
\end{eqnarray*}
which is shown in Figure \ref{fig:bulge1}.

We also consider the function
$\beta^-_{\epsilon,N,\sigma} \colon \R \to [-\epsilon,\sigma]$
which coincides with $\beta^-_{\epsilon,N}$ outside of some \emph{compact} subset of $(-N+\epsilon,0) \subset \R$, while inside $(-N+\epsilon,0)$ it satisfies
\begin{itemize}
\item $\frac{d}{d r}\beta^-_{\epsilon,N,\sigma}(r) \ge 0$ for $-N+\epsilon \le r \le -N+2\epsilon$;
\item $\frac{d}{d r}\beta^-_{\epsilon,N,\sigma}(r) \le 0$ for $-\epsilon \le r \le 0$;
\item $\beta^-_{\epsilon,N,\sigma}(r)=\sigma$ for $-N+2\epsilon \le r \le -\epsilon$;
\end{itemize}

For any number $s$ we now define the function
\[ \beta^-_{\epsilon,N,s} := \beta^-_{\epsilon,N} + \frac{s-\epsilon}{\sigma-\epsilon}(\beta^-_{\epsilon,N,\sigma}-\beta^-_{\epsilon,N}).\]
In the case $s=\sigma$ the function is shown in Figure \ref{fig:bulge2}. In the following, we will only be considering this parameter with values $s \in [\epsilon,\sigma].$

Finally, note that the vector fields $\beta^+_{\epsilon,N}(r)R_{\alpha_+}$ and $\beta^-_{\epsilon,N,s}(r)R_{\alpha_-}$ constructed above on the convex and concave cylindrical ends of $(X,d\eta)$, respectively, are both Hamiltonian vector fields.

\begin{figure}[htp]
\centering
\labellist
\pinlabel $\beta^+_{\epsilon,N}(r)$ at 65 83
\pinlabel $\epsilon$ at -3 72
\pinlabel $-\epsilon$ at -7 24
\pinlabel $r$ at 203 47
\pinlabel $N-\epsilon$ at 78 35
\pinlabel $N+\epsilon$ at 137 35
\pinlabel $N$ at 110 60
\endlabellist
\includegraphics{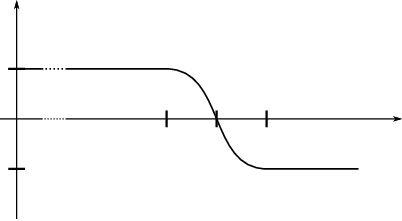}
\caption{ On the convex end, all Lagrangian cobordisms $L_{\epsilon,N,s}$ are obtained from $L$ by applying the time-1 flow of the Hamiltonian vector field $\beta^+_{\epsilon,N}(r) R_{\alpha_+}$.}
\label{fig:epsilon}
\end{figure}

\begin{figure}[htp]
\centering
\labellist
\pinlabel $\beta^-_{\epsilon,N}(r)$ at 100 75
\pinlabel $-N-\epsilon$ at 15 53
\pinlabel $-N$ at 50 28
\pinlabel $-N+\epsilon$ at 73 53
\pinlabel $\sigma$ at 176 92
\pinlabel $\epsilon$ at 176 63
\pinlabel $-\epsilon$ at 179 15
\pinlabel $r$ at 198 39
\endlabellist
\includegraphics{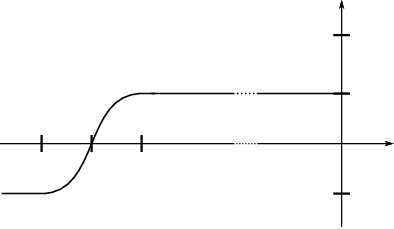}
\caption{The function $\beta^-_{\epsilon,N}(r).$}
\label{fig:bulge1}
\end{figure}

\begin{figure}[htp]
\centering
\labellist
\pinlabel $\beta^-_{\epsilon,N,\sigma}(r)$ at 100 104
\pinlabel $-N-\epsilon$ at 15 53
\pinlabel $-N$ at 50 28
\pinlabel $-N+\epsilon$ at 73 53
\pinlabel $-N+2\epsilon$ at 90 28
\pinlabel $-\epsilon$ at 140 28
\pinlabel $\sigma$ at 176 92
\pinlabel $\epsilon$ at 176 63
\pinlabel $-\epsilon$ at 179 15
\pinlabel $r$ at 198 39
\endlabellist
\includegraphics{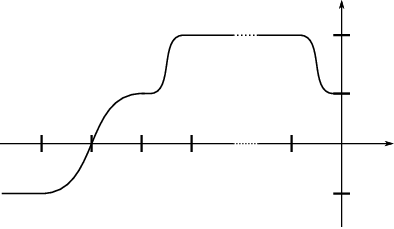}
\caption{On the concave end, the exact Lagrangian cobordism $L_{\epsilon,N,\sigma}$ is obtained from $L$ by applying the time-1 flow of the Hamiltonian vector field $\beta^-_{\epsilon,N,\sigma}(r) R_{\alpha_-}$.}
\label{fig:bulge2}
\end{figure}

\subsection{A Bott push-off}
We now use the coordinates provided by the Weinstein neighborhood defined in Section \ref{sec:wein}, identifying a neighborhood of $L$ with $U \subset (T^*(\R \times \Lambda),d\theta_{\R \times \Lambda})$. In these coordinates we can find a smooth family of functions
\[\widetilde{g}_{\epsilon,N} \colon (\R \setminus (-1,1)) \times \Lambda \to \R\]
for which the sections $t\,d\widetilde{g}_{\epsilon,N}$, $t \in [0,1]$, in the above Weinstein neighborhood are identified with the corresponding time-$t$ flow of $L \setminus \overline{X} \subset X$ under the vector fields $\beta^+_{\epsilon,N}(r)R_{\alpha_+}$ and $\beta^-_{\epsilon,N,s}(r)R_{\alpha_-}$ constructed in Section \ref{sec:explicit} with $s=\epsilon$. In these coordinates, we may moreover assume that $ \widetilde{g}_{\epsilon,N}|_{[-1-\delta,-1]}=-\epsilon e^{q+1},$
while $ \widetilde{g}_{\epsilon,N}|_{[1,1+\delta]}=-\epsilon e^{q-1}-\epsilon$ for some small $\delta>0$.

Finally, the above function $\widetilde{g}_{\epsilon,N}$ defined on $(\R \setminus (-1,1)) \times \Lambda$ extends smoothly to a function defined on all of $\R \times \Lambda$, without introducing any critical points inside $[-1,1] \times \Lambda$. After making the latter extension sufficiently $C^1$-small, this finishes the construction of the sought functions $\widetilde{g}_{\epsilon,N} \colon \R \times \Lambda \to \R$. Note that the critical points of these functions consist of the two critical manifolds $\{q=\pm (N+1)\}$, each of which is non-degenerate in the Bott sense and diffeomorphic to $\Lambda$.

\subsection{A Morse--Bott perturbation}
\label{sec:MorseBott}
The push-offs $t\,d\widetilde{g}_{\epsilon,N}$ defined on the cylindrical ends of $(X,d\eta)$ intersect $L$ precisely in the manifolds $\{\pm N\} \times \Lambda_\pm$ contained inside the respective cylindrical end. Here we provide a generic perturbation of the above function, making these intersections transverse.

Again consider the coordinates defined in Section \ref{sec:wein}. Fix a generic Morse function $h\colon \Lambda \to [0,1]$. We use $h$ to create a $ C^2$-small perturbation of $\widetilde{g}_{\epsilon,N}$ which near the critical manifold $\{q=\pm(N+1)\}$ takes the form
\[\widetilde{g}_{\epsilon,N} +\epsilon^{10} \rho(q\mp(N+1))h \colon \R \times \Lambda \to \R.\]
Here $\rho \colon \R \to [0,1]$ is a fixed smooth bump-function that is equal to 1 in some neighborhood of $0$, while its support is contained inside $(-\epsilon^3,\epsilon^3) \subset \R$. It follows that these perturbations have support contained in the subsets $\{ |q \mp (N+1)| \le \epsilon^3\}$.

Finally, the produced perturbation of $\widetilde{g}_{\epsilon,N}$ is our sought Morse function $g_{\epsilon,N} \colon \R \times \Lambda \to \R$.

\begin{rmk}
 Recall that the above construction is an instance of the standard technique used in order to obtain a Morse function by perturbing a Morse--Bott function: roughly speaking, add a Morse function on the critical manifold to the Morse-Bott function.
\end{rmk}

\subsection{The ``bulge'' inside the concave end}
Note that the construction of $L_{\epsilon,N,s}$ now is complete in the case when $s=\epsilon$. In order to finish the construction, what remains is thus the cases $s \in [\epsilon, \sigma]$. Recall that these cobordisms all agree outside of the subset $ \{-N+\epsilon \le r \le -\epsilon\}$ by construction. Inside the latter subset, we prescribe $L_{\epsilon,N,s}$ to be equal to the time-$1$ map of $L$ under the flow of the Hamiltonian vector field $\beta_{\epsilon,N,s} R_{\alpha_-}$ as constructed in Section \ref{sec:explicit}. Note that the whole cobordism $L_{\varepsilon,N,s}$ obtained this way is Hamiltonian isotopic to $L$. This finishes the construction.

\section{Proof of Theorem \ref{thm:main}}
\label{sec:Proof}

In this section we prove our main theorem. This is done by studying the Lagrangian Floer homology of a pair of cobordisms (Section \ref{sec:FloerHomology}) together with the technique of splashing (Section \ref{sec:SplashNeck}). The pair of Lagrangians that we start with consists of the concordance $L$ from $\Lambda_-$ to $\Lambda$, together with its Hamiltonian deformation $L_{\epsilon,N,s}$ as constructed in Section \ref{sec:PushOff}. The latter is a Lagrangian concordance from $\Lambda_-^\epsilon$ to $\Lambda^\epsilon$, two Legendrians obtained by applying the time-$(-\epsilon)$ Reeb flow to $\Lambda_-$ and $\Lambda,$ respectively.

\subsection{Passing to an indefinite contact Hamiltonian}
First we must replace the given contact Hamiltonian $H_t$ with one which is \emph{indefinite}, in the sense that $H_t$ attains the value zero for all $t \in [0,1].$

For a given contact isotopy $\phi^t_{\alpha,H_t} \colon (Y,\xi) \to (Y,\xi)$, taking a suitable family $c_t$ of constant functions as described Lemma \ref{lma:factor}, this indefiniteness is satisfied for the contact Hamiltonian that generates the composition
$$ \phi^{-C_t}_{R_\alpha} \circ \phi^t_{\alpha,H_t}=\phi^t_{\alpha,-c_t+H_t \circ \phi^{C_t}_{R_\alpha}} \:\:\: \text{for} \:\:\: C_t:=\int_0^t c_t ds.$$
In order to see that we may replace $H_t$ by the latter indefinite contact Hamiltonian $-c_t+H_t \circ \phi^{C_t}$ when proving Theorem \ref{thm:main}, we note the following elementary result:
\begin{lma}
\label{lma:ReebChordCorrespondence}
\begin{enumerate}
\item Since the Reeb flow $\phi^t_{R_\alpha}$ preserves the contact form $\alpha,$ the numbers $A$ and $B$ associated to the contact isotopies
$$\phi^{-C_t}_{R_\alpha} \circ \phi^t_{\alpha,H_t} \:\: \text{and} \:\: \phi^t_{\alpha,H_t},$$
as well as their oscillatory norms coincide; and
\item For $a \le b,$ there is a bijection between the subsets
$$\mathcal{Q}_\alpha(\Lambda,\phi^{-C_1}_{R_\alpha}\circ\phi^1_{\alpha,H_t}(\Lambda);a-C_1,b-C_1) \:\:\: \text{and} \:\:\: \mathcal{Q}_\alpha(\Lambda,\phi^1_{\alpha,H_t}(\Lambda);a,b),$$
of Reeb chords.
\end{enumerate}
\end{lma}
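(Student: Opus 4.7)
The proof amounts to two routine but useful bookkeeping checks concerning the effect of pre- (or post-) composing a contact isotopy with a Reeb flow, so I would treat the two parts separately.

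For Part (1), I would compute the conformal factor of $\psi^t := \phi^{-C_t}_{R_\alpha}\circ \phi^t_{\alpha,H_t}$ directly. Since the Reeb flow is a strict contactomorphism, $(\phi^{-C_t}_{R_\alpha})^*\alpha = \alpha$, and hence
\[(\psi^t)^*\alpha = (\phi^t_{\alpha,H_t})^*(\phi^{-C_t}_{R_\alpha})^*\alpha = (\phi^t_{\alpha,H_t})^*\alpha = e^{-\tau^t_{\alpha,H_t}}\alpha,\]
which shows that the conformal factor, and therefore the numbers $A,B$ from \eqref{eq:norms}, are the same for the two isotopies. For the oscillatory norm, I would first confirm by a standard calculation (as in the proof of Lemma \ref{lma:factor}) that the contact Hamiltonian generating $\psi^t$ is exactly $F_t := -c_t + H_t\circ \phi^{C_t}_{R_\alpha}$. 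Then, since $\phi^{C_t}_{R_\alpha}\colon Y \to Y$ is a diffeomorphism, $\max_Y F_t = -c_t + \max_Y H_t$ and $\min_Y F_t = -c_t + \min_Y H_t$, and hence $\|F\|_{\OP{osc}} = \|H\|_{\OP{osc}}$ after integrating.

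For Part (2), the bijection is realized by simply sliding the endpoint along the Reeb flow. Explicitly, given a chord $\gamma\colon [0,\ell] \to Y$ of the Reeb flow with $\gamma(0)\in\Lambda$ and $\gamma(\ell)\in\phi^1_{\alpha,H_t}(\Lambda)$, I would consider the chord $\widetilde\gamma(t) := \gamma(t)$ on $t\in[0,\ell - C_1]$ (reparameterising by sign in the evident way if $\ell - C_1 < 0$); since $\phi^{-C_1}_{R_\alpha}(\gamma(\ell)) = \gamma(\ell - C_1)$, this produces a Reeb chord between $\Lambda$ and $\phi^{-C_1}_{R_\alpha}\circ \phi^1_{\alpha,H_t}(\Lambda)$ of signed length $\ell - C_1$, and this construction is evidently invertible by applying $\phi^{C_1}_{R_\alpha}$ to the other endpoint. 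The signed length conventions in the definition of $\mathcal{Q}_\alpha(\Lambda_0,\Lambda_1;a,b)$ (chords from $\Lambda_0$ to $\Lambda_1$ of length $\ell\in[a,b]$, together with chords from $\Lambda_1$ to $\Lambda_0$ of length $\ell\in[-b,-a]$) are tailored precisely so that the shift by $-C_1$ in signed length translates the interval $[a,b]$ into $[a-C_1,b-C_1]$, giving the claimed bijection.

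There is no real obstacle; the only minor care is in Part (2), to verify that the signed-length convention in the definition of $\mathcal{Q}_\alpha$ correctly handles the case in which $\ell$ and $\ell - C_1$ have opposite signs (so that the Reeb shift converts a chord going one way into a chord going the other way). This is a direct check from the definition.
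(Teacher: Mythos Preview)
Your proposal is correct. The paper does not actually give a proof of this lemma; it simply introduces it as ``the following elementary result'' and states it without argument. Your direct verification---computing the conformal factor of the composite using $(\phi^{-C_t}_{R_\alpha})^*\alpha=\alpha$ for Part~(1), and sliding the endpoint along the Reeb flow for Part~(2)---is exactly the kind of routine check the authors have in mind, and there is nothing to compare against.
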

From now on, we will therefore assume $H_t$ to be indefinite in the above sense.
\subsection{Fixing constants}

Recall the strict Inequality \eqref{eq:nobreaking} in the assumptions of the theorem:
$$0< e^A \|H\|_{\OP{osc}} < \sigma(\Lambda_-,\alpha_-).$$

The parameters $s \in [\epsilon, \sigma]$ and $N>0$ in the construction of the Hamiltonian push-offs $L_{\epsilon,N,s}$ will be chosen so that the following properties hold.

First we fix $\sigma\ge\epsilon$ and $\delta>0$ for which
\begin{equation}
\label{eq:sigma}
0< e^{A+2\delta} \|H\|_{\OP{osc}} < \sigma < \sigma(\Lambda_-,\alpha_-)
\end{equation}
is satisfied, and where $\delta>0$ is sufficiently small.

Second, we shrink $\epsilon>0$, so that it becomes shorter than the length of the smallest Reeb chord from $\Lambda$ to itself
(see Lemma \ref{lma:ChordBijection} which requires a bijection between the set of Reeb chords from $\Lambda_-$ to itself with the set of Reeb chords from $\Lambda_-$ to $\Lambda_-^\epsilon$)
and so that we have
\begin{equation}
\label{eq:hbar}
 \sigma < e^{-2\epsilon}(\sigma(\Lambda_-,\alpha_-)-\epsilon).
\end{equation}

\subsection{The homotopy class of ``contractible'' chords}
We now need to pinpoint a distinguished connected component of the space $\Pi(X;L,L_{\epsilon,N,s})$ of paths in $X$ with starting point in $L$ and endpoint in $L_{\epsilon,N,s}$. Denote by $\mathfrak{p}_0 \in \pi_0(\Pi(X;L,L_{\epsilon,N,s}))$ the component of paths from $L$ to $L_{\epsilon,N,s}$ containing the Hamiltonian chord
$$[0,\epsilon] \ni t \mapsto (-N-2\epsilon,\phi^{-t}_{e^r}(x)) \in (-\infty,0] \times Y_- \subset X, \:\: x \in \Lambda_-.$$
 This Hamiltonian chord lives in the concave end and has endpoints on the cylinder over $\Lambda_- \cup \Lambda_-^\epsilon$; in fact, it corresponds to a very short $\alpha_-$-Reeb chord from $\Lambda_-^\epsilon$ to $\Lambda_-$,
but traversed in \emph{reverse} time. The following relationship between Reeb chords from $\Lambda_-$ to itself and Reeb chords from $\Lambda_-$ to $\Lambda_-^\epsilon$ holds.

\begin{lma}
\label{lma:ChordBijection}
There is a bijective correspondence between the Reeb chords
$$[0,\ell] \ni t \mapsto \phi^t_{\alpha_-,1}(x), \:\: x\in\Lambda_-, \phi^\ell_{\alpha_-,1}(x) \in \Lambda_-,$$
from $\Lambda_-$ to itself of length $\ell>0$, and
\begin{enumerate}
\item the Reeb chords
$$[0,\ell-\epsilon] \ni t \mapsto \phi^{t}_{\alpha_-,1}(x), \:\: x \in \Lambda_-,\phi^{\ell-\epsilon}_{\alpha_-,1}(x) \in \Lambda_-^\epsilon,$$
from $\Lambda_-$ to $\Lambda_-^\epsilon$ of length $\ell-\epsilon.$ Furthermore, the former chord is trivial inside $\pi_1(Y_-,\Lambda_-)$ if and only if the latter chord is contained in the class $\mathfrak{p}_0 \in \pi_0(\Pi(X;L,L_{\epsilon,N,s}))$ under the canonical embedding $Y_-=\{-N-2\epsilon\} \times Y_- \subset X,$ and
\item the Reeb chords
$$[0,\ell+\epsilon] \ni t \mapsto \phi^{t}_{\alpha_-,1}(x), \:\: x \in \Lambda_-^\epsilon,\phi^{\ell+\epsilon}_{\alpha_-,1}(x) \in \Lambda_-,$$
from $\Lambda_-^\epsilon$ to $\Lambda_-$ of length $\ell+\epsilon.$ Furthermore, the former chord is trivial inside $\pi_1(Y_-,\Lambda_-)$ if and only if the latter chord is contained in the class $\mathfrak{p}_0^{-1} \in \pi_0(\Pi(X;L_{\epsilon,N,s},L))$ (i.e.~inverting the time of the paths in class $\mathfrak{p}_0$) under the canonical embedding $Y_-=\{-N-2\epsilon\} \times Y_- \subset X.$
\end{enumerate}
\end{lma}
The proof is straightforward; see Figure \ref{fig:chords1} for an illustration.
\begin{lma}
\label{lma:ContractibleClass}
\begin{enumerate}
\item Any intersection point $L \cap L_{\epsilon,N,s}$ contained outside of the subset $\{|r \pm N | \ge \epsilon\}$ is contained in the region $[-N+\epsilon,-\epsilon] \times Y$ of the ``bulge.'' (See Figure \ref{fig:bulge2}.) There is a one-to-two correspondence between the Reeb chords from $\Lambda_-^\epsilon$ to $\Lambda_-$ of length $\epsilon< \ell < s+\epsilon$ and such intersection points; one is contained near $r=-N+\epsilon$ while one is contained near $r=0.$ When considered as paths inside $\Pi(X;L_{\epsilon,N,s},L)$ (note the order!), these intersection points and corresponding chords moreover live in the same component.
\item For numbers $s$ satisfying $s \in [\epsilon, \sigma]$, there is no intersection point
$$L \cap L_{\epsilon,N,s} \cap \{|r \pm N | \ge \epsilon\}$$
contained in the component $\mathfrak{p}_0 \in \pi_0(\Pi(X;L,L_{\epsilon,N,s})).$
\end{enumerate}
\end{lma}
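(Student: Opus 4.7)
The plan is to stratify the intersection $L\cap L_{\epsilon,N,s}$ according to the three regions from the push-off construction of Section \ref{sec:PushOff}: the Morse--Bott neighborhoods $\{|r\pm N|\le\epsilon\}$ (excluded from the statement), the far cylindrical regions, and the bulge region in the concave end. In the far cylindrical regions (i.e.~$r\le -N-\epsilon$ and $r\ge N+\epsilon$) both $L$ and $L_{\epsilon,N,s}$ are cylinders over Legendrians translated by the time-$(-\epsilon)$ Reeb flow, which are disjoint since $\epsilon$ is smaller than the shortest Reeb chord on the ends. So all intersections away from the Morse--Bott neighborhoods must lie in the bulge, where $L_{\epsilon,N,s}$ is obtained from $L$ by the time-one flow of $\beta^-_{\epsilon,N,s}(r)R_{\alpha_-}$. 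Because this vector field is proportional to the Reeb field, the flow preserves $r$ and merely translates along $R_{\alpha_-}$ by $\beta^-_{\epsilon,N,s}(r)$; hence $(r,x)$ is an intersection precisely when $L:=\beta^-_{\epsilon,N,s}(r)$ is the length of a genuine Reeb chord on $\Lambda_-$ from some $y\in\Lambda_-$ to $x\in\Lambda_-$. From the profile of $\beta^-_{\epsilon,N,s}$ (Figure \ref{fig:bulge2}) each value $L\in(\epsilon,s)$ is attained at exactly two $r$-values, one on the rising and one on the falling slope, while generically the plateau value $s$ is not a chord length; together with Lemma \ref{lma:ChordBijection} this gives the desired one-to-two correspondence with chords from $\Lambda_-^\epsilon$ to $\Lambda_-$ of length $\ell=L+\epsilon$.

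For the component assertion of Part (1), I would construct an explicit homotopy in $\Pi(X;L_{\epsilon,N,s},L)$ between the constant path at the intersection point $(r,x)$ and the Reeb chord from $(r_0,\phi^{-\epsilon}(y))\in L_{\epsilon,N,s}$ to $(r_0,x)\in L$ placed at level $r_0=-N-2\epsilon$: drag the start endpoint along the ``twisting cylinder'' $L_{\epsilon,N,s}$ from $(r,x)$ down to $(r_0,\phi^{-\epsilon}(y))$, and simultaneously drag the end endpoint along the straight cylinder $L$ from $(r,x)$ down to $(r_0,x)$. A short bookkeeping of the net Reeb winding, which equals $L+\epsilon$ in the forward direction, matches the winding of the Reeb chord and so identifies the homotopy class.

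For Part (2), the key observation is that, after reversing the path direction, each bulge intersection represents in $\Pi(X;L,L_{\epsilon,N,s})$ a class that differs from the $\mathfrak{p}_0$-chord by the Reeb chord on $\Lambda_-$ of length $L=\beta^-_{\epsilon,N,s}(r)$. Since $\beta^-_{\epsilon,N,s}$ attains values at most $s$ and $s\in[\epsilon,\sigma]$, we have $L\le\sigma$, and our choice \eqref{eq:sigma} of $\sigma<\sigma(\alpha_-,\Lambda_-)$ forces $L<\sigma(\alpha_-,\Lambda_-)$. By the very definition of $\sigma(\alpha_-,\Lambda_-)$ as the minimal length of Reeb chords and orbits that are trivial in $\pi_1(Y_-,\Lambda_-)$, the chord of length $L$ must be non-contractible; consequently the bulge-intersection class is distinct from $\mathfrak{p}_0$. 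This non-triviality step is the main conceptual obstacle, as it is precisely where the quantitative hypothesis of Theorem \ref{thm:main} enters the argument to separate the relevant generators; without the strict inequality $\sigma<\sigma(\alpha_-,\Lambda_-)$ a bulge intersection could a priori fall into $\mathfrak{p}_0$.
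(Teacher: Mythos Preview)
Your argument is correct and follows essentially the same route as the paper's own proof: both reduce Part~(1) to the explicit form of the push-off as a Reeb-flow deformation in the concave end, read off the one-to-two correspondence from the profile of $\beta^-_{\epsilon,N,s}$, and deduce Part~(2) from Part~(1) together with the inequality $\sigma<\sigma(\alpha_-,\Lambda_-)$ (the paper cites \eqref{eq:hbar} rather than \eqref{eq:sigma}, but either suffices here). Your write-up is in fact more explicit than the paper's, which simply refers back to the construction in Section~\ref{sec:explicit} and asserts that the constant path at a bulge intersection can be homotoped to the relevant reversed Reeb chord.

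One small omission: your three-region stratification (Morse--Bott neighborhoods, far cylindrical ends, bulge) does not cover the compact cobordism $\overline{X}$ nor the portion $[0,N-\epsilon]\times Y_+$ of the convex end. There the push-off is governed by the Morse function $g_{\epsilon,N}$ (for $s=\epsilon$, and unchanged for larger $s$ since the family $L_{\epsilon,N,s}$ differs from $L_{\epsilon,N,\epsilon}$ only in the bulge), and $g_{\epsilon,N}$ has no critical points there by construction; hence no intersections. This is implicit in the paper as well, so adding a sentence to that effect would make your Part~(1) complete.
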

The correspondence of the above lemma is schematically depicted in Figure \ref{fig:chords2}.
\begin{proof}

(1): This follows from the construction of $L_{\epsilon,N,s}$ in Section \ref{sec:explicit}. Namely, the cobordism $L_{\epsilon,N,s}$ is obtained from $L$ by creating a ``bulge'' in the region $[-N+\epsilon,-\epsilon] \times Y,$ where this bulge is created by an application of the positive Reeb flow up to time at most equal to $s.$ Using this fact it is now an easy matter to homotope the constant path at such an intersection point to a Reeb chord inside $\{-N-\epsilon\} \times Y_-$ going from $\{-N-\epsilon\} \times \Lambda_-^\epsilon$ to $\{-N-\epsilon\} \times \Lambda_-$ and being of of length $>\epsilon.$

(2): The statement follows from Part (1) of this lemma combined with Part (2) of Lemma \ref{lma:ChordBijection} above.
\end{proof}

\begin{figure}[htp]
\centering
\labellist
\pinlabel $-2N$ at 15 -10
\pinlabel $\color{green}\R\times\Lambda_-^\epsilon$ at 235 30
\pinlabel $\color{green}\R\times\Lambda_-^\epsilon$ at 235 65
\pinlabel $\color{blue}\R\times\Lambda_-$ at 235 43
\pinlabel $\color{blue}\R\times\Lambda_-$ at 235 78
\pinlabel $r$ at 228 3
\pinlabel $\color{red}c'$ at 37 55
\pinlabel $\color{red}c$ at 55 53
\pinlabel $\color{red}d$ at 68 38
\endlabellist
\includegraphics{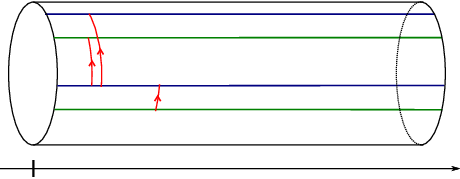}
\vspace{5mm}
\caption{The correspondence between the Reeb chords provided by Lemma \ref{lma:ChordBijection}: the Reeb chord $c$ on $\Lambda_-$ corresponds to a chord $c'$ from $\Lambda_-$ to $\Lambda_-^\epsilon.$ The chord $d$ of length $\epsilon$ from $\Lambda_-^\epsilon$ to $\Lambda_-$ lives in the component $\mathfrak{p}_0 \in \pi_0(\Pi(X;L,L_{\epsilon,N,s}))$ when traversed backwards in time.}
\label{fig:chords1}
\end{figure}

\begin{figure}[htp]
\centering
\labellist
\pinlabel $-N+\epsilon$ at 97 -10
\pinlabel $0$ at 168 -10
\pinlabel $\color{green}\R\times\Lambda_-^\epsilon$ at 235 30
\pinlabel $\color{green}\R\times\Lambda_-^\epsilon$ at 235 65
\pinlabel $\color{blue}\R\times\Lambda_-$ at 235 43
\pinlabel $\color{blue}\R\times\Lambda_-$ at 235 78
\pinlabel $\color{red}c$ at 50 55
\pinlabel $\color{red}c_1$ at 110 90
\pinlabel $\color{red}c_2$ at 164 90
\pinlabel $\color{red}d$ at 82 51
\pinlabel $\color{red}d_1$ at 129 51
\pinlabel $\color{red}d_2$ at 143 51
\pinlabel $r$ at 228 3
\endlabellist
\includegraphics{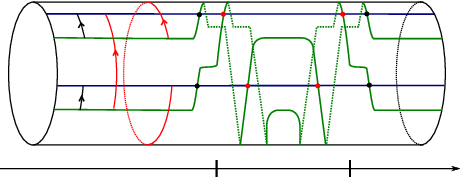}
\vspace{5mm}
\caption{The correspondence between the Reeb chords provided by Lemma \ref{lma:ContractibleClass}. For every Reeb chord $c$ from $\Lambda_-^\epsilon$ to $\Lambda_-$ of a (suitably) bounded length, there are precisely two corresponding intersection points $c_1$ and $c_2$ in the bulge region; one is contained near $r=-N+\epsilon$ while one is contained near $r=0.$}
\label{fig:chords2}
\end{figure}

\subsection{Fixing the actions of the Hamiltonian chords}
\label{sec:actions}
Recall the action conventions from Section \ref{sec:action}, where an action is associated to the primitives of the one-form $\varphi\eta$ pulled back to the two Lagrangian cobordisms. Here $\varphi \colon X \to \R_{>0}$ is a choice of an auxiliary piecewise smooth function. In order to get optimal results, we need to be careful when choosing this function. We will take 
 the Hofer function $\varphi$ to be constantly equal to $1$ on $\overline{X}$, while it takes the values
\begin{equation}
\label{eq:varphi}
\varphi(r)=\begin{cases} e^{-r-2\epsilon}, & r \le -N-\epsilon, \\
e^{N-\epsilon}, & -N-\epsilon \le r \le -N+\epsilon, \\
e^{-r}, & -N+\epsilon \le r \le 0, \\
1, & 0 \le r \le N+\epsilon, \\
e^{-r+N+\epsilon}, & r \ge N+\epsilon,
\end{cases}
\end{equation}
on the cylindrical ends.
 In other words, with the notation from Definition \ref{dfn:varphi}, we have the associated constant $r_\varphi=2\epsilon$. It now follows from Lemma \ref{lma:ChordBijection} that:
\begin{lma}
\label{lma:hbar}
We have an equality
$$\hbar = \hbar(\varphi,\mathfrak{p}_0,\Lambda_-,\Lambda_-^\epsilon,\alpha_-) =e^{-2\epsilon}(\sigma(\Lambda_-,\alpha_-)-\epsilon),$$
where the left-hand side was defined in equation (\ref{eq:hbar_def}).
\end{lma}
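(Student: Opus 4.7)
The plan is to unwind the definition of $\hbar$ given in Section \ref{sec:invariance}, using the prescribed form of $\varphi$ from Equation \eqref{eq:varphi} and the bijection on chords provided by Lemma \ref{lma:ChordBijection}. Recall that
\[\hbar = e^{-r_\varphi}\min_{c \in \mathcal{Q}_{\alpha_-}^{\mathfrak{p}_0}} \int_c \alpha_-.\]
From the explicit formula \eqref{eq:varphi}, which prescribes $\varphi(r)=e^{-r-2\epsilon}$ for $r \le -N-\epsilon$, I read off that $r_\varphi = 2\epsilon$. The remaining task is thus to identify the minimal $\alpha_-$-length of curves in $\mathcal{Q}_{\alpha_-}^{\mathfrak{p}_0}$.

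I would break the analysis into the four types of curves entering into the definition of $\mathcal{Q}_{\alpha_-}^{\mathfrak{p}_0}$: (i) contractible periodic Reeb orbits in $Y_-$; (ii) contractible Reeb chords with both ends on $\Lambda_-$; (iii) contractible Reeb chords with both ends on $\Lambda_-^\epsilon$; and (iv) Reeb chords from $\Lambda_-$ to $\Lambda_-^\epsilon$ in the class $\mathfrak{p}_0$. By the very definition of $\sigma(\alpha_-,\Lambda_-)$, the curves in (i) and (ii) have length at least $\sigma(\alpha_-,\Lambda_-)$. For (iii), I would use that $\phi^{-\epsilon}_{R_{\alpha_-}}$ is a strict contactomorphism (it preserves $\alpha_-$, and hence also $R_{\alpha_-}$), so that it produces a length- and contractibility-class-preserving bijection between the contractible chords on $\Lambda_-$ and the contractible chords on $\Lambda_-^\epsilon$; again these have length at least $\sigma(\alpha_-,\Lambda_-)$.

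The crucial case is (iv), which is handled directly by Lemma \ref{lma:ChordBijection}: that lemma provides a bijection between Reeb chords from $\Lambda_-$ to itself (with the trivial class in $\pi_1(Y_-,\Lambda_-)$) and Reeb chords from $\Lambda_-$ to $\Lambda_-^\epsilon$ in the class $\mathfrak{p}_0$, where a chord of length $\ell$ on $\Lambda_-$ corresponds to a chord of length $\ell - \epsilon$ in the second set. Hence the minimal length in (iv) is exactly $\sigma(\alpha_-,\Lambda_-) - \epsilon$, and this is strictly smaller than the minimum obtained in (i)--(iii). Combining, the infimum over $\mathcal{Q}_{\alpha_-}^{\mathfrak{p}_0}$ is $\sigma(\alpha_-,\Lambda_-) - \epsilon$, and multiplying by $e^{-r_\varphi} = e^{-2\epsilon}$ gives the claimed equality.

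The proof is largely formal once the setup is in place; the only subtle point to check carefully is that no Reeb chord from $\Lambda_-$ to $\Lambda_-^\epsilon$ in class $\mathfrak{p}_0$ has been missed (in particular, that no short chord of length less than $\epsilon$ in the reverse direction, as represented by the distinguished Hamiltonian chord defining $\mathfrak{p}_0$, counts here). This is assured because Lemma \ref{lma:ChordBijection} covers chords traversing from $\Lambda_-$ to $\Lambda_-^\epsilon$ in positive Reeb time, and the homotopy class condition defining $\mathfrak{p}_0$ excludes a reverse-time short chord by Lemma \ref{lma:ContractibleClass} together with our earlier assumption that $\epsilon$ is smaller than the shortest Reeb chord on $\Lambda$.
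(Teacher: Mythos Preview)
Your proof is correct and follows the same route as the paper, which simply states that the lemma ``follows from Lemma~\ref{lma:ChordBijection}''; you have spelled out the four cases in $\mathcal{Q}_{\alpha_-}^{\mathfrak{p}_0}$ explicitly and invoked the chord bijection for the mixed case, exactly as intended. One small remark: your claim that the minimum in case~(iv) is \emph{exactly} $\sigma(\alpha_-,\Lambda_-)-\epsilon$ implicitly assumes that $\sigma(\alpha_-,\Lambda_-)$ is realized by a chord rather than a periodic orbit; if the minimum is realized by an orbit, the overall minimum over (i)--(iv) can be $\sigma(\alpha_-,\Lambda_-)$ rather than $\sigma(\alpha_-,\Lambda_-)-\epsilon$, so the stated identity is really the inequality $\hbar \ge e^{-2\epsilon}(\sigma(\alpha_-,\Lambda_-)-\epsilon)$, which is precisely what is used via Inequality~\eqref{eq:hbar} downstream.
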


Although the continuous function $\varphi$ only is piecewise smooth, such a function is sufficient for our purposes: recall that pseudoholomorphic curves have positive $d(\varphi \eta)$-energy by Lemma \ref{lma:varphienergy}. To that end, we must use an admissible almost complex structure in the sense of Section \ref{sec:AdmissibleACS}, which in particular is cylindrical in the complement of the subset $\{\varphi'(r)=0\}$ of the cylindrical ends.

We will use $d(\varphi \eta)$-energy (resp.~$d\eta$-energy) and the $\mathfrak{a}_\varphi$-action (resp.~$\mathfrak{a}$-action)
of Floer generators, in order to obstruct the existence of certain pseudoholomorphic curves when we prove Lemma \ref{lma:cone} and Proposition
\ref{prp:refinedinvariance}.

Fix $s \in [\epsilon, \sigma]$ and choose primitives $f_0$ and $f_1$ of the pull-backs of $\eta$ to $L$ and $ L_{\epsilon,N,s}$, respectively, and primitives $f_0^\varphi$ and $f_1^\varphi$ of the pull-backs of $\varphi\eta$ to $L$ and $L_{\epsilon,N,s},$ respectively. All primitives here are required to vanish at $r=-\infty$, and they are thus uniquely determined. In the following we will also assume that
$$f_0=f_0^\varphi \equiv 0$$
 outside of the non-cylindrical part $\overline{X}$ (i.e.~this equality in particular holds inside the subset where all intersection points reside) which, in view of Lemma \ref{lma:equalaction}, causes no restriction. Now we estimate the actions defined in Section \ref{sec:action} for the 0-Hamiltonian chords using the above primitives.
\begin{lma}[The main action computation]
\label{lma:ActionSummary}
Let $\mathbf{O}(\epsilon)>\mathbf{o}(\epsilon) > 0 $ be fixed, but unspecified, positive functions of $\epsilon, N$, defined for all $N,\epsilon \ge 0,$ that tend to $0$ as $\epsilon \to 0$ for any fixed choice of $N>0.$ Let $p,q \in L \cap L_{\epsilon,N,s}$ be 0-Hamiltonian chords. 
Let $s \ge \epsilon.$
Under the assumption that $N > 0$ is sufficiently large and $\epsilon>0$ is sufficiently small, we may assume the following:
\begin{enumerate}
\item For $p$ near $\{ r = -N\},$
$$- \mathbf{O}(\epsilon)< \mathfrak{a}(p) <0 \:\: \text{ and } \:\: -\mathbf{O}(\epsilon) < \mathfrak{a}_\varphi(p)<0;$$
\item For $p$ near $\{ r = N\},$ 
$$
 \mathbf{o}(\epsilon) < \mathfrak{a}_\varphi(p) < \mathbf{O}(\epsilon)
 \:\: \text{ and } \:\:
|\mathfrak{a}(p)-s| < \mathbf{O}(\epsilon);$$
\item
Inside the subset $[0,N-\epsilon] \times Y$ both potentials $f_1, f_1^\varphi \colon L_1 \to \R$ are constant, where they moreover satisfy
$$0 < f_1^\varphi < \frac{1}{2}\mathbf{o}(\epsilon), \:\: \text{ and } \:\:s-\mathbf{O}(\epsilon)<f_0-f_1<s+\mathbf{O}(\epsilon).$$
\end{enumerate}
\end{lma}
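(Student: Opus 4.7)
The plan is to compute the primitives $f_1$ and $f_1^\varphi$ of $\eta|_{L_{\epsilon,N,s}}$ and $(\varphi\eta)|_{L_{\epsilon,N,s}}$ directly, normalized to vanish at $r \to -\infty$. Since $f_0 = f_0^\varphi \equiv 0$ on $L$, the actions simplify to $\mathfrak{a}(p) = -f_1(p)$ and $\mathfrak{a}_\varphi(p) = -f_1^\varphi(p)$, so the proof reduces to elementary integration of the explicit profiles of $\beta^-_{\epsilon,N,s}, \beta^+_{\epsilon,N}$ from Section \ref{sec:explicit} together with the profile of $\varphi$ given in \eqref{eq:varphi}. The computation splits into three region-wise integrations --- concave cylindrical end, middle region $\overline X$, convex cylindrical end --- matched at the interfaces $r = 0$.

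On the cylindrical ends, $L_{\epsilon,N,s}$ is parameterized by $(r, y) \mapsto (r, \phi^{\beta^\pm(r)}_{R_{\alpha_\pm}}(y))$. Since the Reeb flow preserves $\alpha_\pm$ and $T\Lambda_\pm \subset \ker \alpha_\pm$, the pullback of $e^r \alpha_\pm$ is simply $e^r(\beta^\pm)'(r)\, dr$. In the middle, I would invoke a Weinstein embedding $\Psi \colon U \subset T^*L \to X$ chosen so that $\Psi^*\eta = \theta_L$ exactly; this is possible since $\eta|_L = 0$ makes $\Psi^*\eta - \theta_L$ an exact one-form vanishing on $0_L$, which can be killed by a symplectic isotopy of $\Psi$. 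Under this normalization, $L_{\epsilon,N,s} \cap \overline X$ is the graph $\{(q, dg_{\epsilon,N}(q))\}$ with pullback one-form $dg_{\epsilon,N}$, so both primitives jump across the middle by exactly $g_{\epsilon,N}(q_+) - g_{\epsilon,N}(q_-)$ (since $\varphi \equiv 1$ on $\overline X$).

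Using $g_{\epsilon,N}(\mp 1) = \epsilon, 2\epsilon$ from the cylindrical extension formulas, integration by parts gives, on the concave side, $f_1(0^-) = \epsilon - \int_{-\infty}^0 e^u \beta^-_{\epsilon,N,s}(u)\, du = \epsilon - s + O(\epsilon)$, with the dominant $-s$ arising from the bulge integral $\int_{-N+2\epsilon}^{-\epsilon} e^u \cdot s\, du \approx s$; meanwhile the identity $\varphi(u) e^u \equiv 1$ on $[-N+\epsilon, 0]$ makes the corresponding bulge integral for $f_1^\varphi$ telescope to $\beta^-(0) - \beta^-(-N+\epsilon) = 0$, leaving only the transition near $r = -N$ weighted by $\varphi e^u = e^{-2\epsilon}$, and yielding $f_1^\varphi(0^-) \approx 2\epsilon + O(\epsilon^2)$. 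Crossing the middle adds $\epsilon = g_{\epsilon,N}(1) - g_{\epsilon,N}(-1)$ to both; on the convex side, the transition of $\beta^+$ across $r = N$ contributes $\int_{N-\epsilon}^N e^u(\beta^+)'(u)\, du \approx -\epsilon e^N$ to both, since $\varphi = 1$ on $[0, N+\epsilon]$. Thus at intersection points near $\{r = N\}$, $f_1 \approx 2\epsilon - s - \epsilon e^N$ and $f_1^\varphi \approx 3\epsilon - \epsilon e^N$, giving $\mathfrak{a}(p) \approx s + \epsilon e^N - 2\epsilon$ and $\mathfrak{a}_\varphi(p) \approx \epsilon(e^N - 3)$. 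For intersections near $\{r = -N\}$, only the $\beta^-$ transition contributes, yielding $f_1 \approx \epsilon e^{-N-\epsilon}$ and $f_1^\varphi \approx \epsilon e^{-2\epsilon}$, hence both actions are small and negative. On the slab $\{0 \le r \le N-\epsilon\}$ on the convex side, $(\beta^+)' \equiv 0$ and both primitives are constant with values $2\epsilon - s$ and $3\epsilon$ respectively. The Morse perturbation of $g_{\epsilon,N}$ shifts positions and action values only at order $\epsilon^{10}$ and is absorbed into the error.

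The lemma then follows by choosing $\mathbf{o}(\epsilon) = \epsilon\mu_1(N)$ and $\mathbf{O}(\epsilon) = \epsilon\mu_2(N)$ with $6 < \mu_1(N) < e^N - 3 < \mu_2(N)$ (valid for $N > \ln 9$) and with $\epsilon$ small enough to absorb the $O(\epsilon)$ errors; both functions tend to $0$ as $\epsilon \to 0$ for fixed $N$, as required. The main technical point is normalizing the Weinstein embedding so that $\Psi^*\eta = \theta_L$, ensuring that the middle-region increment of each primitive is exactly $g_{\epsilon,N}$ with no extra correction; otherwise an auxiliary function $h$ with $h|_{0_L} = 0$ would contribute $O(\|dg_{\epsilon,N}\|^2) = O(\epsilon^2)$ terms which, while harmless, would complicate the bookkeeping.
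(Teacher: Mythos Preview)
Your proof is correct and follows essentially the same strategy as the paper's: both reduce to integrating $e^r\beta'_{\epsilon,N,s}(r)\,dr$ and $e^r\varphi(r)\beta'_{\epsilon,N,s}(r)\,dr$ over the support of $\beta'$, using $f_0\equiv f_0^\varphi\equiv 0$ so that $\mathfrak{a}(p)=-f_1(p)$ and $\mathfrak{a}_\varphi(p)=-f_1^\varphi(p)$. The paper writes the single approximation $\mathfrak{a}(p_i)\approx -\int_{-\infty}^{(-1)^iN}e^r\beta'\,dr$ and then bounds each subinterval by monotonicity of the prefactor, arriving at the same leading terms $\epsilon e^{N}$ (for $\mathfrak{a}_\varphi(p_2)$), $s$ (for $\mathfrak{a}(p_2)$), and $-\epsilon e^{-2\epsilon}$ (for $\mathfrak{a}_\varphi(p_1)$) that you obtain.

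The one genuine difference is your explicit treatment of the compact piece $\overline{X}$. You route through the Weinstein neighborhood and read off the increment $g_{\epsilon,N}(1)-g_{\epsilon,N}(-1)=\epsilon$ for both primitives (since $\varphi\equiv 1$ there), whereas the paper effectively absorbs this contribution into the integral formula and its error budget without singling it out. Your approach is cleaner here; the paper's stated $\epsilon^2$ error bound literally covers only the Morse perturbation and the $\epsilon^3$-shift in the critical level, so the $O(\epsilon)$ middle contribution is tacitly swept into $\mathbf{O}(\epsilon)$ rather than into the ``$\approx$''. Your observation that even without the normalization $\Psi^*\eta=\theta_L$ the defect $h$ satisfies $h|_{0_L}=0$ and $dh|_{0_L}=0$, hence contributes only $O(\|dg\|^2)=O(\epsilon^2)$, is exactly the right way to justify ignoring it.
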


\begin{proof}
For $i = 1,$ respectively $i=2,$ let $p_i$ be a $0$-Hamiltonian chord sitting in the contact slice $\{r = r_i\}$ in the concave, respectively convex, end.
The Morse--Bott perturbation in Section \ref{sec:MorseBott} used to produce $L_{\epsilon,N,s}$ implies that 
$|r_i - (-1)^i N| < \epsilon^3.$ 
Moreover, to within an error bounded by 
$$ 
||g_{\epsilon,N,s} - \widetilde{g}_{\epsilon,N,s}||_{C^1} + \int_{r = \pm N - \epsilon^3}^{\pm N + \epsilon^3}
|e^r \beta'(r) \max\{\varphi(r), 1\}| d r 
< 
\epsilon^2
$$
$\mathfrak{a}(p_i)$ and $\mathfrak{a}_\varphi(p_i)$
have the following integral approximations:
\[
\mathfrak{a}(p_i) \approx -\int_{-\infty}^{(-1)^i N} e^r \beta'_{\epsilon,N,s}(r) d r , \quad
\mathfrak{a}_\varphi(p_i) \approx -\int_{-\infty}^{(-1)^i N} e^r \varphi(r) \beta'_{\epsilon,N,s}(r) d r.
\]
Here we use that both primitives $f_0$ and $f_0^\varphi$ on $L$ vanish identically in the region containing the intersection points (in fact, everywhere outside of $\overline{X}$), together with Lemma \ref{lma:ActionComp} for the computation of the potentials on $L_1.$

We make several observations to obtain the required estimates.
First, 
$$
\mbox{supp}(\beta'_{\epsilon,N,s}) \subset [-N-\epsilon, -N+2 \epsilon] \cup [-\epsilon, 0] \cup [N-\epsilon, N+ \epsilon].
$$
Second, 
$$\left|\int_{r_0}^{r_0'} \beta'_{\epsilon,N,s}(r) d r\right|= |\beta_{\epsilon,N,s}(r_0') - \beta_{\epsilon,N,s}(r_0)|
 = 
 \begin{cases} \epsilon \,, &\mbox{if} \,\, (r_0,r_0') \in \{(\pm N - \epsilon, \pm N), (\pm N, \pm N + \epsilon) \} \,,\\
 s - \epsilon \,, &\mbox{if} \,\, (r_0,r_0') \in \{(-N + \epsilon, -N+2\epsilon), (-\epsilon, 0)\}.
 \end{cases}
$$
Third, on any such interval $(r_0,r_0')$, all factors $I(r)$ in front of the $ \beta'_{\epsilon,N,s}(r)$-term in the integrand are monotonic; thus,
\begin{eqnarray*}
-I(r_i)(\beta_{\epsilon,N,s}(r_0') - \beta_{\epsilon,N,s}(r_0)) & \le & -\int_{r_0}^{r_0'} I(r) \beta'_{\epsilon,N,s}(r) d r \\
& \le & -I(r_j)(\beta_{\epsilon,N,s}(r_0') - \beta_{\epsilon,N,s}(r_0))
\end{eqnarray*}
 where $r_0 = r_i < r_j=r_0'$ or $r_0 = r_j < r_i=r_0'.$

We apply these observations (along with the error bounds on the approximations) to several integral computations to deduce the inequalities of the lemma. 
As we will see below, the lower bound for $\mathfrak{a}_\varphi(p_2)$ will motivate setting
\begin{equation}
\label{eq:o}
 \mathbf{o}(\epsilon) = (-e^{-\epsilon} - e^{0} + e^{N-3\epsilon}) \epsilon- \epsilon^2
\end{equation}
which we claim is positive because we can assume without loss of generality that $N-3\epsilon \ge 1$ and $-2+e^1> \epsilon.$
We omit the precise exponential-polynomial formulation of $ \mathbf{O}(\epsilon)$ in terms of $\epsilon, N,$ since $ \mathbf{O}(\epsilon)$ only appears as an upper bound (in magnitude).
The $(\pm \epsilon^2)$-terms included below will cover the integral approximations made above.

The bounds for $\mathfrak{a}(p_1)$ follow from 
$$
 -\epsilon^2 > -e^{-N - \epsilon} \epsilon > -\int_{-N-\epsilon}^{-N} e^r \beta'_{\epsilon,N,s}(r) d r > -e^{-N} \epsilon > -\mathbf{O}(\epsilon) +\epsilon^2.
 $$
The bounds for $\mathfrak{a}_\varphi(p_1)$ follow from
$$
 -\epsilon^2 > -e^{-2\epsilon} \epsilon > - \int_{-N-\epsilon}^{-N} e^r \varphi (r) \beta'_{\epsilon,N,s}(r) d r >- e^{-\epsilon} \epsilon > -\mathbf{O}(\epsilon) +\epsilon^2.
$$
The bounds for $\mathfrak{a}(p_2)$ follow from 
\begin{eqnarray*}
&s - \frac{1}{2} \mathbf{O}(\epsilon) - \epsilon^2 > (s - \epsilon)(e^0 - e^{-N+\epsilon}) >
 &\\
& -\left(\int_{-N+ \epsilon}^{-N+2\epsilon} + \int_{-\epsilon}^0\right) e^r \beta'_{\epsilon,N,s}(r) d r
> (s- \epsilon) (e^{-\epsilon} - e^{-N+2\epsilon}) > s - \mathbf{O}(\epsilon) &
\end{eqnarray*}
together with
\begin{eqnarray*}
\frac{1}{2} \mathbf{O}(\epsilon) 
& > & (-e^{-N-\epsilon} - e^{-N} + e^N) \epsilon \\
&> &- \left(\int_{-N- \epsilon}^{-N} + \int_{-N}^{-N+\epsilon}+\int_{N-\epsilon}^{N} \right) e^r \beta'_{\epsilon,N,s}(r) d r \\
& > & (-e^{-N} - e^{-N+\epsilon} + e^{N-\epsilon}) \epsilon > \epsilon^2.
\end{eqnarray*} 
The bounds for $\mathfrak{a}_\varphi(p_2)$ follow from 
$$0 = -\left(\int_{-N+ \epsilon}^{-N+2\epsilon} + \int_{-\epsilon}^0\right) e^r \varphi\beta'_{\epsilon,N,s}(r) d r $$
together with
\begin{eqnarray*}
\mathbf{O}(\epsilon) - \epsilon^2 \
& > & ( -e^{-2\epsilon} - e^{-\epsilon} + e^{N-2\epsilon}) \epsilon \\
& > & -\left(\int_{-N- \epsilon}^{-N} + \int_{-N}^{-N+\epsilon}+\int_{N-\epsilon}^{N} \right) e^r \varphi(r) \beta'_{\epsilon,N,s}(r) d r \\
&>& (-e^{-\epsilon} - e^{0} + e^{N-3\epsilon}) \epsilon 
= \mathbf{o}(\epsilon) + \epsilon^2.
\end{eqnarray*} 

First we note that, since $\beta'|_{[0, N - \epsilon]} =0,$ the claim made in Part (3) that the potentials $f_1$ and $f_1^\varphi$ are constant in the concerned region holds.

To compute the bounds, we use the same integration estimates as for $\mathfrak{a}(p_2)$ and $\mathfrak{a}_\varphi(p_2)$ above, except that we omit the fifth (last) integral as it occurs when $r > N -\epsilon.$ 
We see that for $0 \le r \le N - \epsilon,$ 
\begin{eqnarray*}
s+ \mathbf{O}(\epsilon)
& > & (s - \epsilon)(e^0 - e^{-N+\epsilon})+(-e^{-N-\epsilon} - e^{-N}) \epsilon + \epsilon^2 \\
& > & -(f_1 - f_0)(r) \\
& > & (s- \epsilon) (e^{-\epsilon} - e^{-N+2\epsilon})+ (-e^{-N} - e^{-N+\epsilon}) \epsilon - \epsilon^2 > s - \mathbf{O}(\epsilon),
\end{eqnarray*}
as well as
\begin{eqnarray*}
\lefteqn{0 > ( -e^{-2\epsilon} - e^{-\epsilon} ) \epsilon + \epsilon^2 > -(f_1^\varphi - f_0^\varphi)(r)} \\
& > & (-e^{-\epsilon} - e^{0} ) \epsilon- \epsilon^2 > -\frac{1}{2}\mathbf{o}(\epsilon).
\end{eqnarray*} 
The last inequality holds for sufficiently large $N$ in \eqref{eq:o} and all arbitrarily small $\epsilon>0.$
Plugging in $f_0^\varphi = 0 = f_0$ gives the answer.
\end{proof}

 Since $\hbar$ is independent of $\epsilon$ and $N$ from Lemma \ref{lma:ActionSummary}, 
after
possibly shrinking $\epsilon>0$, we can assume that $\mathbf{O}(\epsilon)$ in Lemma \ref{lma:ActionSummary} satisfies\begin{equation}
\label{eq:epsilon1}
2 \mathbf{O}(\epsilon) < \hbar.
\end{equation}
Further, by using Part (3) of Lemma \ref{lma:ActionSummary} together with Inequality \eqref{eq:sigma}, and after possibly shrinking $\delta>0$, we may moreover assume that
\begin{equation}
\label{eq:epsilon2}
e^{A+2\delta} \|H\|_{\OP{osc}} < \sigma-\mathbf{O}(\epsilon) <(f_0 - f_1)|_{[0,N-\epsilon]\times Y} < \sigma + \mathbf{O}(\epsilon)
\end{equation}
is satisfied when $\epsilon>0$ is sufficiently small and $s = \sigma$ (that is, when $f_1$ is the primitive of the pull back of $\eta$ to $L_{\epsilon, N, s} = L_{\epsilon, N, \sigma}$).

\subsection{The Floer homology of the push-off}
\label{sec:FloerHomologyPush}

By Lemma \ref{lma:ContractibleClass} we conclude that
\[CF^{\mathfrak{p}_0}_*(L,L_{\epsilon, N, s};0,J) = C_*^- \oplus C_*^+\]
holds for all $\epsilon \le s \le \sigma,$ where $C_*^+,$ resp. $C_*^-,$ denotes the vector subspace generated by the intersection points at the convex, resp. concave, end. Recall the superscript introduced in Section \ref{sec:LagIntHF} which indicates that the generating intersection points lie in the same contractible homotopy class. Also, recall the choice of Morse function $h: \Lambda \rightarrow [0, 1]$ made in Section \ref{sec:MorseBott}. By the construction of $L_{\epsilon,N,\sigma}$ in the same section, there is an isomorphism $C_*^-=C_{*-1}^+=C^{\mathrm{Morse}}_*(h)$ of graded vector spaces induced by a canonical identification of the generators.

\begin{lma}[Identifying Floer and Morse homology]
\label{lma:cone}
Using the above canonical identification of generators, the Floer complex
\[\left(CF^{\mathfrak{p}_0}_* (L,L_{\epsilon,N,\sigma};0,J_t)=C_*^- \oplus C_*^+,d=\begin{pmatrix} d_- & 0 \\ \psi & d_+ \end{pmatrix}\right)\]
is well-defined, has homotopy class independent of the choice of admissible almost complex structure, and is given as an acyclic mapping cone of $\psi$ (the acyclicity is equivalent to $\psi$ being a quasi-isomorphism). Moreover, there are homotopy equivalences
$$(C_*^-,d_-) \sim (C^{\OP{Morse}}_*(h),\partial_h) \:\: \text{and} \:\: (C_{*-1}^+,d_+)\sim(C^{\OP{Morse}}_{*}(h),\partial_h)$$
of the respective quotient and subcomplexes and the Morse homology complex of $\Lambda$.
\end{lma}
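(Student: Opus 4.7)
The plan is to verify the four claims of Lemma~\ref{lma:cone} in order: well-definedness, the upper-triangular shape of $d$, independence of $J_t$, the Morse identifications of the sub- and quotient complexes, and finally the acyclicity. All four rely on combining Lemma~\ref{lma:ActionSummary} with the non-bubbling Theorem~\ref{thm:compactness} and the classical Floer--Morse correspondence for $C^1$-small exact Lagrangian perturbations.

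\emph{Well-definedness, upper-triangular shape, and $J$-invariance.} Lemma~\ref{lma:ActionSummary} places the $\mathfrak{a}_\varphi$-actions of generators in $C_*^-$ in the window $(-\mathbf{O}(\epsilon),0)$ and those of $C_*^+$ in $(\mathbf{o}(\epsilon),\mathbf{O}(\epsilon))$; inequality \eqref{eq:epsilon1} then forces the difference of any two generator actions to be strictly less than $\hbar$, so Case~(0) of Theorem~\ref{thm:compactness} applies to every moduli space relevant to $d$ and to $d^2$, and Proposition~\ref{prp:welldefcomplex} yields $d^2=0$. Because $d$ strictly increases $\mathfrak{a}_\varphi$ by Lemma~\ref{lma:varphienergy}, while generators in $C_*^+$ have strictly positive $\varphi$-action and those in $C_*^-$ strictly negative, the block of $d$ going from $C_*^+$ to $C_*^-$ must vanish, producing the displayed upper-triangular form. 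Independence of the chain homotopy type on $J_t \in \mathcal{J}(L,L_{\epsilon,N,\sigma},\varphi)$ follows by the standard continuation argument, using that generators and their actions are $J$-independent and that Case~(1) of Theorem~\ref{thm:compactness} bounds all continuation strips with $G_t\equiv 0$.

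\emph{Morse identification of the sub- and quotient complexes.} By construction in Section~\ref{sec:MorseBott}, each of $C_*^-$ and $C_{*-1}^+$ is generated by the perturbed critical points of the Morse function $g_{\epsilon,N}$ inside a small Weinstein neighborhood of the Bott submanifold $\{r=\pm N\}\cap L$, where $L_{\epsilon,N,\sigma}$ is the graph of $t\,dg_{\epsilon,N}$. The classical Floer--Morse correspondence for $C^1$-small exact perturbations (see e.g.\ \cite[Ch.~15]{OhSympTop2} or Floer's original \cite{Floer:MorseTheoryLagrangian}) identifies the Floer differential restricted to these generators with the Morse differential of $g_{\epsilon,N}$ on the Bott submanifold, which by construction is a Morse perturbation of $h\colon\Lambda\to[0,1]$. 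This yields the claimed chain equivalences $(C_*^-,d_-)\sim (C^{\OP{Morse}}_*(h),\partial_h)$ and $(C_{*-1}^+,d_+)\sim(C^{\OP{Morse}}_*(h),\partial_h)$.

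\emph{Acyclicity.} To show that $\psi$ is a quasi-isomorphism I would first deform the parameter $s$ from $\sigma$ down to $\epsilon$. By Lemma~\ref{lma:ContractibleClass}(2) the set of generators in class $\mathfrak{p}_0$ is untouched by the deformation, and the Hamiltonian isotopy is supported inside the bulge region whose intersection points are \emph{not} in $\mathfrak{p}_0$, so action comparisons together with Proposition~\ref{prp:bifurcation} provide a chain homotopy equivalence between the complexes at $s=\sigma$ and at $s=\epsilon$. At $s=\epsilon$ the entire pair $(L,L_{\epsilon,N,\epsilon})$ is globally the graph of $dg_{\epsilon,N}$, and the step above extends to a global Floer--Morse identification of $C_*^-\oplus C_*^+$ with the Morse complex of $g_{\epsilon,N}$ on $L=\R\times\Lambda$. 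Since $g_{\epsilon,N}\to 0^+$ at $r\to-\infty$ and $g_{\epsilon,N}\to+\infty$ at $r\to+\infty$ with $dg(\partial_r)>0$ at each end, the gradient flow confines trajectories between the critical submanifolds, and a long exact sequence argument shows its Morse homology vanishes. Hence $\psi$ is a quasi-isomorphism and the cone is acyclic. The main obstacle here is translating the non-compact Morse picture on $\R\times\Lambda$ into an honest cascade statement; this is handled by the fact that the monotonicity of $g_{\epsilon,N}$ at infinity, combined with the compactness furnished by Theorem~\ref{thm:compactness}, confines all relevant strips to a fixed compact subset of $X$.
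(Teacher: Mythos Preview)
Your proposal is correct and follows essentially the same strategy as the paper: action estimates from Lemma~\ref{lma:ActionSummary} together with Case~(0) of Theorem~\ref{thm:compactness} give well-definedness and the upper-triangular shape; the deformation in $s$ from $\sigma$ down to $\epsilon$ via Lemma~\ref{lma:ContractibleClass}(2) and Proposition~\ref{prp:bifurcation} reduces to the graphical case; and Floer's Morse correspondence for the pair $(L,\mathrm{graph}\,dg_{\epsilon,N})$ then yields both the identification of $(C_*^\pm,d_\pm)$ with Morse complexes of $h$ and the acyclicity. Two small points: for $J$-invariance with $G_t\equiv 0$ the relevant compactness is Case~(0), not Case~(1), of Theorem~\ref{thm:compactness} (the paper in fact handles this through the bifurcation analysis of Proposition~\ref{prp:bifurcation} rather than continuation); and your description of the asymptotics of $g_{\epsilon,N}$ is slightly off---what matters is that $g_{\epsilon,N}$ is monotone in $r$ outside $\{|r|\le N+\epsilon\}$, so that after the Morse--Bott perturbation the $r$-direction trajectory connects corresponding critical points on the two components, making $\psi$ a quasi-isomorphism.
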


\begin{proof}
Inequality \eqref{eq:epsilon1} and Parts (1) and (2) of Lemma \ref{lma:ActionSummary} imply that, for sufficiently small $\epsilon >0,$ and for any generators $p,q \in CF^{\mathfrak{p}_0}_* (L,L_{\epsilon,N,s};0,J_t),$ $s \in [\epsilon,\sigma],$
the inequality
$$|\mathfrak{a}_\varphi(p) - \mathfrak{a}_\varphi(q)| < \hbar$$
is satisfied. Here we also rely on Lemma \ref{lma:ContractibleClass} in order to infer that $$CF^{\mathfrak{p}_0}_* (L,L_{\epsilon,N,s};0,J_t)=C_*^- \oplus C_*^+$$
holds on the level of vector spaces. Consequently, Part (1) of Proposition \ref{prp:bifurcation} applies, showing that
$$ CF^{\mathfrak{p}_0}_* (L,L_{\epsilon,N,s};0,J_t), \:\: s \in [\epsilon,\sigma],$$
are well defined Floer complexes for generic admissible almost complex structures.

The cone-structure of the complexes now follows from Parts (1) and (2) of Lemma \ref{lma:ActionSummary} combined with Lemma \ref{lma:varphienergy}. Specifically, for any two generators $p_\pm \in C_*^\pm,$
$$ \mathfrak{a}_\varphi(p_+) - \mathfrak{a}_\varphi(p_-) > 0.$$

Recall from Section \ref{sec:wein}, for $s$ sufficiently close to $\epsilon,$ $L_{\epsilon,N,s}$ is a graphical Lagrangian which sits in a Weinstein tubular neighborhood of $L$ symplectomorphic to $T^*L.$ 
So for sufficiently small $\epsilon$ and $s,$ we can use Floer's original work which proves that the pseudoholomorphic strips (for a suitable time-dependent $J_t$) with boundary on $L$ and $L_{\epsilon, N, s}$ converge to gradient flow lines for a suitable metric on $L$
\cite{Floer:MorseTheoryLagrangian}.
To understand the underlying Morse complexes, recall the construction of $\widetilde{g}$ and its close perturbation $g.$
From standard Morse--Bott analysis (see \cite[Proposition 1.9]{MorseNovikov}, for example), the negative gradient flows of $g$ leave the critical level sets 
$\{q = \pm(N+1)\} = \{r = \pm N\} \cap L$ invariant.
Moreover, no such negative gradient flows run from $\{q = -(N+1)\}$ to $\{q = +(N+1)\}.$
Thus the result holds for sufficiently small $\epsilon$ and $s$ (and suitable $J$). 

Next consider an arbitrary admissible almost complex structure (see Section \ref{sec:AdmissibleACS}) and an arbitrary $L_{\epsilon,N,s},$ $s \in [\epsilon, \sigma]$ as in the statement of the lemma. Floer's computation is valid after a suitable compactly supported deformation of this almost complex structure and for $s=\epsilon.$ Using the invariance properties of the Floer complex we will deduce the statement for an arbitrary $s \in [\epsilon,\sigma]$ as well as a general admissible almost complex structure.

The family $L_{\epsilon,N,s},$ $s \in [\epsilon,\sigma],$ of Lagrangian cobordisms is generated by a Hamiltonian isotopy by their construction in Section~\ref{sec:PushOff}. Note that, according to Lemma \ref{lma:ContractibleClass}, no births/deaths of intersection points $L \cap L_{\epsilon,N,s}$ corresponding to chords in the homotopy class $\mathfrak{p}=0$ occur during this isotopy. Thus, we can continue to identify the vector spaces of $0$-Hamiltonian chords as $C_*^\pm.$ Part (2) of Proposition \ref{prp:bifurcation}, proven using bifurcation analysis, now shows that the homotopy class of the complexes is invariant. (Here we make use of the fact that the Hamiltonian is compactly supported, and that the Lagrangians are cylindrical outside of a compact subset, in order to obtain a well-defined identification of the ``homotopy class'' of an intersection point.)

Finally, by Stokes' theorem together with (1) and (2) of Lemma \ref{lma:ActionSummary}, no handle-slide strip in the bifurcation analysis can originate from a generator of $C_*^+$ and terminate at a generator of $C_*^-.$ The aforementioned chain homotopy equivalence thus descends to a homotopy equivalence on the sub and quotient complexes, as claimed.
\end{proof}

\subsection{Action properties when applying Usher's trick}
\label{sec:ContinuationMaps}

Given the contact Hamiltonian $H_t \colon Y \to \R$ and the constant $A \ge 0$ in the assumption (see Equation (\ref{eq:norms})), Proposition \ref{prp:usher} produces an associated Hamiltonian $G_t \colon X \to \R$ for which $\phi^1_{G_t}(L)$ and $\phi^1_{e^r H_t}(L)$  have coinciding images  in the subset $[A+\delta/2,A+3\delta/2] \times Y.$ In particular, both of these images are cylinders over $\phi^1_{\alpha,H_t}(\Lambda)$ in that subset.

Let $M_\pm$ be as defined as in \eqref{eq:norms}. Recall the choice of a small number $\delta>0$ made above (see Inequalities \eqref{eq:sigma} and \eqref{eq:epsilon2}) and denote
\begin{equation}
\label{eq:Mbar}
\overline{M}_- := e^{2\delta}M_- < 0 < e^{2\delta}M_+ =: \overline{M}_+.
\end{equation}
The Hamiltonian $G_t \colon X \to \R$ may be assumed to satisfy
\begin{equation}
\label{eq:Mbar2}
\overline{M}_- < -\int_0^1 \max_{x \in X} G_t \, dt \le -\int_0^1 \min_{x \in X} G_t \, dt < \overline{M}_+
\end{equation}
according to Proposition \ref{prp:usher}.

Here we investigate properties of the action of the generators of the Floer complex when the latter Hamiltonian is turned on.

\begin{lma} 
\label{lma:Cartan}
The primitives $\overline{f}_0, \overline{f}_0^\varphi \colon \phi^1_{G_t}(L) \to \R$ of the pull backs of $\eta$ and $\varphi\eta,$ respectively, to $\phi^1_{G_t}(L),$ both vanish in the complement of the subset
$$[0,A+\delta/2] \times Y \: \cup \: [A+ 3\delta/2,N-2\epsilon] \times Y,$$
given that the primitives are chosen to vanish at $r=-\infty.$ (Recall that the support of $G_t$ can be assumed to be contained inside $[0,N-2\epsilon]$ when $N \gg 0$ is sufficiently large; see Proposition \ref{prp:usher}.)
\end{lma}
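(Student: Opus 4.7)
The strategy is to write each primitive as an explicit integral along the Hamiltonian flow of $G_t$, and then to check that the integrand vanishes along the orbit of any $q \in L$ whose image $p = \phi^1_{G_t}(q)$ lies in the complement of the prescribed subset. First, I would apply the Cartan-formula calculation used in the Naturality Lemma of Section \ref{sec:Naturality} to the time-dependent Hamiltonian $G_t$: since $f_0 \equiv f_0^\varphi \equiv 0$ on $L$ by the normalization of Section \ref{sec:actions}, this yields
\[
\overline{f}_0(\phi^1_{G_t}(q)) \;=\; \int_0^1 \bigl(\iota_{X_{G_t}}\eta - G_t\bigr)\bigl(\phi^t_{G_t}(q)\bigr)\,dt,
\]
and the identical formula for $\overline{f}_0^\varphi$. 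That the same integrand controls both primitives uses Part (5) of Definition \ref{dfn:varphi}, which forces $\varphi \equiv 1$ on the support of $G_t$, and hence along every orbit of $\phi^t_{G_t}$, killing the extra $d\varphi$-term that would otherwise appear in the Cartan expansion of $\varphi\eta$.

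Next I would compute the integrand explicitly using Proposition \ref{prp:usher}. Writing $G_t = \kappa_t \cdot e^r K_t$ and using the elementary identity $\iota_{X_H}\eta = \partial_r H$ valid for any Hamiltonian on a symplectization $(\R\times Y, d(e^r\alpha))$, a direct calculation gives
\[
\iota_{X_{G_t}}\eta - G_t \;=\; (\partial_r \kappa_t)\cdot e^r K_t,
\]
so the integrand vanishes wherever $\kappa_t$ is locally independent of $r$. It remains to verify, for $p$ in each connected piece of the prescribed complement, that the whole orbit $\{\phi^t_{G_t}(q)\}_{t\in[0,1]}$ lies in the locus $\{\partial_r\kappa_t = 0\}$.

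I would then dispatch the three cases $r(p)<0$, $p \in (A, A+2\delta)\times Y$, and $r(p) > N-2\epsilon$ in turn. For $p \in (A, A+2\delta)\times Y$, Property (3) of Proposition \ref{prp:usher} states exactly that $\kappa_t \equiv 1$ on $\phi^t_{G_t}((\phi^1_{G_t})^{-1}([A,A+2\delta]\times Y))$, which by definition contains the orbit through $q$. For $r(p) > N - 2\epsilon$, one chooses the parameter $N$ large enough that the $r$-support of $\kappa_t$ is contained in $[-2\delta, N-2\epsilon)$; then $\phi^1_{G_t}$ is the identity on a neighborhood of $p$, the orbit is constant, and $\partial_r\kappa_t = 0$ holds trivially. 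For $r(p) < 0$, I would refine the construction of $\kappa_t$ in the proof of Proposition \ref{prp:usher} so that the ``arbitrarily small neighborhood'' of $\phi^t_{-e^rK_{1-t}}(Y_A)\cap\{r \ge 0\}$ is taken inside $\{r > 0\}$; this makes $\phi^1_{G_t}$ the identity on $\{r < 0\}$ and closes the argument as in the previous case.

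The main obstacle is justifying this final refinement, namely that the cutoff $\kappa_t$ from Proposition \ref{prp:usher} may be arranged to have $r$-support inside $\{r > 0\}$ without spoiling the identity $\phi^1_{G_t}|_{(\phi^1_{G_t})^{-1}([A,A+2\delta]\times Y)} = \phi^1_{e^r H_t}$ or the Hofer-norm estimate. This is possible because the set on which $\kappa_t$ is forced to equal one, namely $\phi^t_{-e^r K_{1-t}}(Y_A)\cap\{r \ge 0\}$, already lies in $\{r\ge 0\}$, and the role of the ambient neighborhood in the construction is only to provide smoothness of the bump function; consequently the neighborhood may be placed inside $\{r > 0\}$ at the cost only of making the transition region for $\kappa_t$ steeper, leaving Properties (1)--(3) and the key identities intact.
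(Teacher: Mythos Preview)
Your proof follows the paper's strategy: Cartan's formula expresses the primitive as the time-integral of $\iota_{X_{G_t}}\eta - G_t$ along the flow line, and one verifies this integrand vanishes case by case. Your identity $\iota_{X_{G_t}}\eta - G_t = (\partial_r\kappa_t)\,e^rK_t$ is a clean unification of the computations; the paper instead treats the $[A,A+2\delta]$-case directly via $\iota_{X_t}\alpha = K_t$ for the lifted contact vector field. Your explicit handling of the $\varphi$-primitive through Part~(5) of Definition~\ref{dfn:varphi} is also more careful than the paper's.

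Your proposed refinement for the region $\{r<0\}$ does not work, however. By Part~(3) of Lemma~\ref{lma:usher}, applied with $F_t=-H_{1-t}$ as in the proof of Proposition~\ref{prp:usher}, one has $\min_{t,y}\tau^t_{\alpha,-K_{1-t}}(y)=-A$ exactly, and this minimum is attained since $[0,1]\times Y$ is compact. Consequently the set $\phi^{t_0}_{-e^rK_{1-t}}([A,A+2\delta]\times Y)$, on which Property~(3) forces $\kappa_{t_0}\equiv 1$, meets the hypersurface $\{r=0\}$; no smooth bump function equal to $1$ at a point of $\{r=0\}$ can have support contained in $\{r>0\}$, so ``steepening the transition'' does not help. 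The paper's own proof glosses over the same point (asserting $G_t\equiv 0$ outside $[0,N-\epsilon]\times Y$), so your concern is well-founded. A correct fix is to run Proposition~\ref{prp:usher} with $A$ replaced by $A+\delta'$ for an arbitrarily small $\delta'>0$: the relevant orbits then stay in $\{r\ge\delta'\}$, the cutoff may be supported in $\{r>0\}$, and the only cost is replacing $e^{A+2\delta}$ by $e^{A+2\delta+\delta'}$ in the Hofer bound, which is absorbed by the strict inequality~\eqref{eq:sigma}.
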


\begin{proof}
First we recall that the corresponding primitives $f_0, f_0^\varphi \colon L \to \R$ both vanish identically.

We compute the change in these primitives under the Hamiltonian isotopy $\phi^t_{G_t}$ in the following manner. By using Cartan's formula
\[\frac{d}{dt} (\phi^t_{G_t})^*(e^r\alpha)=d(\iota_{X_t} e^r\alpha)+\iota_{X_t}d(e^r\alpha)=d(\iota_{X_t} e^r\alpha)-dG_t,\]
the new primitives can be obtained by integrating the variable $t$ of the function $\iota_{X_t} e^r\alpha-G_t$.

Outside of $[0,N-2\epsilon] \times Y$ we have $G_t \equiv 0$ by construction, and the statement is an easy consequence.

For the image
\[\phi^1_{G_t}(L) \: \cap \: [A+\delta/2,A+3\delta/2] \times Y\]
we argue as follows. Any $p \in L$ for which $\phi^1_{G_t}(p) \in [A+\delta/2,A+3\delta/2] \times Y$ satisfies the property that
\[G_t \circ \phi^t_{G_t}(p)=e^r K_t \circ \phi^t_{e^r K_t}(p), \:\: t \in [0,1],\]
by the construction in Section \ref{sec:usher} (in particular, see Part (2) of Proposition \ref{prp:usher}). At such a point $p \in L$, the above application of Cartan's formula thus tells us that the infinitesimal change of the primitive is given by
\[ \iota_{X_t} e^r\alpha -e^r K_t =0\]
since $\iota_{X_t} \alpha=K_t$ (recall that $K_t \colon Y \to \R$ is a contact Hamiltonian).
\end{proof}

\subsection{Turning on the splashing}
\label{sec:TurnSplash}

In this section we introduce the splashing construction from Section \ref{sec:splashing}. Recall that $\phi^1_{G_t}(L)$ and $L_{\epsilon,N,\sigma}$ both are cylindrical and disjoint inside $[A+\delta/2,A+3\delta/2] \times Y$, each intersecting the hypersurface $\{A \} \times Y$ of contact type in a Legendrian submanifold. The splashing construction will be performed inside the cylindrical piece
$$([A+\delta/2,A+3\delta/2] \times Y,d(e^r\alpha)) \subset (X,d\eta)$$
of the symplectization using the data
$$
\begin{cases}
T=A+2\delta/2,\\
3\varepsilon=\delta/2,\\
L_0=\phi^1_{G_t}(L),\\
L_1=L_{\epsilon,N,\sigma},\\
Z_+>\overline{M}_+>0,\\
Z_-<\overline{M}_-<0.
\end{cases}$$

Recall the Hamiltonian $h^{\OP{Spl}}:=h^{\OP{Spl}}_{\varepsilon, Z_-, Z_+} \colon \overline{X} \to \R$ obtained in Section \ref{sec:splashing} which realizes the splashing. We can assume this Hamiltonian to be arbitrarily small in the uniform $C^0$-norm, and supported inside $[A+\delta/2,A+3\delta/2] \times Y$

\begin{lma}
\label{lma:GtildeG}
There exists a Hamiltonian $\widetilde{G}_t \colon X \to \R$ with support contained in $[0,N-\epsilon] \times Y$ satisfying
\begin{align*}
& L^{\OP{Spl}}:=\phi^1_{\widetilde{G}_t}(L)=\phi^1_{h^{\OP{Spl}}} \circ \phi^1_{G_t}(L),\\
& \left|\int_0^1 \max \widetilde{G}_t dt - \int_0^1 \max {G}_t dt\right|< \nu/2, \\
& \left|\int_0^1 \min \widetilde{G}_t dt - \int_0^1 \min {G}_t dt\right|<\nu/2,
\end{align*}
for any choice of $\nu>0.$ In particular
$$\left| \| \widetilde{G}_t \|_{\OP{osc}} - \|G_t\|_{\OP{osc}} \right| < \nu.$$
\end{lma}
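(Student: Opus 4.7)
The plan is to realize the composition $\phi^1_{h^{\OP{Spl}}} \circ \phi^1_{G_t}$ as a single Hamiltonian isotopy $\phi^t_{\widetilde{G}_t}$ by time-concatenation with reparameterization. Fix a small $\tau \in (0,1)$ and choose smooth non-decreasing surjections $\rho_1 \colon [0, 1-\tau] \to [0,1]$ and $\rho_2 \colon [1-\tau, 1] \to [0,1]$, each with all derivatives vanishing at its endpoints (so as to ensure smoothness of the concatenation). Set
$$\widetilde{G}_t := \begin{cases} \rho_1'(t) \cdot G_{\rho_1(t)} & t \in [0, 1-\tau], \\ \rho_2'(t) \cdot h^{\OP{Spl}} & t \in [1-\tau, 1]. \end{cases}$$
A direct verification shows that $\phi^t_{\widetilde{G}_t}$ agrees with $\phi^{\rho_1(t)}_{G_s}$ on $[0,1-\tau]$ and with $\phi^{\rho_2(t)}_{h^{\OP{Spl}}} \circ \phi^1_{G_s}$ on $[1-\tau,1]$, so evaluating at $t=1$ yields the required equality of flows.

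The oscillation bounds reduce to a substitution computation. Substituting $s = \rho_1(t)$ in the first piece and using autonomy of $h^{\OP{Spl}}$ in the second, one gets
$$\int_0^1 \max_X \widetilde{G}_t \, dt = \int_0^1 \max_X G_s \, ds + \max_X h^{\OP{Spl}},$$
together with the analogous identity for the minimum. Consequently, it is enough to show that $|\max h^{\OP{Spl}}|$ and $|\min h^{\OP{Spl}}|$ can each be made smaller than $\nu/2$ by shrinking $\varepsilon$. Using
$$h^{\OP{Spl}}(r) = \int_0^r e^s \beta^{\OP{Spl}}_{\varepsilon, Z_-, Z_+}(s)\, ds$$
together with the fact that $\beta^{\OP{Spl}}_{\varepsilon, Z_-, Z_+}$ is supported on an interval of length at most $6\varepsilon$ with values in $[Z_-, Z_+]$, we obtain the crude estimate $|h^{\OP{Spl}}| \le 6\varepsilon \cdot e^{A+\delta+3\varepsilon} \cdot \max(|Z_-|, |Z_+|)$, which tends to $0$ as $\varepsilon \to 0^+$ with $Z_\pm$ held fixed. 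The ``In particular" bound on $\|\widetilde{G}_t\|_{\OP{osc}} - \|G_t\|_{\OP{osc}}$ follows at once from the triangle inequality applied to the two individual estimates.

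The final point concerns the support condition. The splash Hamiltonian $h^{\OP{Spl}}$ has support in $[A, A+2\delta] \times Y$ by construction, and Proposition \ref{prp:usher} provides $G_t$ with compact support. The only potential issue is that $G_t = \kappa_t \cdot e^r K_t$ may be nonzero on $\{-2\delta \le r \le 0\}$; however, in this region $\kappa_t$ depends only on $r$, so the Hamiltonian flow of $G_t$ acts on the cylindrical end $\R \times \Lambda_-$ of $L$ merely as an $r$-dependent contact isotopy, preserving its cylindrical structure. After replacing $G_t$ by a version truncated on $\{r \le 0\}$ (and correspondingly redefining the Legendrian at the concave end of the target concordance, which does not enter any subsequent argument since the relevant generators and strips will be confined to the interior region), we obtain $\OP{supp}(\widetilde{G}_t) \subset [0, N-\epsilon] \times Y$ upon taking $N$ large enough to contain the positive-$r$ extent of $G_t$. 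The only real obstacle to this approach is the bookkeeping required to ensure that this truncation does not change any intersection behavior or Floer differential in the region we care about, but this is automatic given the cylindrical form of both Lagrangians on $\{r \le 0\}$.
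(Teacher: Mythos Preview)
Your concatenation argument and oscillation computation are essentially the paper's proof: both realize $\phi^1_{h^{\OP{Spl}}}\circ\phi^1_{G_t}$ by time-concatenation, the only difference being that you allow a general reparameterization pair $(\rho_1,\rho_2)$ where the paper takes the specific split at $t=1/2$. Your substitution computation for $\int_0^1\max\widetilde G_t\,dt$ is in fact cleaner than what the paper writes out.

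The third paragraph, however, contains a genuine error. You write $\R\times\Lambda_-$, but the region $\{-2\delta\le r\le 0\}$ in question lies on the \emph{convex} side (collar of $Y_+$), so the relevant Legendrian is $\Lambda=\Lambda_+$; this also means $L$ need not even be cylindrical there, since that slice sits in $\overline X$. More importantly, your claim that the flow of $G_t=\kappa_t(r)e^rK_t(y)$ is ``an $r$-dependent contact isotopy preserving cylindrical structure'' is false: a direct computation of the Hamiltonian vector field gives $\partial_r$-component $dr(X_{G_t})=-\kappa_t(r)\,R_\alpha(K_t)(y)$, which depends on $y$ and so does \emph{not} preserve the slices $\{r=\text{const}\}$. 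Consequently your truncation-and-redefinition workaround does not prove the lemma as stated.

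The honest fix is much simpler than what you attempt. In the proof of Proposition~\ref{prp:usher} the cut-off $\kappa_t$ is built with support in an arbitrarily small neighborhood of a subset of $\{r\ge 0\}$; one may therefore arrange $\OP{supp}G_t\subset\{r\ge 0\}$ from the outset (equivalently, translate the entire construction by $2\delta$ in the $r$-direction and absorb this into the choice of $\delta$). With that, the support claim for $\widetilde G_t$ is immediate once $N$ is taken large enough to exceed the upper $r$-extent of $G_t$. The paper's own proof is silent on this point as well, so the lemma is really relying on this freedom in Proposition~\ref{prp:usher}.
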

\begin{proof}
The Hamiltonian $\widetilde{G}_t$ is constructed as follows. First, it causes no restriction to assume that $G_t$ vanishes identically near $t=0,1$; this can be done with an arbitrarily small effect on the Hofer norm. Second, we may replace $h^{\OP{Spl}}$ by a non-autonomous Hamiltonian $h^{\OP{Spl}}_t$ also vanishing identically near $t=0,1$; this can be done while still having an arbitrarily small Hofer norm. Finally, we consider the concatenation
$$ \widetilde{G}_t = \begin{cases}
2 G_{2t}, & 0 \le t \le 1/2,\\
2h^{\OP{Spl}}, & 1/2 \le t \le 1,
\end{cases}
$$
for which
$$L^{\OP{Spl}}:=\phi^1_{\widetilde{G}_t}(L)=\phi^1_{h^{\OP{Spl}}_\varepsilon} \circ \phi^1_{G_t}(L)$$
is satisfied. 
\end{proof}

Although $CF^{\mathfrak{p}_0}_*(L,L_{\epsilon,N,\sigma};\widetilde{G}_t)$ may not be a chain complex due to ``unwanted bubbling," it still defines a vector space whose generators have an associated action. 
Fix the choice of action induced by $\varphi\eta$, where $\varphi \colon X \to \R_{\ge 0}$ is as defined in Equality \eqref{eq:varphi}. With such an action one defines the Floer complex
\[
C_*:=CF^{\mathfrak{p}_0}_*(L,L_{\epsilon,N,\sigma};\widetilde{G}_t)^{\overline{M}_+}_{\overline{M}_-}
\]
with generators in the action range 
 $[\overline{M}_-,\overline{M}_+),$
as described in Section \ref{sec:LagIntHF}. Note that this indeed is a well-defined complex by Proposition \ref{prp:welldefcomplex}, which follows since (\ref{eq:Mbar}), (\ref{eq:norms}), (\ref{eq:sigma}), (\ref{eq:hbar}),
and Lemma \ref{lma:hbar} (in this very order) combine to give
\begin{equation}
\label{eq:Mhbar}
\overline{M}_+-\overline{M}_+ = e^{2\delta} (M_+ - M_-) = e^{A+2\delta} || H_t ||_{\OP{osc}} <\sigma < \hbar.
\end{equation}
Since the initial intersection points $L_{\epsilon,N,\sigma} \cap L$ all lie outside of the support of $G_t$ (see Proposition \ref{prp:usher}), and hence also outside of the support of $\widetilde{G}_t$ (see Lemma \ref{lma:GtildeG}), these intersections all remain, forming a subset of the generators of $CF^{\mathfrak{p}_0}_*(L,L_{\epsilon,N,\sigma}; \widetilde{G}_t )^{\overline{M}_+}_{\overline{M}_-}.$
In other words, there are inclusions
$$C_*^\pm \subset C_*$$
on the level of vector spaces.

We define the ``left'' $L_*,$ ``right'' $R_*,$ and ``intersection'' $I_*$ vector subspaces as follows.
\begin{itemize}
\item $L_* \subset C_*$ is generated by those chords (in the appropriate action range) corresponding to the intersection points
$$L^{\OP{Spl}} \cap L_{\epsilon,N,\sigma} \cap \{r \le T+\varepsilon\},$$
and thus in particular $C^-_* \subset L_*;$
\item $R_* \subset C_*$ is generated by those chords (again in the appropriate action range) corresponding to the intersection points
$$L^{\OP{Spl}} \cap L_{\epsilon,N,\sigma} \cap \{r \ge T-\varepsilon\},$$
and thus in particular $C^+_* \subset R_*;$
\item
$I_*:= L_* \cap R_* \subset C_*.$
\end{itemize}
Here we have made heavy use of the naturality, as described in Section \ref{sec:Naturality}.

\begin{lma}
\label{lma:reebchords}
 For admissible $ J \in \mathcal{J}(L,L_{\epsilon,N,\sigma}, \varphi_\lambda)$ with $\lambda \gg 0$ sufficiently large, 
$L_*,$ $R_*$ are subcomplexes of $C_*.$ 
For small enough constants $\epsilon,\delta,\nu>0,$ there is a bijective correspondence between the generators of $I_*$ and the Reeb chords inside
$$\mathcal{Q}_\alpha(\phi^1_{\alpha,H_t}(\Lambda),\Lambda;M_-,M_+)=\mathcal{Q}_\alpha(\Lambda,\phi^1_{\alpha,H_t}(\Lambda);-M_+,-M_-)$$ (c.f.~Remark \ref{rmk:qa}).
\end{lma}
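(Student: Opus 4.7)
The plan is to prove the two statements in sequence, first the subcomplex assertion and then the bijection with Reeb chords.

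\textbf{Paragraph 1 (subcomplex statement).} To show $L_*$ and $R_*$ are subcomplexes of $C_*,$ I would apply Proposition \ref{prp:WrapObstruction} to the splashing performed in Section \ref{sec:TurnSplash}. Invoking the naturality of Section \ref{sec:Naturality}, the complex $C_* \subset CF^{\mathfrak{p}_0}_*(L,L_{\epsilon,N,\sigma};\widetilde{G}_t)_{\overline{M}_-}^{\overline{M}_+}$ is canonically identified in an action-preserving way with the subcomplex of the unperturbed Floer complex $CF^{\mathfrak{p}_0}_*(L^{\OP{Spl}},L_{\epsilon,N,\sigma};0)_{\overline{M}_-}^{\overline{M}_+},$ whose differential counts genuine pseudoholomorphic strips to which Proposition \ref{prp:WrapObstruction} applies directly. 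By the choices made in Section \ref{sec:TurnSplash} one has $Z_+ > \overline{M}_+$ and $Z_- < \overline{M}_-,$ and these inequalities (together with the contribution $C_0-C_1$ computed from the primitives on $L^{\OP{Spl}}$ and $L_{\epsilon,N,\sigma}$) can be secured by possibly further enlarging $|Z_\pm|$ as the proposition allows. The $\varphi_\lambda$-admissible almost complex structure used to define $d$ is then included in $\mathcal{J}(L^{\OP{Spl}},L_{\epsilon,N,\sigma},\varphi)$ after a sufficiently large neck-stretching, so the proposition forbids any pseudoholomorphic strip with input and output of action in $[\overline{M}_-,\overline{M}_+]$ from crossing from $\{r\le T+\varepsilon\}$ to $\{r\ge T+\varepsilon\}$ or vice versa. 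Consequently $d$ preserves both $L_*$ and $R_*.$

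\textbf{Paragraph 2 (identification with slice Reeb chords).} The subspace $I_*=L_* \cap R_*$ is generated by the intersections of $L^{\OP{Spl}} \cap L_{\epsilon,N,\sigma}$ in $[T-\varepsilon,T+\varepsilon]\times Y$ lying in the action window $[\overline{M}_-,\overline{M}_+]$ and in the class $\mathfrak{p}_0.$ Since both Lagrangians are cylindrical and disjoint in $\{\varepsilon^2 \le |r-T|\le \varepsilon\}$, such generators in fact lie in $[T-\varepsilon^2,T)\cup(T,T+\varepsilon^2].$ Lemma \ref{lma:IntersectionComplex} then gives an explicit bijection from these intersections to a subset of the Reeb chords in the slice $\{r=T\}\times Y$ between $L^{\OP{Spl}}\cap\{T\}\times Y=\phi^1_{\alpha,H_t}(\Lambda)$ and $L_{\epsilon,N,\sigma}\cap\{T\}\times Y=\phi^\epsilon_{R_\alpha}(\Lambda);$ the direction of each chord (from $\phi^\epsilon$ to $\phi^1$ or vice versa) is determined by which side of $r=T$ the intersection lies on, which in turn is dictated by the sign of $\beta^{\OP{Spl}}$ on that side.

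\textbf{Paragraph 3 (matching to $\mathcal{Q}_\alpha$ and the action-length translation).} The $(-\epsilon)$-Reeb shift provides a canonical bijection in the sense of Lemma \ref{lma:ChordBijection}: a slice chord of length $\ell$ from $\phi^\epsilon_{R_\alpha}(\Lambda)$ to $\phi^1_{\alpha,H_t}(\Lambda)$ (resp.\ from $\phi^1_{\alpha,H_t}(\Lambda)$ to $\phi^\epsilon_{R_\alpha}(\Lambda)$) corresponds to a chord from $\Lambda$ to $\phi^1_{\alpha,H_t}(\Lambda)$ of length $\ell+\epsilon$ (resp.\ from $\phi^1_{\alpha,H_t}(\Lambda)$ to $\Lambda$ of length $\ell-\epsilon$). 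To identify the effective length range, I would combine the action formula $|\mathfrak{a}_\varphi(p_i^\pm)-(\pm e^T\ell(c_i^\pm)+(C_0-C_1))|<K\varepsilon$ of Lemma \ref{lma:IntersectionComplex} with the control on $C_1$ furnished by Lemma \ref{lma:ActionSummary}(3) and the control on $C_0$ derived from Lemma \ref{lma:Cartan} together with the explicit primitive computation on the splashed cylinder (i.e.\ $f_0^{\OP{Spl}}(T) = -\int_A^T e^r\beta^{\OP{Spl}}(r)\,dr$ after accounting for $\overline{f}_0^\varphi|_{r=A}$). Choosing $Z_\pm$ suitably large (and compensating by taking $\varepsilon$ correspondingly small so that Proposition \ref{prp:WrapObstruction}'s hypotheses remain in force) tunes $(C_0-C_1)/e^T$ so that the action window $[\overline{M}_-,\overline{M}_+]$ translates precisely to the length windows $(0,M_+]$ and $(0,-M_-]$ after the $\epsilon$-shift.

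\textbf{Paragraph 4 (homotopy class) and the main obstacle.} Finally, the chords produced in the convex end lie in the class $\mathfrak{p}_0$ because the representative chord near $\{r=-N-2\epsilon\}\times Y_-$ can be transported through the concordance $L$ to a canonical short Hamiltonian chord in the splashing region; the Hamiltonian isotopy from $L$ to $L^{\OP{Spl}}$ is contained in a homotopically trivial deformation (since $e^A\|H\|_{\OP{osc}}<\sigma(\alpha_-,\Lambda_-)$ prevents non-trivial-class Reeb chords from appearing, much as in Lemma \ref{lma:ContractibleClass}(2)), and the splashing itself is $C^0$-small by Lemma \ref{lma:GtildeG}. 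The main obstacle in the whole argument is the bookkeeping in Paragraph 3: the factor $e^A$ appearing in $M_\pm$ versus the factor $e^T=e^{A+\delta}$ appearing in the action formula must be reconciled via a careful choice of the splashing profile $\beta^{\OP{Spl}}$, and one must verify that this choice is compatible simultaneously with the hypotheses of Propositions \ref{prp:usher}, \ref{prp:WrapObstruction}, and with the bound \eqref{eq:Mhbar} ensuring that $C_*$ is a well-defined complex.
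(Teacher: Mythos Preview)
Your Paragraphs 1 and 2 are essentially the paper's approach: naturality reduces to genuine strips, Proposition \ref{prp:WrapObstruction} gives the subcomplex property, and Lemma \ref{lma:IntersectionComplex} sets up the dictionary with Reeb chords. The paper verifies the hypothesis of Proposition \ref{prp:WrapObstruction} by computing $C_0=0$ via Lemma \ref{lma:Cartan} and $0<C_1<\tfrac{1}{2}\mathbf{o}(\epsilon)$ via Lemma \ref{lma:ActionSummary}(3), so $C_0-C_1$ is tiny and the inequality $Z_++(C_0-C_1)>\overline{M}_+>\overline{M}_->Z_--(C_0-C_1)$ follows from the choices $Z_+>\overline{M}_+$, $Z_-<\overline{M}_-$ already made.

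Paragraph 3 contains a genuine error. The constants $C_0,C_1$ in Lemma \ref{lma:IntersectionComplex} are the values at $r=T$ of the primitives on the \emph{pre-splash} Lagrangians $L_0=\phi^1_{G_t}(L)$ and $L_1=L_{\epsilon,N,\sigma}$; they are not computed on $L^{\OP{Spl}}$, and they are completely independent of $Z_\pm$. So your proposed mechanism---``choosing $Z_\pm$ suitably large tunes $(C_0-C_1)/e^T$''---does not exist. With the correct values $C_0=0$ and $C_1\in(0,\tfrac{1}{2}\mathbf{o}(\epsilon))$, Lemma \ref{lma:IntersectionComplex} gives $\mathfrak{a}_\varphi(p_i^\pm)\approx \pm e^T\ell(c_i^\pm)$ up to an error of order $\epsilon$, and the action window $[\overline{M}_-,\overline{M}_+]$ translates to length windows $(0,e^{-T}\overline{M}_+]$ and $(0,-e^{-T}\overline{M}_-]$. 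Since $T=A+\delta$ and $\overline{M}_\pm=e^{2\delta}M_\pm$, these do \emph{not} agree exactly with $(0,e^{-A}M_+]$ and $(0,-e^{-A}M_-]$; there is an unavoidable factor $e^\delta$ together with the $\epsilon$-shift in the Legendrian.

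The paper resolves this not by tuning but by invoking the discreteness of the Reeb chord spectrum: by non-degeneracy there is a gap $\mu>0$ such that no chord length from $\Lambda$ to $\phi^1_{\alpha,H_t}(\Lambda)$ lies in $(e^{-A}M_+,e^{-A}M_++\mu)$ (and similarly at the other end). One then takes $\delta,\epsilon$ small relative to $\mu$, so the perturbed window captures exactly the same finite set of chords. This is the missing idea in your argument; without it the length windows cannot be matched precisely and the claimed bijection does not follow.
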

\begin{rmk}
\label{rmk:rabinowitz}
The intersection complex $I_* = L_* \cap R_*$ is related to the Rabinowitz--Floer homology complex as first defined in \cite{Rabinowiz} by Cieliebak--Frauenfelder in the non-relative case, and later in \cite{Merry}, \cite{LagrangianRabinowitz} by Merry in the relative case.
\end{rmk}

\begin{proof}
Again we will implicitly make use of the naturality property from Section \ref{sec:Naturality}. Two crucial identities that we need are that
\begin{eqnarray*}
\phi^1_{G_t}(L) \cap [A+\delta/2,A+3\delta/2] \times Y & =& [A+\delta/2,A+3\delta/2] \times \phi^1_{\alpha,H_t}(\Lambda),\\
L_{\epsilon,N,\sigma} \cap [A+\delta/2,A+3\delta/2] \times Y &=&[A+\delta/2,A+3\delta/2] \times \Lambda^\epsilon,
\end{eqnarray*}
both are cylindrical inside $[A+\delta/2,A+3\delta/2] \times Y,$ the first one being a consequence of Proposition \ref{prp:usher}.

Consider the primitives $\overline{f}^\varphi_0$ and $\overline{f}^\varphi_1=f^\varphi_1$ of $\varphi\eta$ pulled back to $\phi^t_{G_t}(L)$ and $L_{\epsilon,N,\sigma},$ respectively. We have $\overline{f}^\varphi_0 \equiv 0$ inside $[A+\delta/2,A+3\delta/2] \times Y$ by Lemma \ref{lma:Cartan}, while Part (3) of Lemma \ref{lma:ActionSummary} implies that $0 < f^\varphi_1 < \frac{1}{2}\mathbf{o}(\epsilon).$ Recall that we set $C_i := \overline{f}^\varphi_i(T)$ in Section \ref{sec:neckstretching}, where
$T = A + 2\delta/2$ was specified at the beginning of this subsection. Since $C_0=0$ and $0 < C_1 <\frac{1}{2}\mathbf{o}(\epsilon),$ the assumptions of Proposition \ref{prp:WrapObstruction} are satisfied for the constants $Z_\pm$ chosen at the beginning of this subsection
 when $\lambda \gg 0$ is sufficiently large. 
The claim concerning the subcomplex property then follows from Proposition \ref{prp:WrapObstruction}.

The bijection between the relevant generators and the Reeb chords inside $\mathcal{Q}_\alpha(\phi^1_{\alpha,H_t}(\Lambda),\Lambda;M_+,M_-)$ finally follows from Lemma \ref{lma:IntersectionComplex}. (Note that the splash is performed to $\phi^1_{G_t}(L)$ inside $[A+\delta/2,A+3\delta/2] \times Y,$ i.e.~to the cylinder over $\phi^1_{\alpha,H_t}(\Lambda)$.) Since our set-up is generic, see Section \ref{sec:Results}, there is an interval $(e^{-A}M_+,e^{-A}M_+ +\mu)$ (resp.~$(e^{-A}M_--\mu,e^{-A}M_-)$), for $\mu >0$ sufficiently small (but possibly large compared to $\delta$), which contains no length of a Reeb chord from $\Lambda$ to $\phi^1_{\alpha,H_t}(\Lambda)$ (resp.~from $\phi^1_{\alpha,H_t}(\Lambda)$ to $\Lambda$).
\end{proof}

Since Lemma \ref{lma:reebchords} implies that $I_*$ is a subcomplex of both $L_*$ and $R_*$, in combination with $L_* + R_* = C_*,$ we get the short exact sequence whose maps are induced by inclusions
\begin{equation}
\label{eq:MVS}
0 \rightarrow I_* \rightarrow L_* \oplus R_* \rightarrow C_* \rightarrow 0.
\end{equation}

Recall the definition of $\Phi_{0,\widetilde{G}_t}$ and $\Phi_{\widetilde{G}_t,0}$ induced by the Hamiltonian $\widetilde{G}_t$ in Section \ref{sec:LagIntHF}.

\begin{lma}
\label{lma:leftinverse}
The maps
\begin{eqnarray*}
&& \Phi_{0,\widetilde{G}_t} \colon CF^{\mathfrak{p}_0}_*(L,L_{\epsilon,N,\sigma};0) \to CF^{\mathfrak{p}_0}_*(L,L_{\epsilon,N,\sigma};\widetilde{G}_t)^{\overline{M}_+}_{\overline{M}_-}, \\
&& \Phi_{\widetilde{G}_t,0} \colon CF^{\mathfrak{p}_0}_*(L,L_{\epsilon,N,\sigma};\widetilde{G}_t)^{\overline{M}_+}_{\overline{M}_-} \to CF^{\mathfrak{p}_0}_*(L,L_{\epsilon,N,\sigma};0),
\end{eqnarray*}
are well-defined chain maps, whose composition $\Phi_{\widetilde{G}_t,0} \circ \Phi_{0,\widetilde{G}_t}$ is chain homotopic to the identity.
\end{lma}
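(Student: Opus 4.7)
The plan is to verify the five numerical hypotheses of Proposition \ref{prp:invariance} (filtered invariance), applied to the data $L_0 = L$, $L_1 = L_{\epsilon,N,\sigma}$, Hamiltonian $\widetilde{G}_t$ from Lemma \ref{lma:GtildeG}, path component $\mathfrak{p}_0$, auxiliary function $\varphi$ defined in \eqref{eq:varphi}, and outer action window $[m_-, m_+] := [\overline{M}_-, \overline{M}_+]$ from \eqref{eq:Mbar}. Once these inequalities are checked, that proposition delivers both continuation maps $\Phi_{0,\widetilde{G}_t}$, $\Phi_{\widetilde{G}_t,0}$ and the chain homotopy $\Phi_{\widetilde{G}_t,0}\circ\Phi_{0,\widetilde{G}_t}\sim\id$ directly.

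For the inner window $[m_-',m_+']$, I will invoke Parts (1)--(2) of Lemma \ref{lma:ActionSummary}, which confine the $\varphi$-actions of all generators of the unperturbed complex $CF^{\mathfrak{p}_0}_*(L,L_{\epsilon,N,\sigma};0;\varphi)$ to $(-\mathbf{O}(\epsilon),\mathbf{O}(\epsilon))$. I will then set $m_-' := -\mathbf{O}(\epsilon)$ and $m_+' := \mathbf{O}(\epsilon)$, with a small enlargement if needed to accommodate \eqref{eq:inv.0}. The two controlling inputs will be: the uniform gap $\hbar - (\overline{M}_+ - \overline{M}_-) > \hbar - \sigma > 0$ provided by \eqref{eq:Mhbar}, and the bounds on $\widetilde{G}_t$ coming from Proposition \ref{prp:usher} composed with Lemma \ref{lma:GtildeG}, which ensure that for any prescribed $\nu > 0$ one has
\begin{align*}
\int_0^1 \max_X \widetilde{G}_t\,dt &< -\overline{M}_- + \nu/2,\\
\int_0^1 \min_X \widetilde{G}_t\,dt &> -\overline{M}_+ - \nu/2,\\
\|\widetilde{G}_t\|_{\OP{osc}} &< \overline{M}_+ - \overline{M}_- + \nu.
\end{align*}

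After these substitutions, condition \eqref{eq:inv.1} is literally \eqref{eq:Mhbar}, while \eqref{eq:inv.2}, \eqref{eq:inv.3}, and \eqref{eq:inv.4} all collapse to the single requirement $\hbar > \overline{M}_+ - \overline{M}_- + \mathbf{O}(\epsilon) + \nu/2$. This is automatic once $\epsilon$ and $\nu$ are chosen small relative to the fixed positive gap from \eqref{eq:Mhbar}, and so these three checks are routine.

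The step I expect to be the main obstacle is \eqref{eq:inv.0}, which after substitution reads $\int_0^1 \max_X \widetilde{G}_t\,dt \le -\overline{M}_- - \mathbf{O}(\epsilon)$. Proposition \ref{prp:usher} directly furnishes only the pointwise strict inequality $\max_{\R\times Y} G_t < e^{A+2\delta}\max_Y H_{1-t}$, and its integrated consequence $\int \max G_t\,dt < -\overline{M}_-$ carries a gap whose size is not a priori independent of the construction. My plan for securing a uniform gap is to re-invoke Proposition \ref{prp:usher} with a slightly smaller parameter $\delta' \in (0,\delta)$ in place of $\delta$: this yields the stronger bound $\int \max G_t\,dt < -e^{2\delta'}M_-$ with explicit positive gap $|M_-|(e^{2\delta}-e^{2\delta'})$ independent of $\epsilon, \nu$, together with the analogous dual bound for the minimum. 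The degenerate case $M_- = 0$, where $H_t$ is forced to be nonpositive almost everywhere, I will handle separately by arranging $\max_X G_t \le 0$ pointwise in the Usher-trick construction. With the parameters then selected in the order $\delta \gg \delta' \gg \nu \gg \epsilon$, all five hypotheses of Proposition \ref{prp:invariance} hold, and the conclusions of the lemma follow.
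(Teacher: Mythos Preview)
Your approach is correct and essentially identical to the paper's: set $m_\pm' := \pm\mathbf{O}(\epsilon)$, $m_\pm := \overline{M}_\pm$, and verify the hypotheses of Proposition~\ref{prp:invariance} using \eqref{eq:Mbar2} and Lemma~\ref{lma:GtildeG}. Your treatment of \eqref{eq:inv.0} via a secondary parameter $\delta'$ is more elaborate than necessary---once $\delta$ is fixed and $G_t$ is constructed by Proposition~\ref{prp:usher}, the strict inequality \eqref{eq:Mbar2} provides a \emph{fixed} positive gap, after which $\nu$ and then $\epsilon$ may simply be chosen smaller than it---but your argument is sound.
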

\begin{proof}
Let $m'_\pm := \pm \mathbf{O}(\epsilon)$ and $m_\pm := \overline{M}_\pm.$ With \eqref{eq:Mbar2} and Lemma \ref{lma:GtildeG} we now see that, for sufficiently small $\nu>0$ (defined in Lemma \ref{lma:GtildeG}), there exists a sufficiently small $\epsilon>0$ such that the constants $m_- \le m'_- \le m'_+ \le m_+$ satisfy the hypotheses of Proposition \ref{prp:invariance}. (Here the Hamiltonian $G_t$ of that proposition is replaced with $\widetilde{G}_t$ of this section.) The claims are now direct consequences of this proposition.
\end{proof}

Denote by $C^{\ge 0}_*$ the vector subspace spanned by all generators of $C_*$ contained outside of the concave end. We have a canonical decomposition
$$C_* = C^{-}_* \oplus C^{\ge 0}_*$$
as vector spaces.
\begin{prp}
\label{prp:refinedinvariance}
For a suitable admissible almost complex structure,
\[ C^+_* \subset C^{\ge 0}_* \subset CF^{\mathfrak{p}_0}_*(L,L_{\epsilon,N,\sigma};\widetilde{G}_t)^{\overline{M}_+}_{\overline{M}_-} \]
is a sequence of subcomplexes satisfying the following properties:
\begin{enumerate}
\item $\Phi_{0,\widetilde{G}_t}(C^+_*) \subset C^+_*$ and $\Phi_{\widetilde{G}_t,0}(C^+_*) \subset C^+_*;$
\item $\Phi_{\widetilde{G}_t,0}|_{C^+_*}$ is a left-sided homotopy inverse of $\Phi_{0,\widetilde{G}_t}|_{C^+_*}$ which induces an isomorphism on the homology level; and
\item $\Phi_{\widetilde{G}_t,0}(C^{\ge 0}_*) \subset C^+_*.$
\end{enumerate}
\end{prp}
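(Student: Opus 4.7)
The plan is, in order: (a) establish the subcomplex structure $C_*^+ \subset C_*^{\ge 0} \subset C_*$; (b) prove that both continuation maps preserve $C_*^+$ (Part (1)); (c) prove the refined filtration statement in Part (3); (d) deduce Part (2) from Proposition \ref{prp:invariance}. Throughout I rely on three ingredients prepared in earlier sections: the action estimates of Lemma \ref{lma:ActionSummary}, the splashing obstruction of Proposition \ref{prp:WrapObstruction}, and the action filtration bounds for continuation maps from Proposition \ref{prp:filtrationproperties} together with Lemma \ref{lma:varphienergy} for differentials. The numerical core of the argument is the chain of strict inequalities
$$\overline{M}_+ - \overline{M}_- = e^{A+2\delta}\|H\|_{\OP{osc}} < \sigma < \hbar$$
from \eqref{eq:sigma}, \eqref{eq:Mhbar} and Lemma \ref{lma:hbar}, together with the weaker $\overline{M}_+ < \sigma$.

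For the subcomplex structure of $C_*^{\ge 0} \subset C_*$ I must rule out differentials from $C_*^+ \cup I_*$ to $C_*^-$. Those from $C_*^+$ are forbidden by the strict action separation $\mathfrak{a}_\varphi(C_*^+) > 0 > \mathfrak{a}_\varphi(C_*^-)$ provided by parts (1)--(2) of Lemma \ref{lma:ActionSummary} combined with Lemma \ref{lma:varphienergy}. Those from $I_*$ are forbidden by Case B of Proposition \ref{prp:WrapObstruction}, since the inputs lie in $\{r \ge T - \varepsilon\}$ and the outputs in $\{r \le T - \varepsilon\}$. To further show that $C_*^+$ is a subcomplex of $C_*^{\ge 0}$, I must rule out differentials $C_*^+ \to I_*$; here I neck-stretch along a hypersurface $\{r = r_0\}$ in a cylindrical slab strictly to the right of $I_*$ and outside the support of $\widetilde{G}_t$ (possible for $N \gg 0$, since the support of $\widetilde{G}_t$ is bounded in the $r$-coordinate independently of $N$ by Lemma \ref{lma:GtildeG} and the construction in Proposition \ref{prp:usher}). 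Lemma \ref{lma:StretchedAction} then amplifies $\mathfrak{a}_{\varphi_\lambda}(C_*^+)$ by $e^\lambda$ while leaving $\mathfrak{a}_{\varphi_\lambda}(I_*)$ unchanged, and Lemma \ref{lma:varphienergy} applied to the new $\varphi_\lambda$ precludes the unwanted differentials for $\lambda \gg 0$.

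Part (1) is then an extension of the same neck-stretched action argument, now using Proposition \ref{prp:filtrationproperties}: the Hamiltonian correction to action changes is a $\lambda$-independent constant of order $\|H\|_{\OP{osc}}$, dominated by the $e^\lambda$-factor. Part (3), which asks that $\Phi_{\widetilde{G}_t, 0}(I_*) \subset C_*^+$, is the main technical obstacle. The naive action inequality gives only
$$\mathfrak{a}_\varphi(q_+) > \overline{M}_- - \overline{M}_+ - \nu/2 = -e^{A+2\delta}\|H\|_{\OP{osc}} - \nu/2,$$
which does not exclude $q_+ \in C_*^-$. I would resolve this by adapting Proposition \ref{prp:WrapObstruction} to continuation strips: neck-stretch at $\{r = T - \varepsilon\}$ with parameter $\lambda' \gg 0$, so that Claim~2 in the proof of Proposition \ref{prp:WrapObstruction} forces $\mathfrak{a}_{\varphi_{\lambda'}}(q_-) > Z_+$ for every $q_- \in I_*$, while $\mathfrak{a}_{\varphi_{\lambda'}}(q_+)$ remains equal to $\mathfrak{a}_\varphi(q_+) \in (-\mathbf{O}(\epsilon), 0)$ for $q_+ \in C_*^-$; the continuation-map filtration bound is then inconsistent once $Z_+$ is taken large enough to dominate the Hamiltonian correction $-\int_0^1 \min_X \widetilde{G}_t\, dt$.

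The hard part will be justifying this adaptation, because the neck slice $\{r = T - \varepsilon\}$ sits inside the support of the splashing part of $\widetilde{G}_t$, so that the stretched $\varphi_{\lambda'}$ fails condition~(5) of Definition \ref{dfn:varphi} and is not in $\mathcal{C}(L_0, L_1, \widetilde{G}_t)$; Proposition \ref{prp:filtrationproperties} does not directly apply. I would bypass this by exploiting that the splashing Hamiltonian $h^{\OP{Spl}}$ can be chosen arbitrarily $C^0$-small, so the correction term produced by integrating $d(\varphi_{\lambda'}\eta)$ against $X_{h^{\OP{Spl}}}$ on the support of $h^{\OP{Spl}}$ is negligible compared to the $e^{\lambda'}$-amplification; a direct energy estimate modelled on Lemma \ref{lma:action} then yields the needed filtration inequality in spite of the admissibility failure. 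Once Parts (1) and (3) are in hand, Part (2) follows from Proposition \ref{prp:invariance} applied to $CF_*^{\mathfrak{p}_0}(L, L_{\epsilon, N, \sigma}; 0)$ and $CF_*^{\mathfrak{p}_0}(L, L_{\epsilon, N, \sigma}; \widetilde{G}_t)_{\overline{M}_-}^{\overline{M}_+}$: the chain homotopy $K_{0, \widetilde{G}_t, 0}$, defined by counting strips with Hamiltonian $\rho_\kappa(s) \widetilde{G}_t$, satisfies the same action estimates and therefore restricts to $C_*^+$, where it implements the left-homotopy-inverse relation $\Phi_{\widetilde{G}_t, 0} \circ \Phi_{0, \widetilde{G}_t} \sim \id$.
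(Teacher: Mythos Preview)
Your approach to the subcomplex property of $C^+_*$ and to Part~(1) via neck-stretching near $\{r=N\}$ matches the paper's argument essentially verbatim. The genuine divergence, and the real gap, is in Part~(3).

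Your proposed route for Part~(3)---neck-stretching at $\{r=T-\varepsilon\}$ and invoking Claim~2 of Proposition~\ref{prp:WrapObstruction}---runs into exactly the obstruction you flag, and your workaround does not close it. The slice $\{r=T-\varepsilon\}$ lies inside the support not only of the splash $h^{\OP{Spl}}$ but of the large Hamiltonian $G_t$ itself (recall $T-\varepsilon=A+2\delta/3$ and the support of $G_t$ covers $[A,A+2\delta]\times Y$ by Proposition~\ref{prp:usher}). When you redo the energy estimate of Lemma~\ref{lma:action} with a stretched $\varphi_{\lambda'}$ that violates condition~(5) of Definition~\ref{dfn:varphi}, the Hamiltonian correction term picks up a factor $e^{\lambda'}$ on the portion of the support lying to the right of the neck. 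Thus both the desired action amplification $\mathfrak{a}_{\varphi_{\lambda'}}(q_-)$ and the unwanted correction scale with the \emph{same} factor $e^{\lambda'}$; making $h^{\OP{Spl}}$ small in $C^0$ does nothing about the $G_t$ contribution, which is of order $\|H\|_{\OP{osc}}$.

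The paper's argument for Part~(3) (and simultaneously for the subcomplex property of $C^{\ge 0}_*$) avoids this entirely by switching to the \emph{unweighted} action $\mathfrak{a}=\mathfrak{a}_1$, for which $1\in\mathcal{C}(L_0,L_1,\widetilde{G}_t)$ trivially and Proposition~\ref{prp:filtrationproperties} applies without modification. The point is that the bulge of height $\sigma$ built into $L_{\epsilon,N,\sigma}$ on the concave end (Section~\ref{sec:PushOff}) is invisible to $\mathfrak{a}_\varphi$ (because $\varphi(r)=e^{-r}$ there) but contributes $\sigma$ to $\mathfrak{a}$. This yields $\mathfrak{a}(p)>\mathfrak{a}_\varphi(p)+\sigma>\overline{M}_-+e^{A+2\delta}\|H\|_{\OP{osc}}=\overline{M}_+$ for every $p\in C^{\ge 0}_*$, which dominates the Hamiltonian correction $-\int_0^1\min_X\widetilde{G}_t\,dt<\overline{M}_+$ and separates $C^{\ge 0}_*$ from $C^-_*$ (where $\mathfrak{a}<0$). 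This is precisely why the parameter $\sigma$ was chosen to satisfy \eqref{eq:sigma}; your proposal never uses the bulge and thus misses the mechanism the construction was designed to provide.

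A smaller omission: in Part~(2) you obtain only that $\Phi_{\widetilde{G}_t,0}|_{C^+_*}$ is a left homotopy inverse, hence injective on homology. To upgrade this to an isomorphism the paper separately shows (via Proposition~\ref{prp:bifurcation}) that the two subcomplexes $C^+_*$ have the same homology, and then invokes that a left inverse between equidimensional vector spaces is a two-sided inverse.
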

\begin{proof}

The claim that the vector subspace
$$C^+_* \subset CF^{\mathfrak{p}_0}_*(L,L_{\epsilon,N,\sigma};\widetilde{G}_t)^{\overline{M}_+}_{\overline{M}_-}$$
in fact is a subcomplex will be proven together with Part (1) below, while
the claim that the vector subspace
$$C^{\ge 0}_* \subset CF^{\mathfrak{p}_0}_*(L,L_{\epsilon,N,\sigma};\widetilde{G}_t)^{\overline{M}_+}_{\overline{M}_-}$$
is a subcomplex will be proven together with Part (3) below.

In the following we will consider the primitives $\widetilde{f}^\varphi_0$ and $\widetilde{f}^\varphi_1=f^\varphi_1$ of $\varphi\eta$ pulled back to $\phi^t_{\widetilde{G}_t}(L)$ and $L_{\epsilon,N,\sigma},$ respectively. Note that $\widetilde{f}^\varphi_0$ coincides with the primitive $\overline{f}^\varphi_0$ of $\varphi\eta$ pulled back to $\phi^t_{G_t}(L)$ outside of the ``splashing region,'' i.e.~ $[A+\delta/2,A+3\delta/2] \times Y.$

(1): Let $p$ be a generator of $C^+_*.$ By Parts (2) and (3) of Lemma \ref{lma:ActionSummary}, together with Lemma \ref{lma:Cartan}, the constants $C_1 := \widetilde{f}_1^\varphi(N - 2\epsilon) < \frac{1}{2} \mathbf{o}(\epsilon)$ and $C_0 := \widetilde{f}_0^\varphi(N - 2\epsilon) = 0$ satisfy
$\mathfrak{a}_\varphi(p) - (C_1 - C_0)> (1-\frac{1}{2})\mathbf{o}(\epsilon).$
Since both $\widetilde{f}_i^\varphi,$ $i=0,1,$ are constant inside $[N - 2\epsilon, N - \epsilon],$
we can stretch the neck at $\{ N - 2\epsilon \le r \le N-\epsilon\}$ to define $\varphi_\lambda$ as in Formula (\ref{eq:VarphiNeckStretch}). 

Note that this stretching imposes a specific construction of the almost complex structure in $\{ N - 2\epsilon \le r \le N-\epsilon\}.$
However, the transversality arguments of Proposition \ref{prp:Transversality} still hold since all pseudoholomorphic strips that intersect 
 $\{ N - 2\epsilon \le r \le N-\epsilon\}$ also must limit to their Hamiltonian chords contained somewhere outside of this region. The latter subset is where the generic perturbation carried out by Proposition \ref{prp:Transversality} can be taken to occur.

After the neck stretching, Lemma \ref{lma:StretchedAction} shows that $\mathfrak{a}_{\varphi_\lambda}(p)$ can be assumed to be arbitrarily large for the intersection points $p$ that generate $C^+_*$ as a vector subspace of 
$CF^{\mathfrak{p}_0}_*(L,L_{\epsilon,N,\sigma};0)$ (resp. $CF^{\mathfrak{p}_0}_*(L,L_{\epsilon,N,\sigma};\widetilde{G}_t)$ ). 
(Note that we first fix $\mathbf{o}(\epsilon)>0,$ and hence all the other parameters involved in the construction, such as $\delta$ etc.
Then we increase $\lambda$ arbitrarily high.)
Let $q$ be an arbitrary generator of the orthogonal complement of $C^+_*$ in $CF^{\mathfrak{p}_0}_*(L,L_{\epsilon,N,\sigma};\widetilde{G}_t)$ (resp. $CF^{\mathfrak{p}_0}_*(L,L_{\epsilon,N,\sigma};0)$).
Note that $\mathfrak{a}_{\varphi_\lambda}(q) = \mathfrak{a}_{\varphi}(q).$
So Proposition \ref{prp:filtrationproperties} with $\widetilde{\varphi}=\varphi_\lambda$ can be applied to show that, for sufficiently large $\lambda,$ there are no strips which contribute to $\langle \Phi_{0,\widetilde{G}_t}(p), q \rangle$ 
 (resp. $\langle \Phi_{\widetilde{G}_t,0}(p), q \rangle$ ) .

(2): Recall Lemma \ref{lma:leftinverse}, which is based upon Proposition \ref{prp:invariance}, by which the composition $\Phi_{\widetilde{G}_t, 0} \circ \Phi_{0,\widetilde{G}_t}$ is chain homotopic to the identity. The same stretching argument as in the proof of Part (1) above shows that the Floer continuation homotopies in fact restrict to $C^+_*.$ In other words, the composition $\Phi_{\widetilde{G}_t, 0}|_{C^+_*} \circ \Phi_{0,\widetilde{G}_t}|_{C^+_*}$ of restrictions is also homotopic to the identity map $\id_{C^+_*}.$

The same stretching argument, combined with the invariance result Proposition \ref{prp:bifurcation} in terms of bifurcation analysis, shows that the homology of the subcomplex
$$C^+_* \subset CF^{\mathfrak{p}_0}_*(L,L_{\epsilon,N,\sigma};\widetilde{G}_t)^{\overline{M}_+}_{\overline{M}_-}$$
is isomorphic to the homology of the subcomplex
$$C^+_* \subset CF^{\mathfrak{p}_0}_*(L,L_{\epsilon,N,\sigma};0)^{\overline{M}_+}_{\overline{M}_-}.$$
 Since a left-inverse of a map between equidimensional vector spaces is an inverse, the isomorphism on the level of homology follows.

(3): Now we need to consider the primitives $\widetilde{f}_0$ and $\widetilde{f}_1$ of $\eta$ pulled back to $\phi^1_{\widetilde{G}_t}(L)$ and $L_{\epsilon,N,\sigma},$ respectively. Part (2) of Remark \ref{rmk:action} allows us to apply Part (3) of Lemma \ref{lma:ActionSummary} and Inequality \eqref{eq:epsilon2} in order to obtain 
$$
-(\widetilde{f}_1(N-\epsilon)-\widetilde{f}_0(N-\epsilon)) >e^{A+2\delta}\|H_t\|_{\OP{osc}}>-(\widetilde{f}^\varphi_1(N-\epsilon)-\widetilde{f}^\varphi_0(N-\epsilon))+e^{A+2\delta}\|H_t\|_{\OP{osc}}.$$
Roughly speaking, the ``bulge'' of $L_{\varepsilon,N,\sigma}$ of size $\sigma>0$ contained inside $[-N+\epsilon,-\epsilon] \times Y_-$ contributes an additional term of
$$ \sigma > e^{A+2\delta}\|H_t\|_{\OP{osc}} $$
to the primitive $\widetilde{f}_1$ of $\eta$ compared to the primitive $\widetilde{f}^\varphi_1$ of $\varphi\eta.$ In view of Inequality \eqref{eq:epsilon2}, it is important to choose $\varepsilon>0$ sufficiently small here.

Now, fix a generator $p\in C^{\ge 0}_*.$
If $p$ lies in the support of the Hamiltonian deformation $\widetilde{G}_t,$ Part (2) of Remark \ref{rmk:action} does not apply.
However, since for any $\tau,$ $\widetilde{G}_t$ changes $f_0(\tau)$ and $f^\varphi_0(\tau)$ by the same (possibly trivial) amount, while leaving $f_1(\tau), f^\varphi_1(\tau)$ fixed, we similarly obtain
\[ \mathfrak{a}(p)>\mathfrak{a}_\varphi(p)+e^{A+2\delta}\|H_t\|_{\OP{osc}}.\]
By definition of $C_*,$ $\mathfrak{a}_\varphi(p) \le \overline{M}_+$. 
Since
\[\overline{M}_\pm := e^{2\delta}M_\pm \:\: \text{and} \:\: e^{A+2\delta}\|H_t\|_{\OP{osc}}=e^{2\delta}(M_+-M_-),\]
we may hence assume that
\[0<-\int_0^1 \min_X \widetilde{G}_t dt <e^{2\delta}M_+\le \mathfrak{a}(p)\]
(here we have used Proposition \ref{prp:usher} and Lemma \ref{lma:GtildeG}).

The claim that
$$ C^{\ge 0}_* \subset CF^{\mathfrak{p}_0}_*(L,L_{\epsilon,N,\sigma};\widetilde{G}_t)^{\overline{M}_+}_{\overline{M}_-} $$
is a subcomplex is now a direct consequence of Lemma \ref{lma:ActionSummary} together with Part (2) of Remark \ref{rmk:action} (the differential is action increasing). Similarly, the action consideration in Proposition \ref{prp:filtrationproperties} using $\widetilde{\varphi}=1$ implies that the inclusion
$$ \Phi_{\widetilde{G}_t,0}(C^{\ge 0}_*) \subset C^+_* \subset CF^{\mathfrak{p}_0}_*(L,L_{\epsilon,N,\sigma};0) $$
is satisfied. Indeed, any $q \in C^-_* \subset CF^{\mathfrak{p}_0}_*(L,L_{\epsilon,N,\sigma};0)$ satisfies $\mathfrak{a}(q) < 0.$
\end{proof}

\begin{cor}
\label{cor:invariance}
For an almost complex structure as used in Proposition \ref{prp:refinedinvariance} the following hold.
\begin{enumerate}
\item The homology of the subcomplex
\[C^+_* \subset CF^{\mathfrak{p}_0}_*(L,L_{\epsilon,N,\sigma};\widetilde{G}_t)^{\overline{M}_+}_{\overline{M}_-}\]
is isomorphic to the Morse homology complex $(C^{\OP{Morse}}_{*-1}(h),\partial_h)$ of $\Lambda$;
\item Consider the inclusions
$$C^+_* \subset C^{\ge 0}_* \subset CF^{\mathfrak{p}_0}_*(L,L_{\epsilon,N,\sigma};\widetilde{G}_t)^{\overline{M}_+}_{\overline{M}_-}$$
of subcomplexes. The first inclusion is injective, while the composition of inclusions vanishes in homology.
\end{enumerate}
\end{cor}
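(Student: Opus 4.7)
The plan is to derive both parts from Lemma \ref{lma:cone} (which identifies, homologically, the unperturbed version of $C^+_*$ with the Morse complex $C^{\OP{Morse}}_{*-1}(h)$) together with the three compatibility statements in Proposition \ref{prp:refinedinvariance} concerning the continuation maps $\Phi_{0,\widetilde{G}_t}, \Phi_{\widetilde{G}_t,0}$.

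For Part (1), I would simply string together the two identifications. Write $B_* := CF^{\mathfrak{p}_0}_*(L,L_{\epsilon,N,\sigma};0)$ and $(C^+_*, d_+) \subset B_*$ for the subcomplex with its unperturbed differential; by Lemma \ref{lma:cone} its homology is $H_{*-1}(C^{\OP{Morse}}_*(h),\partial_h)$. Proposition \ref{prp:refinedinvariance}(2) then states that $\Phi_{0,\widetilde{G}_t}|_{C^+_*}$ is a quasi-isomorphism onto the target subcomplex $(C^+_*, d^H_+) \subset CF^{\mathfrak{p}_0}_*(L,L_{\epsilon,N,\sigma};\widetilde{G}_t)^{\overline{M}_+}_{\overline{M}_-}$ whose homology we want, yielding Part (1).

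For Part (2), both assertions follow from factorization arguments using the same compatibilities. For the full-rank statement for $(C^+_*, d^H_+) \hookrightarrow C^{\ge 0}_*$, I would observe that Proposition \ref{prp:refinedinvariance}(3) lets us factor the quasi-isomorphism $\Phi_{\widetilde{G}_t,0}|_{C^+_*}$ as the composition
$$(C^+_*, d^H_+) \hookrightarrow C^{\ge 0}_* \xrightarrow{\Phi_{\widetilde{G}_t,0}} (C^+_*, d_+) \subset B_*,$$
so the inclusion $(C^+_*, d^H_+) \hookrightarrow C^{\ge 0}_*$ must be injective on homology. The vanishing of the composed inclusion $(C^+_*, d^H_+) \hookrightarrow C_*$ I would establish by the dual factorization: Proposition \ref{prp:refinedinvariance}(1) gives that $\Phi_{0,\widetilde{G}_t}$ sends the subcomplex $(C^+_*, d_+) \subset B_*$ into $(C^+_*, d^H_+) \subset C_*$, yielding a strictly commutative square
$$\begin{matrix} (C^+_*, d_+) & \xrightarrow{\Phi_{0,\widetilde{G}_t}|_{C^+_*}} & (C^+_*, d^H_+) \\ \downarrow & & \downarrow \\ B_* & \xrightarrow{\Phi_{0,\widetilde{G}_t}} & C_* \end{matrix}$$
in which the left vertical inclusion is null-homotopic (since $B_*$ is acyclic by Lemma \ref{lma:cone}) and the top horizontal is the quasi-isomorphism of Part (1). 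Commutativity then forces the right vertical inclusion to vanish in homology.

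The only subtlety worth flagging is ensuring that both diagrams commute on the nose, rather than merely up to chain homotopy, since homotopies would weaken the acyclicity-plus-commutativity argument. This is precisely what is provided by the strict containments $\Phi_{0,\widetilde{G}_t}(C^+_*) \subset C^+_*$ and $\Phi_{\widetilde{G}_t,0}(C^{\ge 0}_*) \subset C^+_*$ in Proposition \ref{prp:refinedinvariance}; once those are granted, everything else is formal homological algebra.
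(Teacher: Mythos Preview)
Your proposal is correct and follows essentially the same argument as the paper: Part (1) is exactly Lemma \ref{lma:cone} combined with Proposition \ref{prp:refinedinvariance}(2), and for Part (2) the paper draws precisely the two commutative squares you describe (one with $\Phi_{\widetilde{G}_t,0}$ for injectivity, one with $\Phi_{0,\widetilde{G}_t}$ and the acyclicity of $B_*$ for vanishing). The only cosmetic difference is that you phrase the first square as a factorization while the paper writes it out as a diagram, but the content is identical.
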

\begin{proof}
(1): This follows from Part (2) of Proposition \ref{prp:refinedinvariance} together with Lemma \ref{lma:cone}.

(2): Parts (2) and (3) of Proposition \ref{prp:refinedinvariance} imply the existence of a commutative diagram of the form
\[\xymatrix{
H(C^+) \ar[r] \ar[d]^{\simeq}_{[\Phi_{\widetilde{G}_t,0}|_{C^+}]} & \ar[d]^{[\Phi_{\widetilde{G}_t,0}|_{C^{\ge0}}]} H(C^{\ge0})\\
H(C^+) \ar@{=}[r] & H(C^+),
}\]
where the horizontal maps are induced by the canonical inclusions of subcomplexes. This shows the first claim.

Lemma \ref{lma:cone} and Part (2) of Proposition \ref{prp:refinedinvariance} produces a commutative diagram of the form
\[\xymatrix{
H(C^+) \ar[r] \ar[d]^{\simeq}_{[\Phi_{0,\widetilde{G}_t}|_{C^+}]} & \ar[d]^{[\Phi_{0,\widetilde{G}_t}]} HF(L,L_{\epsilon,N,\sigma};0)=0\\
H(C^+) \ar[r] & HF(L,L_{\epsilon,N,\sigma};\widetilde{G}_t)^{\overline{M}_+}_{\overline{M}_-},
}\]
where the horizontal maps are induced by the canonical inclusions of subcomplexes. This implies the vanishing of the bottom map, as sought.

\end{proof}

\subsection{Finishing the proof of Theorem \ref{thm:main}}
 Just before Lemma \ref{lma:reebchords}, we introduced the subcomplex $L_*$ (resp. $R_*$) generated by Reeb chords of appropriate action to the left (resp. right) of $\{ r \le T + \varepsilon\}$ (resp. $\{ r \ge T - \varepsilon\}$). 
Recall Lemma \ref{lma:reebchords} and Proposition \ref{prp:refinedinvariance}, which imply that
$$C^+_* \overset{\iota}{\subset} R_* \overset{\iota_R}{\subset} C_*=CF^{\mathfrak{p}_0}_*(L,L_{\epsilon,N,\sigma};\widetilde{G}_t)^{\overline{M}_+}_{\overline{M}_-}$$
is a sequence of inclusions of subcomplexes.
So Part (2) of Corollary \ref{cor:invariance} implies that this composition
of inclusions vanishes on the level of homology, while the first inclusion is an inclusion on the homology level. Also, let
$$L_* \overset{\iota_L}{\subset} C_*=CF^{\mathfrak{p}_0}_*(L,L_{\epsilon,N,\sigma};\widetilde{G}_t)^{ \overline{M}_+}_{ \overline{M}_-}$$
 denote the inclusion of the subcomplex $L_*$. 
 We then have a commutative diagram in homology
\[ \xymatrix@C=4em{
& & H_*(C^+) \ar@{^(->}[d]_{[0\oplus\iota]} \ar[dr]^{[\iota_R\circ\iota]=0} & \\
\hdots \ar[r] & H_*(I) \ar[r] & H_*(L) \oplus H_*(R) \ar[r]^{[\iota_L+\iota_R]} & H_*(C) \ar[r] & \hdots
}\]
where the horizontal sequence is the exact Mayer--Vietoris sequence from \eqref{eq:MVS}. Combining this diagram with Part (1) of Corollary \ref{cor:invariance} we deduce that
\[\dim H(I) \ge \dim \ker([\iota_L+\iota_R]) \ge \dim H(C^+)=\dim H_*(\Lambda).\]
This inequality, combined with Lemmas \ref{lma:ReebChordCorrespondence} and \ref{lma:reebchords}, then finishes the proof. \qed

\bibliographystyle{plain}
\bibliography{references}

\def\cprime{$'$} \def\cprime{$'$}
\begin{thebibliography}{10}

\bibitem{CompAbbas}
C.~Abbas.
\newblock {\em An introduction to compactness results in symplectic field
  theory}.
\newblock Springer, Heidelberg, 2014.

\bibitem{Akaho2}
M.~Akaho.
\newblock Symplectic displacement energy for exact {L}agrangian immersions.
\newblock Preprint (2015), available at
  \texttt{http://arxiv.org/abs/1505.06560}.

\bibitem{Akaho}
M.~Akaho.
\newblock Hofer's symplectic energy and {L}agrangian intersections in contact
  geometry.
\newblock {\em J. Math. Kyoto Univ.}, 41(3):593--609, 2001.

\bibitem{AlbersLeafwise}
P.~Albers and U.~Frauenfelder.
\newblock Leaf-wise intersections and {R}abinowitz {F}loer homology.
\newblock {\em J. Topol. Anal.}, 2(1):77--98, 2010.

\bibitem{Orderability}
P.~Albers, U.~Fuchs, and W.~J. Merry.
\newblock Orderability and the {W}einstein conjecture.
\newblock {\em Compos. Math.}, 151(12):2251--2272, 2015.

\bibitem{Positive}
P.~Albers, U.~Fuchs, and W.~J. Merry.
\newblock Positive loops and {$L^\infty$}-contact systolic inequalities.
\newblock {\em Selecta Math. (N.S.)}, 23(4):2491--2521, 2017.

\bibitem{AlbersHein}
P.~Albers and D.~Hein.
\newblock Cuplength estimates in {M}orse cohomology.
\newblock {\em J. Topol. Anal.}, 8(2):243--272, 2016.

\bibitem{AudinDamian}
M.~Audin and M.~Damian.
\newblock {\em Morse theory and {F}loer homology}.
\newblock Universitext. Springer, London; EDP Sciences, Les Ulis, 2014.
\newblock Translated from the 2010 French original by Reinie Ern{\'e}.

\bibitem{AudinLafontaine}
M.~Audin and J.~Lafontaine.
\newblock Introduction: applications of pseudo-holomorphic curves to symplectic
  topology.
\newblock In {\em Holomorphic curves in symplectic geometry}, volume 117 of
  {\em Progr. Math.}, pages 1--14. Birkh\"auser, Basel, 1994.

\bibitem{Borman}
M.~S. Borman and F.~Zapolsky.
\newblock Quasi-morphisms on contactomorphism groups and contact rigidity.
\newblock {\em Geom. Topol.}

\bibitem{CompSFT}
F.~Bourgeois, Y.~Eliashberg, H.~Hofer, K.~Wysocki, and E.~Zehnder.
\newblock Compactness results in symplectic field theory.
\newblock {\em Geom. Topol.}, 7:799--888, 2003.

\bibitem{LagConc}
B.~Chantraine.
\newblock Lagrangian concordance of {L}egendrian knots.
\newblock {\em Algebr. Geom. Topol.}, 10(1):63--85, 2010.

\bibitem{Cthulhu}
B.~Chantraine, G.~Dimitroglou~Rizell, P.~Ghiggini, and R.~Golovko.
\newblock Floer theory for {L}agrangian cobordisms.
\newblock {P}reprint (2015), available at
  \texttt{http://arxiv.org/abs/1511.09471}.

\bibitem{QuasiFunctions}
Yu.~V. Chekanov.
\newblock Critical points of quasifunctions, and generating families of
  {L}egendrian manifolds.
\newblock {\em Funktsional. Anal. i Prilozhen.}, 30(2):56--69, 96, 1996.

\bibitem{Chekanov}
Yu.~V. Chekanov.
\newblock Lagrangian intersections, symplectic energy, and areas of holomorphic
  curves.
\newblock {\em Duke Math. J.}, 95(1):213--226, 1998.

\bibitem{Rabinowiz}
K.~Cieliebak and U.~A. Frauenfelder.
\newblock A {F}loer homology for exact contact embeddings.
\newblock {\em Pacific J. Math.}, 239(2):251--316, 2009.

\bibitem{CieliebakMohnke}
K.~Cieliebak and K.~Mohnke.
\newblock Compactness for punctured holomorphic curves.
\newblock {\em J. Symplectic Geom.}, 3(4):589--654, 2005.
\newblock Conference on Symplectic Topology.

\bibitem{SymplecticEilenbergSteenrod}
K.~Cieliebak and A.~Oancea.
\newblock Symplectic homology and the {E}ilenberg--{S}teenrod axioms.
\newblock {\em Algebr. Geom. Topol.}, 18(4):1953--2130, 2018.

\bibitem{MorseNovikov}
O.~Cornea and A.~Ranicki.
\newblock Rigidity and gluing for {M}orse and {N}ovikov complexes.
\newblock {\em J. Eur. Math. Soc. (JEMS)}, 5(4):343--394, 2003.

\bibitem{OnHomologicalRigidity}
G.~Dimitroglou~Rizell and R.~Golovko.
\newblock On homological rigidity and flexibility of exact {L}agrangian
  endocobordisms.
\newblock {\em Internat. J. Math.}, 25(10):1450098, 24, 2014.

\bibitem{IntroSFT}
Y.~Eliashberg, A.~Givental, and H.~Hofer.
\newblock Introduction to symplectic field theory.
\newblock {\em Geom. Funct. Anal.}, (Special Volume, Part II):560--673, 2000.
\newblock GAFA 2000 (Tel Aviv, 1999).

\bibitem{LagCont}
Y.~Eliashberg, H.~Hofer, and D.~Salamon.
\newblock Lagrangian intersections in contact geometry.
\newblock {\em Geom. Funct. Anal.}, 5(2):244--269, 1995.

\bibitem{Entov:Tetragons}
M.~Entov and L.~Polterovich.
\newblock Lagrangian tetragons and instabilities in hamiltonian dynamics.
\newblock {\em Nonlinearity}, 30(1):13, 2017.

\bibitem{Fish}
J.~W. Fish.
\newblock Target-local {G}romov compactness.
\newblock {\em Geom. Topol.}, 15(2):765--826, 2011.

\bibitem{Floer:MorseTheoryLagrangian}
A.~Floer.
\newblock Morse theory for {L}agrangian intersections.
\newblock {\em J. Differential Geom.}, 28(3):513--547, 1988.

\bibitem{Floer:RelativeMorseIndex}
A.~Floer.
\newblock A relative {M}orse index for the symplectic action.
\newblock {\em Comm. Pure Appl. Math.}, 41(4):393--407, 1988.

\bibitem{FuchsRutherford}
D.~Fuchs and D.~Rutherford.
\newblock Generating families and {L}egendrian contact homology in the standard
  contact space.
\newblock {\em J. Topol.}, 4(1):190--226, 2011.

\bibitem{FloerAnomalyI}
K.~Fukaya, Y.-G. Oh, H.~Ohta, and K.~Ono.
\newblock {\em Lagrangian intersection {F}loer theory: anomaly and obstruction.
  {P}art {I}}, volume~46 of {\em AMS/IP Studies in Advanced Mathematics}.
\newblock American Mathematical Society, Providence, RI, 2009.

\bibitem{FloerAnomalyII}
K.~Fukaya, Y.-G. Oh, H.~Ohta, and K.~Ono.
\newblock {\em Lagrangian intersection {F}loer theory: anomaly and obstruction.
  {P}art {II}}, volume~46 of {\em AMS/IP Studies in Advanced Mathematics}.
\newblock American Mathematical Society, Providence, RI, 2009.

\bibitem{Geiges}
H.~Geiges.
\newblock {\em An introduction to contact topology}, volume 109 of {\em
  Cambridge Studies in Advanced Mathematics}.
\newblock Cambridge University Press, Cambridge, 2008.

\bibitem{Givental}
A.~B. Givental{\cprime}.
\newblock The nonlinear {M}aslov index.
\newblock In {\em Geometry of low-dimensional manifolds, 2 ({D}urham, 1989)},
  volume 151 of {\em London Math. Soc. Lecture Note Ser.}, pages 35--43.
  Cambridge Univ. Press, Cambridge, 1990.

\bibitem{Gromov}
M.~Gromov.
\newblock Pseudoholomorphic curves in symplectic manifolds.
\newblock {\em Invent. Math.}, 82(2):307--347, 1985.

\bibitem{BorderedII}
J.~G. Harper and M.~G. Sullivan.
\newblock A bordered {L}egendrian contact algebra.
\newblock {\em J. Symplectic Geom.}, 12(2):237--255, 2014.

\bibitem{Her}
H.-L. Her.
\newblock Symplectic energy and {L}agrangian intersection under {L}egendrian
  deformations.
\newblock {\em Pacific J. Math.}, 231(2):417--435, 2007.

\bibitem{HoferNorm}
H.~Hofer.
\newblock On the topological properties of symplectic maps.
\newblock {\em Proc. Roy. Soc. Edinburgh Sect. A}, 115(1-2):25--38, 1990.

\bibitem{Hofer93}
H.~Hofer.
\newblock Pseudoholomorphic curves in symplectizations with applications to the
  {W}einstein conjecture in dimension three.
\newblock {\em Invent. Math.}, 114(3):515--563, 1993.

\bibitem{Khovanov}
M.~Khovanov and P.~Seidel.
\newblock Quivers, {F}loer cohomology, and braid group actions.
\newblock {\em J. Amer. Math. Soc.}, 15(1):203--271, 2002.

\bibitem{Persistance}
F.~Laudenbach and J.-C. Sikorav.
\newblock Persistance d'intersection avec la section nulle au cours d'une
  isotopie hamiltonienne dans un fibr\'e cotangent.
\newblock {\em Invent. Math.}, 82(2):349--357, 1985.

\bibitem{YJLee}
Y.-J. Lee.
\newblock Reidemeister torsion in {F}loer-{N}ovikov theory and counting
  pseudo-holomorphic tori. {I}.
\newblock {\em J. Symplectic Geom.}, 3(2):221--311, 2005.

\bibitem{SympTop}
D.~McDuff and D.~Salamon.
\newblock {\em Introduction to symplectic topology}.
\newblock Oxford Mathematical Monographs. The Clarendon Press Oxford University
  Press, New York, second edition, 1998.

\bibitem{Merry}
W.~J. Merry.
\newblock On the {R}abinowitz {F}loer homology of twisted cotangent bundles.
\newblock {\em Calc. Var. Partial Differential Equations}, 42(3-4):355--404,
  2011.

\bibitem{LagrangianRabinowitz}
W.~J. Merry.
\newblock Lagrangian {R}abinowitz {F}loer homology and twisted cotangent
  bundles.
\newblock {\em Geom. Dedicata}, 171:345--386, 2014.

\bibitem{OhSympTop1}
Y.-G. Oh.
\newblock {\em Symplectic topology and {F}loer homology. {V}ol. 1}, volume~28
  of {\em New Mathematical Monographs}.
\newblock Cambridge University Press, Cambridge, 2015.
\newblock Symplectic geometry and pseudoholomorphic curves.

\bibitem{OhSympTop2}
Y.-G. Oh.
\newblock {\em Symplectic topology and {F}loer homology. {V}ol. 2}, volume~29
  of {\em New Mathematical Monographs}.
\newblock Cambridge University Press, Cambridge, 2015.
\newblock Symplectic geometry and pseudoholomorphic curves.

\bibitem{Ono}
K.~Ono.
\newblock Lagrangian intersection under {L}egendrian deformations.
\newblock {\em Duke Math. J.}, 85(1):209--225, 1996.

\bibitem{DisplacementLagrangian}
L.~Polterovich.
\newblock Symplectic displacement energy for {L}agrangian submanifolds.
\newblock {\em Ergodic Theory Dynam. Systems}, 13(2):357--367, 1993.

\bibitem{SabloffTraynor15}
J.~M. Sabloff and L.~Traynor.
\newblock The minimal length of a {L}agrangian cobordism between {L}egendrians.
\newblock {\em Selecta Math. (N.S.)}, 23(2):1419--1448, 2017.

\bibitem{Sandon}
S.~Sandon.
\newblock A {M}orse estimate for translated points of contactomorphisms of
  spheres and projective spaces.
\newblock {\em Geom. Dedicata}, 165:95--110, 2013.

\bibitem{HoferContactomorphism}
E.~Shelukhin.
\newblock The {H}ofer norm of a contactomorphism.
\newblock {\em J. Symplectic Geom.}, 15(4):1173--1208, 2017.

\bibitem{BorderedI}
S.~Sivek.
\newblock A bordered {C}hekanov-{E}liashberg algebra.
\newblock {\em J. Topol.}, 4(1):73--104, 2011.

\bibitem{Sullivan}
M.~G. Sullivan.
\newblock {$K$}-theoretic invariants for {F}loer homology.
\newblock {\em Geom. Funct. Anal.}, 12(4):810--872, 2002.

\bibitem{ObservationsHofer}
M.~Usher.
\newblock Observations on the {H}ofer distance between closed subsets.
\newblock {\em Math. Res. Lett.}, 22(6):1805--1820, 2015.

\end{thebibliography}
\end{document}